\documentclass[a4paper, 11pt]{amsart}
\addtolength{\textheight}{9.3mm}
\usepackage[margin=1.8cm]{geometry}

\usepackage{anysize}
\usepackage[hang]{subfigure}

\marginsize{3,5cm}{2,5cm}{2,5cm}{2,5cm}
\usepackage{amssymb}
\usepackage{amsmath,amsbsy,amssymb,amscd}
\usepackage{t1enc}\pagestyle{myheadings}
\usepackage[cp1250]{inputenc}
\usepackage[british]{babel}
\usepackage[all]{xy}
\usepackage{color}
\usepackage{amsfonts}
\usepackage{latexsym}
\usepackage{comment}
\usepackage{amsthm}
\usepackage{mathrsfs}
\usepackage{hyperref}
\usepackage{graphicx}

\usepackage{color}
\usepackage{caption}
\usepackage{enumerate}
\usepackage{bbm}
\definecolor{orange}{rgb}{1,0.5,0}

\def\Blue{}
\def\OldBlue{}
\def\inx{i_{n,x}}
\usepackage{marginnote}
\edef\marginnotetextwidth{\the\textwidth}
\def\Red{}




\usepackage{mathrsfs} 

\DeclareMathAlphabet{\mathpzc}{OT1}{pzc}{L}{it} 

\def\cal#1{{\mathcal #1}} 



\def\a{\alpha}
\def\vep{\varepsilon}

\newtheorem{definition}{Definition}[section]

\newtheorem{proposition}{Proposition}[section]
\newtheorem{theorem}{Theorem}[section]

\newtheorem{corollary}{Corollary}[section]
\newtheorem{remark}{Remark}[section]
\newtheorem{lemma}{Lemma}[section]

\numberwithin{equation}{section}

\def\i{\mathrm{i}}

\def\H{\check{H}}
\def\C{\mathbb{C}}
\def\geq{\geqslant}
\def\leq{\leqslant}
\def\R{\mathbb{R}}
\def\T{\mathbb{T}}
\newcommand{\raz}{\mathbbm{1}}
\def\eps{\varepsilon}

\def\Z{\mathbb{Z}}
\def\N{\mathbb{N}}

\def\Q{\mathbb{Q}}
\def\cB{\mathcal{B}}

\def\cC{\mathcal{C}}

\newcommand{\ov}{\overline}

\newcommand{\bea}{\begin{eqnarray}}
  \newcommand{\eea}{\end{eqnarray}}
  \newcommand{\beab}{\begin{eqnarray*}}
  \newcommand{\eeab}{\end{eqnarray*}}
\renewcommand{\a}{\alpha}
  \newcommand{\be}{\begin{equation}}
  \newcommand{\ee}{\end{equation}}
  
 \newcommand{\zdk}{(Z,\mathcal{D},\kappa)}
 \newcommand{\xbm}{(X,\mathcal{B},\mu)}
 \newcommand{\ycn}{(Y,\mathcal{C},\nu)}

\newcommand{\cD}{\mathcal D}

\newcommand{\bfu}{\boldsymbol{u}}

\newcommand{\mob}{\boldsymbol{\mu}}

\title{On disjointness properties of some parabolic flows}

\author{Adam Kanigowski}
\address{Department of Mathematics, University of Maryland at College
Park, College Park, MD 20740, USA}
\email{adkanigowski@gmail.com}

\author{Mariusz Lema\'nczyk}
\address{Faculty of Mathematics and Computer Science, Nicolaus Copernic
University
Ul. Chopina 12/18
87-100 Torun, Poland}
\email{mlem@mat.umk.pl}

\author{Corinna Ulcigrai}
\address{Institut f{\"u}r Mathematik, Universit{\"a}t  Z{\"u}rich Winterthurerstrasse 190,
CH-8057 Z¨urich} \address{School of Mathematics,  University of Bristol, Howard House, Queens Ave BS8 1SD Bristol, UK} 
\email{corinna.ulcigrai@gmail.com}


\begin{document}
\baselineskip=14pt \maketitle

\begin{abstract}{The Ratner property, a quantitative form of divergence of nearby trajectories, is a central feature in the study of parabolic homogeneous flows. Discovered by Marina Ratner and used in her 1980th seminal works on horocycle flows, it pushed forward the disjointness theory of such systems}. In this paper, exploiting  a recent variation of the Ratner property,  we prove new disjointness phenomena for smooth parabolic flows beyond the homogeneous {world}. In particular, we establish a general disjointness criterion based on the \emph{switchable} Ratner property.  We then apply this new criterion
to study disjointness properties  of  smooth time changes of horocycle flows and smooth Arnol'd flows on $\T^2$, focusing in particular on disjointness of  {distinct} flow \emph{rescalings}. {As a consequence, we answer a question by Marina Ratner on the M\"obius orthogonality of time-changes of horocycle flows. {\OldBlue In fact,  we prove M{\"o}bius orthogonality for all smooth time-changes of horocycle flows and uniquely ergodic realizations of Arnol'd flows considered.}}
\end{abstract}

\begin{flushright}
{\it Dedicated to the memory of Marina Ratner}
\end{flushright}
\tableofcontents

\section{Introduction}
In this paper we study ergodic properties of {\it parabolic} dynamical systems. Classical examples of parabolic systems are horocycle flows acting on unit tangent bundles of compact surfaces with constant negative curvature. In the 1980's, Marina Ratner established many spectacular rigidity phenomena for horocycle flows. In particular, Ratner classified invariant measures and  {joinings (for basic ergodic theory notions, see Section~\ref{s:Sec2}): both invariant measures and joinings have to be {\it algebraic}}. The key phenomenon used in M.\ Ratner's theorems is  the {\it Ratner property} {(originally, in \cite{Rat}, it was called H-property, renamed as the R-property in \cite{Thou})} which describes a special (polynomial) way of divergence of nearby trajectories. In \cite{Rat2}, M.\ Ratner showed that the  R-property persists for  smooth {\it time changes} of horocycle  flows. {This, in \cite{Rat2}, {\OldBlue allowed  her to} show that (smooth) time changes of horocycle flows  share with horocycle flows similar rigidity phenomena}.

There is no formal definition for a system to be parabolic. In view of the above, it seems that the Ratner property is one of  characteristics making a system parabolic. For a long time there were no known examples of systems with {the Ratner property} beyond horocycle flows and their (smooth) time changes. The situation has changed within the last {12} years. K.\ Fr\c{a}czek and the second author showed in \cite{Fr-Le} that there exists a class of flows on $\T^2$ (smooth flows with one singular point) for which a variant of {the R-property} holds.  However, the flows in \cite{Fr-Le} are not (globally) smooth.
The first  {class} of {\OldBlue globally smooth} flows with {{\OldBlue a}  variation of the Ratner property} beyond the horocyclic world {was} given by B.\ Fayad and the first author in \cite{Fa-Ka}. {The class considered in \cite{Fa-Ka} consists of (some)} smooth mixing flows on the torus $\T^2$ with one fixed point (these are sometimes known as Arnol'd and Kochergin flows).
This triggered further investigations and {the first and third authors} and J.\ Ku\l aga-Przymus in \cite{Ka-Ku-Ul} proved {\OldBlue that the same variant of the} Ratner property holds for smooth Arnol'd flows on every surface of genus $g>1$. {\OldBlue The variation of the Ratner property considered} in \cite{Fa-Ka} and \cite{Ka-Ku-Ul} {\OldBlue is the so called {\it switchable Ratner property} (or \emph{SR-property} for short). {\OldBlue For this variant (whose definition is recalled in Section~\ref{Rproperties}), one only requires that the controlled form of divergence for nearby trajectories in Ratner's property  holds \emph{either} in the future \emph{or} in the past, depending on the initial points.} A recent result of G.\ Forni and the first author \cite{ForK} shows that {the SR-property} holds {\OldBlue also} in the class of smooth (non-trivial) time changes of constant type {\it Heisenberg nilflows}.

{\OldBlue All the above mentioned variations  of  the R-property  (which are detailed in Section~\ref{Rproperties}) were defined in order to have the same strong dynamical consequences of the original R-property itself, hence we will sometimes call them simply \emph{Ratner properties}.  In particular, all Ratner properties, as the original R-property does, restrict the type of
self-joinings that the flow can have (see Section 2.5) and allow to enhance mixing properties.}  }

{The Ratner property (or its variations) of a flow imposes some restrictions not only on the set of self-joinings but also on the set of its joinings with another (ergodic) flow.} In particular, we can ask whether for two systems {sharing the same  Ratner property is it} possible to classify joinings between them. The first result in this direction can be found {\OldBlue in Ratner's work~\cite{Rat2}, where she shows} that {two flows $(\bar{h}_t)$ and $(\tilde{h}_t)$ given by two smooth time changes of a horocycle flow $(h_t)$  are disjoint, i.e.\ the only joining between them is the  product measure, whenever the cocycles corresponding to the time changes are not {\it jointly cohomologous} (see Section~\ref{s:tch} for basic definitions)}.

In the present paper, we establish a general disjointness criterion based on the {SR-property} (Theorem~\ref{disjoint.flows}),  which we then use to prove  disjointness properties  of horocycle {flows, their smooth time changes and some Arnol'd flows on $\T^2$, see Theorems~\ref{main:prop},~\ref{main:th2} and~\ref{main:th}}.  The  statement (as well as the proof) of the criterion, which is rather technical, is given  in {Section}~\ref{sec:Sec3}. {We now pass to a description of our main new disjointness results}.

\subsection*{Disjointness for time-changes of horocycle flows}
To state the first of our results, we {need to} recall some basic definitions.
Let $G=SL(2,\R)$ and $\Gamma$ be a lattice in $G$. Let
$$h_t:=\begin{pmatrix} 1&t\\0& 1\end{pmatrix}, {\;t\in\R,}$$
{and, by an abuse of notation, let also $(h_t)$ denote}
the \emph{horocycle flow} acting on $M:=G/ \Gamma$ {(by left multiplication by $h_t$) considered with Haar measure  $\mu$}. For $\tau\in C^1(M)$, $\tau>0$, let $(\widetilde{h}^\tau_t)$ denote the time change of $(h_t)$ given by $\tau$, i.e. for $x\in M$,
$$\widetilde{h}^\tau_t(x)=h_{{u}(x,t)}x,\qquad  \text{where}\ \int_0^{{u}(x,t)}\tau(h_sx)\,ds=t.$$
Let us remark that while {a time change seems to be the easiest form of perturbation of a flow (indeed, it  preserves orbits),  a time change of a horocycle flow} typically breaks the homogeneity of the original flow and  therefore can introduce new spectral and disjointness phenomena (see, for example, the results below). Thus, (smooth) time changes of {horocycle flows} constitute a fundamental class of {non-homogeneous} parabolic flows.

{We will be particularly interested in \emph{rescalings} of a flow $(R_t)$ acting on a probability standard space $\zdk$.
Given a real number $p>0$, by the $p$-\emph{rescaling} of  $(R_t)$ we simply mean the flow $(R_{pt})$ (in which the time was rescaled by the factor $p$). Notice that when $p$ is an integer, the time-one map of $p$-rescaling coincides with the $p$-\emph{power} of the time-one map $R_1$ of the flow, so considering rescalings is an analogous operation to considering the powers of a given transformation. In the case of a horocycle flow (or, more generally, a unipotent flow on a homogenous space), rescalings are also called \emph{algebraic reparametrizations} since the time-changed flow is still of algebraic nature. We are interested in the situation when two rescalings $(R_{pt})$ and $(R_{qt})$, where $p,q>0$ and $p\neq q$, are disjoint, i.e.\ the only joining between them is product measure. Let us recall that the notion of disjointness was introduced by Furstenberg in \cite{Fu} to study how \emph{different} two systems can be. In particular, if two flows are isomorphic, the isomorphism yields a non-trivial joining, so that the two flows cannot be disjoint (in fact, disjoint flows do not have a non-trivial common factor).}

Notice  that {two different} rescalings of {a horocycle flow $(h_t)$ are \emph{never} disjoint}. {Indeed,
if $(g_s)$ denotes the \emph{geodesic flow} given by left multiplication by}
$${g_s}:=\begin{pmatrix} e^s&0\\0& e^{-s} \end{pmatrix}$$
on $SL(2,\R)/\Gamma$,
the renormalization equation
\be\label{renorm} h_t g_s=g_sh_{e^{-2s}t},\qquad \forall \ t,s \in \mathbb{R}, \ee
yields that, for any positive $p\neq q$, {the flows $(h_{pt})$ and $(h_{qt})$} are conjugated by $g_s$ with $s=-\frac{\log(q/p)}{2}$ (and hence are not disjoint).

However, for a large class of smooth time changes of {a} horocycle flow, we have the following result on {disjointness of rescalings}. {In Theorem~\ref{main:th2},
we consider the class} $B^+(M)$ of $W^6$-smooth \emph{positive} functions (time changes)
which have non-trivial support outside of the discrete series (see Section~\ref{sec:deviations} for {the relevant definitions which require some basic notions} from the  representation theory of $SL(2,\R)$).

\begin{theorem}\label{main:prop} Assume that $\Gamma$ is cocompact. {Assume moreover that the cocycle determined by $\tau\in B^+(M)$ is not  a quasi-coboundary}. Then for any {positive} $p,q\in \R\setminus\{0\}$, $p\neq q$, the flow rescalings $(\widetilde{h}^\tau_{pt})$ and $(\widetilde{h}^\tau_{qt})$ are disjoint.
\end{theorem}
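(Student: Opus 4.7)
\medskip

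\noindent\textbf{Proof plan for Theorem~\ref{main:prop}.} The strategy is to apply the general disjointness criterion of Section~\ref{sec:Sec3} (Theorem~\ref{disjoint.flows}) to the two rescaled flows, reduce any putative non-trivial joining to an algebraic relation between the two rescalings, and finally rule this relation out by exploiting the fact that the cocycle generated by $\tau$ is not a quasi-coboundary. First, I would verify the hypotheses of Theorem~\ref{disjoint.flows}. Observe that, up to a constant factor in the time change, $(\widetilde{h}^\tau_{pt})$ is nothing but the time change of $(h_t)$ by $\tau/p$, and analogously for $q$. Since Ratner \cite{Rat2} established that smooth positive time changes of horocycle flows enjoy the R-property (hence the SR-property), both rescalings fall under the scope of Theorem~\ref{disjoint.flows}. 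Ergodicity of both rescalings is automatic since $(h_t)$ is uniquely ergodic on $M$.

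Next, I would argue by contradiction: assume that, for some positive $p\neq q$, there exists a non-trivial ergodic joining $\rho$ of $(\widetilde{h}^\tau_{pt})$ with $(\widetilde{h}^\tau_{qt})$. The disjointness criterion, applied to the SR-property, then forces $\rho$ to carry a very rigid structure: it produces a Markov-type self-joining and ultimately yields a measurable isomorphism (modulo the flow direction) between the two rescalings. Translating this isomorphism back to the underlying horocycle flow $(h_t)$, and using that time changes share orbits with the original flow, one obtains a measurable map $\Phi:M\to M$ sending $(h_t)$-orbits to $(h_t)$-orbits and intertwining the time changes in a controlled way. The renormalization identity $h_t g_s = g_s h_{e^{-2s}t}$ strongly restricts such maps: the only candidates on the homogeneous side are the geodesic flow elements $g_s$ with $e^{-2s}=q/p$ (as already used to explain the non-disjointness of rescalings of $(h_t)$ itself).

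The cohomological content of the isomorphism then becomes: $\tau$ must satisfy, up to an additive measurable coboundary, an equation of the form
\[
\tau\circ g_s \;=\; \frac{q}{p}\,\tau + \partial\xi,
\]
for some measurable transfer function $\xi$ (with $s=-\tfrac12\log(q/p)$). The cocycle determined by $\tau$ would therefore be cohomologous, after a scalar renormalization, to its push-forward under $g_s$; combined with the hypothesis $p\neq q$ and the invariance properties of the decomposition into unitary $SL(2,\R)$-representations, this would force $\tau$ to be a quasi-coboundary. Here the assumption $\tau\in B^+(M)$, i.e.\ that $\tau$ has non-trivial projection outside the discrete series, is crucial: it brings the Flaminio--Forni asymptotics for ergodic integrals of the horocycle flow (recalled in Section~\ref{sec:deviations}) into play and provides the quantitative obstruction allowing to convert the above cohomological equation into a coboundary statement. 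Since $\tau$ is assumed \emph{not} to be a quasi-coboundary, we reach a contradiction, and $(\widetilde{h}^\tau_{pt})$ and $(\widetilde{h}^\tau_{qt})$ must be disjoint.

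The main obstacle I anticipate is the middle step: extracting from a general joining, via the SR-property mechanism of Theorem~\ref{disjoint.flows}, a clean isomorphism of the form $\Phi=g_s$ composed with a measurable time shift, rather than merely an abstract Markov joining. This requires carefully exploiting the fact that both flows are time changes of the \emph{same} horocycle flow, so that any orbit-preserving measurable isomorphism between them automatically arises from a measurable self-map of the horocycle foliation, which by Ratner's rigidity (or by direct renormalization) must be algebraic. Once this step is secured, the translation into the cohomological equation and the application of Flaminio--Forni deviations are comparatively standard.
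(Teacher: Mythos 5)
Your proposal fundamentally misconstrues the mechanism of Theorem~\ref{disjoint.flows}, and as a result the strategy you describe is not the one the paper actually follows — indeed it quietly relies on Ratner's joining rigidity from \cite{Rat2}, which the paper explicitly states it avoids.

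Theorem~\ref{disjoint.flows} is a \emph{direct} disjointness criterion: given a non-trivial ergodic joining, the quantitative shearing hypotheses (the existence of sets $E_k$, automorphisms $A_k$, the almost linear functions $a(\cdot)$ with relative drift by $p\in P$, etc.) lead to a contradiction via an argument with Birkhoff averages against indicator functions, cf.~\eqref{eq1}--\eqref{eq4} and the proof of~\eqref{disjon}. It never produces a Markov structure, nor extracts a measurable isomorphism $\Phi$ from the joining, nor reduces to a cohomological equation; those steps belong to an entirely different method. The step you yourself single out as ``the main obstacle'' — turning the joining into a clean isomorphism $g_s$ composed with a time shift — is not something Theorem~\ref{disjoint.flows} ever delivers; to get it you would need Ratner's classification of joinings between smooth time changes of horocycle flows, i.e.\ precisely the route via \cite{Rat2} that the paper emphatically does not take. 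Likewise, the final implication (``the cohomological equation would force $\tau$ to be a quasi-coboundary'') is the main content of \cite{Fl-Fo-note}, not something one gets for free from the representation decomposition; invoking it here leaves the proof circular or at best outsourced to a reference not established in this paper.

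The actual work in the paper consists in \emph{verifying the hypotheses} of Theorem~\ref{disjoint.flows} for the pair $(\widetilde h^\tau_{pt},\widetilde h^\tau_{qt})$, which you do not carry out. Concretely, one sets $A_k=g_{1/n_k}$ (geodesic push), chooses the sets $E_k=g_{-\log(pn_k^2)}(R)$ so that the Bufetov–Forni cocycle $\beta_\tau$ (via Lemma~\ref{lem:buff}) is bounded below along the relevant orbit, and then combines the explicit shearing formula for time-changed horocycle flows (Proposition~\ref{thm:hor}, with the functions $\chi_{x,y}$ and $A_x(T)$) with the asymptotics of Lemma~\ref{lem:buff}. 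The crux is a comparison of the relative drifts $a(t)$ and $b(t)$ built from $\chi$ and $A_x$ for the two rescalings, where the key quantitative input is the estimate~\eqref{jul2} showing the two ergodic integrals $\frac1p\int_0^{u(pn_k^2,x)}(\tau-\tau\circ g_{n_k^{-1}})(h_tx)\,dt$ and $\frac1q\int_0^{u(qn_k^2,y)}(\tau-\tau\circ g_{n_k^{-1}})(h_ty)\,dt$ differ by at least $d_{p,q}$; this exploits the factors $(pn_k^2)^{1/2}/p$ versus $(qn_k^2)^{1/2}/q$, i.e.\ $p^{-1/2}\neq q^{-1/2}$. Then one carries out a case analysis (\textbf{A.} where the polynomial part already forces drift, \textbf{B.} where the sub-polynomial part does) together with the three cases (T1)--(T3) according to the spectral type of $\tau$. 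None of this appears in your outline. So the proposal, while it sketches a legitimate alternative route known to exist (and attributed in the paper to Flaminio and Forni), is neither self-contained nor the paper's argument, and its claim to apply Theorem~\ref{disjoint.flows} is incorrect.
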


Notice that the above theorem does not hold if the cocycle given by $\tau$ is a quasi-coboundary, since in this case, $(\widetilde{h}^\tau_{t})$ is isomorphic to $(h_t)$ and we {have} already remarked that the theorem fails for {any} horocycle flow because of the renormalization equation \eqref{renorm}. Thus, {non-trivial and sufficiently smooth} time changes of $(h_t)$ {have \emph{always} better} disjointness properties than the horocycle flow itself.

Theorem~\ref{main:prop} {is  proved}  using the new disjointness criterion given by Theorem~\ref{disjoint.flows} below, {together} with quantitative estimates on deviations of ergodic averages  {\OldBlue which follows from the works of Flaminio and Forni \cite{Fl-Fo} and Bufetov and Forni \cite{BF} (it can be deduced from the results in \cite{BF}, see Appendix~\ref{app:BF}).}

Let us remark that the question on possible disjointness
of different rescalings of time changes of a horocycle flow is implicit in \cite{Rat2}.  Indeed, from the results in~\cite{Rat2}, one can deduce (see \cite{Fl-Fo-note} for details)
that, for a smooth time change $(\widetilde{h}^{v}_t)$ of the horocycle flow, the rescalings $(\widetilde{h}^{v}_{pt})$ and $(\widetilde{h}^{v}_{qt})$ are disjoint if and only if the cocycles determined by $v$ and $v\circ g_{-r}$, where $r:=\frac12\log(q/p)$, are not \emph{jointly cohomologous} (see \cite{Rat2} for the definition).

Shortly after proving our result, and motivated by it, L.\ Flaminio and G.\ Forni   informed us that an alternative proof of Theorem~\ref{main:prop}  can be obtained from the above mentioned disjointness result  by Ratner \cite{Rat2},  together with some considerations on the cohomological equation derived from  their work in \cite{Fl-Fo}, and that, it also holds for an arbitrary $v \in W^6(M)$ such that the cocycle generated by $v$ is not quasi-coboundary, see \cite{Fl-Fo-note}. The main result proved  in \cite{Fl-Fo-note} is indeed that this absence of  joint cohomology between the cocycles  determined by $v$ and $v\circ g_{-r}$ holds for $r\neq 0$   whenever the cocycle determined by $v$ is not a quasi-coboundary. Note that the latter result is also \emph{a posteriori} implied by our Theorem~\ref{main:prop} when $v\in B^+(M)$.

We  stress  that our proof of Theorem~\ref{main:prop} does \emph{not} rely on  Ratner's result from~\cite{Rat2} and hence is perhaps more direct. It has furthermore the advantage of showing how deviations of ergodic averages and, in particular, quantitative \emph{shearing phenomena} (through our disjointness criterion) play directly a role in disjointness of rescalings.
While it might be possible to adapt Flaminio and Forni's approach, which  seem to rely essentially on representation theory only, to generalizations to other unipotent flows,  our criterion of disjointness (Theorem~\ref{disjoint.flows}), which requires as an input only dynamical properties of the flow, can  be applied to study  many other parabolic flows with similar quantitative shearing properties \emph{outside} of the homogeneous world  (see the further results in this paper on which we will detail shortly, as well as  the conclusive remark at the end of Introduction mentioning other recent works based on our criterion, see \cite{Be-Ka,Do-Ka, ForK}), thus providing a unifying approach to disjointness questions.

\subsection*{Disjointness properties of Arnol'd flows}
 Another important class of \emph{parabolic flows} is given by \emph{area-preserving flows on surfaces}. Let us define  {the} class of \emph{Arnol'd flows}, which, as we explain below, are flows which arise naturally {when} studying area preserving flows on a surface of genus one. A flow from this class has a special representation (see Section~\ref{s:spflow} for definitions) over an irrational rotation $R_\alpha (x)=x+\alpha$  on $\mathbb{T}$ and under a roof function  $f: \mathbb{T}\to \mathbb{R}_{>0}$ which (identifying $\mathbb{T}$ with $[0,1)$) has the form $$f(x)=-A_-\log x-A_+\log(1-x)+h(x),$$ where $h\in C^5(\T)$ and $A_-\neq A_+$ (both $A_-,A_+$ are positive numbers), see Figure~\ref{Arnoldspecialflow}. We will denote such  flows by
 $((R_\alpha)^f_{{t}})$.

\begin{figure}[h!]
{    \includegraphics[width=0.5\textwidth]{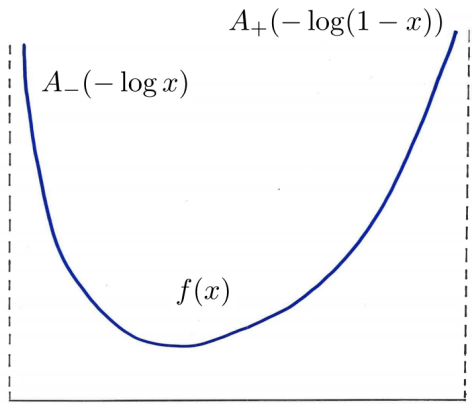}}
\centering{\caption{
Special representation of an {Arnol'd} flow. \label{Arnoldspecialflow}}}

\end{figure}


{A} motivation to study Arnol'd flows comes from considering  smooth area-preserving flows on the torus $\T^2$, with the simplest possible critical points, namely a {center} and a simple saddle (as in Figure~\ref{Arnoldflow}).   These flows are  also known as {\it multi-valued} or {\it locally} Hamiltonian flows (see in particular the works by Novikov and its school \cite{No}) since in suitable coordinates they are locally given by $\stackrel{\cdot}x=\frac{\partial H}{\partial y}$, $\stackrel{\cdot}y=-\frac{\partial H}{\partial x}$, where $H$ is a local Hamiltonian. The minimal components of a \emph{typical} such flow   {consist} of invariant circles (which foliates a neighbourhood of the center, see Figure~\ref{Arnoldflow}) and a non-trivial \emph{minimal component} (supported on a torus minus a disk). {The restriction $(\phi_t)$  of the flow to the minimal component  admits} a special representation of the form described above. {All this} was first remarked by Arnol'd in the paper \cite{Ar} (from which the name \emph{Arnol'd flows} comes), where mixing of  {$(\phi_t)$  was} conjectured.

 \begin{figure}[h!]
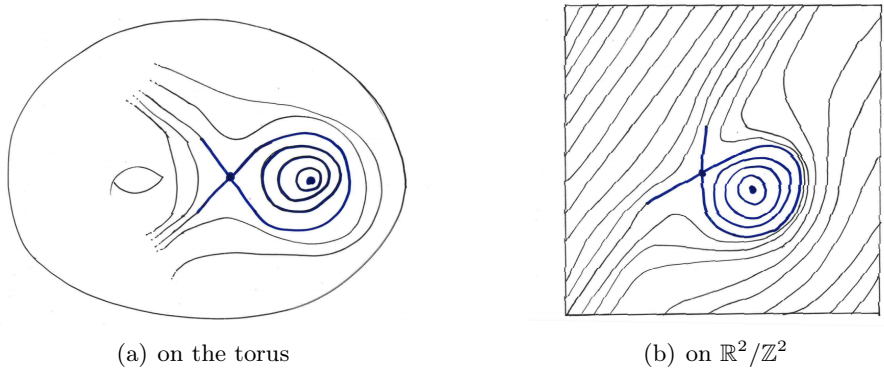

 \subfigure[on the torus \label{Arnoldflowtorus}]{
  \includegraphics[width=0.39\textwidth]{LocHamTorus} 	} \hspace{9mm}
\subfigure[on $\mathbb{R}^2/\mathbb{Z}^2$ \label{Arnoldflowsquare}]{\includegraphics[width=0.33\textwidth]{LocHamSquare}   }	
 \caption{A locally Hamiltonian flow on $\mathbb{T}^2$ with a periodic orbits island and a minimal component (Arnol'd flow). \label{Arnoldflow}}
\end{figure}

Our next result {shows disjointness} of rescalings of Arnol'd flows:
\begin{theorem}\label{main:th2} There exists a full Lebesgue measure set  $\mathcal{D}\subset \T$ such that for every $\alpha\in \mathcal{D}$ and every $p,q>0$, $\frac{p}{q}\notin \left\{1,\frac{A_-}{A_+},\frac{A_+}{A_-}\right\}$, the flows  $((R_\alpha)_{{pt}}^f)$ and $((R_\alpha)_{{qt}}^f)$ are disjoint.
 \end{theorem}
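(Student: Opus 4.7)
The strategy is to apply the general disjointness criterion of Theorem~\ref{disjoint.flows} to the pair of rescaled Arnol'd flows, exploiting the switchable Ratner property established for this class in \cite{Fa-Ka}.

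\emph{Step 1: Reduction to special flows over the same base.} I would first observe that the $p$-rescaling $((R_\alpha)^f_{pt})$ is metrically isomorphic to the special flow $((R_\alpha)^{f/p}_t)$, since dividing the roof by $p$ amounts to traversing the vertical fibers $p$-times faster. Thus both $((R_\alpha)^f_{pt})$ and $((R_\alpha)^f_{qt})$ become Arnol'd special flows over the \emph{same} base rotation $R_\alpha$, with logarithmic coefficients $(A_-/p, A_+/p)$ and $(A_-/q, A_+/q)$ respectively. The set $\mathcal{D}\subset\T$ will be taken as the full-measure set of $\alpha$ on which the switchable Ratner property (together with the associated quantitative control of Birkhoff sums of $f'$) from \cite{Fa-Ka} is available for Arnol'd-type roofs with arbitrary positive logarithmic coefficients; note that such a set is independent of $p$ and $q$ since the coefficients enter only as multiplicative constants in the shearing estimates.

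\emph{Step 2: Applying the disjointness criterion.} I would then feed the two flows into Theorem~\ref{disjoint.flows}. The SR-property for an Arnol'd flow is driven by polynomial shearing of nearby orbits, whose leading constant is governed by $A_-$ (resp.\ $A_+$) when the base orbit of $R_\alpha$ approaches the singularity at $0$ (resp.\ at $1$). The criterion translates the existence of a non-product ergodic joining of the two flows into the existence of a common sequence of times along which the shearing rates of both flows are compatible, possibly after flipping forward and backward time coordinates. Listing the possible matchings between the pairs $(A_-/p,A_+/p)$ and $(A_-/q,A_+/q)$ yields exactly four equalities, namely $A_-/p=A_-/q$, $A_+/p=A_+/q$, $A_-/p=A_+/q$ and $A_+/p=A_-/q$, which correspond to $p/q\in\{1,A_-/A_+,A_+/A_-\}$. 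Under the hypotheses of the theorem none of these holds, hence no non-product joining can exist and the two flows are disjoint.

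\emph{Main obstacle.} The delicate part is Step 2, namely the translation of the abstract conclusion of the criterion into a concrete list of admissible shearing matchings. One must verify that, in an ergodic joining of the two rescaled flows, the shearing events -- visits of the base orbit of $R_\alpha$ to a neighbourhood of $0$ or $1$ -- occur \emph{simultaneously} in both coordinates (so that the base rotation is identified on the diagonal) and that the switchable aspect of the Ratner property does not create spurious matchings beyond the four listed: this requires carefully tracking which of $A_\pm$ governs the forward and which the backward direction. The assumption $A_-\neq A_+$ built into the definition of an Arnol'd roof is essential here, since otherwise the three excluded ratios would collapse and the exclusion of matchings would become vacuous.
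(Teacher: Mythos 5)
The reduction in Step~1 is correct, and you correctly identify that the final answer should exclude $p/q\in\{1,A_-/A_+,A_+/A_-\}$ and that a combinatorial matching between the two logarithmic coefficients is involved. But Step~2 contains a genuine gap that the listing-of-matchings idea cannot repair.

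The flaw is the premise that the obstruction to a joining is a mismatch of \emph{first-order} shearing rates $A_-/p$ vs.\ $A_-/q$ (etc.). After rescaling the two flows onto the same base and normalizing time, the first-order shearing of $(R_\alpha)^{pf}$ over $w$ returns and of $(R_\alpha)^{qf}$ over $[\zeta w]$ returns (where $\zeta = p/q$) are \emph{both} asymptotic to $p(A_--A_+)\,w\log w$ times the respective displacement $\|x-x'\|$ or $\|y-y'\|$: the constants $A_-$ and $A_+$ enter only through the combination $A_--A_+$, and the rescaling factors $p$, $q$ cancel against $\zeta$. So at first order the two shearing rates are the same constant, and the only discrepancy lies in the displacements, which the joining can tune adversarially to make the first-order contributions cancel exactly. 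In that resonant regime (the paper's ``second order splitting'' case, $n_k=v$ in Section~\ref{sec:xy}) the argument must go to Birkhoff sums of $f''$ and $f'''$, which is where $A_-$ and $A_+$ finally appear separately, together with the normalized closest-approach quantities $B_{v,x}^{-2}$, $B_{v,y}^{-2}$, $B_{v,x}^{-3}$, $B_{v,y}^{-3}$. The paper then eliminates the unknown $B_{v,y}$ between the two resulting inequalities via the combinatorial Lemma~\ref{lem:cpq}, and only then does the exclusion $p/q\notin\{1,A_-/A_+,A_+/A_-\}$ yield a contradiction. Your proposal mentions none of this machinery --- there is no room in the naive first-order matching picture to account for the free parameter $B_{v,y}$ and the consequent need for two constraints (from $f''$ and $f'''$) rather than one.

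A secondary inaccuracy: the full-measure set $\mathcal{D}$ cannot be taken to be the SR-property set from \cite{Fa-Ka}. The paper's $\mathcal{D}$ (Definition~\ref{rotationassumptions}) imposes the additional Diophantine conditions $(\mathcal{D}2)$ and $(\mathcal{D}3)$ precisely because they are needed for the construction of the automorphisms $A_k$ and for the control of Birkhoff sums of $f'$, $f''$, $f'''$ that drive the second-order argument; these are new to this paper and not part of the \cite{Fa-Ka} set. Finally, the remark about "simultaneous shearing events" and the forward/backward asymmetry in the switchable property is a genuine issue, but addressing it requires the specific structure of the sets $E_k$ (constructed so that the $x,x'$ pair has two-sided control at scale $c_{p,q}/q_{n_k}$) and the forward-or-backward dichotomy of Lemma~\ref{lem:forback}; merely stating that it "requires careful tracking" does not resolve it.
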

\noindent The proof of Theorem~\ref{main:th2} {which is done in Section~\ref{s:Sec7} (and uses the results of Sections~\ref{s:Sec6})}   is also based on the disjointness criterion given by Theorem~\ref{disjoint.flows}.

Theorems~\ref{main:th2} and~\ref{main:prop} show an interesting phenomenon: {the Ratner} property in the non-homogeneous setting yields \emph{stronger} dynamical consequences than in the homogeneous one. The reason (which will be explained in more details in Section~\ref{s:Sec5}) is that {the Ratner property} for {a} horocycle flow (hence in the homogeneous setting) depends only on the distance between the initial points and not on their position {in the space}. In all other examples of {a Ratner} property (in particular, for {smooth} non-trivial time changes of horocycle flows)  divergence depends both on their distance and, what is more important, on their position in the space. This allows one to get stronger rigidity conclusions for the set of joinings.

\smallskip

Finally, our third main result, which is once again based on the disjointness criterion given by Theorem~\ref{disjoint.flows},
  shows that the two classes of parabolic flows {considered} so far, namely smooth time changes of horocycle flows and Arnol'd flows, are typically disjoint:
\begin{theorem}\label{main:th}There exists a full Lebesgue measure set of rotation numbers $\mathcal{D}\subset \T$ such that for every $\alpha\in \mathcal{D}$, {every horocycle flow $(h_t)$ (with $\Gamma$ cocompact)} and every  $\tau\in C^1(M)$, the Arnol'd flow $((R_\alpha)_{{t}}^f)$ and the time-change flow $(\widetilde{h}^\tau_t)$ are disjoint.
\end{theorem}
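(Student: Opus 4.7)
The plan is to derive Theorem~\ref{main:th} from the general disjointness criterion Theorem~\ref{disjoint.flows}. Since that criterion takes as input (a variant of) the switchable Ratner property for one of the two flows, we will feed in the SR-property of the Arnol'd flow $((R_\alpha)^f_t)$ obtained by Fayad and the first author in \cite{Fa-Ka}, which holds for a full Lebesgue measure set of rotation numbers $\alpha\in\T$. Intersecting that set with the Diophantine set appearing in Theorem~\ref{main:th2} produces the set $\mathcal{D}$ in the statement.

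Suppose towards a contradiction that, for such $\alpha$, there exists a non-trivial ergodic joining $\rho$ between $((R_\alpha)^f_t)$ and the time-change horocycle flow $(\widetilde{h}^\tau_t)$. Applying Theorem~\ref{disjoint.flows}, the controlled SR-shearing of the Arnol'd flow along sequences of times $t_n\to\infty$ has to be matched on the second coordinate by a rigidity-type behavior of $(\widetilde{h}^\tau_t)$: for $\rho$-a.e.\ pair $(x,y)$ and for $t_n$ along the SR-sequence given by the Arnol'd flow, the point $\widetilde{h}^\tau_{t_n}(y)$ has to be approximated (up to a small error) by a shift of $y$ along the $(h_t)$-direction by a definite amount, which is prescribed by the Arnol'd shearing acting on $x$. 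This forces $\rho$ into a very restrictive Markov-like form.

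The contradiction then comes from the structural incompatibility between the two shearing rates. The shearing produced by the Arnol'd flow depends on the position of the orbit with respect to the logarithmic singularity, and in particular takes a non-constant range of values governed by the asymmetric coefficients $A_-\neq A_+$. On the other hand, the shearing of $(\widetilde{h}^\tau_t)$ along the $(h_t)$-direction is, asymptotically, affine in $t$, the leading coefficient being $1/\int_M \tau\,d\mu$ up to corrections controlled by Flaminio--Forni deviations of ergodic averages. Selecting two distinct SR-shearing sequences $t_n^{(1)}$, $t_n^{(2)}$ for the Arnol'd flow whose shearing rates are in a ratio different from $1$ (obtained by considering orbits passing at different distances from the singularity, or by comparing returns from the two sides controlled by $A_-$ and $A_+$), and converting each via $\rho$ into a compatible $(h_t)$-shearing, produces two incompatible affine relations between the two timescales, which cannot simultaneously hold for any $\tau$.

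The hard part is the last step: extracting from the SR-structure of $((R_\alpha)^f_t)$ enough genuinely independent shearing sequences, and controlling the conditional distributions of $\rho$ well enough along each of them to translate the ``matching'' produced by Theorem~\ref{disjoint.flows} into a sharp quantitative identity. This is expected to follow the same scheme used for Theorem~\ref{main:th2} in Section~\ref{s:Sec7}, the crucial new ingredient being that the $(\widetilde{h}^\tau_t)$-side delivers an asymptotically \emph{deterministic} linear shearing rate (unlike the position-dependent rate on the Arnol'd side), so any nontrivial Markov identification between the two is doomed from the start.
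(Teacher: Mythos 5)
Your framework is right at the top level (apply Theorem~\ref{disjoint.flows}, feeding in the switchable-Ratner shearing structure of the Arnol'd flow on the $(S_t)$-side), but your mechanism for producing the contradiction is not the paper's and, more importantly, has a genuine gap that would prevent the argument from working for arbitrary $\tau\in C^1(M)$.

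The key choice in the paper's proof is the direction of the perturbation on the horocycle side. The paper takes $A_k x := v_{1/k}x$, a small shift along the \emph{opposite horocycle} direction. By Proposition~\ref{thm:hor} (with $s=0$, $\bar v=0$, $r=1/k$), this produces a relative reparametrization $\chi_{x,x'}(t)=t-k^{-1}t^2$ of the orbit of $\widetilde h^\tau_t$, i.e.\ a \emph{quadratic} shearing, and crucially this quadratic shape is a geometric property of the unipotent/geodesic frame, \emph{independent of the time change} $\tau$. You instead describe the horocycle shearing as ``asymptotically affine in $t$, the leading coefficient being $1/\int\tau$ up to Flaminio--Forni corrections.'' That would be the picture for a geodesic perturbation $g_s$ (as used in the proof of Theorem~\ref{main:prop}), but that picture is both insufficient and $\tau$-dependent. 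The whole reason Theorem~\ref{main:th} can be stated for \emph{every} $\tau\in C^1(M)$, with no hypothesis of non-trivial support outside the discrete series, is precisely that the $v$-direction perturbation gives universal quadratic shearing with no need for any deviation-of-ergodic-averages input. Your reliance on Flaminio--Forni deviations would only apply to a restricted class of $\tau$, contrary to the statement.

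The contradiction in the paper is then a \emph{degree mismatch}, not two incompatible affine relations. On the Arnol'd side the shearing between nearby points is $\tilde b(t)=(A_--A_+)\,\frac{t\log t}{\ell\log\ell}$, which on the relevant window $[R,S]$ is essentially linear (and made precise via conditions (PAL)/(SAL) of Definition~\ref{agood}). One writes $\tilde c(t):=-k^{-1}t^2+\tilde b(t)$ and shows (formula~\eqref{eq:stdrift}) that a quadratic and a near-linear function cannot stay small simultaneously on a macroscopic window: for some $T_0\asymp\tilde T$, $|\tilde c(T_0)|>\frac{1}{32\cdot 2018\bar A^2}$, which is then taken as the shift $p\in P$. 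Your plan of extracting ``two distinct SR-shearing sequences'' and getting ``two affine relations'' with different slopes (governed by $A_-$ and $A_+$) is closer to the Case~2 analysis used in the proof of Theorem~\ref{main:th2} (two Arnol'd flows with different singularity coefficients), not to the horocycle situation, and would need some independent handle on the horocycle side which arbitrary $\tau\in C^1(M)$ does not provide. Also, your description of the criterion as forcing $\rho$ into a ``Markov-like form'' is not what happens: the criterion works directly with Birkhoff averages of indicator functions of well-chosen sets and gets a quantitative contradiction with disjointness of $T_p$ from the identity, with no disintegration or Markov structure.
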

The proof of {Theorem~\ref{main:th}} is at the heart of Section~\ref{s:Sec9}.  Notice that
since the result holds for \emph{all} smooth time changes, it implies in particular  disjointness between (homogenous) horocycles flows  and area-preserving flows in the class of Arnol'd flows considered in Theorem~\ref{main:th2} (sets $\mathcal{D}$ in both theorems are the same).

\subsection*{Consequences for  M\"obius disjointness.}
We now briefly discuss a relation between Theorems~\ref{main:prop} and~\ref{main:th2}  with the \emph{M\"obius disjointness problem} \cite{Sa} which is lately  under intensive study, see e.g.\ survey \cite{Fe-Ku-Le}. If $(T_t)$ is a continuous flow on a compact metric space $X$ then it is called {\em M\"obius disjoint} if
\be\label{sar1}
\lim_{N\to\infty}\frac1N\sum_{n\leq N}F(T_{nt}x)\mob(n)=0\ee
for all $F\in C(X)$, $t\in\R$ and $x\in X$, {where} $\mob$ stands for the classical M\"obius function (whose definition is recalled in {Section}~\ref{s:Sarnak}).
Sarnak conjectured in \cite{Sa}  that all zero entropy homeomorphisms are M\"obius disjoint. As  recent development shows, Sarnak's conjecture is very likely equivalent to the famous Chowla conjecture on correlations of the M\"obius function, see \cite{Fe-Ku-Le} for details.

M\"obius disjointness for horocycle flows has been proved by Bourgain, Sarnak and Ziegler in \cite{Bo-Sa-Zi}. A few years ago M. Ratner asked about M\"obius disjointness of (smooth) time changes of horocycle flows (see Question~7 in \cite{Fe-Ku-Le}).
Since the flows appearing in Theorems~\ref{main:prop} and~\ref{main:th2}  are totally ergodic (in fact, they are known to be mixing,  see respectively \cite{SK} for Arnol'd flows and \cite{Ma} for time changes of horocycle flows),  in view of the so called K\'atai-Bourgain-Sarnak-Ziegler criterion for orthogonality in \cite{Bo-Sa-Zi} (recalled in
 {Section}~\ref{s:Sarnak}),
our results in particular imply  that M\"obius disjointness holds for all smooth time-changes in $B^+(M)$ of horocycle flows and for all uniquely ergodic models of  Arnol'd flows (see {Section}~\ref{s:Sarnak} for more details).  In particular, Theorem~\ref{main:prop} answers positively Ratner's question on M\"obius disjointness (Question~7 in \cite{Fe-Ku-Le}).

A question which has seen a recent surge of interest is \emph{uniform convergence} in M\"obius disjointness, namely whether the convergence  in~\eqref{sar1} is \emph{uniform} in $x \in {X}$. It is for example shown in \cite{Ab-Ku-Le-Ru} that Sarnak's conjecture is equivalent to its uniform convergence form.
While on the level of M\"obius disjointness we cannot see any difference between horocycle flows and their (non-trivial) time changes, this situation changes drastically when we ask about {uniformity}.
Indeed, Theorem~\ref{main:prop} not only answers positively Ratner's question on M\"obius disjointness, but it also yields {uniform convergence} (in $x\in M$) in~\eqref{sar1}. On the other hand, the question of whether  uniform convergence holds  for horocycle flows themselves remains largely open, see Question~5 in \cite{Fe-Ku-Le}. Thus, this is another instance, where in the non-homogeneous set up one can prove stronger properties than in the homogeneous world.

In the case of  Arnol'd flows and the corresponding locally Hamiltonian flows, however, while we  show M\"obius disjointness of any uniquely ergodic realization, we have been  unable until now to show uniformity of convergence in~\eqref{sar1}.
Uniform convergence for M\"obius {disjointness}, especially in its stronger form of so called convergence on (typical) short intervals, will be discussed in some detail in Section~\ref{s:Sarnak}.

\medskip
We conclude by saying that the (parabolic) disjointness criterion (Theorem~\ref{disjoint.flows}) seems to be a general tool in studying disjointness of systems with {a Ratner} property. A variant of this criterion is used in a recent work of G.\ Forni and the first author \cite{ForK}, where disjointness properties of time changes of Heisenberg nilflows are studied and also in  recent works of P.\ Berk and the first author \cite{Be-Ka} and C.\ Dong and the first author \cite{Do-Ka} for getting disjointness of some classes of {interval exchange transformations (IET's)} and translation flows.

\smallskip

\subsection*{Outline of the paper} The structure of the paper is as follows. In Section~\ref{s:Sec2}, we recall some basic definitions and properties, in particular about flows and special flows, joinings and irrational rotations on the circle.   We also recall, for {the} convenience of the reader, the  definition of the SR-property (a recent variation of the classical Ratner property), which is not directly used, but {implicitly} drives many of the exploited phenomena. In Section~\ref{sec:Sec3}, we state and prove the new disjointness criterion (Theorem~\ref{disjoint.flows}). Section~\ref{s:Sec4} is devoted to the proof {of} Theorem~\ref{main:prop} on disjointness for time changes of horocycle flows which gives the first illustration of the use of our criterion. In Section~\ref{s:Sec5}, we state a disjointness criterion in the language of special flows and we then show how it can be deduced from  {our} main criterion {on} disjointness. The proof of the  disjointness result for Arnol'd flows, i.e.\ Theorem~\ref{main:th2}, takes Sections~\ref{s:Sec6} and~\ref{s:Sec7}: in Sections~\ref{s:Sec6} we {first} prove some estimates on the growth of Birkhoff sums of derivatives of the roof function, then in Sections~\ref{s:Sec7} we use them in order to quantify shearing and derive the splitting and slow drift of nearby orbits, properties which are needed to apply the disjointness criterion.  In Section~\ref{s:Sec9}, we show that Arnol'd flows from the considered class are disjoint with smooth time changes of horocycle flows (Theorem~\ref{main:th}). Finally, in Section~\ref{s:Sarnak} we discuss consequences of our main results to the problem of M\"obius disjointness. Three appendices follow. In the first two, we give the proofs of two technical results which are used in the proof of Theorem~\ref{main:prop}: in the first one, Appendix~\ref{app:horocycleR}, we prove some quantitative shearing properties for time-changes of horocycle flows; in  Appendix~\ref{app:BF} we state  a result on ergodic averages of a special family of smooth functions which can be deduced from \cite{BF} (we thank G.\ Forni for the proof). Finally, {in  Appendix~{\ref{s:Sec10}},} we present a  missing link in the literature, namely, the equivalence  between two different {kinds} of convergence  on short intervals.

\section{Preliminaries: definitions, notation and some basic facts}\label{s:Sec2}
\subsection{Flows, joinings and disjointness}\label{sec:back1}
Assume that $\zdk$ is a probability standard Borel space. If $Z$ is additionally a metric space (with a metric $d$), then we will also write $(Z,\mathcal{D},\kappa,d)$ to emphasize the role of $d$. By ${\rm Aut}\zdk$, we denote the set of all automorphisms of $\zdk$, i.e.\ $R\in{\rm Aut}\zdk$ if $R:Z\to Z$ is a bi-measurable, measure-preserving $\kappa$-a.e.\ bijection. Each $R\in{\rm Aut}\zdk$ determines a unitary operator, also denoted by $R$, on $L^2\zdk$ given by $Rf:=f\circ R$ for $f\in L^2\zdk$. Endowed with the weak operator topology of unitary operators, ${\rm Aut}\zdk$ becomes a Polish group.

We will be mainly deal with {\em flows}, i.e.\ with measurable, measure-preserving $\R$-actions $(R_t)_{t\in\R}\subset{\rm Aut}\zdk$. Measurability means that the map $(z,t)\to R_tz$ is measurable (in fact, it is equivalent to saying that the unitary representation $t\mapsto R_t$ is continuous). If no confusion arises, we will  abbreviate notation $(R_t)_{t\in\R}$ to $R_t$. By the {\em centralizer} $C(R_t)$ of the flow $R_t$, we mean the set of all $W\in{\rm Aut}\zdk$ that commute with the flow: $WR_t=R_tW$ (for each $t\in\R$) (in fact, $C(R_t)$ is a closed subgroup of ${\rm Aut}\zdk)$.  It is well-known (see e.g.\ \cite{Co-Fo-Si}) that if a flow $R_t$ is ergodic (i.e.\ its only invariant sets are either of zero or full measure) then for Lebesgue a.e.\ $s\in\R$, the time automorphism $R_s$ is ergodic.  Given $0\neq r\in \R$, by $R_{rt}$ we denote the flow $t\mapsto R_{rt}$.

Assume that $T_t,S_t$ are flows on probability standard Borel spaces $\xbm$ and $\ycn$, respectively. A \emph{joining} between $T_t$ and $S_t$ is a $T_t\times S_t$-invariant (for each $t\in\R$) probability measure on $(X\times Y,\mathcal{B}\otimes\mathcal{C})$ with the projections $\mu$ and $\nu$, respectively.
By $J(T_t,S_t)$ we denote the set of {\rm joinings} between the flows $T_t$ and $S_t$.
Following Furstenberg \cite{Fu}, we say that $T_t$ and $S_t$ are {\em disjoint}, and we write $T_t\perp S_t$, if product measure $\mu\otimes\nu$ is the only member of $J(T_t,S_t)$. Note that if $T_t\perp S_t$ then at least one of the two flows must be ergodic. If both flows are ergodic, then by $J^e(T_t,S_t)$ we denote the subset of {\em ergodic joinings},  that is, of those $\rho\in J(T_t,S_t)$ which make the flow $(T_t\times S_t)_{t\in\R}\subset{\rm Aut}(X\times Y,\mathcal{B}\otimes\mathcal{C},\rho)$ ergodic (this set is always nonempty, as the ergodic decomposition of any joining yields a.e.\ ergodic component also a joining). Note also that if $W$ yields an isomorphism of $T_t$ and $S_t$ then the formula
$$
\rho(B\times C):=\mu(B\cap W^{-1}C),\;\;B\in\mathcal{B},C\in\mathcal{C},$$
determines a member of $J(T_t,S_t)$ which is ergodic if $T_t$ (hence $S_t$) is ergodic.  Note also that, for each $r\neq0$, we have $J(T_t,S_t)=J(T_{rt},S_{rt})$ with the equality $J^e(T_t,S_t)=J^e(T_{rt},S_{rt})$ whenever the flows $T_t$ and $S_t$ are ergodic.

Unless it is stated otherwise, flows are assumed to be {\em free} $\R$-actions, i.e.\ $t\mapsto T_tx$ is 1-1 for $\mu$-a.e.\ $x\in X$. If $T_t$ is ergodic and  $S_t=Id$ (for each $t\in\R$, so $S_t$ is not a free $\R$-action) then $T_t\perp S_t$. In fact, whenever an automorphism $R\in {\rm Aut}\zdk$ is ergodic then $R$ is disjoint from the identity (defined on an arbitrary probability, standard Borel space). We recall here that joinings for automorphisms are defined similarly as for flows -- we replace $\R$-actions by $\Z$-actions, in fact, joinings can be defined for actions of more general groups, see e.g.\ \cite{Gl} for more details.

\begin{remark}\label{r:disTA}\em
Although, in general, ergodic properties of a time automorphism and the whole flow are different (e.g.\ a time automorphism can have more factors, or can have a larger centralizer), there is no difference if we consider disjointness of time automorphisms. More precisely, given $r\neq0$, the flows $T_{rt}$ and $S_{rt}$ are disjoint if and only if the time automorphisms $T_r$ and $S_r$ are disjoint. Indeed, the $\Z$-subaction $(T_{rn}\times S_{rn})$ is cocompact, so if $\rho\in J(T_r,S_r)$ then $\frac1r\int_0^r \rho\circ(T_s\times S_s)\,ds\in J(T_{rt},S_{rt})$.\end{remark}

\subsection{Special flows}\label{s:spflow} Assume that $R\in{\rm Aut}\zdk$ is an ergodic automorphism and let $f:Z\to \R$ belongs to $L^1\zdk$, $f>0$ ($\kappa$-a.e). For $n>0$, set $f^{(n)}(z)=\sum_{j=0}^{n-1}f(R^jz)$. Complemented by $f^{(0)}(z)=0$ and $f^{(-n)}(z):=-f^{(n)}(T^{-n}z)$, we obtain the cocycle identity
\be\label{cocind}
f^{(m+n)}(z)=f^{(m)}(z)+f^{(n)}(R^mz)\ee
true for $\kappa$-a.e.\ $z\in Z$ {and all $m,n\in\Z$}. By a {\em special flow} $R^f=(R^f)_{t\in\R}$ with the {\em base} $R$ and {the} {\em roof function} $f$, we mean the {$\R$}-action given by
\be\label{defspfl}
R^f_t(z,r)=(R^nz, r+t-f^{(n)}(z)),\ee
where $n\in\Z$ is the only number satisfying $f^{(n)}(z)\leq r+t<f^{(n+1)}(z)$. Then $T^f_t$ is a flow defined on the probability standard Borel space $(Z^f,\mathcal{D}^f,\kappa^f)$, where the space $$Z^f=\{(z,r)\in Z\times\R:\:z\in Z,0\leq r<f(z)\},$$
consists of the area in the plane under the graph of the function $f$, $\mathcal{D}^f$ is the restriction of the product $\sigma$-algebra $\mathcal{D}\otimes\mathcal{B}(\R)$ to $Z^f$, and $\kappa^f$ is the restriction of product measure
$\kappa\otimes\lambda_{\R}$ to $\mathcal{D}^f$, normalized by $\int_{Z}f\,d\kappa$. It is not hard to see that in view of the ergodicity of $R$, the special flow $R^f_t$ is also ergodic.

\begin{remark}\label{spflor}{\em
One can observe that if $r\neq0$ then the two special flows
$$
\mbox{$(R^f_{r^{-1}t})_{t\in\R}$ and $(R^{rf}_t)_{t\in\R}$ are isomorphic}.$$
Indeed, the map $Z^{f}\ni(x,s)\mapsto (x,rs)\in Z^{rf}$ is measure-preserving and equivariant.
}\end{remark}
\subsection{Time changes of flows}\label{s:tch} Assume that $R_t$ is a flow on $\zdk$ and let $v\in L^1\zdk$ be a positive function. The function $v$ determines a cocycle over $R_t$ (which, by abusing the notation, we will denote again by $v$), given by the formula
\be\label{cocy1}
v(t,x):= \int_0^t v(R_sx) d s.
\ee
Recall that the cocycle property {means} that $v(t_1+t_2,x)=v(t_1,x)+v(t_2,R_{t_1}x)$ for $\kappa$-a.e.\ $x\in {Z}$ and all $t_1,t_2\in\R$.
One can then show  (see for example~\cite{Co-Fo-Si}) that for a.e.\ $x\in {Z}$ and all $t\in\R$, there exists a unique {$u=u(t,x)$}  such that
\be\label{zamcz}
\int_0^uv(R_sx)\, ds=t\;\;\ee
(Note that if $t<0$, then $u<0$.) Now, we can define $\widetilde{R}_t(x):=R_{u(t,x)}(x)$ and this is indeed an $\R$-action as
the function $u=u(t,x)$ satisfies the cocycle identity: $u(t_1+t_2,x)=u(t_1,x)+u(t_2,\widetilde{R}_{t_1}x)$. The new flow $\widetilde{R}_t$ has the same orbits as the original flow (hence it is ergodic if $R_t$ was), and it preserves the measure $\widetilde{\kappa}\ll\kappa$, where $\frac{d\widetilde{\kappa}}{d\kappa}=
{v/\int_Zv\,d\kappa}$. We will also use notation $\widetilde{R}^v_t$ for the time change if it is not clear which function $v$ is used.

We say that $v$ is a \emph{quasi-coboundary} if  $v(t,x)=\xi(x)-\xi(R_tx)+t$
for a measurable $\xi:Z\to\R$. If $v$ is a quasi-coboundary, then the flows $R_t$ and $\widetilde{R}_t$ are isomorphic. More generally, if $w:{Z}\to\R^{+}$ is another time change of $R_t$ and if
$$
v(t,x)-w(t,x)=\xi(x)-\xi(R_tx)$$
(for $\kappa$-a.e.\ $x\in {Z}$ and all $t\in\R$) then the two time changes $\widetilde{R}^v_t$ and $\widetilde{R}^w_t$ are isomorphic, where the isomorphism is given by the map $x\mapsto \widetilde{R}^w_{\xi(x)}x$ (in the special case of the trivial time change of $R_t$ given by $w=1$, for which the associated cocycle is $w(x,t)\equiv t$, this reduces to the previous definition of quasi-coboundary).
Note that on $Z$ we consider, in general, two different absolutely continuous measures.

\begin{remark}\em One can show that any special flow $R_t^f$, where $R\in{\rm Aut}\zdk$, can be obtained from {a time change of the \emph{suspension} of $R$, i.e.\ from a time change of} the special flow over $R$ built under the constant function~1.\end{remark}

\begin{remark}\em
 If $R_t$ is a smooth flow on a manifold $Z$ and $v$ is also smooth then the time change flow $\widetilde{R}^v_t$ is a smooth flow. In fact, when $A:Z\to TZ$ is a vector field and
$$
\frac d{dt}R_tx=A(R_tx)$$
and if $v:Z\to\R^{+}$ is smooth, then consider the vector field $\frac1v A$. There is a unique (smooth) flow $\widetilde{R}_t$ given by
$$
\frac d{dt}\widetilde{R}_tx=\frac1{v(\widetilde{R}_tx)}A(\widetilde{R}_tx)$$
and one can show that the flow $\widetilde{R}_t$ thus defined (which is clearly smooth by construction) coincides with the flow $\widetilde{R}^v_t$ defined at the beginning of the section.
\end{remark}


Let us conclude this section with two very simple lemmas on time changes which will be helpful later.

\begin{lemma}\label{r:ue}
Assume that  $Z$ is a compact metric space and  $R_t: Z \to Z$ is a uniquely ergodic flow. {Then, for} any   {$v\in C(Z)$} a positive function with $\int_Zv\,d\kappa=1$, {we have}
$$
\lim_{t\to\infty}\frac t{u(t,z)}=1$$
uniformly in $z\in Z$.
\end{lemma}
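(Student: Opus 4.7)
The strategy is to rewrite $t/u(t,z)$ as a Birkhoff-type time average and then invoke the uniform convergence of ergodic averages guaranteed by unique ergodicity of $(R_t)$ acting on the compact metric space $Z$.

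By the defining equation~\eqref{zamcz}, we have $t = \int_0^{u(t,z)} v(R_s z)\,ds$, so that
$$\frac{t}{u(t,z)} = \frac{1}{u(t,z)}\int_0^{u(t,z)} v(R_s z)\,ds.$$
Thus, once we know that $u(t,z)\to\infty$ uniformly in $z$ as $t\to\infty$, the result will follow from the fact that for a uniquely ergodic continuous flow on a compact metric space, the ergodic averages $\frac{1}{T}\int_0^T v(R_s z)\,ds$ converge to $\int_Z v\,d\kappa = 1$ uniformly in $z\in Z$ as $T\to\infty$ (this is the classical Krylov--Bogoliubov / Oxtoby characterization of unique ergodicity for continuous observables).

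The uniform divergence of $u(t,z)$ is the only thing to verify. Since $v\in C(Z)$ is positive and $Z$ is compact, there exist constants $0<m\leq v(x)\leq M<\infty$ for every $x\in Z$. Plugging these bounds in the defining integral yields
$$m\, u(t,z) \;\leq\; \int_0^{u(t,z)} v(R_s z)\,ds \;=\; t \;\leq\; M\, u(t,z),$$
so $t/M\leq u(t,z)\leq t/m$, and in particular $u(t,z)\to\infty$ uniformly as $t\to\infty$.

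Putting the two ingredients together, for any $\varepsilon>0$ we can choose $T_0$ so large that $\left|\tfrac{1}{T}\int_0^T v(R_s z)\,ds - 1\right|<\varepsilon$ for all $T\geq T_0$ and all $z\in Z$, and then choose $t_0:=MT_0$ so that $u(t,z)\geq t/M\geq T_0$ whenever $t\geq t_0$. This gives $|t/u(t,z) - 1|<\varepsilon$ uniformly in $z$ for $t\geq t_0$, which is the claim. There is no real obstacle here; the only place where some care is needed is in recording that the continuity of $v$ and compactness of $Z$ are what allow us to turn the \emph{a.e.} definition of $u(t,z)$ into an everywhere defined, uniformly controlled quantity.
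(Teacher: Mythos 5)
Your proof is correct and takes essentially the same approach as the paper: rewrite $t/u(t,z)$ as the ergodic average $\frac{1}{u(t,z)}\int_0^{u(t,z)} v(R_s z)\,ds$ and invoke the uniform convergence of ergodic averages for uniquely ergodic flows. The paper states this in one line; you have usefully filled in the step that $u(t,z)\to\infty$ uniformly via the two-sided bound $t/M\leq u(t,z)\leq t/m$ coming from positivity and continuity of $v$ on compact $Z$.
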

\begin{proof}
Remark that  for each (and, in fact, uniformly {in}) $z\in Z$, we have
$$
\frac{t}{u(t,z)}=\frac{1}{u(t,z)}\int_0^{u(t,z)}v(R_sz)\,ds \xrightarrow[t\to \infty]{} \int_Zv\,d\kappa=1.
$$
The conclusion  follows immediately.
\end{proof}

The following shows that the study of  rescalings of  a flow  can be reduced to the study of time changes.
\begin{lemma}\label{ptc}
Consider {a} flow $R_t: Z \to Z$ on  $\zdk$, $v:Z\to\R^{+}$ {integrable} and let {$\widetilde{R}^{v}_t$} be the corresponding time change.  Fix $0< p\in\R$ and consider {$\widetilde{R}^{pv}_t$}. {Then, for} $\kappa$-a.e.\ $x\in Z$ and all $t\in\R$, {we have}
$$
\widetilde{R}^{pv}_t(x)=\widetilde{R}^v_{(1/p)t}(x).
$$
\end{lemma}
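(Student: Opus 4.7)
The plan is to unfold the defining implicit equation \eqref{zamcz} for both time changes and observe that rescaling the cocycle by $p$ is equivalent to rescaling the time parameter by $1/p$.

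More concretely, first I would write $\widetilde{R}^{v}_s(x)=R_{u(s,x)}(x)$, where by definition $u=u(s,x)$ is the unique solution of
$$\int_0^{u} v(R_\sigma x)\,d\sigma = s.$$
Likewise, $\widetilde{R}^{pv}_t(x)=R_{u'(t,x)}(x)$, where $u'=u'(t,x)$ is the unique solution of
$$\int_0^{u'}(pv)(R_\sigma x)\,d\sigma = t, \qquad\text{i.e.}\qquad \int_0^{u'} v(R_\sigma x)\,d\sigma = \frac{t}{p}.$$
Comparing the two displayed equations (using that both cocycles are strictly increasing functions of their upper limit, since $v>0$, so the solutions are unique), I would conclude that $u'(t,x)=u(t/p,x)$ for $\kappa$-a.e.\ $x$ and all $t\in\R$.

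Substituting this back gives
$$\widetilde{R}^{pv}_t(x)=R_{u'(t,x)}(x)=R_{u(t/p,x)}(x)=\widetilde{R}^{v}_{(1/p)t}(x),$$
which is the claim. There is no real obstacle here: the only thing to watch is that one works outside the full-measure set where \eqref{zamcz} fails and where the flow $R_t$ is not defined, but this set can be taken the same for $v$ and $pv$ since they share the same orbits and the integrability hypothesis is identical up to the positive constant $p$.
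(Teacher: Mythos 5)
Your proof is correct and follows essentially the same route as the paper: both unfold the defining implicit equation~\eqref{zamcz} for the two time changes and observe that multiplying the cocycle $v$ by $p$ rescales the time parameter by $1/p$, so $u'(t,x)=u(t/p,x)$. The paper phrases it slightly differently (multiplying the defining equation for $u(x,t)$ by $p$ to read off $\widetilde{R}^{pv}_{pt}(x)=\widetilde{R}^{v}_t(x)$), but the argument is the same.
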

\begin{proof}
For $\kappa$-a.e.\ $x\in Z$ and all $t\in\R$ we have $\int_0^{u(x,t)}(pv)(R_sx)\,ds=pt$. This can be interpreted by saying that
$\widetilde{R}^{pv}_{pt}(x)=R_{u(x,t)}(x)$. Hence, since $R_{u(x,t)}(x)=\widetilde{R}_t(x)$, it gives the conclusion.
\end{proof}

\subsection{Irrational rotations, Denjoy-Koksma inequality}\label{sec:DK}
For each $x\in\R$, we set $$\|x\|:=\min(\{x\},1-\{x\}),\qquad  \text{where}\  \{x\}=x-[x]$$ stands for the fractional part of $x$. By $\T=[0,1)$ we denote the additive circle. Then, clearly, $\|\cdot\|$ determines a translation invariant metric on $\T$. By $\lambda_{\T}$ (or $\lambda$) we will denote Lebesgue measure on $\T$.

Let $$
[0,1)\ni\alpha=[0;a_1,a_2,\ldots]=\frac{1}{a_1+\frac{1}{a_2+
\ldots}}$$ be irrational.  Then the positive integers $a_i$ are called {\em partial quotients} of $\alpha$. By setting
$$
p_0=0,p_1=1,p_{n+1}=a_{n+1}p_n+p_{n-1},\;
q_0=1,q_1=a_1,q_{n+1}=a_{n+1}q_{n+1}+q_{n-1}$$
for $n\geq2$, we obtain the sequences $(p_n),(q_n)$ of {\em numerators} and {\em denominators} of $\alpha$, respectively. Then (see e.g.\ \cite{Kh})
$$
\frac{1}{2q_nq_{n+1}}<\left|\alpha-\frac{p_n}{q_n}\right|<
\frac1{q_nq_{n+1}};$$
equivalently,
\begin{equation}\label{CFbasicestimate}
\frac1{2q_{n+1}}<\|q_n\alpha\|<\frac1{q_{n+1}}.
\end{equation}

From this inequality, we can deduce the following elementary \emph{spacing} properties of orbits which we will use several times later.
\begin{lemma}[Spacing of orbits up to return times]\label{spacing_orbit}
For any $x \in \mathbb{T}$ and $n \in \mathbb{N}$, the orbit $\{ x+i\alpha, \ 0\leq i < q_n\}$ is such that
\be\label{ee2}
\|x+i\alpha-(x+j\alpha)\|\geq \frac{1}{2q_n}, \qquad \text{for\ all} \ 0\leq i\neq j < q_n.\ee
On the other hand, in each interval of length $2/q_n$ there must be a point of the form $x+j\alpha$ for some $0\leq j<q_n$.
\end{lemma}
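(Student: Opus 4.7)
The plan is to derive both parts of the lemma from the approximation estimate~\eqref{CFbasicestimate} together with classical properties of continued-fraction convergents. The first assertion will follow almost immediately from the best-approximation property of the denominators $q_n$, while the second will require a finer description of how the orbit actually fills the circle.

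For the first part, I would first observe that, since $\|\cdot\|$ is translation-invariant on $\T$, one has
\[
\|x+i\alpha-(x+j\alpha)\|=\|(i-j)\alpha\|=\|k\alpha\|,\qquad k:=|i-j|\in\{1,\dots,q_n-1\}.
\]
I would then invoke the classical best-approximation property of the convergents: no integer $k$ strictly between $1$ and $q_n$ approximates $\alpha$ better than $q_{n-1}$, i.e.\ $\|k\alpha\|\geq \|q_{n-1}\alpha\|$ for every such $k$. Applying~\eqref{CFbasicestimate} at the index $n-1$ gives $\|q_{n-1}\alpha\|>\frac{1}{2q_n}$, and combining the two inequalities yields precisely~\eqref{ee2}.

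For the second part, I would aim to show the stronger quantitative statement that every gap between cyclically consecutive points of the orbit $O_n:=\{x+i\alpha:0\leq i<q_n\}$ has length strictly less than $2/q_n$; this immediately implies that any interval of length $2/q_n$ must meet $O_n$. The key is the identification of the gap lengths: for $N=q_n$ the orbit partitions $\T$ into arcs of only \emph{two} possible lengths, namely $\|q_{n-1}\alpha\|$ and $\|q_{n-1}\alpha\|+\|q_n\alpha\|$. Granting this, the bound~\eqref{CFbasicestimate} applied at indices $n-1$ and $n$ controls the largest gap by
\[
\|q_{n-1}\alpha\|+\|q_n\alpha\|<\tfrac{1}{q_n}+\tfrac{1}{q_{n+1}}<\tfrac{2}{q_n},
\]
finishing the argument. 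The main obstacle is therefore to establish the two-length description of the gaps. One way is to appeal to the classical three-distance (Steinhaus) theorem specialized to $N=q_n$; a more self-contained route is to write $\alpha=p_n/q_n+\eta$ with $|\eta|<1/(q_nq_{n+1})$ and view $O_n$ as a perturbation of the equispaced grid $\{k/q_n:0\leq k<q_n\}$, using that each orbit point lies within $1/q_{n+1}$ of a unique grid point (since $\gcd(p_n,q_n)=1$) and then tracking carefully how consecutive orbit points in cyclic order differ by one of the two elementary ``close returns'' generated by $q_{n-1}\alpha$ and $q_n\alpha$. Either approach requires going beyond the naive spacing bound of part~1 and using the inductive structure of the denominators, and this is the only nontrivial point in the proof.
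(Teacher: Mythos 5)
Your argument for the first part coincides with the paper's: reduce to $\|k\alpha\|$ with $0<|k|<q_n$ via translation invariance, apply the best-approximation property $\|k\alpha\|\geq\|q_{n-1}\alpha\|$, and conclude from $\|q_{n-1}\alpha\|>1/(2q_n)$, which is~\eqref{CFbasicestimate} at index $n-1$.

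For the second part your primary route is genuinely different. You invoke the three-distance (Steinhaus) theorem at the special index $N=q_n$, where the gaps take exactly the two values $\|q_{n-1}\alpha\|$ and $\|q_{n-1}\alpha\|+\|q_n\alpha\|$, and then bound the larger one by $1/q_n+1/q_{n+1}<2/q_n$; that is correct and conceptually clean, at the cost of importing a classical structure theorem. The paper avoids it by a direct grid-comparison: assuming $\alpha>p_n/q_n$ (the other case being symmetric), every $x+k\alpha$ sits within $1/q_{n+1}$ \emph{to the right} of $x+kp_n/q_n$, and since $\gcd(p_n,q_n)=1$ the set $\{kp_n/q_n\bmod 1:\, 0\leq k<q_n\}$ is precisely the full $1/q_n$-grid; the one-sided drift then bounds every gap between cyclically consecutive orbit points by $1/q_n+1/q_{n+1}<2/q_n$, with no case analysis on gap types. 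Note this is actually simpler than your described ``self-contained route'': once the sign of $\alpha-p_n/q_n$ is fixed the perturbation is monotone, so there is no need to track close returns or rederive the two-gap structure — the uniform drift bound suffices. Both arguments are sound; yours trades a citation for symmetry in sign, while the paper's stays elementary but needs the two-case split.
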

\begin{proof}
For the first part, note that for $0\leq i,j<q_n$, $\|x+i\alpha-(x+j\alpha)\|=\|(i-j)\alpha\|$ with $|i-j|<q_n$. Since $\frac1{2q_n}<\|q_{n-1}\alpha\|$ and $\|r\alpha\|\geq\|q_{n-1}\alpha\|$ whenever $|r|<q_n$, we have \eqref{ee2}.  For the second,  assuming that $\alpha>\frac{p_n}{q_n}$, we have $k\alpha-\frac{kp_n}{q_n}<\frac1{q_{n+1}}$ for $0\leq k<q_n$, so that in particular there {exist} $0\leq i \neq j < q_n$ such that
$$
\|x+i\alpha-(x+j\alpha)\|< \frac{1}{q_n} + \frac{1}{q_{n+1}} < \frac{2}{q_n}.
$$
Treating the case $\alpha<\frac{p_n}{q_n}$ similarly, the second part follows.
\end{proof}

\smallskip

If $f:\T\to\R$ has bounded variation then, for each $n\geq0$ and $x\in\T$, we have the following \emph{Denjoy-Koksma inequality}:
\be\label{eq:doko}
\left|f^{(q_n)}(x)-q_n\int_{\T}f\,d\lambda\right|\leq 2{\rm Var}(f), \ee see e.g.\ \cite{Ku-Ni}.

\smallskip
We will also make use of so called \emph{Ostrowski expansion} of a natural number $N$. Namely, we can represent $N$ as
$$
N=\sum_{j=1}^Kb_jq_j,$$
where $0\leq b_j\leq a_j$ (and $b_K\neq 0$) and if, for some $j$, we have $b_j=a_j$ then necessarily $b_{j+1}=0$. Such a decomposition is unique, see e.g.\ \cite{Ku-Ni}.


\subsection{Ratner properties}\label{Rproperties}
Our main disjointness criterion (Theorem~\ref{disjoint.flows}) is based on quantitative divergence properties and is {inspired} by the Ratner property (originally defined in \cite{Rat}) and in particular its recent variation known as {the} \emph{switchable Ratner property} (SR-property), first introduced in \cite{Fa-Ka}.  While the SR-property is not {explicitly} used in the paper,  it is implicitly present, both in formulation of the disjointness criterion and the phenomena driving it. We hence  include in this section its formal definition followed {by} some explanations, as we think that the reader who is not familiar with it, might benefit by reading it before the following section.

\smallskip
Let $(X,d)$ be a $\sigma$-compact metric space, $\mathcal{B}$ the $\sigma$-algebra of Borel subsets of $X$ {and} $\mu$ a Borel probability measure on {$X$}.  Let $(T_t)_{t\in\mathbb{R}}$ be an ergodic flow acting on $(X,\mathcal{B},\mu)$.
The Ratner property encodes a quantitative property of controlled divergence of nearby trajectories in the flow direction. Heuristically, we want that for \emph{most} pairs of nearby points $x,x'$, the orbits of $x,x'$ \emph{split} in the flow direction (say at time $t=M$) by a definite amount (the \emph{shift} $p$, which belongs to a fixed compact set $P$) and then \emph{realign}, so that now $T_{t}(x)$ and the time-shifted orbit $T_{t+p}(x')$ are close, and stay close for a fixed proportion $\kappa M$ of the time it took to see the shift, namely for \emph{most} times $t \in [M,M+L]$ where $L>\kappa M$ (this is made precise in Definition~\ref{def:SR}). One can see that this type of phenomenon is possible only for parabolic systems, in which orbits of nearby points diverge with polynomial or subpolynomial speed.

In the \emph{switchable} variation, this phenomenon might not happen for positive times $t$ (i.e. in the \emph{future}), but only in the \emph{past}, i.e.\ for $t<0$ (and one can \emph{switch} between exploiting the past or the future according to the pair of points $x,x'$). This is encoded by the following formal definition.  (We comment below on the relation with the definitions in the literature, see Remark~\ref{rk:Ratner_hist}).

\begin{definition}[SR-property]\label{def:SR}\em
 We say that the flow $(T_t)$ has the {\em  SR-property} (with \emph{shifts set} $P$)  if there exists a compact set $P\subset \mathbb{R}\setminus\{0\}$ (the \emph{shifts set}) such that:

\noindent  for every $\vep>0$ and $N\in\N$ there exist $\kappa=\kappa(\vep)$, $\delta=\delta(\vep,N)$ and a set $Z=Z(\vep,N)$ with $\mu(Z)>1-\vep$, such that:

\smallskip
\noindent for every $x,y\in Z$ with $d(x,y)<\delta$ and $x$ not in the orbit of $y$, there exist $p=p(x,y)\in P$ and $M=M(x,y),$ $L=L(x,y)$ such that
$M \geq N$ and ${L} \geq \kappa M$, and
at least one of the following holds:
\begin{enumerate}[(i)]
\item
$ d(T_{t}(x),T_{t+p}(y))<\vep \quad \text{for} \quad t\in U \subset [M, M+L]$, or
\item
$d(T_{t}(x),T_{t+p}(y))<\vep \quad \text{for} \quad -t\in U \subset [M, M+L]$,
\end{enumerate}
where $U\subset [M, M+L] $ is such that its Lebesgue measure $|U|$ satisfies $|U|>(1-\vep)L$.
\end{definition}

Referring to the heuristic explanation before the definition, \emph{most} initial points is formalized by the set $Z$ of arbitrarily large measure. The splitting and realignement with shift of nearby orbits should happen for arbitrarily large times (i.e. for every $N\geq 0$ there must be an $M\geq N$).  Finally,  $(i)$ describes controlled splitting in the \emph{future}, while  $(ii)$ describes controlled splitting in the \emph{past}.


\begin{remark}\label{rk:Ratner_hist}\em
In the classical Ratner property,  $P=\{1,-1\}$ and for all $x,y\in Z$ (i) has to be satisfied. The first extensions of the Ratner property were introduced by K. Fr\k{a}czek and M. Lema{\'{n}}czyk in \cite{Fr-Le} and \cite{FL2} (as mentioned in the introduction) and amounted to allow $P$ to be any finite set (the \emph{finite Ratner property})  or any compact set $P\subset \mathbb{R}\setminus\{0\}$ (the \emph{weak Ratner property})\footnote{The definition given in \cite{Fr-Le} and \cite{FL2} is slightly different, in that the property is stated in terms of the time $t$ maps of the flow, and the flow is said to have the corresponding Ratner properties if the set of times $t$ such that $T_t$ has the corresponding property is uncountable. This is clearly implied by the definition given here.}.
  The \emph{switchable} variation (where the possibility $(ii)$ of controlled splitting in the past is also allowed) was introduced by B.~Fayad and the first author in \cite{Fa-Ka}.

  All the variants of the Ratner properties are defined so that, if $(T_t)$  has the SR-property, then it has the \emph{finite extension of joinings property} (shortened as FEJ property, see \cite{Fa-Ka}), which is a rigidity property that restricts the type of self-joinings that $(T_t)$  can have (see \cite{Ryz-Tho,Fr-Le}).  
  \end{remark}





\section{A criterion for disjointness}\label{sec:Sec3}
The main result of this section is the new disjointness {criterion  which} the main results of the rest of the paper are based {on}. Since this criterion and the explanations in this section are inspired by the SR-property (even if the property is not used directly), the reader who is not familiar with it might benefit from reading {Section}~\ref{Rproperties} first.

The statement of the criterion is given in {Section}~\ref{s:Sec3}.
Since it   is rather technical, let us first explain the guiding ideas behind it. 
The criterion was devised and formulated so that it can be applied  to prove disjointness of two flows which both have the SR-property (see Definition~\ref{def:SR}), so that in both flows one can observe a controlled form of divergence of nearby trajectories (for example polynomial divergence), but  the  speed of divergence for the two flows is different
(for example for one flow is it  linear, in the other quadratic).

The key \emph{shearing phenomenon} exploited in the criterion is that, for pairs of two nearby points in the first system  and two nearby points in  the second, after some time (depending on both pairs of points) we will see a \emph{relative divergence}, i.~e.~in one pair we will see a realignement with some shift $C_1>0$ in the flow direction, while in the other we will see a realignement with a shift $C_2>0$ with $C_1\neq C_2$. To see that this is the case when both flows have the Ratner property (\emph{not} switchable for now) but with different speeds, one can reason as follows. Using the Ratner or SR-property, one can find times $M_1>0$ and $M_2>0$ such that the first pair  splits exactly by $1$ at time $M_1$ and the second splits by exactly $1$ at time $M_2$. If  the second pair has not yet not split at time $M_1$ (or symmetrically, first pair has not yet split at time $M_2$), then we are done with $C_1=1$ for one flow and $C_2=0$ for the other. The interesting case is when $M_1$ and $M_2$ are very close (assume for simplicity that $M_1=M_2$). In this case one can use the fact that the speed of divergence is different to conclude  that after some time, say $10M_1$, {the} two pairs will have split by a different amount (in the linear and quadratic case, it is a simple consequence of the fact that a line and parabola can have at most two points of intersection -at time $M_1$- and after $M_1$ they start slowly diverging). This explains how the relative shearing appears in this situation.

Let us remark that if both flows have only the \emph{switchable} Ratner property, one might have pairs of points for which one see the Ratner form of shearing only in the past for one flow, and only in the future for the other. Therefore, there is no hope to implement the heuristic {described} above on the \emph{whole} (or large parts) of the space.
An essential feature of the criterion is that for one of the two flows one can consider only pairs of points  in \emph{small} parts of space (whose  measure is  bounded {from} below, but not necessarily close to one), on which one sees controlled shearing \emph{both} forward \emph{and} backward. Thus, one can couple these pairs with pairs in the other flow which have the Ratner property in the future \emph{or} the past to get the \emph{relative divergence} explained above. We will come back to this point after the statement of the criterion in  {Section}~\ref{s:Sec3}.

In order to give the precise formulation of the criterion, we now first need to give the definition of \emph{almost linear} reparametrization. 

\subsection{Almost linear reparametrizations}\label{AL}

 In the disjointness criterion stated in the next {Section}~\ref{s:Sec3}, we will consider a class of time reparametrizations given by functions $t \mapsto a(t)$ which are \emph{almost linear} (in the sense of Definition \ref{agood}). Heuristically, we want the function $a(\cdot)$ to be "close" to a linear function, either being close to a piecewise linear function, which on each interval of continuity has the form $a(t)=t+R_i$ (we call this case (PAL) for \emph{Piecewise} Almost Linear), or having derivative $a'(t)$  close to $1$ (a case dabbed (SAL), for \emph{Smoothly} Almost Linear).


For a (measurable) set $I\subset \R $, $|I|$ denotes its Lebesgue measure.

\begin{definition}[almost linear functions and good triples]\label{agood}\em Assume that $I=[u_1,u_2]$ is an interval, $a:I\to \R$, $U\subset I$ and $0<d,\xi<1$. We say that $a$ is \emph{almost linear} and that the triple $(a,U,d)$ is $\xi$-{\em good} if at least one of the following holds:
\begin{enumerate}
\item[(PAL)]\ \ ({\it Piecewise Almost Linear function}) we can write $U$ as $U=\bigcup_{i=1}^v(c_i,d_i)$, where $v\leq |I|/d$ and $|U|>(1-\xi)|I|$, and for each  $i=1,\ldots,v$, for  $x\in(c_i,d_i)$
 we have that $a(t)=t+R_i$ for  some $R_i\in \R$ such that
\begin{equation*}
R_1\leq \xi |I|, \qquad  |R_{i+1}-R_i|<\xi \text{ for every } i=1,\ldots,v-1.
\end{equation*}
In this case we say that $a$ is a \emph{piecewise almost linear} function.
\item[(SAL)]\;\;({\it Smooth {Almost Linear} function})  the function $a\in C^1(I)$, $U=I=[u_1,u_2]$ and  we have that $|a(u_i)-u_i|\leq \xi|I|$ for $i=1,2$ and $$1-\xi<a'(t)\leq 1+\xi, \qquad \text{for\ every}\ t\in I.$$
In this case we say that $a$ is a \emph{smooth almost linear} function.
\end{enumerate}
\end{definition}

\begin{remark}\em The notion of (PAL)  for {$a(\cdot)$} is introduced in order to deal with special flows: in this case, we need to take care of the fact that special flows are discontinuous close to the roof. This phenomenon produces the decomposition $(c_i,d_i)$, $i=1,\ldots, v$ in (PAL).
\end{remark}

This definition of almost linear functions and good triples is given so that the following result (which is essentially based on a  change of variables argument) holds. The lemma guarantees that if $a(\cdot)$ is (PAL) or (SAL) then the orbital integrals $\int_{M}^{M+L}f(T_tx)dt$ and $\int_{M}^{M+L}f(T_{a(t)}x)dt$ are close.
\begin{lemma}[Closeness of almost linear good reparametrizations]\label{rem:good}\em Let $0<d,\xi<1$, $a:[M,M+L]\to \R$ and let $U\subset [M,M+L]$ be such that $a$ is almost linear and $(a,U,d)$ is $\xi$-good. Assume that $f:\R\to (0,1]$ is measurable. Then
$$
\frac{1}{L}\int_M^{M+L}f(a(t))\,dt<\frac{1}{L}\int_{M}^{M+L}f(s)\,ds+
9\xi/d.
$$
\end{lemma}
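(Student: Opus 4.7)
The plan is to treat the cases (SAL) and (PAL) of Definition~\ref{agood} separately, the former being an elementary change of variables and the latter requiring some careful bookkeeping of the piecewise translation structure.

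For (SAL), I would perform a direct change of variable $s = a(t)$. Since $a\in C^1(I)$ with $a'(t)\in(1-\xi,1+\xi]$, the map $a$ is an increasing diffeomorphism, and writing
\[
\int_M^{M+L}f(a(t))\,dt \;=\; \int_{a(M)}^{a(M+L)}\frac{f(s)}{a'(a^{-1}(s))}\,ds \;\leq\; \frac{1}{1-\xi}\int_{a(M)}^{a(M+L)}f(s)\,ds,
\]
one uses $(1-\xi)^{-1}\leq 1+2\xi$ for $\xi\leq 1/2$ (the range $\xi>1/2$ is vacuous since $9\xi/d\geq 9/2>1$ already exceeds the trivial bound). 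Together with the endpoint estimate $|a(u_i)-u_i|\leq \xi L$ and $f\leq 1$ to absorb the boundary corrections of total length $\leq 2\xi L$, this yields an additive error of size $O(\xi)$, which is dominated by $9\xi/d$ since $d\leq 1$.

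For (PAL), I would first split
\[
\int_M^{M+L}f(a(t))\,dt \;\leq\; \int_U f(a(t))\,dt + \xi L,
\]
using $f\leq 1$ and $|I\setminus U|\leq \xi L$. On each piece $(c_i,d_i)$ the map $a$ is the translation $t\mapsto t+R_i$, so substituting $s = t+R_i$ gives
\[
\int_U f(a(t))\,dt \;=\; \sum_{i=1}^v\int_{c_i+R_i}^{d_i+R_i}\!f(s)\,ds \;=\; \int_{\R} f(s)\,g(s)\,ds,
\]
where $J_i := (c_i+R_i,d_i+R_i)$, $J:=\bigcup_i J_i$, and $g(s):=\#\{i:\,s\in J_i\}$ is the multiplicity function. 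Using $\mathbf{1}_J=\min(g,1)$ and $f\leq 1$, one obtains
\[
\int f g\,ds \;\leq\; \int_J f\,ds + \int (g-1)_+\,ds \;\leq\; \int_M^{M+L}\!f\,ds + |J\setminus[M,M+L]| + (|U|-|J|).
\]
The three terms on the right represent, respectively, the ``reference'' integral, a \emph{boundary overflow} from shifted pieces leaving $[M,M+L]$, and an \emph{overlap} among the shifted intervals.

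The boundary term is controlled by the uniform shift bound $|R_i|\leq |R_1|+(i-1)\xi\leq \xi L+(v-1)\xi \leq 2\xi L/d$, which follows from $|R_1|\leq \xi L$, $|R_{i+1}-R_i|<\xi$, $v\leq L/d$ and $d\leq 1$. This forces $J\subset[M-2\xi L/d,\,M+L+2\xi L/d]$ and hence $|J\setminus[M,M+L]|\leq 4\xi L/d$. The main technical obstacle is bounding the overlap $|U|-|J|=\int(g-1)_+\,ds$ by a constant times $L\xi/d$. The key structural inputs here are the disjointness and ordering $c_1<\cdots<c_v$ of the original intervals in $[M,M+L]$, which combined with $|R_{i+1}-R_i|<\xi$ forces any two \emph{consecutive} shifted intervals $J_i, J_{i+1}$ to overlap by at most $\xi$; together with $v\leq L/d$. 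Writing $|U|-|J|=\sum_{i\geq 2}|J_i\cap(J_1\cup\cdots\cup J_{i-1})|$ and estimating each summand using the disjointness of the $(c_k,d_k)$ and the slow variation of the $R_k$ (so that any point of the union lying to the right of $c_i+R_i$ must come from an earlier $J_k$ with $R_k>R_i$, and such $R_k-R_i$ is controlled), one obtains $|U|-|J|\leq C\cdot L\xi/d$ for a moderate constant $C$. Adding the contributions $\xi L$, $4\xi L/d$ and $C\cdot L\xi/d$, dividing by $L$, and using $\xi\leq \xi/d$ (since $d\leq 1$) gives the final estimate $\leq 9\xi/d$, with the constant $9$ comfortably absorbing the various numerical factors.
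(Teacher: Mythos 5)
Your (SAL) argument is fine (a harmless reorganization of the paper's change-of-variables estimate), and in the (PAL) case your multiplicity-function decomposition
\[
\int_U f(a(t))\,dt=\int_{\R} f\,g\,ds\le \int_J f\,ds+\int (g-1)_+\,ds
\]
together with the identification $\int(g-1)_+\,ds=|U|-|J|=\sum_{i\ge2}|J_i\cap(J_1\cup\dots\cup J_{i-1})|$ is a legitimate alternative to the paper's route. The boundary term $|J\setminus[M,M+L]|\le 4\xi L/d$ is also correctly controlled.

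The gap is in the overlap bound $|U|-|J|\le C\,\xi L/d$, which is exactly where the lemma's content lies. Your sketch appeals to ``consecutive overlaps are $\le\xi$'' and ``$R_k-R_i$ is controlled,'' but for non-consecutive $k<i$ the slow-variation hypothesis only gives $|R_k-R_i|\le(i-k)\xi$, so estimating $\sum_i|J_i\cap(\bigcup_{k<i}J_k)|$ by $\sum_i\max_{k<i}(R_k-R_i)_+$ gives only $O(v^2\xi)$, which is far too large. The termwise bound $|J_i\cap(J_1\cup\dots\cup J_{i-1})|\le\xi$ that you need is true, but it follows from a more global cancellation: iterating the single-step inequality $d_k+R_k-\xi\le c_{k+1}+R_{k+1}$ (which uses $c_{k+1}\ge d_k$ and $R_k-R_{k+1}<\xi$) yields $d_k+R_k\le c_i+R_i+\xi$ for every $k<i$, i.e.\ any earlier shifted interval $J_k$ meeting $J_i$ can only reach $\xi$ past the left endpoint of $J_i$. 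Equivalently — and this is the paper's phrasing — the trimmed intervals $[c_i+R_i,\,d_i+R_i-\xi]$ are pairwise disjoint \emph{and in increasing order}, from which $|J|\ge\sum_i(|J_i|-\xi)=|U|-v\xi$ follows immediately, and then $|U|-|J|\le v\xi\le\xi L/d$. Without this ordering/telescoping observation, the overlap bound is unproved and the (PAL) case is incomplete.
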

The rest of this subsection will be taken by the proof of this {lemma}.
\begin{proof}
Assume first that $a$ is smoothly almost {linear, that is,  (SAL)} is satisfied.  Then
$$
\left|\frac{1}{L}\int_M^{M+L}f(a(t))\,dt-
\frac{1}{L}\int_M^{M+L}f(a(t))a'(t)\,dt\right|\leq \xi.
$$
On the other hand, substituting $s=a(t)$, by the assumptions on the endpoints given by $(SAL)$, we obtain that
$$
\frac{1}{L}\int_M^{M+L}f(a(t))a'(t)\,dt=
\frac{1}{L}\int_{a(M)}^{a(M+L)}f(s)\,ds=
\frac{1}{L}\int_M^{M+L}f(s)\,ds+
2\xi.
$$
Combining the estimates, this concludes the proof in this case (since $d<1$ and hence $3\xi<9\xi /d$).

\smallskip
Assume now that $a$ is piecewise almost {linear, so (PAL)} is satisfied. Let $A_s:=\{1\leq i\leq v\;:\; d_i-c_i\leq \xi\}$ (where $s$ stays for \emph{short} intervals). Let  $U_s:=\bigcup_{i\in A_s}[c_i,d_i]$ and $U_l:=U\setminus U_s$ (where $l$ stays for \emph{large}). We then have
\begin{align*}
\left|\frac{1}{L}\int_M^{M+L}f(a(t))\,dt-\frac{1}{L}\int_{U_l} f(a(t))\,dt\right|& =\frac{1}{L}\int_{(I\backslash U)\bigcup U_s}f(a(t))\,dt \\ \leq \frac{|I\backslash U|}{L}+\frac{\xi v}L\leq \frac{\xi L}L+\frac \xi L\frac Ld<\frac{2\xi}{d}
\end{align*}
(where in the last inequality we used that $d<1$). Moreover, substituting ${r}=a(t)$ on each $(c_i,d_i)$ for  $i\notin A_s$ and denoting by $a(x^\pm)$ respectively the right and left limit of the function $a(t)$ as $t \to x^\pm$, we get
\begin{align}
\frac{1}{L}\int_{U_l} f(a(t))\,dt & = \frac{1}{L} \displaystyle\sum_{\begin{subarray}{c}i=1,\dots,v\\ i\notin A_s\end{subarray}}\int_{a(c_i^+)}^{a(d_i^-)}
f({r})\,d{r}  \nonumber
\\ & =  \frac{1}{L}\displaystyle\sum_{\begin{subarray}{c} i=1,\dots,  v \\ i\notin A_s\end{subarray}}\int_{a(c_i^+)}^{a(d_i^-)-\xi}f({r})
\,d{r} +
\frac{1}{L}\displaystyle\sum_{\begin{subarray}{c} i=1,\dots,  v \\ i\notin A_s\end{subarray}} \int_{a(d_i^-)-\xi}^{a(d_i^-)}f({r})\,d{r}. \label{sec.term}
\end{align}
Since $|A_s|\leq v\leq L/d$ ({where $|A_s|$} denotes the cardinality of $A_s$), we have
$$
\frac{1}{L}\displaystyle\sum_{\begin{subarray}{c} i=1,\dots,\\  v i\notin A_s\end{subarray}} \int_{a(d_i^-)-\xi}^{a(d_i^-)}f({r})\,
d{r}\leq \frac{\xi |A_s|}{L}\leq \frac{\xi}{d}.
$$
Now, we estimate the first term in RHS of~\eqref{sec.term}. We claim that the intervals $[a(c_i^+),a(d_i^-)-\xi]$ for $i\notin A_s$ are non-empty (since $i\notin A_s$ implies that $d_i-c_i>\xi$ and hence $a(d_i^-)-a(c_i^+)>\xi$) and \emph{pairwise disjoint}. To see this,
 notice that by the definition (PAL) of almost linear function, for $i\notin A_s$, we have  $$a(c_i^+)=c_i+R_i\leq d_i+R_i-\xi= a(d_i^-)-\xi=d_i+R_i-\xi\leq c_{i+1}+R_{i+1}=a(c_{i+1}^+),$$
 which proves that the intervals $[a(c_i^+),a(d_i^-)-\xi]$ are pairwise disjoint (and in increasing order). This now gives that
 $$
\frac{1}{L}\sum_{i=1,i\notin A_s}^v\int_{a(c_i^+)}^{a(d_i^-)-\xi}f({r})\,
d{r}
\leq \frac{1}{L}\int_{a(c_1^+)}^{a(d_v^-)}f({r})
\,d{r}.$$
Remark now that, by  the assumptions in (PAL) and recalling that $d<1$, we have that $$|R_v|=\left|\sum_{i=1}^{v-1}(R_{i+1}-R_i)+R_1\right|\leq \xi v+\xi L\leq 2\xi \frac{L}{d}.$$
Hence, using this bound on $R_v$ and the assumptions in (PAL) (and $d<1$), we have that
\begin{align*}
& \left| \frac{1}{L}\int_{a(c_1^+)}^{a(d_v^-)}f({r})\,d{r}  - \frac{1}{L}\int_M^{M+L}f({r})\,d{r} \right|  \leq  \frac{|a(c_1^+)-M|+|a(d_v^-)-M-L|}{L} \\ & \qquad \leq \frac{|c_1-M|+|d_v-M-L|+|R_1|+|R_v|}{L}
\leq 3\xi+2\frac{\xi}{d}.
\end{align*}
Combining all the estimates together, we get the desired conclusion also in this case. This finishes the proof.
\end{proof}

\subsection{The disjointness criterion}\label{s:Sec3}
Recall that we are considering measurable, measure-preserving $\R$-actions (i.e.\ flows) on probability standard Borel spaces. In fact, we will be constantly assuming that our configuration spaces are $\sigma$-compact metric spaces (considered with the Borel $\sigma$-algebras and probability Borel measures).

\smallskip
Let $(T_t)$ and $(S_t)$ be two weakly mixing flows acting  on $(X,\cB,\mu,d_1)$ and $(Y,\cC,\nu,d_2)$, respectively. Given $A\subset X$, we set
$V^1_{\epsilon}(A):=\{x\in X: d_1(x,A)<\epsilon\}$, and a similar notation $V^2_{\epsilon}(A')$ is used for a subset $A'$ of $Y$. Let $P=\{p,-p\}$, $p\neq 0$.

Let us first state the criterion, then make some comments that connect it to the heuristics presented at the beginning of the section.

\begin{theorem}[Disjointness criterion]\label{disjoint.flows} Let $0<c<1$. Assume that we have a sequence of sets $$(X_k)\subset\mathcal{B},\qquad \mu(X_k)\to \mu(X),$$ together with a sequence of automorphisms
$$(A_k)\subset Aut(X_k,\cB|_{X_k},\mu|_{X_k}),  \qquad \text{such that}  \ A_k\to Id\  \text{uniformly}\footnote{This means that for each $\delta'>0$,  $d_1(A_kx,x)<\delta'$ for all $k$ large enough and $x\in X_k$.}.$$
Assume moreover that for every $0<\epsilon<1$ and $N\in \N$
there exist a sequence
$$(E_k)=(E_k(\epsilon))\subset \cB, \qquad \mu(E_k)\geq c\mu(X),$$
and $0<\kappa=\kappa(\epsilon)<\epsilon$, $\delta=\delta(\epsilon,N)>0$ and a set
$$Z=Z(\epsilon,N)\subset Y, \qquad \nu(Z)\geq (1-\epsilon)\nu(Y)$$
 such that for all $y,y'\in Z$ satisfying $d_2(y,y')<\delta$, every $k$ such that $d_1(A_k,Id)<\delta$ and every $x\in E_k\cap X_k$, $x':=A_{k}x$ there are
 $$M\geq N,\qquad  L\geq 1, \qquad  \frac{L}{M}\geq \kappa, \quad \text{and}\quad  p\in P$$
 for which at least one of the following holds:
\begin{equation}\label{forw}
\max\left(d_1(T_tx,T_{a(t)+p}x'),\;d_2(S_ty,S_{a(t)}y')\right)
<\epsilon\text{ for } t\in U\subset [M,M+L]
\end{equation}
or
\begin{equation}\label{backw}
\max\left(d_1(T_{t}x,T_{a(t)+p}x'),d_2(S_{t}y,S_{a(t)}y')\right)
<\epsilon\text{ for } -t\in U\subset [M,M+L],
\end{equation}
where $a:=a_{x,x',y,y'}:[M,M+L]\to \R$ {is} extended by $a(-t)=a(t)$,  and $(a,U,c)$ is $\epsilon$-good.

\smallskip
Then, the flows $(T_t)$ and $(S_t)$ are disjoint.
\end{theorem}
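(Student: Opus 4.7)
The plan is to show that every ergodic joining $\rho \in J^e(T_t, S_t)$ equals $\mu \otimes \nu$; by the ergodic decomposition of joinings (whose a.e.\ ergodic components remain joinings, since both marginal measures $\mu, \nu$ are ergodic), this suffices for disjointness. The proof has two stages: first I would establish that $\rho$ is invariant under $T_p \times \mathrm{Id}$ for some $p \in P$; then I would deduce $\rho = \mu \otimes \nu$ from the weak mixing of $(T_t)$.

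For the deduction, disintegrate $\rho = \int \rho^x \, d\mu(x)$ over the first coordinate. The $T_p \times \mathrm{Id}$-invariance of $\rho$ translates, via a direct computation using the $T_p$-invariance of $\mu$, into $\rho^{T_p x} = \rho^x$ for $\mu$-a.e.\ $x$, so the measurable map $x \mapsto \rho^x$ (taking values in the Polish space of probability measures on $Y$) is $T_p$-invariant. Since weak mixing of the flow $(T_t)$ implies that the time-$p$ map $T_p$ is ergodic (in fact weakly mixing) for every $p \neq 0$, this map is $\mu$-a.e.\ constant, and must equal $\int \rho^x \, d\mu(x) = \nu$. Hence $\rho = \mu \otimes \nu$.

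The core of the argument is the first stage, i.e., showing $\int (f \circ T_p)\, g \, d\rho = \int f\, g\, d\rho$ for all continuous $f \in C(X)$ and $g \in C(Y)$. By the Birkhoff ergodic theorem applied to $\rho$, at $\rho$-a.e.\ $(x,y)$ and as $L \to \infty$,
\begin{equation*}
\frac{1}{L}\int_M^{M+L} f(T_t x)\, g(S_t y)\, dt \longrightarrow \int fg\, d\rho, \quad \frac{1}{L}\int_M^{M+L} f(T_{s+p}x')\, g(S_s y')\, ds \longrightarrow \int (f \circ T_p)\, g\, d\rho,
\end{equation*}
the second limit holding for $\rho$-a.e.\ $(x', y')$ (and past-time versions hold analogously). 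Fix $\epsilon > 0$, choose $\epsilon' > 0$ small depending on $\epsilon$, $\|f\|_\infty$, $\|g\|_\infty$ and the moduli of continuity of $f, g$, take $N$ large, and invoke the hypothesis. For $k$ sufficiently large, the measure bounds $\mu(E_k) \geq c\mu(X)$, $\mu(X \setminus X_k) \to 0$, and $\nu(Z) \geq 1 - \epsilon'$ guarantee that the set of pairs $(x, y) \in (E_k \cap X_k) \times Z$ which are Birkhoff-typical for both averages above has positive $\rho$-measure (bounded below uniformly in $k$, say by $c\mu(X)/4$). The key point is then to find, for such a pair $(x, y)$, a $y' \in Z$ with $d_2(y, y') < \delta$ such that $(A_k x, y')$ is also Birkhoff-typical; this is obtained by a Fubini-style measure-theoretic argument using that $A_k$ preserves $\mu$ and that the Birkhoff-typical set has full $\rho$-measure. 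Set $x' := A_k x$ and invoke the hypothesis to obtain $M, L, p, a, U$ and -- say -- the forward conclusion~\eqref{forw} (the backward case~\eqref{backw} being handled identically via past Birkhoff averages). Orbit closeness plus boundedness of $f, g$ yields
\begin{equation*}
|f(T_t x)\, g(S_t y) - f(T_{a(t)+p}x')\, g(S_{a(t)}y')| < \epsilon \quad \text{for } t \in U, \ |U| \geq (1 - \epsilon')L.
\end{equation*}
Applying Lemma~\ref{rem:good} to the positive and negative parts of the map $s \mapsto f(T_{s+p}x')\, g(S_s y')$ after normalizing them into $(0, 1]$ (to accommodate the one-sided, positivity-requiring statement of the lemma) converts the reparametrized orbital average into the direct Birkhoff average at $(x', y')$. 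Letting $N \to \infty$ (so $M, L \to \infty$), then $\epsilon \to 0$, and exploiting the finiteness of $P$ to fix a single $p$ along a cofinal subsequence of $k$'s, yields the desired invariance under $T_p \times \mathrm{Id}$.

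The main obstacles I anticipate are: (i) the one-sided nature of Lemma~\ref{rem:good} forces a decomposition of the integrand into positive and negative parts before applying the lemma in both directions; (ii) guaranteeing the existence of $y' \in Z$ near $y$ with $(A_k x, y')$ Birkhoff-typical is a delicate Fubini-style argument, since the disintegration $\rho^x$ is generally singular with respect to $\nu$; (iii) combining the forward/backward alternatives with the two signs $\pm p$ of $P$ into a single conclusion is handled by pigeonhole (one of the four cases must occur on a positive-$\rho$-measure set), noting that $T_p \times \mathrm{Id}$-invariance is equivalent to $T_{-p} \times \mathrm{Id}$-invariance.
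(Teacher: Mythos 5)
Your overall strategy (show $\rho$ is invariant under $T_p\times\mathrm{Id}$, then use weak mixing of $T_p$ to force $\rho=\mu\otimes\nu$) is a direct version of the paper's contradiction argument (the paper deduces from $\rho\neq\mu\otimes\nu$ that $T_w\times\mathrm{Id}$-invariance \emph{fails} quantitatively, i.e.\ $|\rho(T_{-w}B_w\times C_w)-\rho(B_w\times C_w)|>\eta$, then contradicts this using the hypothesis); these are two sides of the same coin and both are fine. The Birkhoff-averaging machinery and the appeal to Lemma~\ref{rem:good} are also in line with what the paper does.

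However, there is a genuine gap at the step you call a ``delicate Fubini-style argument'': finding $y'\in Z$ with $d_2(y,y')<\delta$ such that $(A_kx,y')$ is Birkhoff-typical. No Fubini-type argument can give you this, and you yourself flag the reason why -- the disintegration $\rho^x$ is in general mutually singular with $\nu$ (think of a graph joining, where $\rho^x=\delta_{\phi(x)}$). Fubini tells you that for $\mu$-a.e.\ $x'$, $\rho^{x'}$-a.e.\ point in the fibre over $x'$ is Birkhoff-typical, but it gives you no control whatsoever on the \emph{location} of the fibre measure $\rho^{A_kx}$ relative to $\rho^x$: these two fibre measures need not be close to one another in any useful sense, so there is no reason a typical point of $\rho^{A_kx}$ should lie within $\delta$ of the chosen $y$. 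The paper resolves this with a qualitatively different tool: after intersecting the Birkhoff-typical set with $X\times Z$ and passing to a compact subset $U_2$, Kunugui's measurable selection theorem produces a measurable map $x\mapsto y(x)$ with $(x,y(x))\in U_2$, and Luzin's theorem upgrades it to a \emph{uniformly continuous} selection on a set $X_{\mathrm{cont}}$ of measure close to $\mu(X)$. Uniform continuity of $y(\cdot)$ is exactly what converts $d_1(x,A_kx)$ small into $d_2(y(x),y(A_kx))<\delta$, with both $(x,y(x))$ and $(A_kx,y(A_kx))$ in the good set as long as $x,A_kx$ both lie in $X_{\mathrm{cont}}\cap{\rm proj}(U_2)$ -- which is then arranged by the positive-measure intersection~\eqref{kurr} with $E_k\cap X_k$. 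Without the selection-plus-continuity idea, I do not see how your argument closes, and as written the gap is real rather than merely a matter of presentation.

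A minor secondary remark: you should be careful that the set of pairs $(x,y)$ to which you apply the hypothesis must additionally satisfy $x\in E_k\cap X_k$, and the bound $\mu(E_k)\geq c\mu(X)$ only gives a fixed positive proportion (not measure close to $1$); the paper accounts for this by making the Birkhoff/Luzin sets have measure $>1-c/10$ so that the intersection with $E_k$ and $A_{-k}(\cdot)$ stays nonempty. This is straightforward bookkeeping once you have the selection, but it should be spelled out.
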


As we explained at the beginning of this section, the criterion is meant to be applied to two flows having the SR-property (or the Ratner property) but with different speed.
The parameters $(X_k, A_k, E_k)$ allows us to relax the SR-property for one of the flows: we only need to control a positive proportion of space (the sets $E_k$) and only in a favorite direction (that we control well, for instance, the geodesic direction $A_k=g_{1/k}$ for the horocycle flow), we do not need to control \emph{all} nearby points. The set $Z$ is just {the set, where the} SR-property holds for the other flow.
The almost linear function $a(t)$ describes the relative drift between points $x,x'$ (or $y,y'$) (for example, it would be $t+td(x,x')$ if the divergence is linear).  Formulas \eqref{forw} and \eqref{backw}  then describe the \emph{relative shearing}, \emph{either} in the future \emph{or} in the past.

\smallskip

The rest of this section is devoted to the proof of {the} disjointness criterion.
\begin{proof}
Let $\rho\in J((T_t)_{t\in\R},(S_t)_{t\in \R})$ be an ergodic joining, $\rho\neq \mu\times \nu$. Recall that by the weak mixing of the flow, all non-zero time automorphisms are weakly mixing, hence ergodic and therefore disjoint from the identity. Thus,
since $T_w$ is disjoint from $Id$  for $w\in P=\{p,-p\}$ {and} $\rho$ is not product measure, there exist $B_w\in \cB,C_w\in\cC$ such that
\begin{equation}\label{iddisT}|\rho(T_{-w}(B_w)\times C_w)-\rho(B_w\times C_w)|>\eta
\end{equation}
for some $0<\eta<1$. There exists $0<\epsilon<\frac{c\eta}{1000}$ such that $$\max\left(\left|\mu(V^1_{\epsilon}(B_w))-\mu(B_w)\right|,
\left|\nu(V^2_{\epsilon}(C_w))-\nu(C_w)\right|\right) <\eta/32.$$ Since $\rho$ is a joining, by the triangle inequality, for each $t\in\R$, we have
\begin{equation}\label{pertu}
|\rho(T_{-t}V^1_{\epsilon}(B_w)\times V^2_\epsilon(C_w))-\rho(T_{-t}B_w\times C_w)|<\frac{\eta}{16}.
\end{equation}

By applying the pointwise ergodic theorem to the joining flow $(T_t\times S_t, \rho)$ and the sets 
$T_{-w}B_w\times C_w$ and $T_{-w}V^1_{\epsilon}(B_w)\times V^2_{\epsilon}(C_w)$
for $w\in P$, there exist $N_0\in \N$, $\kappa>0$ (which we can assume additionally to be of the form $\kappa=\kappa(\vep)$ as in the assumptions of our theorem) and a set $U_1\in \cB\otimes \cC$ with $\rho(U_1)>(1-\frac{c}{100})\rho(X\times Y)$ (recall that $c$ comes from our assumption) such that for every $L,M\geq N_0$ with $\frac{L}{M}\geq \kappa$ and $w\in P\cup\{0\}$, we have
\begin{equation}\label{eq1}\left|\frac{1}{L}\int_{M}^{M+L}\raz_{T_{-w}B_w
\times C_w}(T_tx,S_ty)\,dt-\rho(T_{-w}B_w\times C_w)\right|<\frac{\eta}{16},
\end{equation}
\begin{equation}\label{eq3}\left|\frac{1}{L}\int_{M}^{M+L}
\raz_{T_{-w}V^1_{\epsilon}(B_w)
\times V^2_{\epsilon}(C_w)}(T_tx,S_ty)\,dt-\rho(T_{-w}V^1_{\epsilon}(B_w)\times V^2_\epsilon(C_w))\right|<\frac{\eta}{16}
\end{equation}
and
\begin{equation}\label{eq2}\left|\frac{1}{L}\int_{M}^{M+L}\raz_{T_{-w}B_w
\times C_w}(T_{-t}x,S_{-t}y)\,dt-\rho(T_{-w}B_w\times C_w)\right|<\frac{\eta}{16},
\end{equation}
\begin{equation}\label{eq4}\left|\frac{1}{L}\int_{M}^{M+L}
\raz_{T_{-w}V^1_{\epsilon}(B_w)
\times V^2_{\epsilon}(C_w)}(T_{-t}x,S_{-t}y)\,dt-\rho(T_{-w}V^1_{\epsilon}(B_w)\times V^2_\epsilon(C_w))\right|<\frac{\eta}{16}
\end{equation}
whenever $(x,y)\in U_1$.
Let $U_2:=U_1\cap (X\times Z)$, where $Z=Z(\epsilon,N_0)$ comes from our assumptions. Then $\rho(U_2)>(1-c/50)\rho(X\times Y)$.
 Note also that since $X\times Y$ is $\sigma$-compact, measure $\rho$ is regular and hence, we can additionally assume that $U_2$ is compact.
Define ${\rm proj}:X\times Y\to X$, ${\rm proj}(x,y)=x$. Then the fibers of ${\rm proj}$ are $\sigma$-compact, and since $U_2$ is compact, the fibers of the map ${\rm proj}|_{U_2}:U_2\to {\rm proj}(U_2)\subset X$ are also $\sigma$-compact and ${\rm proj}(U_2)$ is also compact. Thus, by Kunugui's selection  theorem (see e.g.\ \cite{Ga-Le-Sch}, Thm. 4.1), it follows that there exists  a measurable (selection) $y:{\rm proj}(U_2)\to X\times Y$ such that $(x,y(x))\in U_2$. Note that $\mu({\rm proj}(U_2))\geq \rho(U_2)>(1-c/50)\mu(X)$. By Luzin's theorem there exists $X_{\rm cont}\subset {\rm proj}(U_2)$, $\mu(X_{\rm cont})\geq (1-c/50)\mu(X)$ such that $y$ is uniformly continuous on $X_{\rm cont}$. Finally, set
$$
\widetilde{U}:=U_2\cap (X_{\rm cont}\times Y).
$$
We have $\rho(\widetilde{U})>(1-c/10)\rho(X\times Y)$. Moreover, if $U_X:={\rm proj}(\widetilde{U})$ then also
$\mu(U_X)\geq \rho(\widetilde{U})>(1-c/10)\rho(X\times Y)$.
Hence, by the definitions of sequences $(A_k)$ and $(E_k)=(E_k(\epsilon))$, it follows that there exists $k_0=k_0(\epsilon)$ such that for $k\geq k_0$, we have
\begin{equation}\label{kurr}
\mu(A_{-k}(U_X\cap X_k)\cap (U_X\cap X_k)\cap E_k)>0.
\end{equation}
Let $\delta=\delta(\epsilon,N_0)$ come from the assumptions of our theorem. By the uniform continuity of $y:X_{\rm cont}\to Y$ it follows that there exists $0<\delta'<\delta$ such that $d_1(x_1,x_2)<\delta'$ implies $d_2(y(x_1),y(x_2))<\delta$ for each $x_1,x_2\in X_{\rm cont}$.
Since $A_k\to Id$ uniformly
and $\widetilde{U}\subset X_{\rm cont}\times Y$, there exists $k_1=k_1(\epsilon)$ such that for $k\geq k_1$, $d_2(y(x),y(A_kx))<\delta$  for $x\in X_k\cap X_{\rm cont}$. Fix $k\geq \max(k_0,k_1+1)$ (so that $d_1(A_k,Id)<\delta'$). Let $x\in A_{-k}(U_X\cap X_k)\cap (U_X\cap X_k)\cap E_k$. Such a point does exist in view of~\eqref{kurr}. Set $x'=A_kx$, $y=y(x)$, $y'=y(x')$. By definition, $(x,y),(x',y')\in \widetilde{U}$ and $d_2(y,y')<\delta$ and all other  assumptions of our theorem are satisfied for $(x,y)$, $(x',y')$ (so that we obtain $M,L,p$ depending on $(x,y)$ and $(x',y')$ satisfying~\eqref{forw} or~\eqref{backw}).

We will assume that~\eqref{forw} holds and will get a contradiction using~\eqref{eq1} and~\eqref{eq3}. If~\eqref{backw} holds, we argue analogously using~\eqref{eq2} and~\eqref{eq4}.
We claim that
\begin{equation}\label{disjon}
\rho(T_{-w}(B_w)
\times C_w)>\rho(B_w\times C_w)-\frac{\eta}{2}.
\end{equation}
Indeed, in view of~\eqref{pertu},~\eqref{disjon} follows by showing that
$$
\rho(T_{-w}(V^1_{\epsilon}(B_w))
\times V^2_{\epsilon}(C_w))>\rho(B_w\times C_w)-\frac{\eta}{4}.
$$

Using~\eqref{forw} (for $w=p$),~\eqref{eq1} (for $w=0$) and $(x,y)\in \widetilde{U}\subset U_1$ (and remembering that for $U$ in~\eqref{forw}, we have $|U|>(1-\epsilon)L$ as $(a,U,c)$ is $\epsilon$-good), we obtain  that
\begin{multline}
\frac{1}{L}\int_{M}^{M+L}\raz_{V^1_{\epsilon}(B_w)
\times V^2_{\epsilon}(C_w)}(T_{a(t)+w}x',S_{a(t)}y')\,dt> \frac{1}{L}\int_{M}^{M+L}\raz_{B_w
\times C_w}(T_tx,S_ty)\,dt>\\
\rho(B_w\times C_w)-\epsilon-\frac{\eta}{16}.
\end{multline}
Hence to complete the proof of claim~\eqref{disjon}, it is enough to show that
\begin{multline}\label{eq:suf}
\frac{1}{L}\int_{M}^{M+L}\raz_{V^1_{\epsilon}(B_w)
\times
V^2_\epsilon(C_w)}(T_{a(t)+w}x,S_{a(t)}y)\,dt<
\rho(T_{-w}V^1_{\epsilon}(B_w)
\times V^2_\epsilon(C_w))+\frac{\eta}{8}.
\end{multline}
Since $(a,U,c)$ is $\epsilon$-good, we get by Remark~\ref{rem:good} (for $f(t)=\raz_{V^1_{\epsilon}(B_w)
\times V^2_\epsilon(C_w)}(T_{t+w}x,S_{t}y)$)
\begin{multline*}
\frac{1}{L}\int_{M}^{M+L}\raz_{V^1_{\epsilon}(B_w)
\times V^2_\epsilon(C_w)}(T_{a(t)+w}x,S_{a(t)}y)\,dt\leq \\
\frac{1}{L}\int_{M}^{M+L}\raz_{V^1_{\epsilon}(B_w)
\times V^2_\epsilon(C_w)}(T_{s+w}x,S_{s}y)\,ds+ 9\epsilon/c\leq  \\
\rho(T_{-w}V^1_{\epsilon}(B_w)\times V^2_{\epsilon}(C_w))+\eta/16+9\epsilon/c,
\end{multline*}
where the latter inequality follows from~\eqref{eq3} for $w\in P$ remembering that $(x,y)\in \widetilde{U}\subset U_1$.  This completes the proof of~\eqref{eq:suf} and hence also of~\eqref{disjon} since $9\epsilon/c<\eta/16$.

Now, reasoning in a similar way, we get
\begin{equation}\label{rhot}\rho(T_{-w}(B_w)\times C_w)<\rho(B_w\times C_w)+\frac{\eta}{2},
\end{equation}
so putting together~\eqref{disjon} and\eqref{rhot} yields  $|\rho(T_{-w}(B_w)\times C_w)-\rho(B_w\times C_w)|<\frac{\eta}{2}$. This however contradicts~\eqref{iddisT}.
\end{proof}

\section{Disjointness for time changes of horocycle flows (proof of Theorem~\ref{main:prop})}\label{s:Sec4}
In this section, as a first application  of the disjointness criterion given by Theorem~\ref{disjoint.flows}, we show how it can be used to give a proof of Theorem~\ref{main:prop} on disjointness of rescalings of smooth time changes of horocycle flows. We first state two results which provide the main ingredients for the proof, namely Proposition~\ref{thm:hor} in {Section}~\ref{sec:prelimh},  {which  follows from the form of shearing (in the Ratner property) of} time-changes of horocycle flow (and {whose proof is postponed to} Appendix~\ref{app:horocycleR}), and Lemma~\ref{lem:buff} in {Section}~\ref{sec:deviations}, which follows from the work of Bufetov and Forni {\cite{BF}} (see Appendix~\ref{app:BF}). {Theorem~\ref{main:prop}} is then proved in {Section}~\ref{sec:proofmainprop},  exploiting these two as ingredients and the disjointness criterion.

\subsection{Preliminaries and shearing properties of time changes of horocycle flows}\label{sec:prelimh}
The Lie algebra $sl(2,\R)$ is generated by $U,V,X$, where
$$U:=\begin{pmatrix} 0& 1\\
0& 0\end{pmatrix}, \qquad   V:=\begin{pmatrix} 0& 0\\
1& 0\end{pmatrix}, \qquad X:=\begin{pmatrix} 1& 0\\
0& -1\end{pmatrix}$$
and $X$ generates the geodesic flow $(g_t)$, $U$ is the generator of the horocycle flow $(h_t)$ and $V$ generates the opposite horocycle flow $(u_t)$.

For two points $x,y\in M$ which are sufficiently {close,} $d_M(x,y)\leq \epsilon_0$ (for some $\epsilon_0$ depending on $M$), let $d_X$, $d_V$ and $d_U$ denote distances along, respectively, the geodesic, {the} opposite horocycle and the horocycle (the distances are measured locally in the Lie algebra). For a function $\xi\in C^1(M)$ and an element $L\in sl(2,\R)$, we denote by $L\xi$ the derivative of $\xi$ in direction $L$.

In this section $\tau\in C^1(M)$ is fixed and we consider $(\widetilde h^\tau_t)$. For simplicity,  we will drop $\tau$ from the notation and denote the time change simply by $(\tilde{h}_t)$. Recall that $u=u(t,x)$ is given by \eqref{zamcz} (for $v=\tau$).

Take $x,y\in M$ with $d_M(x,y)<\epsilon_0$. Using local coordinates, it follows that
$$
x=\exp(d_U(x,y) U)\exp(d_X(x,y) X)\exp(d_V(x,y) V)y,
$$
where ${d_W(x,y)}<2\epsilon_0$ for $W\in\{U,X,V\}$.
We have the following observation, which is a straightforward consequence of the Taylor formula:

\begin{lemma}\label{taylor:form}For every $\epsilon>0$ there exists $\delta>0$ such that for every $x,y\in M$, $d_M(x,y)<\delta$, we have
$$
|\tau(x)-\tau(y)-\sum_{W\in\{U,X,V\}}d_W(x,y)(W\tau)(x)|<
\epsilon\sum_{W\in\{U,X,V\}}d_W(x,y).
$$
\end{lemma}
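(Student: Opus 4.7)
The plan is to connect $y$ and $x$ by a three-leg path along orbits of the one-parameter subgroups generated by $V$, $X$, $U$ (as supplied by the coordinate expression in the paragraph preceding the lemma), apply the fundamental theorem of calculus on each leg, and then use the $C^1$ regularity of $\tau$ together with compactness of $M$ (valid since $\Gamma$ is cocompact) to replace each directional derivative by its value at the common basepoint $x$ up to a small error.

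Concretely, set $a:=d_U(x,y)$, $b:=d_X(x,y)$, $c:=d_V(x,y)$ and introduce the intermediate points $y_1:=u_c y$ and $y_2:=g_b y_1$, so that $x=h_a y_2$. Telescoping and applying the fundamental theorem of calculus leg by leg yields
$$
\tau(x)-\tau(y)=\int_0^{a}(U\tau)(h_s y_2)\,ds+\int_0^{b}(X\tau)(g_s y_1)\,ds+\int_0^{c}(V\tau)(u_s y)\,ds.
$$
Subtracting $\sum_{W\in\{U,X,V\}}d_W(x,y)(W\tau)(x)$ from both sides, the quantity on the left-hand side of the inequality in the lemma is bounded above by
$$
\sum_{W\in\{U,X,V\}} d_W(x,y)\cdot\sup_{z\in\Gamma_W}\bigl|(W\tau)(z)-(W\tau)(x)\bigr|,
$$
where $\Gamma_U$, $\Gamma_X$, $\Gamma_V$ denote the orbit segments $\{h_s y_2:s\in[0,a]\}$, $\{g_s y_1:s\in[0,b]\}$, $\{u_s y:s\in[0,c]\}$ respectively.

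To finish, it suffices to show that each of the three suprema can be made smaller than $\epsilon$ by shrinking $\delta$. Since $M$ is compact and $\tau\in C^1(M)$, each derivative $W\tau$, $W\in\{U,X,V\}$, is uniformly continuous on $M$. The three flows $h_s$, $g_s$, $u_s$ are smooth on $M$ and hence admit a common Lipschitz constant on bounded time intervals, so there exists a constant $C>0$ depending only on $M$ such that every point of $\Gamma_U\cup\Gamma_X\cup\Gamma_V$ lies within $d_M$-distance $C\,d_M(x,y)$ of $x$ (using that $a,b,c\leq 2\,d_M(x,y)$ from the paragraph preceding the lemma). Given $\epsilon>0$, choose $\delta_0>0$ from uniform continuity so that $d_M(z,x)<\delta_0$ implies $|(W\tau)(z)-(W\tau)(x)|<\epsilon$ for every $W\in\{U,X,V\}$, and set $\delta:=\delta_0/C$. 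The desired estimate then follows. The only potential obstacle is uniformity of the error terms across $x$, which is handled by the compactness of $M$; no deeper input is required.
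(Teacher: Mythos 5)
Your proof is correct. The paper gives no explicit argument here — it only notes the lemma is a "straightforward consequence of the Taylor formula" — and your three-leg telescoping along the $V$, $X$, $U$ orbits with the fundamental theorem of calculus, followed by uniform continuity of the derivatives $W\tau$ on the compact manifold $M$, is precisely the standard way to substantiate that remark. One small imprecision of presentation: the paragraph preceding the lemma only records $d_W(x,y)<2\epsilon_0$, not the sharper bound $d_W(x,y)\lesssim d_M(x,y)$ that you invoke; the latter is nevertheless true (the exponential coordinates $(a,b,c)\mapsto h_a g_b u_c y$ give a local diffeomorphism, so the coordinates are Lipschitz-comparable to $d_M$, and this is also what is implicit in Remark~\ref{r:odl}), and without it the error on the orbit segments would not shrink with $\delta$, so it is good that you made it explicit.
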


We will also use the following matrix presentation: if $d_M(x,y)<\epsilon_0$, then (by the right invariance of $d_G$) there are (unique) small $s=s(x,y), r=r(x,y),\ov{v} =\ov{v}(x,y)$ satisfying
\be\label{marep}
xy^{-1}=h_{\bar v}\begin{pmatrix} e^s& 0\\
r& e^{-s}
\end{pmatrix}.
\ee
\begin{remark}\label{r:odl} \em Notice that for every $\epsilon>0$ there exists $\delta>0$ such that if $d_M(x,y)<\delta$ then
$$
\max\left(\left|\frac{d_U(x,y)}{|\ov{v}|}-1\right|,\; \left|\frac{d_X(x,y)}{|s|}-1\right|,
\left|\frac{d_V(x,y)}{|r|}-1\right|\right)<\epsilon^3.
$$
\end{remark}

Let $\chi_{x,y}:\R\to \R$ be given by
\begin{equation}\label{defchi}
\chi_{x,y}(t)=e^{-2s}t-e^{-3s}rt^2.
\end{equation}

 We have the following crucial Proposition \ref{thm:hor}, which  provides an exact formula for the amount of splitting in a time-changed {flow}. This proposition provides {a} stronger form of Ratner property  (indeed, it implies the Ratner property, see {Remark~\ref{itisR}) in} which points are allowed to diverge by an unbounded amount.

 In what follows, we set $0^{-1}:=+\infty$. Let  $A_x(T)$ be such that
\begin{equation}\label{funce}
\chi_{x,y}(u(T,x))=u(\chi_{x,y}(T)+A_{x}(T),y).
\end{equation}
{(In fact, $A_x(T)$ depends also on $y$, so $A_x(T)=A_{x,y}(T)$ and this number, or rather $\chi_{x,y}(T)+A_x(T)$, is uniquely determined by $\chi_{x,y}(u(T,x))$ and $y$, cf.\ \eqref{zamcz}}.)

\begin{proposition}\label{thm:hor} Fix $K\geq 1$. For every $\epsilon\in (0,K^{-3})$, there exist $N_\epsilon>0$ and $\bar{\delta}=\bar{\delta}(\epsilon)$ such that for every $x,y$ satisfying $\max(|r|,|s|,|\bar{v}|)<\bar\delta$ (cf.~\eqref{marep}) and every $T\in \R$ with $|T|\in [N_\epsilon, K|r|^{-1/2}]$, for $A_x(T)$ defined as above, we have
\begin{equation}\label{axt}
\left|A_x(T)+e^{-2s}\int_0^{u(T,x)}\left(\tau-\tau\circ g_{-s}\right)(h_tx)\,dt\right|\leq \epsilon.
\end{equation}
Moreover, we have
\begin{equation}\label{distgt}
d_M(\tilde{h}_T{x},\tilde{h}_{\chi_{x,y}(T)+A_x(T)}y)\leq \epsilon.
\end{equation}
\end{proposition}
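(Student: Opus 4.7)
I plan to reduce both \eqref{distgt} and \eqref{axt} to an analysis of the algebraic shearing in the ambient horocycle flow $(h_t)$ combined with the defining integral \eqref{zamcz} for $u$. The starting point is the matrix computation: using $xy^{-1}=h_{\bar v}\left(\begin{smallmatrix}e^s & 0\\ r & e^{-s}\end{smallmatrix}\right)$, a direct multiplication yields the exact horocycle identity
\begin{equation}\label{shearid}
h_t\,x \;=\; B_t\cdot h_{\phi(t)}\,y, \qquad \phi(t)=\frac{\bar v+t}{e^{2s}+(\bar v+t)\,r\,e^{s}},
\end{equation}
where $B_t\in G$ is an explicit lower-triangular matrix whose $X$-component is close to $-s$ and whose $V$-component equals $-r$. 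In the range $|t|\leq CK|r|^{-1/2}$ relevant here (with $C$ depending on $\inf\tau,\sup\tau$ so that $u(T,x)\leq CT$ by Lemma~\ref{r:ue}), Taylor expansion of the geometric series gives $\phi(t)=\chi_{x,y}(t)+O(|\bar v|+K^3|r|^{1/2})$, and $B_t$ lies at distance $O(|s|+|r|+K|r|^{1/2})$ from the identity in $G$; both errors become $\leq \epsilon/10$ for $\bar\delta(\epsilon)$ small enough.

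The distance estimate \eqref{distgt} then follows at once by evaluating \eqref{shearid} at $t=u(T,x)$: since by definition $\tilde h_{\chi_{x,y}(T)+A_x(T)} y = h_{\chi_{x,y}(u(T,x))}y$ while $\tilde h_T x = B_{u(T,x)}\, h_{\phi(u(T,x))}y$, the two points differ by the action of $B_{u(T,x)}$ (within distance $\epsilon/10$ of the identity) composed with a horocyclic translation of size $|\phi(u(T,x))-\chi_{x,y}(u(T,x))|\leq\epsilon/10$. For \eqref{axt} I use the defining equation of $u(\cdot,y)$ and the change of variables $\sigma=\chi_{x,y}(\sigma')$:
$$\chi_{x,y}(T)+A_x(T)=\int_0^{\chi_{x,y}(u(T,x))}\!\!\tau(h_\sigma y)\,d\sigma = \int_0^{u(T,x)}\tau\bigl(h_{\chi_{x,y}(\sigma')}y\bigr)\bigl(e^{-2s}-2e^{-3s}r\sigma'\bigr)d\sigma'.$$
Since $h_{\chi_{x,y}(\sigma')}y \approx B_{\sigma'}^{-1}h_{\sigma'}x$ and $B_{\sigma'}^{-1}$ is close to $\exp(-sX)\exp(-rV)$, Lemma~\ref{taylor:form} together with the identity $\tau-\tau\circ g_{-s}=s(X\tau)+O(s^2)$ yields
$$\tau\bigl(h_{\chi_{x,y}(\sigma')}y\bigr)=(\tau\circ g_{-s})(h_{\sigma'}x)+O(|r|\sigma'+|\bar v|+|s|^2)\cdot\|\tau\|_{C^1}.$$

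Combining these ingredients with the identities $\chi_{x,y}(T)=e^{-2s}T-e^{-3s}rT^2$ and $T=\int_0^{u(T,x)}\tau(h_\sigma x)d\sigma$, the linear-in-$r$ main contributions reassemble exactly as $-e^{-2s}\int_0^{u(T,x)}(\tau-\tau\circ g_{-s})(h_\sigma x)d\sigma$, as required by \eqref{axt}. The residual quadratic-in-$r$ remainder $e^{-3s}r\bigl[T^2-2\int_0^{u(T,x)}\tau(h_\sigma x)\sigma\,d\sigma\bigr]$, after integration by parts, equals $e^{-3s}r\bigl[(T-u(T,x))^2-u(T,x)^2+2\int_0^{u(T,x)} F(\sigma)d\sigma\bigr]$ with $F(\sigma)=\int_0^\sigma\tau(h_s x)ds$; with the standard normalization $\int_M\tau\,d\mu=1$ (up to an innocuous rescaling of time via Lemma~\ref{ptc}), the unique ergodicity of $(h_t)$ gives $F(\sigma)=\sigma+o(\sigma)$ uniformly in $x$, so this remainder becomes $o(|r|\,u(T,x)^2)=o(K^2)$, which is $\leq\epsilon$ provided $|T|\geq N_\epsilon$. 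The main obstacle lies precisely in this last step: in the window $|T|\leq K|r|^{-1/2}$ the relevant integrals reach order $|r|^{-1}$, so only the precise quadratic correction $-e^{-3s}rT^2$ built into $\chi_{x,y}$, combined with quantitative equidistribution of horocyclic averages of $\tau$, produces a remainder that genuinely vanishes as $\bar\delta\to 0$ and $N_\epsilon\to\infty$.
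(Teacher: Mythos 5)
Your matrix identity is equivalent to the paper's \eqref{clos2} and \eqref{marep}, your derivation of \eqref{distgt} is essentially the paper's argument, and the change of variables $\sigma=\chi_{x,y}(\sigma')$ is the right first step toward \eqref{axt}. The integration by parts you perform on the quadratic remainder $e^{-3s}r\bigl[T^2-2\int_0^{u(T,x)}\tau(h_\sigma x)\sigma\,d\sigma\bigr]$, using $F(\sigma)=\sigma+o(\sigma)$ (from unique ergodicity and $\int_M\tau\,d\mu=1$), matches in spirit the paper's treatment of the term $\int_0^{u(T,x)}\tau(h_{\chi(t)}y)(\chi'(t)-e^{-2s})\,dt$ (see the argument surrounding \eqref{sho}).

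However, there is a genuine gap in the step where you pass from $\tau\bigl(h_{\chi_{x,y}(\sigma')}y\bigr)$ to $(\tau\circ g_{-s})(h_{\sigma'}x)$. Your pointwise estimate carries an error $O(|r|\sigma'+|\bar v|+|s|^2)$, with the dominant contribution $O(|r|\sigma')$ coming from the $\sigma'$-dependent $X$-component of $B_{\sigma'}^{-1}$ (which is of size $r\chi(\sigma')\sim r\sigma'$, not merely $O(|r|)$). When this error is integrated against $\chi'(\sigma')\,d\sigma'$ over $\sigma'\in[0,u(T,x)]$ with $u(T,x)=O(|r|^{-1/2})$, it produces a term of size $O(|r|\,u(T,x)^2)=O(K^2)$ — which does \emph{not} vanish as $\bar\delta\to 0$ or $N_\epsilon\to\infty$. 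Your phrase ``the linear-in-$r$ main contributions reassemble exactly'' conceals this: there is no exact reassembly, and the bound does not close by brute force. The paper addresses this precisely via a second, separate integration by parts: after identifying the leading error as $-re^s\chi(t)\,(X\tau)(\cdot)$, one integrates by parts in the expression $\int_0^{u(T,x)}t\,(X\tau)(\cdots)\,dt$ and exploits $\int_M X\tau\,d\mu=0$ together with unique ergodicity to get $\int_0^t(X\tau)(g_{-s}h_\theta x)\,d\theta=o(t)$ uniformly (see the argument establishing \eqref{shosho} and the prior reduction \eqref{shoint1}--\eqref{shoint2}). This is a distinct cancellation from the one you carry out for the quadratic remainder (which uses $\int_M\tau\,d\mu=1$, not $\int_M X\tau\,d\mu=0$). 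Your concluding paragraph acknowledges that ``the relevant integrals reach order $|r|^{-1}$'' but attributes the resolution solely to the quadratic correction inside $\chi$; in fact the same mechanism of integration by parts plus quantitative equidistribution must be invoked twice, once for each $O(1)$-sized piece, and the second instance is missing from your argument.
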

\begin{remark}\label{itisR}\em
Proposition~\ref{thm:hor}, in particular,  implies the Ratner property for the time-changed flow $(\tilde{h}_t)$ (see {S}~\ref{Rproperties} for the definition). In order to see this, let $\tilde{T}:=\min(|r|^{-1/2},|s|^{-1})$. Notice that by Lemma~\ref{taylor:form}, for every $T={\rm O}(\tilde{T})$, we have
 $$
 \left|\int_{0}^{u(T,x)}(\tau-\tau\circ g_s)(h_tx)\,dt\right|=
 \left|\int_{0}^{u(T,x)}s(X\tau)(h_tx)+{\rm O}(\epsilon^3 |s|)\,dt\right|\leq$$$$
 |s|\left|\int_{0}^{u(T,x)}(X\tau)(h_tx)\,dt\right|+{\rm O}(\epsilon^3)=
 {\rm O}(\epsilon^3).
 $$
 Therefore, by \eqref{axt}, we have $|A_x(T)|={\rm O}(\epsilon^2)$ (we use {Proposition}~\ref{thm:hor} with $\epsilon$ replaced by $\epsilon^2$) and therefore, by \eqref{distgt},
 $$
 d_M(\tilde{h}_{T}{x},\tilde{h}_{\chi_{x,y}(T)}y)={\rm O}(\epsilon^2),
 $$
 for every $T={\rm O}(\tilde{T})$.
Recall that $\chi_{x,y}(t)=e^{-2s}t-e^{-3s}rt^2$ and hence there exists $T_0={\rm O}(\tilde{T})$ such that $\chi(T_0)=T_0\pm 1$ and for every $t\in [T_0,(1+\epsilon^4)T_0]$, $|\chi(t)-\chi(T_0)|={\rm O}(\epsilon^2)$. Hence, for $t\in [T_0,(1+\epsilon^4)T_0]$, we have
 $$
 d_M(\tilde{h}_{t}{x},\tilde{h}_{t\pm 1}y)<\epsilon,
 $$
 what finishes the proof of Ratner property.
\end{remark}

The proof of Proposition~\ref{thm:hor}, which is a little long and {tedious, is postponed} to Appendix~\ref{app:horocycleR}.

\subsection{Deviations of ergodic averages estimates}\label{sec:deviations}
In order to prove Theorem \ref{main:prop}, in addition to Proposition~\ref{thm:hor} 
 stated in the previous section, we will also need the following estimates on ergodic averages for time-changes of {a} horocycle flow which can be deduced from the work of Bufetov and Forni \cite{BF} (see Appendix~\ref{app:BF}).

\smallskip
Recall that from the representation theory of $SL(2,\R)$ it follows that {the space $L^2_0(M)$ of zero mean square-integrable functions has a decomposition into irreducible (for the regular representation of $SL(2,\R)$) components, parametrized by the eigenvalues of the
Casimir operator $\square$ (listed with multiplicities) of the following form}:
$$
L^2_0(M)=\bigoplus_{\mu\in Spec(\square)\setminus\{0\}}H_\mu= \mathcal{H}_{p}\oplus \mathcal{H}_{c}\oplus \mathcal{H}_{d},
$$
where
$$
\mathcal{H}_{p}=\bigoplus_{\begin{subarray}{c}\mu\in Spec(\square),\\ \mu\geq 1/4\end{subarray}} H_\mu,\qquad \mathcal{H}_{c}=\bigoplus_{\begin{subarray}{c}\mu\in Spec(\square),\\ \mu\in(0,1/4)\end{subarray}} H_\mu,\qquad \mathcal{H}_{d}=\bigoplus_{\begin{subarray}{c}\mu\in Spec(\square),\\ \mu=-n^2+n,\,n\in\Z\setminus\{0\}\end{subarray}} H_\mu.
$$

It follows from \cite{Fl-Fo} that the cocycles generated by the functions (cf.\ \ref{cocy1} with $R_s$ replaced by $h_s$) supported on $\mathcal{H}_d\setminus H_0$ are coboundaries for $(h_t)$. We will consider
time-changes which belong to  {the class $B^+(M)$ defined} as follows. Let {$W^\alpha (M)=W^{\alpha,\alpha}(M)\subset L^2(M) $ denote the  standard Sobolev space}. Let $B(M)$ be given by
$$
B(M)=W^{6}(M) \backslash \mathcal{H}_{d} \subset L^2_0(M).
$$
Thus, the functions in $B(M)$ are not fully contained in the discrete series, or equivalently, in virtue of the above decomposition of  $L^2_0(M)$, the elements of $B(M)$ are those functions in $W^{6}(M) $ {projecting non-trivially on} $\mathcal{H}_{c}\oplus \mathcal{H}_{d}$. {Finally, we let
$B^+(M)$  consist of those \emph{positive} functions $\tau$ (which can then be taken as roof functions) which are of the form $\tau=c+\tau'$, where $\tau'\in B(M)$ and $c\in\R$}.
We call functions with non-trivial support on  $\mathcal{H}_c$ \emph{of type} \textbf{I} and functions supported on $\mathcal{H}_p$ \emph{of type} \textbf{II}.

\smallskip
The following lemma is a consequence of the work by Bufetov and Forni \cite{BF}\Red{, more precisely,} of  Lemma~\ref{lem:BF}, which is deduced from \cite{BF} \Red{in  Appendix}~\ref{app:BF}
\Red{(for $\beta_\tau$, $\beta_\tau^{(1/4)}$ and $\boldsymbol{v}$, see Lemma~\ref{lem:BF})}.

\begin{lemma}[Consequence of Bufetov-Forni \cite{BF}]\label{lem:buff} For every $\epsilon>0$, there exists $T_\epsilon>0$ such that
\begin{itemize}
\item[(T1)] if $\tau$ is of type \textbf{I} then there exist $\alpha_\tau \in (0,1)$, $c_\tau\in\R\setminus\{0\}$ such that \Red{for}
$|s|<T_\epsilon^{-1}$ and $|T|={\rm O}(|s|^{-1/\alpha_\tau})$,
\Red{we have}
$$
\left|\int_0^T(\tau-\tau\circ g_s)(h_tx)dt-sc_\tau T^{\alpha_\tau}\beta_\tau(g_{\log T}x)\right|<\epsilon^2;
$$

\item[(T2)] if $\tau$ is of type \textbf{II} and  $\beta_\tau^{(1/4)}$ vanishes identically
then \Red{for}
$|s|<T_\epsilon^{-1}$ and $|T|={\rm O}(|s|^{-2})$, \Red{we have}
$$
\left|\int_0^T(\tau-\tau\circ g_s)(h_tx)dt-sT^{1/2}(\frac{d}{ds}\beta_\tau)(0,v\log T,g_{\log T}x)\right|<\epsilon^2;
$$
\item[(T3)] if $\tau$ is of type \textbf{II} and $\beta_\tau^{(1/4)}$ does not vanish identically, then for  $|s|<T_\epsilon^{-1}$ and $|T|={\rm O}(\frac{|s|^{-2}}{\log^2 s})$, \Red{we have}
$$
\left|\int_0^T(\tau-\tau\circ g_s)(h_tx)dt+\frac{s}{2}T^{1/2}\log T\beta^{(1/4)}_\tau(g_{\log T}x)\right|<\epsilon^2.
$$
\end{itemize}
\end{lemma}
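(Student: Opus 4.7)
The plan is to derive Lemma~\ref{lem:buff} directly from the asymptotic expansion for ergodic integrals of the horocycle flow recorded in Lemma~\ref{lem:BF} of Appendix~\ref{app:BF}, applied to the twisted function $f_s := \tau - \tau \circ g_s$. Since $g_s$ preserves the Casimir decomposition, $f_s$ lies in the same spectral component (type \textbf{I} or \textbf{II}) as $\tau$ and, in particular, remains in $B(M)$.

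First, I would use the renormalization identity $g_s h_t = h_{e^{2s}t} g_s$ to rewrite
\begin{equation*}
\int_0^T (\tau - \tau \circ g_s)(h_t x)\,dt \,=\, \int_0^T \tau(h_t x)\,dt \;-\; e^{-2s}\int_0^{e^{2s}T} \tau(h_u g_s x)\,du ,
\end{equation*}
and then apply Lemma~\ref{lem:BF} to each of the two ergodic integrals on the right. This produces leading terms of the form $T^{\alpha_\tau}\beta_\tau(g_{\log T} x)$ in case (T1), $T^{1/2}\beta_\tau(g_{\log T} x)$ in case (T2), and $T^{1/2}\log T\,\beta_\tau^{(1/4)}(g_{\log T} x)$ in case (T3), together with error terms controlled by the $W^6$-norm of $\tau$.

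Next, I would combine the two main terms using the geodesic equivariance of the Bufetov--Forni functionals $\beta_\tau$ and $\beta_\tau^{(1/4)}$, which—owing to the transformation law of invariant distributions in the corresponding irreducible component under $g_s$—scale by a fixed power of $e^{s}$ prescribed by the Casimir eigenvalue, combined with the identity $\log(e^{2s}T) = \log T + 2s$. The $s$-independent contributions cancel exactly, and a Taylor expansion in the small parameter $s$ yields the linear-in-$s$ correction displayed in (T1)--(T3): a nonzero constant $c_\tau$ in (T1) built from the combination of the prefactor $e^{-2s}$, the exponent $e^{2s\alpha_\tau}$, and the geodesic scaling of $\beta_\tau$; the $s$-derivative of $\beta_\tau$ evaluated at a renormalized point in (T2); and the resonant $\log T$ factor in (T3) arising from the spectral value $\mu=1/4$.

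Finally, the stated ranges of $|T|$ are dictated by requiring the Bufetov--Forni error together with the quadratic-in-$s$ Taylor remainder (which is of order $s^2 T^{2\alpha_\tau}$ in (T1), $s^2 T$ in (T2), and $s^2 T\log^2 T$ in (T3)) to be smaller than $\epsilon^2$; this gives $|T|\leq|s|^{-1/\alpha_\tau}$, $|T|\leq|s|^{-2}$ and $|T|\leq|s|^{-2}/\log^2 s$ respectively. The principal obstacle is to pin down the precise geodesic equivariance of the Bufetov--Forni functionals in each of the three spectral regimes and to verify that the combined errors satisfy the required bound throughout the stated $T$-range; once these two points are settled, the rest of the argument is a routine algebraic Taylor expansion.
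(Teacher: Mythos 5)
Your proposal is not the paper's proof, and as written it has a genuine gap. The paper simply takes the expansion in Lemma~\ref{lem:BF} at face value — that lemma is \emph{already} stated for the difference integral $\int_0^T(\tau-\tau\circ g_s)(h_tx)\,dt$, and its main term places the entire $s$-dependence into the \emph{smooth coefficients} $\phi_\tau(s)$, $\beta_\tau(s,\cdot,\cdot)$, and $e^{-s/2}-1$ (all of which vanish at $s=0$), with the space variable fixed at $g_{\log T}x$. The proof of (T1)--(T3) is then a one-line Taylor expansion of these coefficients at $s=0$, e.g.~$\phi_\tau(s)=\phi_\tau'(0)\,s + O(s^2)$, together with the observation that the $O(s^2)$-remainder, multiplied by $T^{\alpha_\tau}=O(|s|^{-1})$, is $O(|s|)$.

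Your decomposition $\int_0^T(\tau-\tau\circ g_s)(h_tx)\,dt = \int_0^T\tau(h_tx)\,dt - e^{-2s}\int_0^{e^{2s}T}\tau(h_ug_sx)\,du$, followed by applying the Bufetov--Forni asymptotics to each piece separately, breaks down in two places. First, Lemma~\ref{lem:BF} does not apply to a single ergodic integral of a fixed function; what you would really invoke are the raw asymptotics \eqref{eq:asympt_1}--\eqref{eq:asympt_3} with error of size $C_r\Vert\tau\Vert_r\,T^{\alpha_\tau-\gamma_\tau}$ \emph{for each piece}. Applied to $\tau-\tau\circ g_s$ directly, the error picks up the crucial factor $|s|$ from $\Vert\tau-\tau\circ g_s\Vert_r = O(s\Vert\tau\Vert_r)$; applied to the two pieces separately the factor of $|s|$ is lost, and after multiplying by $T^{\alpha_\tau}=O(|s|^{-1})$ the resulting error $O(T^{\alpha_\tau-\gamma_\tau})=O(|s|^{-1+\gamma_\tau/\alpha_\tau})$ blows up as $s\to 0$ instead of being $<\epsilon^2$. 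Second, the change of variables moves the $s$-dependence into the spatial argument of the Bufetov--Forni cocycles (the second integral evaluates them at $g_{\log T}g_{3s}x$ rather than $g_{\log T}x$); these cocycles are only H\"older continuous, so the proposed Taylor expansion of $\beta_\tau^{\pm}$ along the geodesic direction is not justified. Both difficulties are avoided by keeping the integrand $\tau-\tau\circ g_s$ together, as in Lemma~\ref{lem:BF}, where the geodesic equivariance has already been absorbed at the level of the invariant distributions $D_\mu^{\pm}(\tau\circ g_s)$ and yields \emph{explicit smooth} functions of $s$ multiplying a fixed H\"older term.
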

\begin{proof}
The proof in case \Red{(T1)} is a straightforward consequence of the first part of  Lemma~\ref{lem:BF}, since $\phi_\tau(s)=\phi_\tau'(0)s+{\rm O}(s^2)$ and hence (setting $c_\tau:=\phi_\tau'(0)$)
$$
\phi_\tau(s)T^{\alpha_\tau}\beta_\tau(g_{\log T}x)=c_\tau sT^{\alpha_\tau}\beta_\tau(g_{\log T}x)+{\rm O}(s^2T^{\alpha_\tau}),$$
where ${\rm O}(s^2T^{\alpha_\tau})={\rm O}(|s|)$. For \Red{(T2),} we have
$$
\beta_\tau(s,\Red{\boldsymbol{v}} \log T, g_{\log T}x)=s(\frac{d}{ds}\beta_\tau)(0,\Red{\boldsymbol{v}} \log T, g_{\log T}x)+ {\rm O}(s^2)
$$
and a \Red{reasoning analogous to the above one} applies. An analogous reasoning (expanding $e^{-s/2}-1$ at $0$) gives \Red{(T3)}. This finishes the proof.
\end{proof}

In the next section, we will also make use of the following observation.
\begin{remark}\label{remarknew}{\em
Recall that the \Red{linear} space $\mathcal{P}([0,1]):=\{p:[0,1]\to \R:\: p\text{ is a quadratic polynomial}\}$ is finite dimensional and hence any two norms on this space are equivalent. In particular, it follows that there exists a constant $C_\mathcal{P}$ such that for every $U>0$ and every quadratic polynomial $w:\R\to \R$, $w=at^2+bt+c$,
$$
C^{-1}_\mathcal{P}\max\left(|a|U^2,|b|U,|c|\right) <\sup_{t\in [0,U]}|w(t)|<C_\mathcal{P}\max\left(|a|U^2,|b|U,|c|\right).
$$
}
\end{remark}

\subsection{Proof of Theorem \ref{main:prop}}\label{sec:proofmainprop}
In this section \Red{we  prove} Theorem~\ref{main:prop}, by showing how the assumptions of the disjointness criterion can be verified using  Proposition~\ref{thm:hor}  and \Red{Lemma~\ref{lem:buff}}.

\begin{proof}[Proof of Theorem \ref{main:prop}] We will first give a proof \Red{when the assumptions of (T2) are satisfied}, \Red{in fact, this is the most complicated case}. We will then state what changes are needed if $\tau$ satisfies \Red{(T1)} or \Red{(T3)}. Fix $p,q$ and assume WLOG that $0<p<q$. \Red{Let $A:=\overline{\{\boldsymbol vt:t\in \R\}}\subset \T^\infty$.} Notice that if $c'>0$ is small enough, then for some $d_{p,q}>0$, we have
\be\label{eq:horl}
\|d/ds(\beta_{\tau})(0,\cdot,\cdot)\|_{C^0\Red{(A\times M)}}\left((p^{-1/2}-q^{-1/2}\right)-\frac{c'}{p^{1/2}}>2d_{p,q}.
\ee
\Red{Set}
\begin{equation}\label{buflarge}
L:=\{(a,x)\in A\times M: |d/ds(\beta_{\tau})(0,a,x)|> \|d/ds(\beta_{\tau})(0,\cdot,\cdot)\|_{C^0(A\times M)}-c'\}.
\end{equation}
By \Red{the} continuity of $d/ds(\beta_{\tau})(0,\cdot,\cdot)$ there exists a set $S\times R\subset L$, with $\lambda_\infty(S)\mu(R)=: c>0$, \Red{where $\lambda_\infty$ stands for Haar measure on $\T^\infty$}. \Red{Moreover, since $L$ is open, we can choose $S$ being open.}

We will show that the assumptions of Theorem \ref{disjoint.flows} are satisfied. Let $P=\{-r_{p,q},r_{p,q}\}$, where $r_{p,q}>0$ is a small constant to be specified later.  Let $(n_k)\subset\R$ be an increasing sequence going to $\infty$ such that $\boldsymbol v\log (pn_k^2)\in S$ (such $(n_k)$ exists since $S$ is open and the orbit of $0$ under the linear flow in direction $\boldsymbol v$ is dense in $A$).
 Define $X_k:=M$ and $A_k(x)=g_{1/n_k}(x)$. Then obviously $A_k\to$ Id uniformly.
 Fix $\epsilon>0$ and $N\in \N$. Let $E_k=g_{-\log (pn_k^2)}\Red{(R)}$; then $\mu(E_k)=\mu(R)>c$.

  Let $\kappa=\kappa(\epsilon)=\epsilon^4$, $Z(\epsilon,N)=M$ and $\delta=\delta(\epsilon,N)=
\min(\epsilon^{10},T_\epsilon^{-10},\bar{\delta})$, where $\Red{\bar{\delta}=}\bar{\delta}(\epsilon)$ comes from Proposition~\ref{thm:hor} and $T_\epsilon$ comes from Lemma~\ref{lem:buff}. Take $k$ such that $d(A_k,Id)<\delta$, $x\in E_k$, $x'=A_kx$ and $y,y'\in M$ so that $\max(d_M(x,x'),d_M(y,y'))<\delta$. By the definition of $A_k$, it follows that $r(x,x')=0$, $\bar{v}(x,x')=0$ and $s(x,x')=1/n_k$ (see~\eqref{marep}).

Set
\begin{equation}\label{bart}
\bar{T}:=\min(n_k^{2},s(y,y')^{-2}, |r(y,y')|^{-1/2}).
\end{equation}
We claim now that
\begin{equation}\label{jul2}
\left|\frac{1}{p}\int_0^{u(pn_k^2,x)}\left(\tau-\tau\circ g_{n_k^{-1}}\right)(h_tx)dt-
\frac{1}{q}\int_0^{u(qn_k^2,y)}\left(\tau-\tau\circ g_{n_k^{-1}}\right)(h_ty)dt\right|\geq d_{p,q}.
\end{equation}
Indeed, notice first that by \Red{(T2)}, for every $W={\rm O}(n_k^2)$ and every $z\in M$, we have (since $|u(T,\cdot)-T|<\epsilon^4T$)
$$
\left|\int_0^{u(W,z)}\left(\tau-\tau\circ g_{n_k^{-1}}\right)(h_tz)dt-
\int_0^{W}\left(\tau-\tau\circ g_{n_k^{-1}}\right)(h_tz)dt\right|\leq
$$
$$
\left|\int_0^{u(W,z)-W}\left(\tau-\tau\circ g_{n_k^{-1}}\right)(h_t(h_W z))dt\right|\Red{=}{\rm O}(n_k^{-1}(\epsilon^4n_k^2)^{1/2})={\rm O}(\epsilon^2).
$$
Hence, \eqref{jul2} is by \Red{(T2)}, up to ${\rm O}(\epsilon^2)$, equal to
$$
\left|\frac{1}{p}n_k^{-1}(pn_k^2)^{1/2}(d/ds\beta_\tau)(0, \Red{\boldsymbol v}\log(pn_k^2),g_{\log(pn_k^2)}x)-
\frac{1}{q}n_k^{-1}(qn_k^2)^{1/2}(d/ds\beta_\tau)(0, \Red{\boldsymbol v}\log(qn_k^2),g_{\log(qn_k^2)}y)\right|.
$$
Since $x\in E_k$, we have $(\Red{\boldsymbol v}\log(pn_k^2),g_{\log(pn_k^2)}x)\in \Red{S\times R} \subset L$ and \Red{therefore, by  \eqref{buflarge} and the triangle inequality, the above expression is larger than}
$$
\|d/ds(\beta_{\tau})(0,\cdot,\cdot)\|_{C^0(A\times M}\left(|p^{-1/2}-c_{p,q}|-q^{-1/2}\right),
$$
which, by \eqref{eq:horl}, gives \eqref{jul2}.


By \eqref{distgt}, in Proposition~\ref{thm:hor} (with $K$ to \Red{be} specified at the end of the proof),  for $x,x'$ and then for $y,y'$, using \eqref{bart}, for $N_\epsilon\leq t\leq K\bar{T}$, we get

\be\label{eq:f}
d_M(\tilde{h}_{pt}x,\tilde{h}_{\chi_{x,x'}(pt)+A_x(pt)}x')\leq \epsilon^2
\;\text{ and }\;
d_M(\tilde{h}_{qt}y,\tilde{h}_{\chi_{y,y'}(qt)+A_y(qt)}y')\leq\epsilon^2.
\ee

Define $a(t):=\frac{1}{q}(\chi_{y,y'}(qt)+A_y(qt))$. By the definition of $\chi(\cdot)$ and $A_y(\cdot)$ (from \eqref{funce}, we have that $A_y(\cdot)$ is smooth on every interval $I\subset [0,K\bar{T}]$), it follows that $a$ satisfies $(SAL)$ from Definition \ref{agood} for every interval $I=[\eta_1\bar{T},\eta_2\bar{T}]\subset [0,K\bar{T}]$.~\footnote{Indeed, in view of~\eqref{funce}, $$a(t)=\frac1q\int_0^{\chi_y(u(qt,y))}\tau(h_\theta y)\,d\theta=
\frac1q\int_0^{\chi_y(u(qt,y))-u(qt,y)}\tau(h_\theta y)\,d\theta+t.$$
Now, $\chi_y(u(qt,y))-u(qt,y)={\rm O}(|s|t)={\rm O}(|s|\bar{T})\leq \epsilon\bar{T}$ and since the integrand is bounded, we only need to show that $\frac{d}{dt}\left(\chi_y(u(qt,y))-u(qt,y)\right)={\rm O}(\epsilon^2)$. The latter follows from the fact that $u'(qt,y)$ is bounded and $\chi_y'(u(qt,y))-1=e^{-2s}-1+{\rm O}(r\bar{T})$.}

From \eqref{eq:f}, for all $N_\epsilon\leq t\leq \tilde{T}$, we have
$$
d_M(\tilde{h}_{qt}y,\tilde{h}_{qa(t)}y')\leq\epsilon^2.
$$
Let moreover $b(t)=\frac{1}{p}(\chi_{x,x'}(pt)+A_x(pt))$. Then, analogously, by \eqref{eq:f}, we have
$$
d_M(\tilde{h}_{pt}x,\tilde{h}_{pb(t)}x')\leq\epsilon^2.
$$
So to finish the proof of \eqref{forw}, it is enough to \Red{show that
there} exists $t_0\in [N_\epsilon,K\tilde{T}]$ such that
\begin{equation}\label{show1}
|a(t_0)-b(t_0)-r_{p,q}|< \epsilon^2,
\end{equation}
for some $r_{p,q}\neq 0$
and, for every $t\in[0,\kappa t_0]$, we have
\begin{equation}\label{show2}
|a(t+t_0)-b(t+t_0)-a(t_0)-b(t_0)|< \epsilon^2.
\end{equation}
By the definition of $\chi_{x,x'}(t)$ and $\chi_{y,y'}(t)$, we get
\begin{equation}\label{defa}
a(t)-b(t)= pol(t)+sub(t),
\end{equation}
where
$$pol(t)=(e^{-2s(y,y')}-e^{-2s(x,x')})t-
qe^{-3s(y,y')}r(y,y')t^{2}, \quad \text{and} \quad
sub(t)=\frac{A_x(pt)}{p}- \frac{A_y(qt)}{q}$$ (here we use the fact that $r(x,x')=0$). Notice that by \eqref{axt} for $x$ and by  \Red{(T2)}, for every $t_0\in [0, K\bar{T}]$ and every $t\in [0,\kappa t_0]$, we have (recalling that $s(x,x')=n_k^{-1}$)
$$
\frac{1}{p}|A_x(p(t+t_0))-A_x(pt_0)=e^{-2s}\int_0^{u(p(t+t_0),x)-u(pt_0,x)}
(\tau-\tau\circ g_{s(x,x')})(h_t(h_{u(pt_0,x)}x))dt+{\rm O}(\epsilon^2)\leq $$
$$
s(x,x')\left(u(p(t+t_0),x)-u(pt_0,x)\right)^{1/2}\|d/ds(\beta_{\tau})(0,\cdot,\cdot)\|_{C^0(A\times M}+{\rm O}(\epsilon^2)={\rm O}(\kappa)+{\rm O}(\epsilon^2),
$$
where the \Red{(only)} inequality \Red{above holds} since $|u(p(t+t_0),x)-u(pt_0,x)|={\rm O}(t)=\kappa{\rm O}(\bar{T})=\kappa{\rm O}(s(x,x')^{2})$. Therefore,
$\frac{1}{p}|A_x(p(t+t_0))-A_x(pt_0)|={\rm O}(\kappa)+{\rm O}(\epsilon^2)$.
An analogous reasoning for $y$ shows that $\frac{1}{q}|A_x(q(t+t_0))-A_x(qt_0)|={\rm O}(\kappa)+{\rm O}(\epsilon^2)$. Hence, \Red{for every $t_0\in[0,K\bar{T}]$ and every $t\in[0,\kappa t_0]$, we have}
\begin{equation}\label{subat}
|sub(t+t_0)-sub(t_0)|={\rm O}(\epsilon^2).
\end{equation}

By definition, the functions $a(\cdot)$ and $b(\cdot)$ are continuous on $[0,K\bar{T}]$ and therefore to \Red{prove} \eqref{show1}, it is enough to show that there exists $t_1<K\bar{T}$ such that
\begin{equation}\label{fina}
|a(t_1)-b(t_1)|\geq 2|r_{p,q}|.
\end{equation}

We consider two cases:

\textbf{A.} There exists $t'\in [0,2\bar{T}]$ such that
$|pol(t')|>d_{p,q}/2$ (see \eqref{jul2}). Let then $T'>0$ be the smallest number for which $|pol(T')|=d_{p,q}/2$. By Remark~\ref{remarknew}, for the polynomial $k\mapsto pol(kT')$ \Red{and} for every $\tilde{K}>2$, we have
\be\label{fin:eq}
\sup_{k\in[0,\tilde{K}]}|pol(kT')|\geq C^{-2}_\mathcal{P}\tilde{K}\frac{d_{p,q}}{2}.
\ee
Moreover, by \eqref{axt} for $x$, by  \Red{(T2)} and \eqref{bart} (since $T'<2\bar{T}$), for $k\in[0,K]$, \Red{we have}
\begin{multline}\label{fineq2}
\frac{1}{p}|A_x(pkT')|=\\{\rm O}(\epsilon^2)+p^{-1}|s(x,x')|(pkT'+{\rm O}(\epsilon^4pkT'))^{1/2}\|(d/d\theta\beta_\tau)(0,\cdot,\cdot)\|_{C^0(A\times M)}\leq \\ \frac{2k^{1/2}}{p^{-1/2}}\|(d/d\theta\beta_\tau)(0,\cdot,\cdot)\|_{C^0(A\times M)},
\end{multline}
and similarly
\be\label{fineq3}
\frac{1}{q}|A_y(qkT')|\leq  \frac{2k^{1/2}}{q^{-1/2}}|(d/d\theta\beta_\tau)(0,\cdot,\cdot)\|_{C^0(A\times M)}.
\ee
Notice that if $\tilde{K}>0$ is such that
$$
C^{-2}_\mathcal{P}\tilde{K}\frac{d_{p,q}}{4}>2\tilde{K}^{1/2}(\frac{1}{p^{-1/2}}+\frac{1}{q^{-1/2}})|(d/ds\beta_\tau)(0,\cdot,\cdot)\|_{C^0(A\times M)},
$$
then by \eqref{fin:eq}, \eqref{fineq2} and \eqref{fineq3} it follows that there exists $k\in [0,\tilde{K}]$ such that (see \eqref{defa})

$$
|a(kT')-b(kT')|\geq  |pol(kT')|-|sub(kT')|\geq C^{-2}_\mathcal{P}\tilde{K}\frac{d_{p,q}}{4}
$$
what finishes the proof of \eqref{fina} (and hence also the proof of \eqref{show1}, where $t_0\leq kT'$) if $|r_{p,q}|$ is small enough and $\tilde{K}<K$. Moreover, by Remark~\ref{remarknew} and the definition of $T'$, it follows that for every $t\in [0,\kappa t_0]$, we have
$$
|pol(t+t_0)-pol(t_0)|={\rm O}(\epsilon^2).
$$
This and \eqref{subat} (see \eqref{defa}) finishes the proof of \eqref{show2} and the proof of Theorem~\ref{main:prop} is \Red{complete} in this case.

\textbf{B.} For every $t'\in [0,2\bar{T}]$, we have
$|pol(t')|<\frac{d_{p,q}}{2}$. This, by Remark \ref{remarknew}, in particular means that
\begin{equation}\label{distgeo}
|e^{-2n_k^{-1}}-e^{-2s(y,y')}|\leq \frac{C_\mathcal{P}d_{p,q}}{2\bar{T}}
\end{equation}
and
\begin{equation}\label{distho}
|r(y,y')|\leq \frac{C_\mathcal{P}d_{p,q}}{4q\bar{T}^2}.
\end{equation}
We claim moreover that
\be\label{lmh}
2\bar{T}\geq n_k^2.
\ee
Indeed, if not then by \eqref{bart}, $\bar{T}=\min(|r(y,y')|^{-1/2},s(y,y')^{-2})$. If $\bar{T}=s(y,y')^{-2}$, then by \eqref{lmh}, we get $-2n_k^{-1}<-2^{1/2}|s(y,y')|$. But then (by Taylor's formula)
$$
|e^{-2n_k^{-1}}-e^{-2s(y,y')}|\geq e^{-2s(y,y')}-e^{-2^{1/2}s(y,y')}\geq \frac{2-2^{1/2}}{2}|s(y,y')|
$$
which is a contradiction with \eqref{distgeo}. On the other hand, if
$\bar{T}=|r|^{-1/2}$, then we get a contradiction with \eqref{distho} (by making $d_{p,q}$ smaller if necessary). Hence \eqref{lmh} holds.
 Notice that by \eqref{distgeo} and \eqref{distho} (see also Remark \ref{remarknew}), for every $t_0\in[0,2\bar{T}]$ and every $t\in[0,\kappa t_0]$, we have
$$pol(t+t_0)-pol(t_0)={\rm O}(\epsilon^{2}).$$
This and \eqref{subat} (see \eqref{defa}) \Red{show} that  \eqref{show2} holds for every $t_0\leq 2\bar{T}$. So we only have to show that \eqref{fina} holds for some $t_0\leq 2\bar{T}$.

Notice that by Lemma \ref{taylor:form} for $g_{n_k^{-1}}h_ty$
\Red{(recall that $n_k^{-1}=s(x,x')$)} and $g_{s(y,y')^{-1}}h_ty$, \Red{we have}
$$
\frac{1}{q}\left|\int_0^{u(qn_k^2,y)}\left(\tau-\tau\circ g_{n_k^{-1}}\right)(h_ty)dt-\int_0^{u(qn_k^2,y)}\left(\tau-\tau\circ g_{s(y,y')}\right)(h_ty)dt\right|=
$$
$$
\frac1q\left|\int_0^{u(qn_k^2,y)}(s(x,x')-
s(y,y'))(X\tau)(g_{s(y,y')}h_ty)+{\rm O}(\epsilon^3|s(x,x')-s(y,y')|)\,dt\right|.
$$
Moreover, using \Red{the equality} $g_{s(y,y')}h_ty=h_{e^{-2s(y,y')}t}g_{s(y,y')}$, substituting $t'=e^{-2s(y,y')}t$
and using that $\int_M X\tau d\mu=0$, we get that the above expression is equal to
$$
|s(x,x')-s(y,y')|{\rm O}(\epsilon^3 u(qn_k^2,y))=|s(x,x')-s(y,y')|{\rm O}(\epsilon^3n_k^2)={\rm O}(\epsilon^3),
$$
where the last equality follows by~\eqref{distgeo} and~\eqref{lmh}. Therefore, by \eqref{jul2}
and \eqref{axt}, it follows that if we set $t_1=n_k^2$ (we also use \eqref{lmh} to be able to use \eqref{axt}), then
$$
|sub(t_1)|\geq \frac{2d_{p,q}}{3}.
$$
By \eqref{lmh}, it follows that $t_1=n_k^2<2\bar{T}$ and hence \Red{by the assumption in} \textbf{B.}, we have $|pol(t_1)|<\frac{d_{p,q}}{2}$. This implies \Red{that}
$$|a(t_1)-b(t_1)|\geq |sub(t_1)|-|pol(t_1)|\geq \frac{d_{p,q}}{6}$$
and this gives \eqref{fina} (if $|r_{p,q}|$ is small enough) and hence also \eqref{show1}. This finishes the proof in case $\tau$ satisfies \Red{(T2)}.

In case $\tau$ satisfies the assumptions of \Red{(T1) or (T3)}, we define analogously
$$
L:=\{x\in M: |\Xi(x)|> \|\Xi(\cdot)\|_{C^0(M)}-c'\},
$$
where $\Xi=\beta_\tau$ in $(T1)$ and $\Xi=\beta_\tau^{(1/4)}$ in $(T3)$, and $\mu(L)>c$.

In case $\tau$ satisfies the assumptions of \Red{(T1)}, we set $X_k=M$, $A_kx=g_{k^{-1}}x$ and $E_k=g_{-\log pk^{1/\alpha_\tau}}\Red{(L)}$; and $X_k=M$, $A_kx=g_{k^{-1}}x$ and $E_k=g_{-\log (pk^{2}\log k)}\Red{(L)}$. We define $\kappa, \delta,Z$ as above. Finally, we set
 $$
 \bar{T}:=\min(k^{1/\alpha_\tau},r^{-1/2}(y,y),s(y,y')^{-1/\alpha_\tau})
 $$
 if $\tau$ satisfies \Red{(T1)} and
 $$
\bar{T}:=\min(k^{1/\alpha_\tau},r^{-1/2}(y,y),s(y,y')^{-1/\alpha_\tau})
$$
if $\tau$ satisfies \Red{(T3)}. The rest of the proof follows the lines of the proof in case \Red{(T2)}, i.e. we first show \eqref{jul2} with $u(pk^{1/\alpha_\tau})$ and $u(qk^{1/\alpha_\tau})$ in case \Red{(T1)} and $u(pk^{2}\log k)$ and $u(qk^{2}\log k)$ in case \Red{(T3)}. The functions $a(\cdot)$ and $b(\cdot)$ are defined by the same expressions. The proof (with these new definitions) follows  the same lines as the proof in case \Red{(T2)}. This \Red{completes} the proof of Theorem~\ref{main:prop}.
\end{proof}

\section{Disjointness criterion for special flows}\label{s:Sec5}
In this section we assume that $(T_t)=(T_t^f)$ and $(S_t)=(S_t^g)$ are special flows over  ergodic isometries $T\in Aut(X,\cB,\mu,d_1)$, $S\in Aut(Y,\cC,\nu,d_2)$ and under $f\in L^1_+(X,\cB,\mu)$, $g\in L^1_+(Y,\cC,\nu)$, respectively.  Our aim is to give assumptions on the parameters of special flows so that Theorem~\ref{disjoint.flows} applies.

Then $(T_t^f)$ acts on $X^f$ with metric $d_1^f$ (which is the restriction of the product metric) and $(S_t^g)$ acts on $Y^g$ with metric $d_2^g$. For $(x,s)\in X^f$ and $t\in \R$, we denote by $n(x,s,t)\in \Z$ the unique number for which
$$
f^{(n(x,s,t))}(x)\leq t+s<f^{(n(x,s,t)+1)}(x),
$$
i.e.
\be\label{do1}
T^f_{t}(x,s)=(T^{n(x,s,t)}x,s+t-f^{n(x,s,t)}(x)).\ee
We define $m(y,r,t)$  analogously for $(y,r)\in Y^g$.
We assume that
\be\label{do2}
\mbox{$f$ and $g$ are bounded away from zero.}
\ee
Note that
\be\label{do2A}
n(x,s,t+1)-n(x,s,t)\leq 1/\min(1,\min f).\ee

\begin{proposition}[Disjointness criterion for special flows]\label{cocycle} Let $P=\{-p,p\}$ with $p\neq0$, $0<c<1/2$ and let $$\zeta:=\frac{\int_Xf d\mu}{\int_Yg d\nu}.$$

\noindent Assume that we have a sequence of measurable sets $$(X'_k)\subset X,\qquad  \mu(X'_k)\to\mu(X),$$
together with a sequence of automorphisms
$$A'_k\in Aut(X'_k,\cB|_{X'_k},\mu|_{X'_k}),\ k\geq1,\; \text{such\ that}\
 A'_k\to Id \ \text{uniformly}.$$
Assume moreover that for every $\epsilon'>0$ and $N'\in \N$  there exist
$$(E'_k)=(E'_k(\epsilon'))\subset \cB,\; \mu(E'_k)>c \ \text{for} \ k\geq 1,$$
as well as $0<\kappa'=\kappa'(\epsilon')<\epsilon'$, $\delta'=\delta(\epsilon',N')>0$ and a set
$$Z'=Z(\epsilon',N')\subset Y,\qquad  \nu(Z')\geq (1-\epsilon')\nu(Y),$$ such that for all $y,y'\in Z'$ satisfying $d_2(y,y')<\delta'$, every $k$ such that $d_1(A'_k,Id)<\delta'$ and every $x\in E'_k\cap X'_k$, $x':=A'_{k}x$ there are
$$M'\geq N,\; L'\geq 1,\;  \frac{L'}{M'}\geq \kappa'$$
and $p\in P$ for which one of the following sets of estimates, {\bf (F)} or {\bf (B)} (where {\bf F} stands for Forward and {\bf B} for Backward), holds:
\begin{enumerate}
\item[{\rm\bf (F)}]   \textbf{Forward control}:
\begin{enumerate}
\item[{\rm (F1)}] For any $t$ such that $n(x,s,t)\in[M',M'+L']$, we have\footnote{Here $s,r\in\ \R$ are any numbers for which $(x,s)\in X^f$ and $(y,r)\in Y^g$.}
$$\max\left(\left|n(x,s,t)\int_X fd\mu-t\right|, \left|m(y,r,t)\int_Y gd\nu-t\right|\right)<t^{1/3};$$
\item[{\rm (F2)}] for $w\in [M',M'+L']\cap \Z$, we have
$$\left|(f^{(w)}(x)-f^{(w)}(x'))-(g^{([\zeta w])}(y)-g^{([\zeta w])}(y'))-p\right|<\epsilon';$$
\item[{\rm(F3)}] for $w\in [0,\max(1,\zeta)(M'+L')]\cap \Z$, we have
$$ |f(T^{w}x)-f(T^{w}x')|<\kappa(\epsilon')\text{ and } |g(S^{ w}y)-g(S^{ w}y')|<\kappa(\epsilon');$$
\item[{\rm(F4)}] for every  $w,u\in\left[\frac{\zeta M'}{2},2\zeta (M'+L')\right]$, $|w-u|\leq M'^{1/2}$, we have
$$
\left|(g^{(w)}(y)-g^{(w)}(y'))-(g^{(u)}(y)-g^{(u)}(y'))\right|<\epsilon'.$$
\end{enumerate}

\item[\text{\bf (B)}] \textbf{Backward control}:
\begin{enumerate}
\item[{\rm(B1)}] For any $t$  such that $n(x,s,-t)\in[M',M'+L']$, we have
$$\max\left(\left|n(x,s,-t)-t\int_X fd\mu\right|, \left|m(y,r,-t)-t\int_Y gd\mu\right|\right)<t^{1/3};
$$
\item[{\rm(B2)}] for $w\in [M',M'+L']\cap \Z$, we have
$$\left|(f^{(-w)}(x)-f^{(-w)}(x'))-(g^{([-\zeta w])}(y)-g^{([-\zeta w])}(y'))-p\right|<\epsilon';$$

\item[{\rm(B3)}] for $w\in [0,\max(1,\zeta)(M'+L')]\cap \Z$, we have
$$\left|f(T^{-w}x)-f(T^{-w}x')|<\kappa(\epsilon')\text{ and } |g(S^{- w}y)-g(S^{- w}y')\right|<\kappa(\epsilon');
$$
\item[{\rm(B4)}] for every $w,u\in\left[\frac{\zeta M'}{2},2\zeta (M'+L')\right]$, $|w-u|\leq M'^{1/2}$, we have
$$ \left|(g^{(-w)}(y)-g^{(-w)}(y'))-(g^{(-u)}(y)-g^{(-u)}(y'))\right|
<\epsilon'.
$$
\end{enumerate}
\end{enumerate}

Then  $(T_t^f)$ and $(S_t^g)$ are disjoint.
\end{proposition}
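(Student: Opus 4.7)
The plan is to apply the general disjointness criterion (Theorem~\ref{disjoint.flows}) to the flows $(T_t^f)$ and $(S_t^g)$ on $(X^f, d_1^f)$ and $(Y^g, d_2^g)$ by lifting the given data from the bases $X$ and $Y$ to the special flow spaces and then using assumptions (F) or (B) to verify the two closeness estimates and the $\epsilon$-goodness of the reparametrization~$a(t)$.

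For the lift, I would set $X_k := \{(x,s)\in X^f : x\in X'_k\}$ and define $A_k(x,s):=(A'_kx,s)$. Since $A'_k\to\mathrm{Id}$ uniformly and the projection $X^f\to X$ preserves $d_1$ on the first coordinate, $A_k\to \mathrm{Id}$ uniformly in the $d_1^f$ distance, and by Fubini $\mu^f(X_k)\to 1$. Given $\epsilon>0$ and $N$, I would apply the hypothesis in the base with $\epsilon'=\epsilon'(\epsilon)$ and $N'=N'(\epsilon,N)$ both to be chosen, and define $E_k:=\{(x,s)\in X^f : x\in E'_k\}$ and $Z:=\{(y,r)\in Y^g : y\in Z'\}$; using \eqref{do2} these inherit positive measure and measure close to one respectively. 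Finally, I would set $M:=M'\int_Xf\,d\mu$ and $L:=L'\int_Xf\,d\mu$, so that the ratio $L/M$ is the same as $L'/M'$ and hence $\geq\kappa'$.

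The heart of the argument is to define the almost linear reparametrization $a(t)$ so that the two inequalities \eqref{forw} (or \eqref{backw} in the backward case) hold simultaneously. On each interval of $t$ where $n(x,s,t)=n$ is constant, I would set
\[
a(t) := t + \bigl(f^{(n)}(x')-f^{(n)}(x)\bigr) - p,
\]
so that $T^f_{a(t)+p}(x',s)=(T^n x', \sigma)$ with the same second coordinate as $T^f_t(x,s)=(T^n x,\sigma)$; the isometry $T$ and $A_k\to\mathrm{Id}$ then give the first inequality in \eqref{forw}. For the $Y$-side, writing $m:=m(y,r,t)$, the closeness $d_2^g(S^g_t(y,r),S^g_{a(t)}(y',r))<\epsilon$ reduces to $|a(t)-t-(g^{(m)}(y')-g^{(m)}(y))|<\epsilon$ (together with $m(y',r,a(t))=m$, which follows from it by \eqref{do2}). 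By (F1) one has $n\int_X f\,d\mu \approx t\approx m\int_Y g\,d\nu$, so $m=[\zeta n]+E$ with $|E|\leq (M')^{1/3}\leq (M')^{1/2}$ for $M'$ large. Consequently, (F2) rewrites $a(t)-t$ as $g^{([\zeta n])}(y')-g^{([\zeta n])}(y)-2p+O(\epsilon')$, and (F4) converts this into $g^{(m)}(y')-g^{(m)}(y)-2p+O(\epsilon')$; choosing in $P=\{p,-p\}$ the shift of the opposite sign (i.e.\ applying the criterion with $-p$) absorbs the extra $-2p$ and matches the required identity up to $O(\epsilon')$.

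The main obstacle is checking that $(a,U,c)$ is $\epsilon$-good in the (PAL) sense of Definition~\ref{agood}: the pieces $(c_i,d_i)$ are the intervals of constancy of $n(x,s,\cdot)$ inside $[M,M+L]$, slightly shrunk to stay away from the discontinuities coming from \eqref{do1}. Their number $v$ is bounded by $L/\min f$, so choosing $d=\min f$ works. The jumps $R_{i+1}-R_i = f(T^{n_i}x')-f(T^{n_i}x)$ are controlled by (F3) (with $\kappa'$ small), giving $|R_{i+1}-R_i|<\xi$; and the initial $R_1$ is $O(1)$ in $\epsilon'$ by (F2), so is smaller than $\xi|I|=\epsilon' L$ for $M'$ large. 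The proportion $|U|/|I|$ is $1-O(\kappa'/\min f)$, which is $>1-\xi$ for $\kappa'$ small. Putting these bounds together with a bookkeeping of how the parameters $\epsilon',\kappa',N'$ should be chosen in terms of $\epsilon, N, c$ produces the hypotheses of Theorem~\ref{disjoint.flows}, and the backward case (B) is handled symmetrically by negating $t$ throughout.
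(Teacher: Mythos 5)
Your overall strategy is the same as the paper's: lift $X'_k, A'_k, E'_k, Z'$ from the base to the special flow spaces, define $a(t)$ as a step function using $f^{(n(x,s,t))}(x')-f^{(n(x,s,t))}(x)$ and the shift, verify \eqref{forw} / \eqref{backw} and the (PAL) estimate via (F1)--(F4), and feed this into Theorem~\ref{disjoint.flows}. However, there is a genuine gap in how you handle the $Y$-side, and a second issue with your choice of $M,L$.

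The $Y$-side gap is the main one. You claim that $m(y',r',a(t))=m(y,r,t)$ ``follows from it by \eqref{do2}''; it does not. From $|a(t)-b(t)|<\epsilon$ alone you can only conclude that $a(t)+r'-g^{(m)}(y')$ lies in an interval like $(-\delta-O(\epsilon),\, g(S^my)+\delta+O(\epsilon))$, so when $t+r-g^{(m)}(y)$ is within $\delta$ of $0$ or of $g(S^my)$ the index $m$ can jump and the two orbits land in different fibers. Boundedness of $g$ from below (\eqref{do2}) controls the \emph{number} of such discontinuities, but it does not prevent them. You need to also remove from $U$ a neighbourhood of the $Y$-side discontinuities, i.e.\ restrict to $t$ with $S^g_t(y,r)$ at distance $\geq \epsilon/100$ from $\partial Y^g$, exactly as you already (correctly) do for the $X$-side. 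The paper does this by intersecting with the set of times where both $T^f_t(x,s)\in W^f(\epsilon)$ and $S^g_t(y,r)\in W^g(\epsilon)$, and quantifies the remaining measure via a Birkhoff-ergodic-theorem set $V_1\times V_2$; your counting argument (number of boundary crossings times width of each removed window) would also work, but it must be applied to \emph{both} $X$- and $Y$-discontinuities and you should verify that it does not increase the piece count $v$ beyond $O(L)$ (it does not, since $f$ and $g$ are bounded below). This also forces the $Z$ and $E_k$ you lift to carry the Birkhoff-type information (the paper's $V_1,V_2$), or to be compatible with the counting estimate.

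A second, more minor, gap: setting $M:=M'\int_X f\,d\mu$ and $L:=L'\int_X f\,d\mu$ does not guarantee that $n(x,s,t)\in[M',M'+L']$ for all $t\in[M,M+L]$, which is what you need in order to invoke (F2). By (F1), $n(x,s,M)=M'+O((M')^{1/3})$, and the error can push $n(x,s,M)$ below $M'$ (and $n(x,s,M+L)$ above $M'+L'$). The paper avoids this by choosing $M$ and $L$ integers so that $n(x,s,M)\in[M'+1, M'+c_{f,g}^{-1}+1]$ and $n(x,s,M+L)\in[M'+L'-c_{f,g}^{-1}-1, M'+L'-1]$, i.e.\ ``inverting'' the floor function $n(x,s,\cdot)$ rather than rescaling $M'$. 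Your choice can be repaired by shrinking $[M,M+L]$ by $O((M')^{1/3})$ on each side (which still preserves $L/M\geq\kappa$ for $M'$ large), but as written there is a gap. The sign/shift bookkeeping (picking $-p$ in $P$ to cancel the $-2p$) is correctly observed and matches what the paper implicitly does.
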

The rest of this section will be devoted to the proof of this criterion, which will be deduced  from the disjointness criterion for flows given by Theorem~\ref{disjoint.flows}.
\begin{proof} We will show that the assumptions of  Theorem~\ref{disjoint.flows} (with $c$ replaced by $c/2$) are satisfied. Let
$$C_{f,g}:=\max\left(1,\int_X fd\mu,\left(\int_X fd\mu\right)^{-1},\int_Y gd\nu,\left(\int_Y gd\nu\right)^{-1}\right),$$$$ c_{f,g}:=\min(1,\inf_\T f, \inf_\T g)>0.$$
 Fix $\epsilon>0$ and $N\in \N$. Set $\epsilon':=\frac{\epsilon^2}{2c_{f,g}}$. Let $\kappa'=\kappa'(\epsilon')>0$ be obtained from the assumption of our proposition and set
\be\label{adamkappa}\kappa:=\min\left(\frac{\kappa'}{100C^2_{f,g}},\frac1{100},
 \frac1{18(c_{f,g}^{-1}+1)\left(\int f\,d\mu\right)^{2/3}}\right).\ee
By taking $\epsilon'$ still smaller, we can assume that
\be\label{adamkappa1}
\frac{4\kappa'^2}{\int f\,d\mu}<\kappa \epsilon.\ee
Let
$$
W^f(\epsilon):=\left\{(x,s)\in X^f:\;\frac{\epsilon}{100}<s< f(x)-\frac{\epsilon}{100}\right\}
$$
and
$$
W^g(\epsilon):=\left\{(y,r)\in Y^g:\; \frac{\epsilon}{100}<r< g(y)-\frac{\epsilon}{100}\right\}.
$$
For $J\subset \R$ let
$$U_J^f(x,s):=\{t\in J\;:\;T_t^f(x,s)\in W^f(\epsilon)\}, \;\;U_J^g(y,r):=\{t\in J\;:\;S_t^g(y,r)\in W^g(\epsilon)\}.
$$
By Birkhoff ergodic theorem there exist $T_\epsilon>0$, $V_1\subset X^f$, $\mu^f(V_1)\geq (1-\frac{\epsilon}{2})\mu^f(X^f)$ and $V_2\subset Y^g$, $\nu^g(V_2)\geq (1-\frac{\epsilon}{2})\nu^g(Y^g)$ such that for every $(x,s)\in V_1$, $(y,r)\in V_2$, we have
\begin{equation}\label{tgoodset}
\left|U_J^f(x,s)\right|>\left(1-\frac{\epsilon}{2}\right)|J|\text{ and }
\left|U_J^g(y,r)\right|>\left(1-\frac{\epsilon}{2}\right)|J|,
\end{equation}
holds for all $J=[U,U+V]$, with $\frac{|V|}{|U|}\geq \kappa'$, $|U|\geq T_\epsilon$.

Define $N':=[\max(2T_\epsilon,\kappa^{-3}N,\epsilon^{-1},c_{f,g}^{-4})]+1$. Moreover, by enlarging $N'$ if necessary, we have
\be\label{ajajaj}
N_1'^{1/2}\leq \frac{\kappa'}{10}\int f\,d\mu\cdot N'_1\text{ for each }N_1'\geq N'.\ee
 Let
$$X_k:=\{(x,s)\in X^f:x\in X'_k, (A'_kx,s)\in X^f\}$$
and set
$$A_k(x,s):=(A'_kx,s)$$
for $(x,s)\in X_k$. Finally, set
$$E_k=E_k(\epsilon):=\left\{(x,s)\in X^f\;:\;x\in E'_k(\epsilon'), (x,s)\in V_1, s<\frac1{\epsilon^{j_0}}\right\},$$
where $j_0$ is chosen in such a way that whenever for a set $C\subset X$, we have $\mu(C)>c$ then $\mu^f(\{(x,s)\in X^f:\:x\in C, s<1/\epsilon^{j_0}\})>\frac34c$ (the existence of $j_0$ follows immediately from the fact that $f$ is in $L^1$). By the properties of $A'_k,X'_k$ and $E'_k(\epsilon')$ it follows that $A_k,X_k$ and $E_k$ satisfy the assumptions of Theorem~\ref{disjoint.flows}. Let
$$Z(\epsilon,N)=\{(y,r)\in Y^g : y\in Z'(\epsilon',N'), r<\epsilon^{-j_0}\}\cap V_2~\footnote{Again, by changing $j_0$ if necessary, we can assume that $\nu^g(Z(\epsilon,N))>1-\epsilon$.}$$ and
$$\delta(\epsilon,N):=\min(\delta'(\epsilon',N'),
\frac{\epsilon}{1000}).$$ Take $k$ such that $ d_1^f(A_k,Id)<\delta$, $(x,s)\in E_k$ and set $(x',s'):=A_k(x,s)$, take $(y,r),(y',r')\in Z(\epsilon,N)$ for which $d^g_2((y,r),(y',r'))<\delta$.
Then $d_1(A'_k,Id)<\delta, x\in E'_k$, $x'=A'_kx$,
$y,y'\in Z'(\epsilon',N')$ and $d_2(y,y')<\delta'$. Hence A. or B. holds for $(x,x')$ and $(y,y')$. We will assume that A. holds and show that \eqref{forw} holds for $((x,s),(x',s')), ((y,r),(y',r'))$ (if B. holds we argue analogously to show that \eqref{backw} holds). Since A. holds, we obtain $M',L'$ and $p$.

Let $M,L\in \N$ be any numbers such that $n(x,s,M)\in [M'+1,M'+c^{-1}_{f,g}+1]\cap \Z$,  and $n(x,s,M+L)\in [M'+L'-c_{f,g}^{-1}-1, M'+L'-1]\cap \Z$. Notice that such $M,L$ always exist. Indeed, this follows by~\eqref{do2} and \eqref{do2A}: for all $(x,s)\in X^f$, $n(x,s,u+1)-n(x,s,u)\leq c_{f,g}^{-1}$.

We also have:
\begin{equation}\label{eq:st}
\text{if } t\in [M,M+L]\text{ then } n(x,s,t)\in [M',M'+L'].
\end{equation}
We will show that $M\geq N$, $\frac{L}{M}\geq \kappa$. By~property (F1) (and~\eqref{eq:st}) for $t=M$, it follows that
\begin{equation}\label{eq:boun}2 M\geq M+M^{1/3}\geq n(x,s,M)\int_Xfd\mu \geq M'\int_Xfd\mu.
\end{equation}
Hence $M\geq N$ (since $M'\geq N'\geq \kappa^{-3}N$, and use~\eqref{eq:boun}). Moreover,
\begin{equation}\label{eq:boun2}
M'\geq \frac{n(x,s,M)}{2}\geq\\ \frac{M-M^{1/3}}{2\int_Xf d\mu}\geq \frac{M}{3\int_Xf d\mu}~\footnote{Use $M'\geq N'\geq c_{f,g}^{-4}+1$ which implies $2M'\geq M'+c_{f,g}^{-1}+1\geq n(x,s,M)$, then~property (F1) and finally that $M\geq6$.}
\end{equation}
and by property (F1) of Prop.~\ref{cocycle} for $t=M+L$, we have
\begin{equation}\label{eq:ml}
\frac{M+L}{2}\leq (M+L) -(M+L)^{1/3}\leq n(x,s,M+L)\int_Xf d\mu \leq (M'+L')\int_Xf d\mu.
\end{equation}

Again by property (F1) first for $t=M+L$ and then $t=M$, the definition of $M$ and $L$ and $\kappa$ (to see that $2(c_{f,g}^{-1}+1)\int f\,d\mu\leq M^{1/3}< (M+L)^{1/3}$), by~\eqref{eq:ml} (and the definition of $\kappa$ to see that $3(2(M'+L')\int f\,d\mu)^{1/3}\leq (M'+L')^{1/2}$),~\eqref{ajajaj},~\eqref{eq:boun2}, we have
\begin{multline*}
L=(M+L)-M\geq\\
n(x,s,M+L)\int_Xf d\mu-(M+L)^{1/3}-n(x,s,M)\int_Xf d\mu-M^{1/3}\geq\\  \left(n(x,s,M+L)-n(x,s,M)\right)\int_Xf d\mu- 2(M+L)^{1/3}\geq\\ \left((M'+L')-c_{f,g}^{-1}-1-M'-c_{f,g}^{-1}-1\right)\int_Xf d\mu- 2(M+L)^{1/3}\geq L'\int_Xf d\mu-3(M+L)^{1/3}\geq\\
L'\int_Xf d\mu-(M'+L')^{1/2}\geq L'\int f\,d\mu-\frac{\kappa'}{10}\int f\,d\mu(L'+M')\geq\\
\frac23L'\int f\,d\mu-\frac{\kappa'}{10}\int f\,d\mu\cdot M'\geq\\
\left(\int f\,d\mu\cdot \kappa'M'\right)\left(\frac23-\frac1{10}\right)\geq \frac12\int f\,d\mu\cdot \kappa'M'\geq\\
\frac{\int f\,d\mu}2\kappa'\frac{M}{3\int f\,d\mu}\geq \kappa M,
\end{multline*}
the last inequality by the definition of $\kappa$.


Let $U:=U^f_{[M,M+L]}(x,s)\cap U^g_{[M,M+L]}(y,r)$. By \eqref{tgoodset}, we have $|U|\geq (1-\epsilon)L$. By the definition of $U$ (see the definitions of $U^f_J,U^g_J$ and $W^f,W^g$), it follows that $U$ is a disjoint union of intervals, $U=\bigcup_{i=1}^v(c_i,d_i)$ and
\begin{equation}\label{eq:smh}
0\leq n(x,s,c_{i+1})-n(x,s,d_i)\leq 1.
\end{equation}

Moreover, $U^f_{[M,M+L]}(x,s)$ is a union of intervals of length $\geq \frac{\inf_\T f}{2}\geq \frac{c_{f,g}}{2}$ and similarly $U^g_{[M,M+L]}(y,r)$ is a union of intervals of length $\geq \frac{\inf_\T g}{2}\geq \frac{c_{f,g}}{2}$. Therefore the number of intervals in  $U^f_{[M,M+L]}(x,s)$ and $U^g_{[M,M+L]}(y,r)$ is bounded by $\frac{2L}{c_{f,g}}$. So $v\leq \frac{4}{c_{f,g}}L$.
 Define
$$a(t):=t+f^{(n(x,s,t))}(x')-f^{(n(x,s,t))}(x)-p.$$
Notice that $a(t)=t+R_i$ on each $(c_i,d_i)$ (since $n(x,s,\cdot)$ is constant on $(c_i,d_i)$) and (cf.~\eqref{eq:smh})
\begin{multline*}
\left|R_{i+1}-R_i\right|=\\
\left|\left(f^{(n(x,s,c_{i+1}))}(x')-f^{(n(x,s,c_{i+1}))}(x)\right)-
\left(f^{(n(x,s,d_i))}(x')-f^{(n(x,s,d_i))}(x)\right)\right|\leq\\
|f(T^{n(x,s,d_i)}x)-f(T^{n(x,s,d_i)}(x'))|\leq \epsilon',
\end{multline*}
the last inequality by~(F3),~\eqref{eq:st} and the definition of $\kappa'$. Using~(F3) and~\eqref{eq:st} again, in view of~\eqref{eq:boun2} and~\eqref{adamkappa1}, we have
$$
|R_1|=|f^{(n(x,s,c_{1}))}(x')-f^{(n(x,s,c_{1}))}(x)-p|\leq$$$$
n(x,s,c_1)\kappa'^2+|p|\leq 2M'\kappa'^2\leq \frac{4\kappa'^2}{\int f\,d\mu} M<\epsilon L.
$$

So $(a,U,\frac{c_{f,g}}{4})$ is $\epsilon$-good. Let us also define $b(t):=t+g^{(m(y,r,t))}(y')-g^{(m(y,r,t))}(y)$. We will show that for $t\in U$, we have
\begin{equation}\label{closx}
d_1^f(T_t^f(x,s),T_{a(t)+p}^f(x',s'))<\epsilon/200 \text{ and }
d_2^g(S_t^g(y,r),S_{b(t)}^g(y',r'))<\epsilon/200.
\end{equation}
Let us show the first inequality, the proof of the second being analogous.\footnote{In the calculation below $s=s'$ but, in general, $r\neq r'$.} By the definition of special flow and since $t\in U$, we have
\begin{equation}\label{spec.flow}
f^{(n(x,s,t))}(x)+\frac{\epsilon}{100}\leq t+s<f^{(n(x,s,t)+1)}(x)-\frac{\epsilon}{100}.
\end{equation}
Notice that
\begin{equation}\label{loc.est}
f^{(n(x,s,t))}(x')<a(t)+p+s'<f^{(n(x,s,t)+1)}(x').
\end{equation}
Indeed, by \eqref{spec.flow} we have
$$
a(t)+p+s'=t+s-f^{(n(x,s,t))}(x)+(s'-s)+f^{(n(x,s,t))}(x')\geq \frac{\epsilon}{100}-\delta+f^{(n(x,s,t))}(x')\geq f^{(n(x,s,t))}(x'),
$$
since $\delta<\frac{\epsilon}{100}$.
Similarly, by the cocycle identity\footnote{Applied to RHS of~\eqref{spec.flow}.} and \eqref{spec.flow}
\begin{multline*}
a(t)+p+s'=t+s-f^{(n(x,s,t))}(x)+(s'-s)+f^{(n(x,s,t))}(x')<\\
f(T^{n(x,s,t)}x)+
f^{(n(x,s,t))}(x')-\frac{\epsilon}{100}+\delta<f^{(n(x,s,t)+1)}(x'),
\end{multline*}
the last inequality since $|f(T^{n(x,s,t)}x)-f(T^{n(x,s,t)}x')|<\frac{\epsilon}{1000}$ (see (F3)) and $\delta<\frac{\epsilon}{1000}$ what completes the proof of~\eqref{loc.est}.
Therefore, by the definition of special flow and \eqref{loc.est}, we have
$$
T^f_t(x,s)=(T^{n(x,s,t)}x,t+s-f^{(n(x,s,t))}(x))$$
and
$$T^f_{a(t)+p}(x',s')=(T^{n(x,s,t)}x',a(t)+p+s'-f^{(n(x,s,t))}(x')).
$$
Hence,  the first inequality in~\eqref{closx} follows because of the definition of $a(t)$, since $d_1^f$ is the product metric, $T$ is an isometry and $\delta<\epsilon/1000$. Analogously, we show the second inequality which completes the proof of~\eqref{closx}.

Notice that by~\eqref{closx}, for $t\in U$, by the triangle inequality\footnote{Here $\partial Y^g:=\{(y,0):\:y\in Y\}\cup\{(y,g(y)):\:y\in y\}$.}, we have
\begin{equation}\label{dist.par}
d_2^g(S^g_{b(t)}(y',r'),\partial Y^g)\geq d_2^g(S^g_{t}(y,r),\partial Y^g)-d_2^g(S^g_{b(t)}(y',r'),S^g_{t}(y,r))\geq \epsilon/200.
\end{equation}
Therefore, by the triangle inequality, for $t\in U$
\begin{multline*}
d_2^g(S^g_t(y,r),S^g_{a(t)}(y',r'))\leq\\ d_2^g(S^g_t(y,r),S^g_{b(t)}(y',r'))+
d_2^g(S^g_{b(t)}(y',r'),S^g_{a(t)-b(t)}(S^g_{b(t)}(y',r')))\leq\\ \epsilon/200+d_2^g(S^g_{b(t)}(y',r'),S^g_{a(t)-b(t)}(S^g_{b(t)}(y',r'))).
\end{multline*}
So to finish the proof of our proposition, by \eqref{dist.par}, it is enough to show that
$$
|b(t)-a(t)|<\epsilon/300 \text{ for } t\in U.
$$
which by the definition of $a(t)$, $b(t)$, follows by showing
$$
|(g^{(m(y,r,t))}(y')-g^{(m(y,r,t))}(y))-
(f^{(n(x,s,t))}(x')-f^{(n(x,s,t))}(x))-p|<\epsilon/300$$
 whenever $t$ is such that $n(x,s,t)\in [M',M'+L']\cap \Z$ (see \eqref{eq:st}).
But by property (F1) and~\eqref{eq:boun2},
$$| m(y,r,t)-\zeta n(x,s,t)|=\frac1{\int g\,d\nu}\left|m(y,r,t)\int g\,d\nu-n(x,s,t)\int f\,d\mu\right|\leq$$
$$2t^{1/3}\leq 4M^{1/3}<M'^{1/2},$$
so by (F4), the above follows by
$$
|(g^{([\zeta w])}(y')-g^{([\zeta w])}(y))-
(f^{(w)}(x')-f^{(w)}(x))-p|<\epsilon/600$$  for  $w=n(x,s,t)\in [M',M'+L']\cap \Z$,
which in turn follows from~property (F2)  since $\epsilon'<\epsilon/1000$. Hence,~\eqref{forw} holds and the proof is complete.
\end{proof}

\section{Arnol'd flows and Birkhoff sums estimates}\label{s:Sec6}
\subsection{Definition of a class of Arnol'd flows}

The class of  Arnol'd special flows $(R_\alpha)_t^{f}$ acting on $\T^f$ which we consider consists of special flows over a rotation $R_\alpha$ such that $\alpha$ and $f$ { satisfy the following assumptions}. 

\smallskip
\noindent {\bf {Assumptions on the} base rotation.}
Let $\alpha\in \T\setminus \Q$ and let $(q_n)$ denote the sequence of denominators of $\alpha$.

\begin{definition}[the Diophantine condition $\cD$]\label{rotationassumptions}\em
We say that $\alpha\in \cD$ if $\alpha\in \mathbb{T}$ and $\alpha$ satisfies:
\begin{enumerate}
    \item[($\cD$1)] Let $K_\alpha:=\{n\in\N:\: q_{n+1}\leq q_n\log^{7/8}q_n\}$. Then
$$
\sum_{i\notin K_\alpha}\log^{-7/8}q_i<+\infty.
$$
 	 \item[($\cD$2)] There exists a subsequence $(n_k)$ such that $$q_{n_k+1}\geq q_{n_k}\log(q_{n_k})\log(\log q_{n_k}), \;  \text{for} \ k\geq1.$$
    \item[($\cD$3)] For every $n\geq 1$ sufficiently large, we have $q_{n+1}\leq q_n\log^{2}q_n$.

   \noindent {\Blue Thus, for some constant $D_\alpha>0$, we have  $q_{n+1}\leq D_\alpha q_n\log^{2}q_n$ for every $n \in \mathbb{N}$.}
\end{enumerate}
\end{definition}

{Let us remark that condition ($\cD$1) first appeared in the work \cite{Fa-Ka} by Fayad and the first author, where it was introduced since it plays a crucial role in proving the SR-property.}
\begin{lemma}
The set $\cD$ has full Lebesgue measure in $\mathbb{T}$.
\end{lemma}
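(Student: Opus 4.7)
The plan is to verify that each of the three defining conditions $(\cD 1)$--$(\cD 3)$ separately holds on a set of full Lebesgue measure in $\T$, and then to intersect. Since $q_{n+1}=a_{n+1}q_n+q_{n-1}$ yields $a_{n+1}\leq q_{n+1}/q_n<a_{n+1}+1$, each condition can be rewritten as a statement on the growth of the partial quotients $(a_n)$ of $\alpha$: $(\cD 3)$ becomes $a_{n+1}\leq\log^2 q_n$ for all $n$ large, $(\cD 2)$ becomes $a_{n+1}\geq \log q_n\cdot\log\log q_n$ for infinitely many $n$, and $(\cD 1)$ becomes $\sum_{n:\,a_{n+1}>\log^{7/8}q_n}\log^{-7/8}q_n<\infty$. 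The two classical inputs I would use are L\'evy's theorem (which on a Lebesgue-full set provides constants $c_1,c_2>0$ and $N_0=N_0(\alpha)$ with $c_1 n\leq\log q_n\leq c_2 n$ for all $n\geq N_0$) together with the uniform Gauss-measure estimate $\mathbb{P}(a_{n+1}\geq k\mid a_1,\dots,a_n)\asymp 1/k$, which, by absolute continuity of the Gauss density with respect to Lebesgue, passes to Lebesgue measure and yields both halves of the Borel--Bernstein dichotomy.

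For $(\cD 3)$, on the L\'evy event one has $\log^2 q_n\geq c_1^2 n^2$ eventually, so it is enough to show $a_{n+1}\leq c_1^2 n^2$ eventually; this follows from Borel--Cantelli applied to the summable series $\sum_n n^{-2}$. For $(\cD 2)$, still on the L\'evy event one has $\log q_n\cdot\log\log q_n\leq 2c_2 n\log n$ for $n$ large, so it suffices to produce infinitely many $n$ with $a_{n+1}\geq 2c_2 n\log n$; this is the divergent half of Borel--Bernstein, since $\sum_n 1/(n\log n)=\infty$.

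The most delicate step, and the one where I expect the main technical effort to lie, is $(\cD 1)$: here one cannot simply apply Borel--Cantelli termwise but must instead control the total expected sum. My plan is to first restrict to the L\'evy event, on which $\log^{-7/8}q_n\leq c_1^{-7/8}n^{-7/8}$ and $\log^{7/8}q_n\geq (c_1 n)^{7/8}/2$, and then to use the conditional Gauss-measure bound to estimate $\mathbb{P}(n\notin K_\alpha)\leq C/\log^{7/8}q_n\leq C' n^{-7/8}$. By Tonelli, the expected value of $\sum_{n\notin K_\alpha}\log^{-7/8}q_n$ restricted to the L\'evy event is then bounded by $\sum_n C n^{-7/8}\cdot n^{-7/8}=\sum_n C n^{-7/4}<\infty$, so the series is finite almost surely. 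The subtle point throughout is the lack of Lebesgue-independence of the $(a_n)$, but this is absorbed by the uniform conditional Gauss estimate (equivalently, by the exponential $\psi$-mixing of the Gauss map). The exceptional set where L\'evy's regime fails up to stage $n$ has superpolynomially small Lebesgue measure (by the CLT and large-deviation bounds for $\log q_n$), so it contributes only a summable tail and does not affect the computations above. Intersecting the three full-measure sets thus obtained concludes the proof.
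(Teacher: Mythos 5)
Your proof is correct, but it follows a genuinely different route from the one in the paper. The paper handles $(\cD2)$ and $(\cD3)$ directly via Khintchine's theorem in its Diophantine-approximation form: choosing $\psi(q)=1/(q\log q\log\log q)$ (divergent case) respectively $\psi(q)=1/(q\log^2 q)$ (convergent case), feeding the resulting infinitely-many / finitely-many approximations through Legendre's theorem to identify them with convergents, and then reading off the required growth of $q_{n+1}/q_n$ from the two-sided estimate $\frac{1}{2q_nq_{n+1}}<|\alpha-p_n/q_n|<\frac1{q_nq_{n+1}}$. For $(\cD1)$ the paper gives no argument at all and implicitly defers to the reference \cite{Fa-Ka} where the condition was introduced. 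Your approach instead translates every condition into a statement on the partial quotients $(a_n)$ and assembles L\'evy's theorem, the Borel--Bernstein dichotomy, and the uniform conditional Gauss-measure estimate $\mathbb{P}(a_{n+1}\geq k\mid a_1,\dots,a_n)\asymp 1/k$; this is heavier machinery for $(\cD2)$ and $(\cD3)$, where the Khintchine--Legendre route is shorter, but it has the advantage of yielding a self-contained proof of $(\cD1)$ (which the paper omits). Two small simplifications of your argument are worth recording. First, L\'evy's theorem is only genuinely needed for $(\cD2)$, since the \emph{upper} bound $\log q_n\leq c_2 n$ is the non-trivial half; for $(\cD1)$ and $(\cD3)$ the deterministic lower bound $q_n\geq 2^{(n-1)/2}$, hence $\log q_n\gtrsim n$, already does the job, which removes the need to control an exceptional set of superpolynomially small measure. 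Second, your estimate ``$\mathbb{P}(n\notin K_\alpha)\leq C/\log^{7/8}q_n$'' is really a bound on the conditional probability given $a_1,\dots,a_n$ (the threshold $\log^{7/8}q_n$ being itself measurable with respect to that $\sigma$-algebra), so the Tonelli step should be phrased as
$\mathbb{E}\bigl[\mathbbm{1}\{n\notin K_\alpha\}\log^{-7/8}q_n\bigr]
=\mathbb{E}\bigl[\log^{-7/8}q_n\,\mathbb{P}(n\notin K_\alpha\mid a_1,\dots,a_n)\bigr]
\leq C\,\mathbb{E}\bigl[\log^{-7/4}q_n\bigr]\leq C' n^{-7/4}$,
which is summable; the way you wrote it, the randomness of the threshold is swept under the rug, though the conclusion is the same. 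In $(\cD2)$ one should also be explicit that the constant in the deterministic sequence fed into Borel--Bernstein must dominate the L\'evy constant $\pi^2/(12\log 2)>1$, since $\sum 1/(Cn\log n)$ diverges for every $C>0$ and thus this is harmless; the same cosmetic factor-of-two issue (coming from the $\frac1{2q_nq_{n+1}}$ lower bound) appears in the paper's Khintchine argument and is equally harmless.
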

\begin{proof}
 We claim that ($\cD$2), ($\cD$3) are also full (Lebesgue) measure conditions by Khinchine's theorem. { Indeed,} recall that Khintchine theorem states that, given $\psi:\N\to\R^+$, the inequality
\begin{equation}\label{BC}
\left|\alpha-\frac{ p}{q} \right|<\frac{\psi(q)}{q}
\end{equation}
is satisfied infinitely often for Lebesgue a.e.\ $\alpha$ if $\sum_{q}\psi(q)=\infty$, or Lebesgue a.e.\ $\alpha$ does not satisfy the inequality infinitely often if the series converges. For ($\cD$2), consider $\psi(q)=\frac{1}{q\log q\log\log q}$ and use Legendre theorem to conclude that for the infinitely many solutions $p_{n_k}/q_{n_k}$ of \eqref{BC} we have that $q_{n_k}$ are denominators of $\alpha$. For ($\cD$3), one can reason analogously using the function $\psi(q)=\frac{1}{q \log^2q}$.
\end{proof}

\smallskip
\noindent {\bf Assumptions on the roof function. }
We assume that $f:\T\to\R^+$ is of the form:
\be\label{formroof}
f(x)=A_-(-\log x)+A_+(-\log(1-x))+g(x),\ee
where $g\in C^4(\T)$ and
$$A_-,A_+>0\text{ and } A_-\neq A_+.$$ It follows that
$f\in C^4(\T\setminus \{0\})$, $f>0$ and the behavior around $0$ is given by the following:

\begin{enumerate}
\item[(R1)] $\lim_{x\to 0^+}\frac{f(x)}{-\log(x)}=A_- \text{ and }\lim_{x\to 0^+}\frac{f(x)}{-\log(1-x)}=A_+$;
\item[(Rj)] $\lim_{x\to 0^+}\frac{\frac{d^jf}{dx^j}(x)}{(-1)^jx^{-j}}=(j-1)!A_- \text{ and }\lim_{x\to 0^+}\frac{\frac{d^jf}{dx^j}(x)}{(1-x)^{-j}}=(j-1)!A_+\text{ for }j=1,2,3,4$.
\end{enumerate}

\subsection{Denjoy-Koksma estimates}\label{sec.DK}
 In what follows, for simplicity of notation, we assume that $\int_\T f\,d\lambda=1$.
For $x\in\T$ and $n\in \N$ define $\inx=i(n,x)$  so that $0\leq \inx < q_n $ satisfies
\be\label{ee3}
\|x+\inx \alpha\|=\min_{0\leq j<q_n}\|x+j\alpha\|.
\ee
Set also
\begin{equation}\label{eq.distsing}
B_{n,x}:= { q_n \ \|x+\inx \alpha\| = }\min_{0\leq j <q_n} q_n\ \|R_\alpha^{j}x\| . 
\end{equation}
{The quantity $B_{n,x}$ will play a crucial role in all the following estimates of the Birkhoff sums $f$ and its derivatives, since these depend on the contribution of the closest visit to the origin of $\mathbb{T}$, given by $\|x+\inx \alpha\| ={B_{n,x}}/{q_n}$.}

\begin{remark}{\em \label{B_order_constant}
 Remark that we have
$$0<B_{n,x}<1\;\;\text{ for each }n\geq0.$$
This can be seen by considering the interval  $(-1/q_n,1/q_n)$ which has length $1/2q_n$, and applying Lemma~\ref{spacing_orbit}, which in particular guarantees that there exists a point of the orbit $\{ x+j \alpha, \ 0\leq j < q_n\}$ which enters it and thus gives $B_{n,x}<1$.
}\end{remark}

\subsection{Estimates of Birkhoff sums at return times}\label{specialtime:sec}
The following lemma, which provides estimates for Birkhoff sums of $f$ and its derivatives at special times {\Blue given by the the denominators of $\alpha$ (which corresponds dynamically to closest returns of the orbit to zero)}, is a consequence of the Denjoy-Koksma inequality.
\begin{lemma}[{Special time estimates}]\label{lem:DK}There exists $C(f)>0$ such that for every $n\in \N$ and $x\in \T$,  we have:
 \begin{equation}\label{eq:DKf}
\left|f^{(q_n)}(x)-q_n\right|<C(f)(\log q_n+ |\log B_{n,x}|),
\end{equation}
\begin{equation}\label{eq:DKf'}
\left|f'^{(q_n)}(x)-(A_- - A_+)q_n\log q_n\right|<C(f)q_n\left(1+ \frac1{B_{n,x}}\right),
\end{equation}
\begin{equation}\label{eq:DKf''}
\left|f''^{(q_n)}(x)-f''(x+i\alpha)\right|\leq C(f)q_n^2,
\end{equation}
\begin{equation}\label{eq:DKf'''}
\left|f'''^{(q_n)}(x)-f'''(x+i\alpha)\right|\leq C(f)q_n^3,
\end{equation}

\begin{equation}\label{eq:DKf''''}
\left|f''''^{(q_n)}(x)-f''''(x+i\alpha)\right|\leq C(f)q_n^4
\end{equation}
for all $i=0,\ldots, q_n-1$.
\end{lemma}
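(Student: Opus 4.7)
The plan is to first decompose $f = f_{\mathrm{sing}} + g$, where $f_{\mathrm{sing}}(x) = -A_-\log x - A_+\log(1-x)$ isolates the singular behaviour and $g \in C^4(\T)$ is the smooth remainder. Each derivative $g^{(k)}$ for $k = 0, \dots, 4$ is regular enough to have bounded variation on $\T$, so the Denjoy-Koksma inequality~\eqref{eq:doko} yields $|(g^{(k)})^{(q_n)}(x) - q_n \int g^{(k)}\,d\lambda| \leq 2\,{\rm Var}(g^{(k)}) = O(1)$, uniformly in $x$ and $n$. This contribution can be absorbed in the error terms of all five inequalities, and the task reduces to estimating $(f_{\mathrm{sing}}^{(k)})^{(q_n)}(x)$.

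For the higher derivatives $k \in \{2, 3, 4\}$, the asymptotics (Rj) give $|f_{\mathrm{sing}}^{(k)}(y)| \leq C_k(|y|^{-k} + |1-y|^{-k})$. Using the spacing from Lemma~\ref{spacing_orbit}, when the orbit points $\{x+j\alpha : 0 \leq j < q_n\}$ are ordered by distance from $0$, the $\ell$-th closest lies at distance $\geq (\ell-1)/(2q_n)$; similarly for distance from $1$. Removing the single closest point $x + \inx\alpha$, the sum of $|f_{\mathrm{sing}}^{(k)}|$ over the remaining $q_n - 1$ points is then bounded by $2C_k \sum_{\ell \geq 1}(2q_n/\ell)^k \leq C'_k q_n^k$, since $\sum \ell^{-k}$ converges for $k \geq 2$. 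Combined with the smooth-part bound, this gives \eqref{eq:DKf''}, \eqref{eq:DKf'''} and \eqref{eq:DKf''''} directly by the triangle inequality.

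For $k = 0$ and $k = 1$, where a divergent main term from the integral of $f_{\mathrm{sing}}^{(k)}$ has to be extracted, I would instead apply Denjoy-Koksma to a \emph{truncation} $\tilde{f}_n^{(k)}$ that agrees with $f_{\mathrm{sing}}^{(k)}$ on $[1/(2q_n), 1 - 1/(2q_n)]$ and is extended as a constant on the two complementary arcs. By Lemma~\ref{spacing_orbit}, at most one orbit point, namely $x + \inx\alpha$ (at distance $B_{n,x}/q_n$ from $0$), can fall in the truncated region, so $(f_{\mathrm{sing}}^{(k)})^{(q_n)}(x)$ and $(\tilde{f}_n^{(k)})^{(q_n)}(x)$ differ by at most the single singular term $|f_{\mathrm{sing}}^{(k)}(x+\inx\alpha) - \tilde{f}_n^{(k)}(x+\inx\alpha)|$; this evaluates to $O(|\log B_{n,x}|)$ for $k = 0$ and $O(q_n/B_{n,x})$ for $k = 1$. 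A direct computation gives ${\rm Var}(\tilde{f}_n^{(k)}) = O(\log q_n)$ for $k=0$ and $O(q_n)$ for $k=1$; meanwhile $q_n \int \tilde{f}_n\,d\lambda = q_n + O(\log q_n)$ for $k = 0$ and $q_n \int \tilde{f}_n'\,d\lambda = (A_- - A_+) q_n \log q_n + O(q_n)$ for $k = 1$ (up to sign), after explicit evaluation of the boundary contributions $f_{\mathrm{sing}}^{(k)}(\pm 1/(2q_n))$.

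The main technical point will be this last boundary computation producing the leading constant in~\eqref{eq:DKf'}: the two logarithmic singularities of $f_{\mathrm{sing}}$ give competing contributions to $\tilde{f}_n'(y) = -A_-/y + A_+/(1-y)$ near $0$ and near $1$, and one must track signs and the boundary values $f_{\mathrm{sing}}'(\pm 1/(2q_n))$ carefully to recover the exact $(A_- - A_+) q_n \log q_n$ main term; this is precisely where the asymmetry condition $A_- \neq A_+$ of the Arnol'd roof plays a role. All other statements of the lemma then follow mechanically from the spacing given by Lemma~\ref{spacing_orbit} and the bounded-variation Denjoy--Koksma inequality~\eqref{eq:doko}.
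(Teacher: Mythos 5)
Your approach mirrors the paper's: truncate away the singular contribution near the origin, control the closest visit via $B_{n,x}$, and apply the Denjoy--Koksma inequality~\eqref{eq:doko} to the truncation. The paper works directly with $f$ (and its derivatives) rather than first peeling off the smooth part $g$, but that is a cosmetic difference. Your treatment of $k\in\{2,3,4\}$ genuinely diverges from the paper: you bypass Denjoy--Koksma entirely and bound the sum over the $q_n-1$ non-closest orbit points directly by a convergent series $\sum_\ell \ell^{-k}$ using orbit spacing. This is a clean alternative (it exploits that $\sum\ell^{-k}<\infty$ precisely for $k\geq 2$, explaining why a main term must be extracted only for $k\in\{0,1\}$), whereas the paper applies the same truncate-then-Denjoy--Koksma scheme to all five estimates.

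There are two slips worth repairing. First, your truncation window $[-\tfrac{1}{2q_n},\tfrac{1}{2q_n}]$ has length $1/q_n$, which is \emph{twice} the minimal spacing $1/(2q_n)$ guaranteed by Lemma~\ref{spacing_orbit}; hence two orbit points (one on each side of $0$) can land inside it, and the claim ``at most one orbit point falls in the truncated region'' is not justified. The paper uses $[-\tfrac{1}{4q_n},\tfrac{1}{4q_n}]$, which has length $1/(2q_n)$ and so does capture at most one orbit point. You can either shrink the window to match, or observe that a second-closest orbit point is always at distance $\geq 1/(4q_n)$ from $0$ (since the closest and second-closest bracket $0$ with combined distance $\geq 1/(2q_n)$, and the closest is, by definition, no farther), so that its contribution to the error is $O(\log q_n)$ for $k=0$ and $O(q_n)$ for $k=1$, which is absorbable. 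Second, the claim that the $\ell$-th closest orbit point lies at distance $\geq (\ell-1)/(2q_n)$ from $0$ is not right as stated for small $\ell$ (the second-closest can be nearer than $1/(2q_n)$ to $0$ when it brackets the origin with the closest). A correct version is roughly $\geq (\ell-2)/(4q_n)$ for $\ell\geq 3$ with the second-closest $\geq 1/(4q_n)$; since only the power of $q_n$ and the convergence of $\sum\ell^{-k}$ matter, your conclusion is unaffected, but the constants should be stated more carefully.
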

{ Since the function $f$ and its derivatives do not have bounded variation, the proof is based on defining suitable \emph{truncations} to which Denjoy-Koksma inequality can be applied.}
\begin{proof}{ For given $n \in \mathbb{N}$}, let us consider the { truncation} $\bar{f_n}(x):=\raz_{[-\frac{1}{4q_n},\frac{1}{4q_n}]}(x)f(x)$.
Then, { since (by Lemma~\ref{spacing_orbit}) there is at most one point (namely $x+\inx \alpha$) of the form $x+j\alpha$, $j=0,\ldots,q_n-1$, in the interval $[-\frac1{4q_n},\frac1{4q_n}]$}, we have
\be\label{ee4}
\left|\bar{f_n}^{(q_n)}(x)-f^{(q_n)}(x)\right|=
\left\{\begin{array}{cc}
0&\mbox{if $x+i\alpha\in[-\frac1{4q_n},\frac1{4q_n}]$,}\\
f(x+i\alpha)&\text{otherwise}.\end{array}\right.
\ee
Moreover, {recalling \eqref{eq.distsing} and form of $f$ (see \eqref{formroof}),}
$$
f(x+i_{n,x} \alpha)\leq C_{A_-,A_+,g}(f(\|x+i_{n,x}\alpha\|) = {C}_{A_-,A_+,g}\left(f\left(\frac{B_{n,x}}{q_n}\right)\right)\leq C_f (\log q_n+|\log B_{n,x}|).
$$
Besides, by the Denjoy-Koksma inequality for $\bar{f_n}$, we get
$$
\left|\bar{f_n}^{(q_n)}(x)-q_n\int_\T\bar{f_n}\,d\lambda\right|<
2{\rm Var}(\bar{f_n}).
$$
Now, $$
\left|\int_\T\bar{f_n}\,d\lambda-1\right|=
\left|\int_\T\bar{f_n}\,d\lambda-\int_\T fd\lambda\right|=
$$
$$
A_-\int_0^{1/(4q_n)}\log t\,dt+A_+\int_{1-1/(4q_n)}^1\log(1-t)\,dt+{\rm O}(1/q_n)=$$$$
A_-\int_0^{1/(4q_n)}(\log t+1)\,dt+A_+\int_{1-1/(4q_n)}^1(\log(1-t)-1)\,dt+{\rm O}(1/q_n)=$$$$
A_-(t\log t)|_0^{1/(4q_n)}+A_+((1-t)\log(1-t))|_{1-1/(4q_n)}^{1}+{\rm O}(1/q_n)={\rm O}\left(\frac{\log q_n}{q_n}\right).$$
Since ${\rm Var}(\bar{f_n})={\rm O}(\log q_n)$, this completes the proof of~\eqref{eq:DKf}.

\smallskip
We have $f'(x)=\frac{-A_-}x+\frac{A_+}{1-x}+g'(x)$. Now, \eqref{ee4} holds with $f$ replaced by $f'$, so
$$
\left|\bar{f'_n}^{(q_n)}(x)-f'^{(q_n)}(x)\right|\leq\left|f'\left(
\frac{B_{n,x}}{q_n}\right)\right|={\rm O}\left(\frac{q_n}{B_{n,x}}\right).$$
Moreover, by the Denjoy-Koksma inequality
for $\bar{f'_n}$, we get
$$
\left|\bar{f'_n}^{(q_n)}(x)-q_n\int_\T\bar{f'_n}\,d\lambda\right|<
2{\rm Var}(\bar{f'_n}).$$ Then ${\rm Var}\bar{f'_n}={\rm O}(q_n)$ and
$$
q_n\int_{\T}\bar{f'_n}\,d\lambda=q_n\int_{1/(4q_n)}^{1-1/(4q_n)}
f'\,d\lambda=
q_n\left(f\left(1-\frac1{4q_n}\right)-
f\left(\frac1{4q_n}\right)\right)=
$$ $$
q_n(-A_+\log q_n+A_-\log q_n+{\rm O}(1))=(A_--A_+)q_n\log q_n+{\rm O}(q_n).
$$
Hence, \eqref{eq:DKf'} follows.

\smallskip
We now study the second derivative: $f''(x)=\frac{A_-}{x^2}+\frac{A_+}{(1-x)^2}+g''(x)$. Again, \eqref{ee4} holds with $f$ replaced by $f''$. This we can write as
$$
f''^{(q_n)}(x)-\raz_{\left[\frac{-1}{4q_n},\frac{1}{4q_n}\right]}
(x+i\alpha)f''(x+i\alpha)=\bar{f''_n}^{(q_n)}(x).$$
Hence, by the Denjoy-Koksma inequality (for $f_n''$), we obtain
$$
\left|f''^{(q_n)}(x)-\raz_{\left[\frac{-1}{4q_n},\frac{1}{4q_n}\right]}
(x+i\alpha)f''(x+i\alpha)\right|\leq q_n\int_{\T}\bar{f''_n}\,d\lambda+2{\rm Var}(\bar{f''_n}).$$
Now, ${\rm Var}(\bar{f''_n})={\rm O}(q_n^2)$ and
$$
q_n\int_{\T}\bar{f''_n}\,d\lambda=q_n\left(
f'\left(1-\frac1{4q_n}\right)-f'\left(\frac1{4q_n}\right) \right)={\rm O}(q_n^2).$$
It follows that
$$
\left|f''^{(q_n)}(x)-\raz_{\left[\frac{-1}{4q_n},\frac{1}{4q_n}\right]}
(x+i\alpha)f''(x+i\alpha)\right|={\rm O}(q_n^2).$$
Therefore, if
$x+i\alpha\in\left[\frac{-1}{4q_n},\frac1{4q_n}\right]$ then~\eqref{eq:DKf''} follows, while otherwise $f''(\|x+\i\alpha\|)={\rm O}(q_n^2)$, so~\eqref{eq:DKf''} holds again.

The proofs of remaining inequalities follow the same lines.
\end{proof}

\subsection{Estimates on Birkhoff sums of $f$}
In this subsection, we prove the following estimates on $f^{(n)}$.
\begin{lemma}[{Birkhoff sums of $f$}]\label{lem:dk2} There exists $T_0>0$ such that for every $T\geq T_0$ and $x\in \T$ satisfying
$$
\{x+j\alpha:\:0\leq j\leq T\}\cap \left[-\frac{1}{2T\log^4T},\frac{1}{2T\log^4T}\right],
$$
we have
$$
|f^{(n)}(x)-n|<T^{1/5} \text{ for all }n\in [0,T]\cap \Z.
$$ 
\end{lemma}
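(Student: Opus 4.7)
My plan is to leverage the Ostrowski expansion of $n \in [0,T]$ together with the Denjoy-Koksma estimate \eqref{eq:DKf} from Lemma~\ref{lem:DK} applied block-by-block. The hypothesis on the orbit segment will be used to bound from above the log-singularity term $|\log B_{j,y}|$ that appears in \eqref{eq:DKf}, uniformly for every intermediate base point $y$ in the decomposition.

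First, I translate the orbital-avoidance hypothesis into a quantitative lower bound on $B_{j,y}$. The hypothesis gives $\|x+k\alpha\|\geq \frac{1}{2T\log^4 T}$ for every $0\leq k\leq T$. Consequently, for any $j$ with $q_j \leq T$ and any base point $y=x+m\alpha$ with $0\leq m\leq T-q_j$, every point $y+i\alpha$ with $0\leq i<q_j$ coincides with $x+(m+i)\alpha$ for some $m+i\leq T$, so
$$B_{j,y}=q_j\min_{0\leq i<q_j}\|y+i\alpha\| \;\geq\; \frac{q_j}{2T\log^4 T}.$$
Combined with $B_{j,y}<1$ (Remark~\ref{B_order_constant}), this yields the uniform estimate
$$\log q_j+|\log B_{j,y}|\;\leq\; \log T+4\log\log T+\log 2.$$

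Next, I use the Ostrowski expansion $n=\sum_{j=1}^{K}b_j q_j$ with $0\leq b_j\leq a_j$. Processing this from the largest denominator downward decomposes the orbit of length $n$ into $b_j$ consecutive sub-orbits of length $q_j$, yielding
$$f^{(n)}(x)=\sum_{j=1}^{K}\sum_{i=1}^{b_j}f^{(q_j)}(y_{j,i}),$$
where each $y_{j,i}=x+m_{j,i}\alpha$ satisfies $0\leq m_{j,i}\leq n-q_j\leq T-q_j$, so the previous paragraph's bound applies at every block. Applying \eqref{eq:DKf} to each block (recalling $\int_\T f\,d\lambda=1$) and summing gives
$$|f^{(n)}(x)-n|\;\leq\; C(f)\bigl(\log T+4\log\log T+\log 2\bigr)\sum_{j=1}^{K}b_j.$$

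To conclude I bound $\sum_{j=1}^{K}b_j$ using only the Diophantine condition $(\cD 3)$. Since $a_j=(q_j-q_{j-2})/q_{j-1}\leq q_j/q_{j-1}\leq D_\alpha\log^2 q_{j-1}\leq D_\alpha\log^2 T$, and since $q_j$ grows at least like the Fibonacci sequence (giving $K\leq c\log T$ when $q_K\leq T$), I get $\sum_{j=1}^{K}b_j\leq K\max_j a_j=O(\log^3 T)$. Hence $|f^{(n)}(x)-n|=O(\log^4 T)$, which is smaller than $T^{1/5}$ for all $T\geq T_0$ with $T_0$ large enough. The main obstacle is the bookkeeping in the Ostrowski decomposition — verifying carefully that every intermediate base point $y_{j,i}$ falls within the safe orbit segment $\{x+k\alpha:0\leq k\leq T\}$ so that the uniform singularity-distance bound can be invoked at every scale; everything else is a routine combination of Lemma~\ref{lem:DK} and $(\cD 3)$, and conditions $(\cD 1)$ and $(\cD 2)$ are not needed here.
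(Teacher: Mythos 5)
Your proof is correct and follows essentially the same route as the paper: decompose $f^{(n)}$ via the Ostrowski expansion into orbit-blocks of lengths $q_j$, apply the Denjoy–Koksma estimate \eqref{eq:DKf} to each block, translate the orbit-avoidance hypothesis into a uniform upper bound on the per-block error $\log q_j + |\log B_{j,y}| \leq \log(2T\log^4 T)$, and then bound the number of blocks using $(\cD 3)$ together with the at-least-Fibonacci growth of the denominators (so $K = O(\log T)$ and $\sum b_j = O(\log^3 T)$), yielding $|f^{(n)}(x)-n| = O(\log^4 T) < T^{1/5}$ for $T$ large. The only (cosmetic) differences are that you bound $b_j \leq a_j \leq q_j/q_{j-1}$ rather than the paper's $b_j \leq q_{j+1}/q_j$, and you are a bit more explicit about checking that every base point in the block decomposition stays within the length-$T$ orbit segment so the hypothesis applies at every scale; both lead to the same $O(\log^4 T)$ bound.
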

\begin{proof}Let $n=\sum_{j=1}^kb_jq_j$, where $0\leq b_j\leq \frac{q_{j+1}}{q_j}$ with $b_k>0$, be the Ostrowski expansion of~$n$. Then
$$
f^{(n)}(x)=\sum_{j=1}^kf^{(b_jq_j)}\left(x+
\left(\sum_{i=0}^{j-1}b_iq_i\right)\alpha\right)=
\sum_{j=1}^k\left( \sum_{s=0}^{b_j-1}f^{(q_j)}(z_{j,s})\right),$$
where $z_{j,s}=x+\left(\sum_{i=0}^{j-1}b_iq_i\right)\alpha+sq_j\alpha$.
Then, by the estimates given by Lemma~\ref{lem:DK} (see in particular \eqref{eq:DKf}), for $j\in \{1,...,k\}$, we have
$$
|f^{(q_j)}(z_{j,s})-q_j|\leq C(f)(\log q_j-\log B_{j,z_{j,s}}).
$$
Hence
$$
\left|f^{(n)}(x)-n\right|=\left|
\sum_{j=1}^k\left(\sum_{s=0}^{b_j-1}\left(f^{(q_j)}(z_{j,s})-q_j
\right)\right)\right|\leq C(f)\sum_{j=1}^k\sum_{s=0}^{b_j-1}\log\frac{q_j}{B_{j,z_{j,s}}}.$$
Using our assumption, we have
$$
\frac{q_j}{B_{j,z_{j,s}}}=\frac1{\|z_{j,s}+i(j, z_{j,s}) \alpha\|}\leq 2T\log^4T,$$
(we will determine $T_0$ later), where $i=i(z_{j,s},j)$. Therefore,
$$
\left|f^{(n)}(x)-n\right|\leq C(f)\left(\sum_{j=1}^kb_j\right)\log(2T\log^4 T)\leq 2C(f)k\left(\max_{1\leq j\leq k}b_j\right)\log T$$
if $T$ is sufficiently large. The latter expression is bounded from above by $ C\log^4T$ (which { in turn is bounded by } $ T^{1/5}$) for a certain constant $C>0$ since, given that the sequence of denominators grows exponentially fast,
$k\leq C_1(\alpha)\log q_k\leq C_1(\alpha)\log T$ and since by~($\cD$3) (of Definition \ref{rotationassumptions}), $b_j\leq \frac{q_{j+1}}{q_j}\leq C(\alpha)\log^2q_j$ and $q_k\leq T$, we have
$$\max_{1\leq j\leq k}b_j\leq C(\alpha)\log^{ 2} q_k\leq C(\alpha)\log^2T.$$
Combining these inequalities, this concludes the proof.
\end{proof}

\subsection{Estimates on Birkhoff sums of $f'$}
The Birkhoff sums of $f'$ are the most delicate to control, since $f'$ is \emph{not} integrable and we need very precise control on the growth in order to later exploit cancelations between different shearing rates. Let us first state the two estimates on Birkhoff sums of $f'$  which will be used in the rest of the paper (Lemma~\ref{lem:bsums} and Lemma \ref{lem:canc}).

The first estimate (in Lemma~\ref{lem:bsums}) provides a fine control of ${f'}^{(r)}$ (of order $r\log r$ with optimal control on the constants) as long as one assumes that the points in the orbit of $x$ of length $r$ stay sufficiently far from the singularity. Estimates similar to Lemma~\ref{lem:bsums} but in the more general context of interval exchange transformations were proved by the third author in \cite{Ul} (see Proposition 3.4 in \cite{Ul}, as well as its generalization by Ravotti in \cite{Ra} and Proposition 4.4 in~\cite{Ka-Ku-Ul}) and inspired in turn by the work of Kochergin on rotations, see e.g.~\cite{Ko}.

Given a constant $M>0$, for arbitrary $n\geq0$, define
\begin{equation}\label{eq:sigma}
\Sigma_n(M):=\bigcup_{i=0}^{Mq_{n+1}}R_\alpha^{-i}\left(
\left[-\frac{1}{q_n\log^{7/8 }q_n},\frac{1}{q_n\log^{7/8} q_n}\right]\right).
\end{equation}
The points in $\Sigma_n(M)$ are those whose orbit get close to the singularity and that should be avoided to have the following estimate (which is proved later in this section).


\begin{lemma}[{Control of Birkhoff sums of $f'$ far from singularities}] \label{lem:bsums}
{\Blue Fix any  $M>1$.} There exists $\epsilon_0 = \epsilon_0(f)$ such that for every $0<\epsilon<\epsilon_0$ there exists $\N\ni n_0=n_0(\epsilon)$ such that for all $n\geq n_0$, if 
$\epsilon^{4}q_n\leq r\leq Mq_{n+1}$ and  $x\notin \Sigma_n(M)$,
we have
\begin{equation}\label{eq:dercont}
((A_--A_+)-\epsilon^2)r\log r\leq f'^{(r)}(x)\leq ((A_--A_+)+\epsilon^2)r\log r.
\end{equation}
Moreover, for every $r\in[0,\epsilon^{4}q_n]\cap \Z$, we have
\begin{equation}\label{eq:smallder}
|f'^{(r)}(x)|<\epsilon^2q_{n}\log q_{n}.
\end{equation}
\end{lemma}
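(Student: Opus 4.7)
The plan is to combine the Ostrowski expansion of $r$ with the blockwise Denjoy-Koksma estimate \eqref{eq:DKf'}, using the hypothesis $x\notin\Sigma_n(M)$ to control how closely the orbit of $x$ approaches the singularity at $0$. Writing $r=\sum_{j=1}^k b_j q_j$ with $q_k\le r<q_{k+1}$, the cocycle identity gives
$$
f'^{(r)}(x)=\sum_{j=1}^{k}\sum_{s=0}^{b_j-1}f'^{(q_j)}(z_{j,s}),\qquad z_{j,s}:=x+\Bigl(\sum_{\ell<j}b_\ell q_\ell+s\,q_j\Bigr)\alpha.
$$
Applying \eqref{eq:DKf'} to each block replaces $f'^{(q_j)}(z_{j,s})$ by the principal part $(A_--A_+)q_j\log q_j$ up to an error bounded by $C(f)\,q_j(1+B_{j,z_{j,s}}^{-1})$. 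Since $x\notin\Sigma_n(M)$ and each closest-return point $z_{j,s}+i(j,z_{j,s})\alpha$ lies on the initial orbit segment $\{R_\alpha^ix:0\le i\le Mq_{n+1}\}$, we would get $\|z_{j,s}+i(j,z_{j,s})\alpha\|\ge 1/(q_n\log^{7/8}q_n)$, and hence the pointwise bound $q_j/B_{j,z_{j,s}}\le q_n\log^{7/8}q_n$.

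For the principal term I would show $\sum_j b_j q_j\log q_j=r\log r\,(1+o(1))$. Using $r\ge \epsilon^4 q_n$ together with ($\cD$3), which gives $r<q_{k+1}\le D_\alpha q_k\log^2 q_k$, I would conclude that $q_k\ge \epsilon^4 q_n/(2D_\alpha\log^2 q_n)$, so $\log q_k=\log q_n(1+o(1))$ uniformly in $r\in[\epsilon^4q_n,Mq_{n+1}]$ as $n\to\infty$. Writing $r\log r-\sum_j b_jq_j\log q_j=\sum_j b_jq_j\log(r/q_j)$, estimating $\log(r/q_j)\le 3(k-j+1)\log\log q_n$ via ($\cD$3), and using the geometric bound $q_{j+1}\le q_k\phi^{-(k-j-1)}$ (with $\phi=\sqrt{2}$, derived from $q_{i+2}\ge 2q_i$) would then yield $|r\log r-\sum_j b_jq_j\log q_j|\le C(\alpha)\,r\log\log q_n$, which is $o(r\log r)$ and hence $\le \epsilon^2 r\log r/2$ for $n$ large.

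The technical heart of the proof is the fine bound on the aggregate error $\sum_{j,s}q_j/B_{j,z_{j,s}}$. A purely pointwise summation yields only $O(q_n\log^{31/8}q_n)$, which is larger than the main term when $r\sim\epsilon^4q_n$; instead, the plan is to exploit (i) the distinctness of the closest-return points across different $(j,s)$ pairs, (ii) the three-distance structure, by which consecutive closest returns at level $j$ differ by $\pm q_{j-1}\alpha$ and are thus spaced $\sim q_{j+1}^{-1}$ apart on the circle, and (iii) the Diophantine bound $b_j\le a_{j+1}\le D_\alpha\log^2 q_j$ from ($\cD$3), to obtain the improved estimate $\sum_{j,s}q_j/B_{j,z_{j,s}}\le C(\alpha)\,r\log^{7/8}q_n$. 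Combined with the principal-term estimate, this produces $|f'^{(r)}(x)-(A_--A_+)r\log r|\le\epsilon^2 r\log r$ for $n\ge n_0(\epsilon)$, proving \eqref{eq:dercont}. The small-$r$ estimate \eqref{eq:smallder} will follow from the same decomposition with top index $k'\le k$ such that $q_{k'}\le r\le\epsilon^4 q_n$: the principal term is then at most $|A_--A_+|\epsilon^4 q_n\log q_n$, and bounding the error identically while choosing $\epsilon_0$ so that $|A_--A_+|\epsilon_0^2<1/2$ gives $|f'^{(r)}(x)|<\epsilon^2 q_n\log q_n$. The main obstacle is the refined error bound with the $\log^{7/8}q_n$ factor, whose logarithmic gap against the $\log q_n$ in the principal term is precisely what makes the error negligible.
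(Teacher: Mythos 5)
Your proposal follows the same route as the paper: Ostrowski decomposition of $r$, the blockwise Denjoy--Koksma estimate \eqref{eq:DKf'}, control of the principal term $r\log r$ versus $\sum_j b_j q_j\log q_j$ via ($\cD$3), and a refined bound on the aggregate resonant error using the orbit spacing structure. What you describe informally as ``the main obstacle'' is precisely what the paper factors out as Lemma~\ref{lem:bsums_basic}, which isolates the top-level (``resonant'') contribution $\text{Res}(x,r)$ and controls each level's closest-return sum by the arithmetic-progression estimate $\sum_{s=1}^K (x_0+sh)^{-1}\le x_0^{-1}+\log K/h$ (i.e.\ equation~\eqref{arithmeticprogression}); Lemma~\ref{lem:bsums} is then a corollary obtained by bounding $\text{Res}(x,r)$ in two regimes ($q_{j_r}\ge q_n$ versus $q_{j_r}<q_n$) under $x\notin\Sigma_n(M)$.

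Two concrete issues in the sketch. First, your claimed bound $\sum_{j,s}q_j/B_{j,z_{j,s}}\le C(\alpha)\,r\log^{7/8}q_n$ with a constant depending only on $\alpha$ is too strong as stated: the top-level contribution is controlled (by the arithmetic-progression bound and the hypothesis $x\notin\Sigma_n(M)$) by $\sim q_n\log^{7/8}q_n + 2q_{j_r+1}\log(r/q_{j_r})$, which for $r$ near $\epsilon^4 q_n$ costs a factor $\sim\epsilon^{-4}$ when compared against $r\log^{7/8}q_n$. The constant must therefore depend on $\epsilon$; this is harmless for \eqref{eq:dercont} (since $\log^{7/8}q_n/\log q_n\to 0$ absorbs it), but the statement you want to prove should be phrased as $o(r\log r)$ rather than with an $\epsilon$-free constant. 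Second, a small slip: consecutive closest returns at level $j$ are translates by $q_j\alpha$ (whose distance to $0$ is $\|q_j\alpha\|\sim 1/q_{j+1}$), not by $q_{j-1}\alpha$; your spacing conclusion $\sim q_{j+1}^{-1}$ is right but the stated reason is off. Beyond these, the plan is viable, but the crux --- carrying out the arithmetic-progression estimate level by level and splitting the top-level term into the two regimes --- is exactly what needs to be written out and is not.
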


The other estimate on the Birkhoff sums of $f'$ which we need later (and is also proved
later in this section) is the following.
\begin{lemma}[{Control of Birkhoff sums of $f'$ on good time scales}]\label{lem:canc} Assume  that $M>1$. There exists {\Blue $n_1$} such that for every $n\in \N$, $n\geq {\Blue n_1}$, any $x \notin \Sigma_n(M)$ and any integers $T$ and $r$ such that
$$q_n\log q_n\leq T < q_{n+1}, \qquad 0\leq r < \frac{B_{n,x}T}{2},$$
there exists a constant $C'(f)$ such that we have
\begin{equation}\label{eq:canc}
|f'^{(r)}(x)-(A_--A_+)r \log { q_{n}}|\leq {  C'(f) T}.
\end{equation}

Moreover,  there exists $\epsilon_0=\epsilon_0(f)$ such that  for any positive $\epsilon< \epsilon_0$, any $x$ and $T$ as above
and every pair of integers $0\leq r,s < T$ such that in addition { $|r-s|\leq\epsilon^3 B_{n,x} T$}, we have
\begin{equation}\label{eq:cancshort}
|f'^{(r)}(x)-f'^{(s)}(x)-(A_--A_+)(r-s){ \log q_n}|<\epsilon^{2} T.
\end{equation}
\end{lemma}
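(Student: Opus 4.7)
The plan is to establish both inequalities through an \emph{Ostrowski expansion} of $r$ (and also of $s$ in the second case) in terms of the denominators $q_j$ of $\alpha$, together with the Denjoy--Koksma-type estimate \eqref{eq:DKf'} of Lemma \ref{lem:DK} applied to each $q_j$-block. The key novelty, compared with Lemma \ref{lem:bsums}, is that one must extract a \emph{common} logarithmic factor $\log q_n$ (rather than the level-dependent $\log q_j$) and control the resulting mismatch by $O(T)$ (respectively $\epsilon^{2}T$).

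For \eqref{eq:canc}, I would first write $r=\sum_{j=0}^n b_j q_j$ with $0\le b_j\le a_{j+1}$ and split
\[
f'^{(r)}(x)=\sum_{j=0}^n\sum_{k=0}^{b_j-1} f'^{(q_j)}(z_{j,k}),
\]
where $z_{j,k}=x+m_{j,k}\alpha$ are the natural orbit points produced by the telescoping. Applying \eqref{eq:DKf'} to each block yields
\[
f'^{(q_j)}(z_{j,k})=(A_--A_+)\,q_j\log q_j+O\!\bigl(q_j(1+B_{j,z_{j,k}}^{-1})\bigr).
\]
Since $x\notin\Sigma_n(M)$ and every $z_{j,k}$ has the form $x+m\alpha$ with $0\le m<r<q_{n+1}\le Mq_{n+1}$, one has $\|z_{j,k}\|\ge 1/(q_n\log^{7/8}q_n)$ and hence $B_{j,z_{j,k}}^{-1}$ is uniformly bounded; together with $r<B_{n,x}T/2$ this makes the error terms sum to $O(T)$. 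Summing the main terms gives $(A_--A_+)\sum_j b_j q_j\log q_j$, and the discrepancy with the desired $(A_--A_+)\,r\log q_n$ is $(A_--A_+)\sum_{j<n} b_j q_j\log(q_n/q_j)$. Using $b_{n-1}q_{n-1}\le q_n$ and $\log(q_n/q_{n-1})\le\log a_n\le 2\log\log q_n$ (by the Diophantine condition ($\cD$3)), together with the (at-least-exponential) growth of $(q_j)$ to dominate the lower levels by a geometric series, one concludes that this discrepancy is $O(q_n\log q_n)=O(T)$.

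For \eqref{eq:cancshort}, I would invoke the cocycle identity $f'^{(r)}(x)-f'^{(s)}(x)=f'^{(r-s)}(x+s\alpha)$ (WLOG $r\ge s$). Although $x+s\alpha$ may fail to lie outside $\Sigma_n(M)$, the orbit $\{(x+s\alpha)+i\alpha:0\le i<r-s\}$ is contained in $\{x+j\alpha:0\le j<r\}\subset\{x+j\alpha:0\le j<Mq_{n+1}\}$, which avoids the neighborhood $\bigl(-\tfrac{1}{q_n\log^{7/8}q_n},\tfrac{1}{q_n\log^{7/8}q_n}\bigr)$ of $0$ by assumption. Let $\bar{f'}$ denote the truncation of $f'$ vanishing on that neighborhood; then $\bar{f'}^{(r-s)}(x+s\alpha)=f'^{(r-s)}(x+s\alpha)$. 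Applying the classical Denjoy--Koksma inequality to $\bar{f'}$ on each Ostrowski block of $r-s$ gives main contribution $(r-s)\int\bar{f'}\,d\lambda=(A_--A_+)(r-s)\log q_n+O(r-s)$, while the total error is bounded by (number of blocks)$\,\cdot\,\mathrm{Var}(\bar{f'})$ with $\mathrm{Var}(\bar{f'})=O(q_n\log^{7/8}q_n)$. The hypothesis $|r-s|\le\epsilon^3 B_{n,x}T$ combined with ($\cD$1) (which controls the sum of $\log^{-7/8}q_i$ outside the sparse set $K_\alpha$) keeps the number of Ostrowski blocks small enough that the total error is $<\epsilon^{2}T$ once $n\ge n_1$.

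The main technical obstacle is the substitution of $\log q_j$ by $\log q_n$ in the first estimate: one has to show that the aggregated mismatch across all Ostrowski levels does not exceed $O(q_n\log q_n)$, a bound which is essentially saturated at $j=n-1$. The full force of the Diophantine conditions ($\cD$1) and ($\cD$3) is required here to prevent the intermediate levels from producing a larger error. A secondary difficulty, essential for the applications in Section~\ref{s:Sec7}, is that all constants must be controlled \emph{uniformly} in $x$ on the complement of $\Sigma_n(M)$; this is exactly what the definition of $\Sigma_n(M)$ is designed to enable.
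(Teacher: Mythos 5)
Your overall skeleton (Ostrowski decomposition plus the $q_j$-block estimate of Lemma~\ref{lem:DK} for the first inequality, the cocycle identity for the second) is the right starting point, and in fact the paper deduces Lemma~\ref{lem:canc} from a resonant-control estimate (Lemma~\ref{lem:bsums_basic}) built by precisely that decomposition. However, both of the error bounds you claim are quantitatively wrong as stated, and they are exactly the points where the real work happens.

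For \eqref{eq:canc}, your assertion that "$B_{j,z_{j,k}}^{-1}$ is uniformly bounded" is false. From $x\notin\Sigma_n(M)$ one gets $\|z_{j,k}+i(j,z_{j,k})\alpha\|\ge 1/(q_n\log^{7/8}q_n)$, hence $B_{j,z_{j,k}}^{-1}\le q_n(\log^{7/8}q_n)/q_j$, which blows up for small $j$. What is bounded is the \emph{per-block} error $q_j\cdot B_{j,z_{j,k}}^{-1}\le q_n\log^{7/8}q_n$, but the number of Ostrowski blocks of an integer $r<q_{n+1}$ can be as large as $\sum_j b_j=O(n\log^2 q_n)=O(\log^3 q_n)$ (by ($\cD$3)), so the naive sum of the $B^{-1}$-errors is $O(q_n\log^{31/8}q_n)\gg T$. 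You cannot fix this by invoking $r<B_{n,x}T/2$ naively: that hypothesis only controls the main term $\sum b_j q_j=r\le T$. What the paper actually does is (i) sum the inverse distances at each Ostrowski level using the arithmetic-progression spacing bound (so that each level $j$ contributes $O(q_{j+1}\log b_j)$, geometrically summable), and (ii) isolate the top resonant level and show, via a delicate spacing argument that genuinely \emph{uses} $r<B_{n,x}T/2$, that the closest visit over the truncated orbit is $\ge B_{n,x}/(2q_n)$, hence the resonant term is $\le T$. Your proposal contains neither ingredient.

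For \eqref{eq:cancshort}, the truncation of $f'$ at radius $1/(q_n\log^{7/8}q_n)$ creates a different gap. With that cutoff $\int\bar{f'}\,d\lambda=(A_--A_+)\log(q_n\log^{7/8}q_n)+O(1)$, so your "main term" is $(A_--A_+)(r-s)\log q_n+\Theta((r-s)\log\log q_n)$, and $(r-s)\log\log q_n\le\epsilon^3 T\log\log q_n$ is \emph{not} $<\epsilon^2 T$ once $n$ is large, for any fixed $\epsilon$. Moreover the bound "(number of blocks) $\times\mathrm{Var}(\bar{f'})$" suffers from the same block-count issue as above; $\mathrm{Var}(\bar{f'})=O(q_n\log^{7/8}q_n)$ times $O(\log^3 q_n)$ blocks far exceeds $\epsilon^2 T$, and ($\cD$1) does not reduce the block count (it only controls the density of levels outside $K_\alpha$, which is irrelevant here). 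The paper instead applies the refined resonant estimate to $z=R_\alpha^s x$ and $r-s$ and then replaces $\log q_{j_{r-s}}$ by $\log q_n$ via a two-regime comparison (cases $r-s\ge q_{n-N}$ and $r-s<q_{n-N}$ for a suitable threshold $N=N(\epsilon,f)$), which is the mechanism that actually produces the small $\epsilon^2 T$ bound. Your proposal is missing this comparison, and the direct Denjoy--Koksma route you sketch does not close.
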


Both Lemma~\ref{lem:bsums} and Lemma~\ref{lem:canc} will be deduced {from} the following estimate of Birkhoff sums of $f'$ (which is written in a form which indeed allows {one} to deduce both previously stated {lemmas}).
\begin{lemma}[{Resonant control of Birkhoff sums of $f'$}] \label{lem:bsums_basic}
There exists a constant $C_f>0$ such that for every $\delta>0$ there exists $ J=J(\delta) \in \N$ such that, for all integers $r \geq q_{J}$, if $j_r \geq J $ denotes the unique integer such that
$q_{j_r}\leq r\leq q_{{j_r}+1}$,
we have
\begin{equation} \label{resonantcontrol}
 \left| f'^{(r)}(x)- (A_--A_+)r \log q_{j_r} \right|  \leq {C_f} \left( r+ \delta \ q_{j_r} \log q_{j_r}  + \text{Res}(x,r)\right),
 \end{equation}
{\mbox{for all $x\in\T$},}
where
\begin{equation*}\text{Res}(x,r):= \min \left\{ \, \frac{r}{q_{j_r}} \max_{0\leq i < r} \frac{1 }{|| x + i \alpha  ||},\,  \max_{0\leq i < r} \frac{1 }{|| x + i \alpha  ||} + 2 q_{j_r+1}\log \left(\frac{r}{q_{j_r}} \right)\,  \right\}.
\end{equation*}

\smallskip
{\Blue Furthermore, there exists a constant $\delta_\alpha>0$ (depending on $\alpha$ only) such that the estimate \eqref{resonantcontrol} 
with $\delta=\delta_\alpha$ holds for every $r\in \mathbb{N}$.}
\end{lemma}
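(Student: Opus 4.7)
The plan is to combine the Ostrowski expansion of $r$ with the block-by-block Denjoy--Koksma type estimate \eqref{eq:DKf'}. I will write $r = \sum_{j=L}^{j_r} b_j q_j$ in Ostrowski form with $0 \leq b_j \leq a_{j+1}$ and $b_{j_r} \geq 1$; for each level $j$ and index $s \in \{0,\dots,b_j-1\}$ let $z_{j,s} := x + \bigl(\sum_{i<j} b_i q_i + s\, q_j\bigr)\alpha$ be the starting point of the corresponding $q_j$-block. Applying \eqref{eq:DKf'} block by block gives
$$f'^{(r)}(x) \;=\; (A_--A_+)\sum_{j} b_j q_j \log q_j \;+\; \sum_{j,s} E_{j,s},\qquad |E_{j,s}| \leq C(f)\, q_j\Bigl(1 + \tfrac{1}{B_{j,z_{j,s}}}\Bigr),$$
and the rest of the argument is a tight analysis of the main sum and of the two kinds of errors.

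For the main term I will rewrite $\sum_j b_j q_j \log q_j = r\log q_{j_r} - \sum_{j<j_r} b_j q_j (\log q_{j_r} - \log q_j)$ and show that the correction is absorbed into $\delta q_{j_r} \log q_{j_r}$. Using $b_j q_j \leq q_{j+1}$, the Fibonacci-type lower bound $q_{j+k}/q_j \geq F_k$, and the estimate $\log(q_{i+1}/q_i) \leq \log D_\alpha + 2\log\log q_{j_r}$ coming from ($\cD$3), I expect
$$\sum_{j<j_r} b_j q_j (\log q_{j_r} - \log q_j) \;\leq\; C_\alpha\, q_{j_r}\log\log q_{j_r},$$
so that choosing $J(\delta)$ with $\log\log q_{J(\delta)}/\log q_{J(\delta)} \leq \delta/C_\alpha$ yields the claimed absorption whenever $r \geq q_{J(\delta)}$.

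The Denjoy--Koksma error then splits into the uniform part $C(f)\sum_{j,s} q_j = C(f)\, r$, which produces the $r$-term of the bound, and the resonant sum
$$S(x,r) \;:=\; \sum_{j,s}\frac{1}{\min_{0\leq i< q_j}\|z_{j,s}+i\alpha\|}.$$
I will bound $S(x,r)$ by \emph{each} of the two expressions defining $\text{Res}(x,r)$ and then take the minimum. For the first form, instead of using the full Ostrowski expansion I will only peel off the top level, $r = b q_{j_r} + r'$ with $b = \lfloor r/q_{j_r}\rfloor$, controlling each of the $b$ closest-return contributions by $M := \max_{0\leq i<r} 1/\|x+i\alpha\|$ and recursing on the residue $f'^{(r')}$ whose main term lives at level $j_{r'} < j_r$. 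For the second form, I will invoke the spacing Lemma~\ref{spacing_orbit}: among $r \leq q_{j_r+1}$ consecutive orbit points the distances to $0$ are $1/(2q_{j_r+1})$-separated, so, sorting them $d_1 \leq d_2 \leq \cdots$, we have $d_i \geq (i-1)/(2q_{j_r+1})$; since the closest-return points picked by the Ostrowski blocks are pairwise distinct, isolating the global maximum $M = 1/d_1$ leaves a remainder whose reciprocals sum to at most $2q_{j_r+1}\log(r/q_{j_r})$, using that at most one genuinely close return can occur in each $q_{j_r}$-block.

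Finally, the uniform statement with $\delta = \delta_\alpha$ follows by handling $r < q_{J(\delta_\alpha)}$ directly: both $|f'^{(r)}(x)|$ and $|(A_--A_+)r\log q_{j_r}|$ are dominated there by an $\alpha$-dependent constant times $q_{j_r}\log q_{j_r}$, so picking $\delta_\alpha$ sufficiently large makes the error term $\delta_\alpha q_{j_r}\log q_{j_r}$ trivially absorb everything. The main obstacle I expect is the delicate bookkeeping in the resonant bound: the first form of $\text{Res}(x,r)$ is tight when the orbit has a single dominant close return, while the second form exploits global near-equidistribution; ensuring that \emph{both} forms dominate $S(x,r)$ uniformly in $x$, with the right constants, requires carefully matching the Ostrowski block structure to the local-to-global distribution of orbit returns near the singularity.
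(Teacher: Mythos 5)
Your overall strategy — Ostrowski expansion, block-wise Denjoy--Koksma via \eqref{eq:DKf'}, and separate control of the main term and the resonant sum — matches the paper's, and your treatment of the main-term correction $\sum_{j<j_r} b_j q_j \log(q_{j_r}/q_j)$ (exponential decay of $q_j/q_{j_r}$ combined with ($\cD$3) to get a bound of order $q_{j_r}\log\log q_{j_r}$, then choosing $J(\delta)$) is sound and essentially equivalent to the paper's split into the near-$j_r$ and far-from-$j_r$ levels. However, there are two genuine gaps.

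First, your plan to bound the \emph{full} resonant sum $S(x,r)=\sum_{j,s} 1/\min_{0\le i<q_j}\|z_{j,s}+i\alpha\|$ by $\text{Res}(x,r)$ cannot work. The sub-top-level contributions (i.e.\ from $j<j_r$) have size of order $\sum_{j<j_r} 2q_{j+1}\log(b_j+1)\sim q_{j_r}\log\log q_{j_r}$ (by the arithmetic-progression bound applied at each level and ($\cD$3)), and this is \emph{not} dominated by either form of $\text{Res}(x,r)$ in general: take $x$ with $\max_{0\le i<r}1/\|x+i\alpha\|\sim q_{j_r}$ and $r\sim q_{j_r}$; then the first form is $\sim r\sim q_{j_r}$ and the second is $\sim q_{j_r}$, both missing the extra $\log\log q_{j_r}$ factor. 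The paper's statement has the term $\delta q_{j_r}\log q_{j_r}$ precisely to absorb these sub-level resonant contributions, not just the main-term deviation; $\text{Res}(x,r)$ in the lemma is defined to control \emph{only} the top-level ($j=j_r$) block, and you must keep the two separate. Relatedly, your "peel off the top level and recurse" argument for the first form accumulates: each level $j$ contributes up to $b_j\cdot\max$, and $\sum_j b_j$ is not controlled by $r/q_{j_r}\approx b_{j_r}$. Your second-form argument has a similar accounting issue: the number of distinct closest-return points is $\sum_j b_j$, not $r/q_{j_r}$, so the spacing argument yields $2q_{j_r+1}\log(\sum_j b_j)$, which can exceed $2q_{j_r+1}\log(r/q_{j_r})$ by a factor $\sim\log\log q_{j_r}$.

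Second, your argument for the uniform version with $\delta_\alpha$ fails: $|f'^{(r)}(x)|$ is \emph{not} bounded by an $\alpha$-dependent constant times $q_{j_r}\log q_{j_r}$ uniformly in $x$ (it blows up as the orbit approaches the singularity). The correct uniform argument bounds $|\sum_j b_j q_j \log q_j - r\log q_{j_r}|\le (\sum_{j<j_r} q_j)\log q_{j_r}<q_{j_r}\log q_{j_r}$ directly from the Ostrowski inequality $\sum_{j<j_r} b_j q_j<q_{j_r}$, and the sub-level resonant part is bounded by a constant $C_\alpha q_{j_r}\log q_{j_r}$ uniformly in $r$ (without requiring $r\ge q_J$); the point-dependent part of the error is still carried by $\text{Res}(x,r)$.
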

Let us first show how to deduce the proofs of Lemma~\ref{lem:bsums} and Lemma \ref{lem:canc} from Lemma~\ref{lem:bsums_basic}, we will then prove  Lemma~\ref{lem:bsums_basic}. We will use several times the following remark.
\begin{remark}\label{logreminder}\em
For any $r>0$, let $j_r$ be such that $q_{j_r}\leq r <q_{j_{r+1}} $ (so that the Ostrowski expansion of $r$ is a sum over $j_r$ terms). Then, since by the assumption ($\cD$3) (of Definition \ref{rotationassumptions}) on the rotation  $r\leq q_{j_r+1}\leq  q_{j_r}{\log^2 q_{j_r}} $ for any large $r$, we have that
$$
\lim_{r\to \infty} \frac{|\log q_{j_r}-\log r|}{\log r} = \lim_{r\to \infty} \frac{\log \frac{r}{q_{j_r}}}{\log r} \leq \lim_{r\to \infty}  \frac{\log (\log^2 q_{j_r})}{\log q_{j_r}} = 0.
$$
\end{remark}

\begin{proof}[Proof of Lemma~\ref{lem:bsums}]
Consider the Ostrowski expansion $r= \sum_{j=0}^{j_r} b_j q_j$, where $b_j\leq a_j$ and $b_{j_r}\geq 1$.
We claim that the first part of the lemma will be proved once we show that there exists  $n_0$ such that for $n\geq n_0$ and $r$ as in the assumptions, we have
 \begin{multline}\label{secondhalf}
  \left| f'^{(r)}(x)- (A_- - A_+)r \log q_{j_r} \right| \\ \leq  \frac{\epsilon^2}{2} ( r \log r).
 \end{multline}
Remark first that, since by assumption $r\geq \epsilon^4 q_n$, we can guarantee that $r$ is sufficiently large if $n$ is sufficiently large. Thus, by Remark~\ref{logreminder}, there is $n_0$ such that for $n\geq n_0$ and $r $ as in the assumptions
$$
  \left | (A_--A_+)r \log q_{j_r} - (A_--A_+)r \log r   \right|  \leq   \frac{\epsilon^2}{2} ( r \log r ) .
$$
Thus, once we prove \eqref{secondhalf}, combining it with the above equation we get the first part of the lemma {(namely \eqref{eq:dercont})}.

\smallskip
Let us now prove \eqref{secondhalf}.  By Lemma~\ref{lem:bsums_basic} for $\delta=\epsilon^2/6C_f$, the trivial bounds $q_{j_r}\leq r $ and the assumption $r\geq \epsilon^4 q_n$, we get the estimate
\begin{align}
 \left| f'^{(r)}(x)- (A_- - A_+)r \log q_{j_r} \right| & \leq {C_f} \left( r+ \frac{\epsilon^2}{6 C_f} r \log r   +  \text{Res}(x,r)  \right) \label{toestimate} \\
 & \leq \left( \frac{C_f}{\log \left( q_{n}\epsilon^4\right)}  +  \frac{\epsilon^2}{6} +\frac{C_f \text{Res}(x,r)}{ r\log r} \right) r \log r. \noindent
\end{align}
We now want to show that   each of the three terms in {parentheses} can be bounded by  {$\frac{\epsilon^2}2$}.
 Since for $n $ sufficiently large the first term in the parenthesis  is less than
  $\epsilon^2/6$,  we only have to show that $\text{Res}(x,r)/{r\log r}\leq \epsilon^2/6C_f$. To see this, remark first that, by the assumptions  $r \leq M q_{n+1}$ and $x\notin \Sigma_n(M) $, we have that $$\min_{0\leq i < r} {|| x + i \alpha  ||} \geq {1/(q_n\log^{7/8 }q_n)}.$$ Then, consider two cases:
  \begin{itemize}
  \item if $q_{j_r} \geq  q_n$, so that $q_n/q_{j_r}\leq 1 $,
 we use the first estimate for $\text{Res}(x,r)$ given by Lemma~\ref{lem:bsums_basic} (together with the assumption $r\geq \epsilon^4 q_n$), which gives
$$
\frac{\text{Res}(x,r)}{r \log r} \leq \frac{\frac{r}{q_{j_r}} \left( q_n\log^{7/8 }q_n\right)}{r\log r} \leq \frac{\frac{q_n}{q_{j_r}}\log^{7/8 }q_n }{ \log (\epsilon^4 q_n) } \leq \frac{\log^{7/8 }q_n }{\log (\epsilon^4q_n)}  
< \frac{\epsilon^2}{6C_f}
$$
if $n\geq n_0$ as long as $n_0$ is large enough.
\item Otherwise, if $q_{j_r} < q_n$ (and hence $q_{j_r+1} \leq q_n$), we use the other estimate for $\text{Res}(x,r)$ given by Lemma~\ref{lem:bsums_basic}. Using also that, by Remark~\ref{logreminder},
 $\log \frac{r}{q_{j_r} }\leq \epsilon^7  \log r$ if $r$ is large  and the assumption $r\geq \epsilon^4 q_n$, it gives
$$
\frac{\text{Res}(x,r)}{r \log r} \leq \frac{q_n \left(\log^{7/8 }q_n\right) + 2q_{j_r+1}\log (r/q_{j_r})}{r\log r} \leq  \frac{\log^{7/8 }q_n}{\epsilon^4 \log (\epsilon^4q_n)} + \frac{2q_{n}\epsilon^7 \log r}{\epsilon^4 q_n\log r},
$$
which can be made less than  $\frac{\epsilon^3}{6C_f}$ if $n\geq n_0$ with $n_0$  large enough and $\epsilon < \epsilon_0$, where $\epsilon_0:= \min \left\{ \frac{1}{24 C_f}, 1 \right\} $.
\end{itemize}
 This concludes the proof of \eqref{secondhalf} and hence, by the initial claim, of the first part  of the lemma.

\smallskip
To prove the second part (namely \eqref{eq:smallder}),  {\Blue  let us apply the final part of Lemma~\ref{lem:bsums_basic} (e.g.~the estimate \eqref{resonantcontrol} with $\delta=\delta_\alpha $, which is valid for any $r$), which gives}
\begin{equation}\label{beginningsecondpart}
|f'^{(r)}(x)|\leq (A_- - A_+)r\log q_{j_r} + C_f(r + {\Blue \delta_\alpha} q_{j_r}\log q_{j_r} + Res(x,r)).
\end{equation}
To estimate $Res(x,r)$, we use the second estimate  given by Lemma~\ref{lem:bsums_basic}. Since $x \notin \Sigma(M)$ with {\Blue $M>1$}, using also assumption ($\cD$3) of Definition \ref{rotationassumptions} on $\alpha$ and $q_{j_r+1} \leq q_n$ (since $q_{j_r} \leq \epsilon^4 q_n <q_n $), we get
$$
\text{Res}(x,r) \leq \max_{0\leq \ell <r}\frac{1}{||x+\ell \alpha||}+ 2q_{j_{r}+1}\log\left({\Blue D_\alpha}\log^2 q_{j_{r}}\right) \leq {q_n}(\log {q_n})^{7/8}+ 4 q_n \log{\Blue( D_\alpha^{1/2}\log{q_n})}, $$
which is less than $ {\Blue \epsilon^4}q_n\log q_n$ if $n\geq n_0$ for $n_0$ is sufficiently large.
Thus, combining this with the initial estimate \eqref{beginningsecondpart} and using that $q_{j_r}\leq r \leq \epsilon^4q_n$, if $\epsilon_0<1$ is chosen so that $\epsilon_0^2 4 (A_- - A_+ +  {\Blue C_f (2+ \delta_\alpha)})<1$ (recalling also the assumption on $T$), gives that, for any $\epsilon<\epsilon_0$,
$$|f'^{(r)}(x)|\leq \epsilon^4q_n \log q_n \left((A_- - A_+) +{\Blue C_f (2+ \delta_\alpha)}\right) < \epsilon^2 q_n \log q_n < \epsilon^2 T. $$
 This concludes also the proof of the second part.
\end{proof}

Let us now prove Lemma \ref{lem:canc} using again  Lemma~\ref{lem:bsums_basic}.
\begin{proof}[Proof of Lemma~\ref{lem:canc}]
Recall first that $B_{n,x}<1$ (see Remark~\ref{B_order_constant}), hence the assumptions $r<B_{n,x}T/2$ and $T<q_{n+1}$ imply in particular that $r<T$ and that  $j_r\leq n$ (since $q_{j_r}\leq r < T< q_{n+1}$).
Applying {\Blue the last part of Lemma~\ref{lem:bsums_basic}, i.e. the estimate with $\delta=\delta_\alpha$ which holds for any $r$, } (and using the assumption $T> q_n\log q_n $) we get  
\begin{align*}
|f'^{(r)}(x)- (A_--A_+)r  & {  \log q_{j_r}} |  < C_f  \left( r+ {\Blue \delta_{\alpha} } q_{j_r} \log q_{j_r}  + \text{Res}(x,r) \right) \\
& \leq C_f \left( T +  {\Blue \delta_{\alpha} }  q_{n} \log q_{n}  + \text{Res}(x,r) \right)\leq  C_f ( {\Blue T( 1+ \delta_{\alpha} )}  +\text{Res}(x,r)).
\end{align*}

{\Blue We will now show that $\text{Res}(x,r)\leq T$, so that the RHS is bounded by {\Blue $C_f (2+\delta_\alpha) T$}.  If $r\geq q_n$, since we then have $q_{j_r}= q_n$ (as we already observed above that  $j_r\leq n$), this estimate will conclude the proof. If, on the other hand, $r< q_n$, we also have $q_{j_r}<q_n$, so that the terms $(A_--A_+)r   {  \log q_{j_r}}$ and $(A_--A_+)r   {  \log q_{n}}$ are both less than $(A_--A_+)q_n  {  \log q_{n}}< (A_--A_+) T$ (by the assumptions on $T$). Thus also in this case we conclude the proof, by setting $C'(f):={\Blue C_f (2+\delta_\alpha) } + 2(A_--A_+)$.

\smallskip

We are hence now left to show that $\text{Res}(x,r)\leq T$.} To see this, let us first show that, {\Blue for any $r<B_{n,x}T/2$}, we have
\begin{equation}\label{delicatemin}
\max_{0\leq \ell < r }\frac{1}{||x+\ell \alpha ||} \leq \min \left\{ q_{n}\log^{7/8} q_n , \frac{2q_n}{B_{n,x}}\right\}.
\end{equation}
{\Blue The first bound (by $q_n \log^{7/8}q_n$) follows simply from the assumption $x \notin \Sigma_n (M) $ and $M>1$ (recall the definition \eqref{eq:sigma} of $\Sigma_n (M)$). }  Let us hence prove the second bound (namely by $2q_n/B_{n,x}$).   Let us denote by $m_{x,r}=m(x,r)$ the index $0\leq m_{x,r}<r$ such that
 $$
 ||x+m_{x,r}  \alpha ||  =\min \{ ||x+\ell \alpha ||, 0\leq \ell <r\}.
 $$
  Since $ m_{x,r}<r$, dividing $m_{x,r}$ by $q_n$ we get that
   there exists $0 \leq j \leq r/q_n$ such that $0\leq m_{x,r}-j q_n \leq q_n $.
 Since the points $(R_{q_n \alpha})^i x$ and $(R_{q_n \alpha})^{i+1} x$ for $0\leq i < r-1$ are at most $1/q_{n+1}$ apart   (recall \eqref{CFbasicestimate}), using also the assumptions $r<B_{n,x}T/2$ and $T<q_{n+1}$, it follows that
$$
\left\| \left( x+ (m_{x,r}-j q_n) \alpha \right)-   \left( x+m_{x,r}  \alpha \right) \right\| \leq   \frac{r}{q_n} \frac{1}{q_{n+1}} \leq   \frac{B_{n,x}T}{2q_n} \frac{1}{q_{n+1}}  \leq   \frac{B_{n,x}}{2q_n}.
$$
Moreover, by the definition of $B_{n,x}$ (recall \eqref{eq.distsing}), we have that $B_{n,x}/q_n \leq   ||x+(m_{x,r}-j q_n) \alpha || $ and  by Lemma \ref{spacing_orbit} (since $r>q_n$) that  $||x+ m_{x,r}  \alpha ||<1/q_n $. It  follows that  $x':=x+(m_{x,r}-jq_n) \alpha$ and $x'':= x+m_{x,r} \alpha$ either both belong to the interval $\left[\frac{B_{n,x}}{2q_n}, \frac{3}{2q_n}\right]$ or both to the interval  $\left[ 1-\frac{3}{2q_n}, 1- \frac{B_{n,x}}{2q_n},\right]$, so that (for $n$ large) we either  have that $||x' ||=x'$ and $||x'' ||=x''$, or we have that $||x' ||=1- x'$ and $||x'' ||=1-x''$. Thus, from the previous equation (and the definition of $B_{n,x}$), we also deduce that
$$
\frac{B_{n,x }}{q_n} \leq ||x+ (m_{x,r}-j q_n) \alpha ||  \leq   ||x+m_{x,r}  \alpha ||  +  \frac{B_{n,x}}{2q_n}.
$$
Recalling the definition of $m_{x,r}$, this gives
 that $\min \{ ||x+\ell \alpha ||, 0\leq \ell <r\} \geq  \frac{B_{n,x}}{2q_n}$, which
concludes the proof of~\eqref{delicatemin}.
\smallskip

Let us go back to show the estimate $\text{Res}(x,r)< T$. We consider separately two cases:
\begin{itemize}
\item
Assume first that $q_{j_r}< q_n$. Remark that this means that $q_{j_r+1}\leq  q_n$ and hence $r\leq q_n $. Thus, by this remark and since by Remark \ref{logreminder} for any $0<\delta<1$  we have $\log (r/q_{j_r})\leq  \delta \log r  /4$ for $r$ large (and recalling that $T> q_n \log q_n$), we get
$$
\text{Res}(x,r)\leq  q_n\log q_n^{7/8} + 2q_{j_r+1}\log \left(\frac{r}{q_{j_r}}\right) \leq  q_n \log q_n \left( \frac{1}{\log^{1/8} q_n} +  \frac{\delta}{2} \right) < \delta\, T
$$
for $n$ sufficiently large.
\item Otherwise, if $q_{j_r}\geq q_n$, we have $q_{j_r}=q_n$ (since by the assumptions $j_r\leq n$).  In this case  using \eqref{delicatemin} and the assumption $r<B_{n,x}T /2$, we can then estimate
$$
\text{Res}(x,r)\leq \frac{r}{q_n} \max_{0\leq \ell < r }\frac{1}{||x+\ell \alpha ||}  \leq \frac{B_{n,x}T}{2q_n} \frac{2q_n}{B_{n,x}} =  T.
$$
\end{itemize}
This concludes also the first part of the lemma. 

\smallskip
For the second part of the lemma, assuming without loss of generality that $r> s$ and using the cocycle property of Birkoff sums, we have that $|f'^{(r)}(x)-f'^{(s)}(x)|  = |f'^{(r-s)}(z)|$ for $z:= R_{\alpha}^s (x)$.
{\Blue We claim that, by Lemma~\ref{lem:bsums_basic}, {\Blue for any $\epsilon>0$ there exists $n_0>0$ such that for all $n\geq n_0$,} we have that, for any $r,s$ as in the assumptions, }
\begin{multline}\label{secondpartestimateC}
|f'^{(r)}(x)-f'^{(s)}(x)  -(A_--A_+)(r-s){ \log q_{j_{r-s}}}|  < \\ C_f (r-s) + \frac{\epsilon^2}{6} {\Blue q_n \log q_n}
  + C_f \text{Res}(z,r-s) .
\end{multline}
{\Blue To see this, pick $N=N(\epsilon, f)$ such that,
\begin{equation}\label{choiceNthreshold}  q_{n-N}\leq \frac{1}{\sqrt{2}^N} q_n < \left( \min \left\{ \frac{\epsilon^2}{6 \delta_{\alpha}}, \frac{\epsilon^2}{ 4(A_--A_+)}\right\}\right) q_n,
\end{equation}
where $\delta_\alpha$ is the constant  in the last part of Lemma~\ref{lem:bsums_basic} (the second bound for $N$ will be used only later in the proof). }
Applying {\Blue the first part of} Lemma~\ref{lem:bsums_basic} for $\delta=\epsilon^2/ (6 C_f)$, {\Blue (since $n-N$ goes to infinity as $n$ does)} there exists $n_0$  such that for all $n\geq n_0$ {\Blue  \eqref{secondpartestimateC} holds as long as $r-s \geq q_{n-N}$ (remark that $q_{j_{r-s}}\leq q_n$  since $r-s<T<q_{n+1}$). }
{\Blue On the other hand, if $r-s\leq q_{n-N}$, we can use the final part of Lemma~\ref{lem:bsums_basic}  to get that
$$
|f'^{(r)}(x)-f'^{(s)}(x)  -(A_--A_+)(r-s){ \log q_{j_{r-s}}}| < C_f (r-s) +  \delta_{\alpha} q_{j_{r-s}} \log q_{j_{r-s}}  + C_f \text{Res}(z,r-s) $$
which also in this case gives \eqref{choiceNthreshold} since by assumption on $r,s$ and choice \eqref{choiceNthreshold} of $N$ we have that
$\delta_{\alpha} q_{j_{r-s}} \log q_{j_{r-s}}\leq \delta_{\alpha} q_{n-N} \log q_{n} \leq \frac{\epsilon^2}{6} q_{n}\log q_n$.
This concludes the proof of \eqref{secondpartestimateC} and the claim.}

{\Blue We now want to show that {\Blue for some $n_1\geq n_0$ and $\epsilon_0>0$ such that} each of the three terms in the RHS  of \eqref{secondpartestimateC}} is less than $(\epsilon^2/6)T $ {\Blue if $\epsilon<\epsilon_0$ and $n\geq n_1$}.  Since by assumption ${r-s}\leq \epsilon^3 T$ (recall that $B_{n,x}<1$, by Remark~\ref{B_order_constant}) to control the first term, it is enough to choose $\epsilon< \epsilon_0$, where $\epsilon_0:=1/(6C_f)$. For the second  is is enough {\Blue to recall that by} assumption $q_{n}\log q_n<T$. Finally, to estimate $\text{Res}(z,r-s)$ we use two regimes:
\begin{itemize}
\item either $q_{j_{r-s}}\geq q_n$ and hence $q_{j_{r-s}}=q_n$ (since as  we already remarked $r-s<q_{n+1}$), in which case we use the first estimate for $\text{Res}(z,r-s)$  (in Lemma~\ref{lem:bsums_basic}) combined with \eqref{delicatemin} and the assumption on $(r-s)$ to get
\begin{align*}
\text{Res}(z,r-s) \leq \frac{r-s}{q_{j_{r-s}}}\max_{0\leq \ell <r-s}\frac{1}{||z+\ell \alpha||} & \leq \frac{r-s}{q_n}  \max_{0\leq \ell <r}\frac{1}{||x+\ell \alpha||} \\
& \leq \frac{\epsilon^3 B_{n,x} T}{q_n}\frac{2 q_n}{B_{n,x}} = 2 \epsilon^3 T,
\end{align*}
which is less than $(\epsilon^2T)/ (6C_f)$ for $\epsilon<\epsilon_0$, given that  $\epsilon_0 <  1/{(12 C_f)}$.
\item  Otherwise, if  $q_{j_{r-s}}< q_n$ (and hence $q_{j_{r-s}+1}\leq q_n$), we use the other estimate (and assumption ($\cD$3) of Definition \ref{rotationassumptions} on $\alpha$) to get
\begin{align*}
{Res}(z,r-s)& \leq \max_{0\leq \ell <r-s}\frac{1}{||z+\ell \alpha||}+ 2q_{j_{r-s}+1}\log\left({\Blue D_\alpha \log^2 q_{j_{r-s}}}\right) \\ & \leq {q_n}(\log {q_n})^{7/8}+ 4 q_n \log {\Blue (D_\alpha^{1/2} \log{q_n})},
\end{align*} 
which is less than $ \epsilon^3 q_n\log q_n$ if $n\geq n_0$ for $n_0$ is sufficiently large and hence {less than} $ \frac{\epsilon^2}{6C_f} T$ if $\epsilon<\epsilon_0$.
\end{itemize}
Thus, we showed that 
$$
|f'^{(r)}(x)-f'^{(s)}(x)  -(A_--A_+)(r-s){ \log q_{j_{r-s}}}| \leq \epsilon^2 \frac{T}{2}.
$$

{In order} to conclude, it is now enough to show that
\begin{equation}\label{comparisonlogs}
\left|(A_--A_+)(r-s){ \log q_{j_{r-s}}}-(A_--A_+)(r-s){ \log q_{n}} \right| \leq \frac{\epsilon^2}{2} T.
\end{equation}
If $r-s\geq q_n$, we have that  $q_{j_{r-s}} = q_n$ (since we also know that $r-s< T<q_{n+1}$) and hence there is nothing to prove. Thus we can assume that $r-s< q_n$.
{\Blue Recall the choice of $N$ made earlier (see \eqref{choiceNthreshold})}
and consider again two cases.
\begin{itemize}
\item
 If $ q_{n-N}\leq r-s < q_n$, by definition of $j_{r-s}$ we have $q_{j_{r-s}}\geq q_{n-N}$.
Since we can write $q_{n} = (\prod_{j = n-N}^{n-1} \frac{q_{j+1}}{q_{j}})q_{n-N} $, using also the assumption ($\cD$3) on $\alpha$ (cf.~Def.~\ref{rotationassumptions}), we get in this case  that
$$
\log \frac{q_n}{q_{j_{r-s}} } \leq \log \frac{q_n}{q_{n-N}}  \leq \sum_{j = n-N}^{n-1} \log {\Blue (D_\alpha \log^2 q_{j})} \leq 2N \log { \left(D_\alpha^{1/2} \log  q_{n} \right)}.
$$
Thus, since this bound divided by $\log q_n$ goes to zero as $n$ grows to infinity, {\Blue increasing $n_1$ if necessary, for $n\geq n_1$} (recalling that by assumption $q_n\log q_n<T$), we have
 $$(r-s)\log \left(\frac{q_n}{q_{j_{r-s}}}\right)  \leq q_n \left( \frac{\epsilon^2   }{2(A_--A_+)}\log q_n\right)\leq \frac{\epsilon^2 T  }{2(A_--A_+)}. $$
\item Otherwise, if $0\leq r-s \leq q_{n-N}$, we simply have
\begin{align*}
\left| (r-s)\left( \log q_{j_{r-s}}-{ \log q_{n}}\right) \right| & \leq 2 q_{n-N}\log q_n  \\ & \leq 2 \frac{\epsilon^2}{4(A_--A_+)}q_n \log q_n  <  \frac{\epsilon^2 T}{2(A_--A_+)}
\end{align*}
by {the} assumption on $T$ and {the} choice of $N$.
\end{itemize}
In all cases, this concludes the proof of \eqref{comparisonlogs} and hence of the second part of the lemma.
\end{proof}

The rest of this section will be devoted to the proof of Lemma~\ref{lem:bsums_basic}.
\begin{proof}[Proof of Lemma~\ref{lem:bsums_basic}]
  As in the proof of Lemma~\ref{lem:dk2}, consider  the Ostrowski expansion of $r$, which, by {the} definition of $j_r$, is given by
 $r= \sum_{j=1}^{j_r} b_j q_j$, where $b_j\leq a_j\leq \frac{q_{j+1}}{q_j}$ and $b_{j_r}\geq 1$.  Correspondingly, we get the following decomposition of Birkhoff sums of $f'$:
$$
f'^{(r)}(x)= \sum_{j=1}^{j_r}\left( \sum_{s=0}^{b_j-1}{f'^{(q_j)}}(z_{j,s})\right),\   \text{where} \, \begin{cases} z_{j_r,s}=x +s q_{j_r}\alpha, & \\ z_{j,s}= x+ \sum_{k=j+1}^{j_r}b_k q_k \alpha +s q_j\alpha & \ \text{for} \ 1\leq j < j_r.\end{cases}$$
 Then, by applying the estimates of Lemma~\ref{lem:DK} (in particular \eqref{eq:DKf'} for the point $z_{j,s}$), we have
\begin{equation}\label{decompsum}
\left| f'^{(r)}(x) -   \sum_{j=1}^{j_r}  (A^-- A^+) b_j q_j \log q_j \right|  \leq      C(f) \sum_{j=1}^{j_r}  \sum_{s=0}^{b_j-1} q_j \left( 1+ \frac{1}{B_{j, z_{j,s}}}\right).
\end{equation}
Let us first show that for any $\delta >0$, there exists $J_0= J_0(\delta)$ such that for  $r\geq q_{J_0}$, {we have}
\begin{equation}\label{ergodicterm}
 \left| \sum_{j=1}^{j_r} (A^- - A^+) b_j q_j \log q_j -  (A^- - A^+) r \log q_{j_r} \right| \leq \frac{\delta}{2} (A^+-A^-) q_{j_r} \log q_{j_r}  .
\end{equation}
{\Blue and also that there exists a constant $\delta_\alpha>0$ depending on $\alpha $ such that the estimate above holds for every $r\in\mathbb{N}$ if we take $\delta=2\delta_\alpha$ (which is needed to prove the final part of the lemma).}
{\Blue To prove the latter, it is enough to use the Ostrowski expansion of $r$ to write
$$\left| \sum_{j=1}^{j_r}  b_j q_j \log q_j -   r \log q_{j_r} \right| = \sum_{j=1}^{j_r-1} b_j q_j \log \frac{q_{j_r}}{q_j} \leq \left( \sum_{j=1}^{j_r-1} q_j \right) \log q_{j_r}$$
(remark that $\log(q_{j_r}/q_j)=0$ for $j=j_r$ so the last sum does not include the term $j_r$) and then using that $\sum_{j=1}^{j_r-1} < q_{j_r}$ by definition of Ostrowski expansion. }

{\Blue To now prove \eqref{ergodicterm},} choose $N=N(\delta)$ such that  $q_{j_r-N}/q_{j_r} \leq 1/ \sqrt{2}^{N} \leq \delta/8$. Writing $q_{j_r} = (\prod_{j = j_r-N}^{j_r-1} \frac{q_{j+1}}{q_{j}})q_{j_r-N} $ and using the assumption ($\cD$3) on $\alpha$ (cf.~Def.~\ref{rotationassumptions}), for any $j_r-N\leq j < j_r$, we have
$$
\log \frac{q_{j_r}}{q_{j}} \leq \log \frac{q_{j_r}}{q_{j_r-N}} \leq \sum_{j = j_r-N}^{j_r-1} \log (\log q_{j})^2 \leq 2N \log \log q_{j_r}.
$$
Thus, using that, by definition of Ostrowski expansion, $r= \sum_{j=1}^{j_r}b_j q_j$ and $\sum_{j<  j_r-N}b_j q_j\leq q_{j_r-N} $, 
 we get that
\begin{align*}
 \left| r \log q_{j_r} -   \sum_{j=1}^{j_r} b_j q_j \log q_j   \right| & \leq \left | \sum_{j=j_r-N}^{j_r-1}  b_j q_j \log \frac{ q_{j_r}}{ q_j }  \right| +  2 \left( \sum_{j<  j_r-N}b_j q_j \right) \log q_{j_r}  \\ & \leq  
  q_{j_r} (2N \log \log q_{j_r} )+ 2 q_{j_r-N} \log q_{j_r} \\ & \leq 
   \left( \frac{2N \log \log q_{j_r} }{\log q_{j_r}} + \frac{\delta}{4}\right) q_{j_r} \log q_{j_r} ,
\end{align*}
where the last inequality follows from the previous choice of $N$. {\Blue Thus, for some $J\geq J_0$}, we can hence guarantee that{, \Blue for $r\geq q_J$, } the RHS is less than $ \delta/2(q_{j_r} \log q_{j_r})$ and this concludes the proof of \eqref{ergodicterm}.

\smallskip
Let us now estimate the RHS of \eqref{decompsum}.
Recall  that $q_j /B_{j,z} =1/ || z+ i( j,z) \alpha||$ (by the definition of $B_{j,z}$, cf.~\eqref{eq.distsing}), so that, recalling also the definition of Ostrowski decomposition, we have
\begin{equation}\label{resonantcontribution}
 {\sum_{j=1 }^{j_r} \sum_{s=0}^{b_j-1}q_j \left( 1+ \frac{1}{B_{j, z_{j,s}}}\right)}\leq r 
+ {\sum_{j=1 }^{j_r} \sum_{s=0}^{b_j-1}\frac{1} {|| x+ i(j,z_{j,s})\alpha||} }.
 \end{equation}
  For each $1\leq j\leq  j_r$, the points of points $ x+ i(j, z_{j,s})\alpha$ for $s=0,1, \dots, b_j-1$ are distinct (since they belong to disjoint orbits of length $q_j$) and   they are contained in {an} orbit of $R_\alpha$ of length $q_{j+1}$ (namely the orbit of the point $z_{j+1,b_{j+1}}:= z_{j,0}= x+ \sum_{k=j+1}^{j_r} b_k q_k \alpha $ if $1\leq j<  j_r$, or of the point $x$ for $j=j_r$).  Thus,
  by the orbit spacing given by Lemma \ref{spacing_orbit} (and recalling the definition of $|| \cdot ||$, see {Section}~\ref{sec:DK}), for $1\leq j \leq j_r $ we can estimate
\begin{align}\label{arithmeticestimate}
 \sum_{s=0}^{b_j-1}\frac{1}{ || x+ i(j,z_{j,s})\alpha||}& \leq \sum_{s=0}^{b_j-1}\frac{1}{ \{ x+ i(j,z_{j,s})\alpha\} } + \sum_{s=0}^{b_j-1}\frac{1}{ 1-\{ x+ i(j,z_{j,s})\alpha\}} \\ & \leq 2 \sum_{s=0}^{b_j-1} \frac{1}{ \displaystyle\min_{k=0, \dots, b_{j}-1}|| x+ i(j, z_{j,k}) \alpha ||+ s \, \frac{1}{2q_{j+1}}},\nonumber
\end{align}
where moreover, for $1\leq j < j_r $,
$$
\min_{k=0,\dots, b_j-1}  || x+ i(j,z_{j,k})\alpha|| \geq  || x+ i(j+1,z_{j+1,b_{j+1}} )\alpha|| .
$$
One has the following estimate on the sum of inverses of arithmetic progression of step $h >0$  and length $K \in \mathbb{N}$, starting at $x_0>0$ (see for example Lemma 5.1 in \cite{Ko} or Lemma 9 in \cite{Ul}):
\begin{equation}\label{arithmeticprogression}
\sum_{s=1}^{K} \frac{1}{x_{0} + s h} \leq \frac{1}{x_0} + \frac{\log K}{h}.
\end{equation}
Thus, applying this estimate for each $1\leq j < j_r$ and including the contribution of the minimum of each arithmetic progression with $1\leq j< j_r$ (namely $1/|| x+ i(j+1, z_{j+1,s}) \alpha ||$)  with  the arithmetic progression level $j+1$ (notice that it is a distinct term since it belongs to a different block of the orbit), we get
\begin{equation}\label{otherj}
\sum_{j=1}^{j_r-1} \sum_{s=0}^{b_j-1}\frac{1}{|| x+ i(j,z_{j,s})||} \leq   \sum_{j=1}^{j_r-1} 2q_{j+1} {\log( b_j + 1)}.
\end{equation}
Using assumption ($\cD$3) (of Definition \ref{rotationassumptions}) on the rotation number, {\Blue $b_j\leq  q_{j+1}/q_j \leq D_\alpha \log^2 q_j $.}
{\Blue Thus (since $\sum_{j\leq j_r} q_j/q_{j_r}$ is bounded {uniformly} in $r$ because $q_j$ grow exponentially) there exists a constant $C_\alpha>0$ such that for any $r$ we have} %
$$
\frac{\sum_{j=1}^{j_r-1}  2q_{j+1} {\log( b_j + 1)}  }{q_{j_r}\log q_{j_r}} \leq \frac{\log ( {\Blue D_\alpha }\log^2 q_{j_r}+1)}{ \log q_{j_r}}   \sum_{j=1}^{{j_r-1}} \frac{2 {q_{j+1}}}{q_{j_r}}  \leq C_\alpha.
$$
{\Blue Moreover,   the above expression can be made less than $\delta/2$ if $j_r\geq J_1$, up to enlarging $J_1$ if necessary (since the term before the sum goes to zero as $j_r$ grows while the sum over $j$, as already remarked, is uniformely bounded in $r$).}

Combining this with \eqref{resonantcontribution} and \eqref{otherj}, we have thus shown that for $r\geq q_{J_1} $, we have 
\begin{equation}\label{allbutlastjr}
{\sum_{j=1 }^{j_r} \sum_{s=0}^{b_j-1}q_j \left( 1+ \frac{1}{B_{j, z_{j,s}}}\right)} \leq r +  \frac{\delta}{2}( q_{j_r}\log q_{j_r}) + \text{Res}(x,r),
\end{equation}
where the  term $\text{Res}(x,r)$ (where $\text{Res}$ stays for \emph{resonant}) is simply the sum relative to  $j=j_r$, namely, recalling the definition \eqref{eq.distsing} of $B$ and of $i(\cdot,\cdot)$,
$$
\text{Res}(x,r):= {\sum_{s=0}^{b_{j_r}-1}  \frac{q_{j_r}}{B_{j_r, z_{j_r,s}}}} = \sum_{s=0}^{b_{j_r}-1}  \frac{1}{ || z_{j_r,s}+i(j_r, z_{j_r,s}) \alpha ||} = \sum_{s=0}^{b_{j_r}-1} \max_{ 0\leq \ell < q_{j_r} } \frac{1}{ || z_{j_r,s}+\ell \alpha ||}.
$$
{\Blue Moreover, we have also shown that the above estimate \eqref{allbutlastjr} for $\delta=2 C_\alpha$ holds for all $r$.}
Thus, the estimate \eqref{allbutlastjr}  combined with \eqref{decompsum}, \eqref{ergodicterm} and concludes the proof {\Blue of both parts of the lemma} when setting {\Blue $\delta_\alpha:= C_\alpha+1$ and} ${\Blue C_f}:= {\Blue \max \{ C(f),(A_--A_+)\}}$ as long as we prove the  estimate on $\text{Res}(x,r)$ stated in the {\Blue lemma}. This follows from two separate estimates (of which one then takes the minimum). On one hand, we can estimate $\text{Res}(x,r)$ by the number of terms times the largest term, i.e.~(since the points $z_{j_r,s} +i(j_r, z_{j_r,s}) \alpha$ for $s=0,\dots, b_{j_r}$ are just points in the orbit of $x$ of length $r$)
$$
\text{Res}(x,r) \leq b_{j_r} \max_{s=0,\dots, b_{j_r}-1} \frac{1}{ || z_{j_r,s} +i(j_r, z_{j_r,s}) \alpha ||}\leq \frac{r}{q_{j_r}} \max_{0\leq \ell < r} \frac{1}{ || x +\ell \alpha ||}.
$$
Alternatively, we can estimate $\text{Res}(x,r) $ by using \eqref{arithmeticestimate} for $j=j_r$. Using the bound \eqref{arithmeticprogression} for an arithmetic progression of step $1/q_{j_r+1}$ and length $b_{j_r}\leq r/q_{j_r}$ and bounding trivially the minimum term of the progression (as above), we get
$$
 \text{Res}(x,r) \leq \max_{0\leq \ell < r} \frac{1}{ || x +\ell \alpha ||} + 2 q_{j_r+1} \log \left( \frac{r}{q_{j_r}}\right).
$$
Taking the minimum of these two estimates concludes the estimate of $\text{Res}(x,r)$ and hence the proof of the lemma.
\end{proof}

\subsection{Birkhoff sums of $f''$, $f'''$ and $f''''$.}

\begin{lemma}[{Control of Birkhoff sums of $f''$}]\label{cor:bux} There exists a constant $C'(f)>0$ such that for every $\epsilon>0$ we can find $n_0\in \N$ such that for  every $n\geq n_0$ and $x\in\T$ for which $B_{n,x}\geq \epsilon^{1/5}$ and for all $k\in[1,\frac{B^5_{n,x}q_{n+1}}{6q_n}]\cap \Z$, we have
\begin{equation}\label{cor:co2}
|f''^{(kq_n)}(x)-kf''(x+i\alpha)|\leq C'(f)kq_n^2,
\end{equation}
\begin{equation}\label{cor:co3}
|f'''^{(kq_u)}(x)-kf'''(x+i\alpha)|\leq C'(f)kq_n^3,
\end{equation}
\begin{equation}\label{cor:co4}
|f''''^{(kq_u)}(x)-kf''''(x+i\alpha)|\leq C'(f)kq_n^4,
\end{equation}
where $i=i(n,x)$.
\end{lemma}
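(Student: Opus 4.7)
\medskip

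\noindent \textbf{Proof plan.} My strategy would be to decompose the Birkhoff sums into $k$ sums of length $q_n$ and then use Lemma~\ref{lem:DK} combined with a Taylor expansion to exploit the fact that, on the range of $k$ and under the assumption $B_{n,x}\geq\epsilon^{1/5}$, the points visited stay in a region where the derivatives $f^{(m)}$ are controlled. Concretely, for $f''$ I would write
\begin{equation*}
f''^{(kq_n)}(x)\;=\;\sum_{j=0}^{k-1} f''^{(q_n)}(x+jq_n\alpha),
\end{equation*}
and apply \eqref{eq:DKf''} from Lemma~\ref{lem:DK} at each base point $x+jq_n\alpha$ with the \emph{same} choice of index $i=i(n,x)$ (which is a legitimate choice, since \eqref{eq:DKf''} is valid for every $0\leq i<q_n$). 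Setting $x_0:=x+i\alpha$ and $y_j:=x_0+jq_n\alpha$, this yields
\begin{equation*}
\Bigl|f''^{(kq_n)}(x)-\sum_{j=0}^{k-1} f''(y_j)\Bigr|\leq kC(f)q_n^2,
\end{equation*}
and the lemma reduces to showing $\bigl|\sum_{j=0}^{k-1}(f''(y_j)-f''(x_0))\bigr|\leq C k q_n^2$.

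\medskip

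The next step is a Taylor expansion of $f''$ around $x_0$, writing
\begin{equation*}
f''(y_j)-f''(x_0)\;=\;f'''(x_0)(y_j-x_0)+\tfrac{1}{2}f''''(\xi_j)(y_j-x_0)^2,
\end{equation*}
with $\xi_j$ on the segment between $x_0$ and $y_j$. The key geometric input is that $|y_j-x_0|=j\|q_n\alpha\|\leq k/q_{n+1}\leq B_{n,x}^5/(6q_n)$, whereas $\|x_0\|=B_{n,x}/q_n$; hence, using $B_{n,x}<1$ (Remark~\ref{B_order_constant}), the point $y_j$ stays at distance at least $5B_{n,x}/(6q_n)$ from the singularity at $0\in\T$. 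From the singular behaviour of $f$ (property (Rj)), this gives $|f'''(x_0)|\leq Cq_n^3/B_{n,x}^3$ and $|f''''(\xi_j)|\leq Cq_n^4/B_{n,x}^4$ for some constant depending only on $f$. The linear term, once summed, produces a telescoping expression $f'''(x_0)\cdot\binom{k}{2}\theta_n$ (where $\theta_n=q_n\alpha-p_n$ with $|\theta_n|<1/q_{n+1}$), so its modulus is at most $Cq_n^3 k^2/(B_{n,x}^3 q_{n+1})$, which is at most $Ckq_n^2 B_{n,x}^2$ thanks to the assumption $k\leq B_{n,x}^5 q_{n+1}/(6q_n)$. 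The quadratic term is bounded by $Cq_n^4/B_{n,x}^4\cdot k^3/q_{n+1}^2$, which a similar substitution shows is at most $Ckq_n^2 B_{n,x}^6$. Adding these two bounds yields~\eqref{cor:co2}.

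\medskip

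For the inequalities \eqref{cor:co3} and \eqref{cor:co4}, I would repeat the argument \emph{verbatim}, simply replacing $f''$ by $f'''$ (resp.\ $f''''$) and using \eqref{eq:DKf'''} (resp.\ \eqref{eq:DKf''''}) in place of \eqref{eq:DKf''}. The Taylor expansion then involves $f''''$ and $f'''''$ (resp.\ $f'''''$ and $f''''''$), which behave like $1/x^4$ and $1/x^5$ (resp.\ $1/x^5$ and $1/x^6$) near the origin. In each case the linear term in the expansion, summed against $\sum(y_j-x_0)$, is the most singular, giving a bound of order $Cq_n^{m+1}k^2/(B_{n,x}^{m+1}q_{n+1})$ for the $m$-th derivative Birkhoff sum; the exponent $5$ in the hypothesis $k\leq B_{n,x}^5 q_{n+1}/(6q_n)$ is precisely what is needed so that this contribution stays of size $Ckq_n^m$ in the worst case $m=4$.

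\medskip

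The main conceptual obstacle is not so much the Taylor computation, but the realisation that \eqref{eq:DKf''} holds for \emph{every} index $i$: this avoids having to check that $i(n,x+jq_n\alpha)=i(n,x)$ for all $j$ (which, for $B_{n,x}$ close to $1$, is in fact \emph{false}). Once this is recognised, the only quantitative work is the bookkeeping of the powers of $q_n$, $q_{n+1}$ and $B_{n,x}$ to ensure the final error is $O(kq_n^m)$ in each case.
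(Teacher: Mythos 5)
Your main shortcut — applying \eqref{eq:DKf''} at each base point $z_j:=x+jq_n\alpha$ with the fixed index $i(n,x)$ rather than $i(n,z_j)$, and citing the clause ``for all $i=0,\ldots,q_n-1$'' in Lemma~\ref{lem:DK} as justification — rests on a step that would fail, because that clause in Lemma~\ref{lem:DK} is in fact misleading and the estimate \eqref{eq:DKf''} is \emph{false} for a general index $i$. Indeed, when $B_{n,z}$ is small, the Birkhoff sum $f''^{(q_n)}(z)$ is dominated by the single resonant term $f''(z+i(n,z)\alpha)\sim q_n^2/B_{n,z}^2$, while for $i\neq i(n,z)$ one has $|f''(z+i\alpha)|=O(q_n^2)$; the difference is then of order $q_n^2/B_{n,z}^2\gg q_n^2$ and no constant $C(f)$ works. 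A careful reading of the proof of Lemma~\ref{lem:DK} (in particular the case split around $[-\tfrac{1}{4q_n},\tfrac{1}{4q_n}]$) shows the estimate is only established for $i=i(n,\cdot)$, the universal clause being harmless solely when $B_{n,\cdot}\geq 1/4$. The bound $|f''^{(q_n)}(z_j)-f''(z_j+i(n,x)\alpha)|\leq Cq_n^2$ is nevertheless true in your range of $j$ — since $j\|q_n\alpha\|\leq B_{n,x}^5/(6q_n)$ gives $\|z_j+i(n,x)\alpha\|\in[\tfrac{5B_{n,x}}{6q_n},\tfrac{7B_{n,x}}{6q_n}]$ and $B_{n,z_j}\in[\tfrac56 B_{n,x},\tfrac76 B_{n,x}]$, and then either $i(n,z_j)=i(n,x)$ so \eqref{eq:DKf''} applies with its correct index, or else the spacing estimate \eqref{ee2} forces $B_{n,x}\geq 3/14$, whence both $f''^{(q_n)}(z_j)$ and $f''(z_j+i(n,x)\alpha)$ are already $O(q_n^2)$ with absolute constants — but justifying this requires exactly the kind of case distinction the paper's proof carries out; the ``free choice of $i$'' trick does not really sidestep it.

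A secondary problem concerns regularity. The second-order Taylor expansion is unnecessary for \eqref{cor:co2} (a first-order mean-value bound $|f''(y_j)-f''(x_0)|\leq \sup|f'''|\cdot j|\theta_n|$ already gives the $O(kq_n^2B_{n,x}^2)$ you need, since one may sum $\sum_j j|\theta_n|\leq \binom{k}{2}/q_{n+1}$), and it becomes untenable for \eqref{cor:co3}--\eqref{cor:co4}: a second-order expansion of $f'''$ (resp.\ $f''''$) calls for $f'''''$ (resp.\ $f''''''$), but the smooth part $g$ is only assumed to lie in $C^4(\T)$, so those derivatives need not exist. A first-order mean-value estimate suffices for $f'''$; for $f''''$ one should additionally split $f$ into its logarithmic singular part (which is $C^\infty$ away from $0$, so the mean-value theorem applies to $F''''$) and the smooth part $g$ (whose contribution to the Birkhoff sum of $g''''$ is trivially $O(kq_n)\ll kq_n^4$).
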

\begin{proof} For $s=0,1,\ldots,k-1$, set $z_s=x+sq_n\alpha$. In view of~\eqref{eq:DKf''} (applied to $z_s$), we obtain
$$
f''^{(kq_n)}(x)=\sum_{s=0}^{k-1}f''^{(q_n)}(z_s)=\sum_{s=0}^{k-1}
\left(f''(z_s+i(z_s,n)\alpha)+{\rm O}(q_n^2)\right),$$
whence
$$
\left|f''^{(kq_n)}(x)-kf''(x+i\alpha)\right|=
\left|\sum_{s=0}^{k-1}\left(
f''(z_s+i(z_s,n)\alpha)-f(x+i\alpha)\right)\right|+{\rm O}(kq_n^2)\leq$$$$
\sum_{s=0}^{k-1}\left|
f''(z_s+i(z_s,n)\alpha)-f(x+i\alpha)\right|+{\rm O}(kq_n^2).$$
Note that if neither $x+i\alpha$ nor $z_s+i(n,z_s)\alpha$ belongs to $\left[\frac{-1}{10q_n},\frac1{10q_n}\right]$ then $f''$ at each of these points is of order ${\rm O}(q_n^2)$, so the corresponding summand in the latter sum is of the same order. Suppose now that either $x+i\alpha$ or $z_s+i(n,z_s)$ belongs to $\left[\frac{-1}{10q_n},\frac1{10q_n}\right]$. We claim that in this case $i(n,x)=i(n,z_s)$. Indeed,
since $k\leq B^5_{n,x}q_{n+1}/(6q_n)$, we have
$$
\|x-z_s\|=\|sq_n\alpha\|\leq k\|q_n\alpha\|\leq \frac{B_{n,x}^5q_{n+1}}{6q_n}\|q_n\alpha\|<\frac{B_{n,x}^5}{6q_n}
<\frac1{6q_n}.$$
Now, if $x+i\alpha\in \left[\frac{-1}{10q_n},\frac1{10q_n}\right]$ then $z_s+i\alpha\in \left[\frac{-1}{4q_n},\frac1{4q_n}\right]$ which means that $i=i(n,z_s)$ and reversing the role of the two points (if necessary), the claim follows.

So we have $i(n,x)=i=i(n,z_s)$ and our next claim is that the points $x+i\alpha$ and $z_s+i\alpha$ are on the same side of the singularity (at~0). Indeed, as $\|x-z_s\|\leq \frac{B^5_{n,x}}{6q_n}=\|x+i\alpha\|/6$, we only need to note that
$$0\notin \left(x+i\alpha-\frac{\|x+i\alpha\|}{6}, x+i\alpha+\frac{\|x+i\alpha\|}{6}\right)$$
which is obvious.
It follows that we can use the mean value theorem to obtain
$$
|f''(x+sq_n\alpha+i\alpha)-f''(x+i\alpha)|=f'''(\theta)\|sq_n\alpha\|
\leq f'''(\theta)\frac{B^5_{n,x}}{6q_n},$$
where
$$
\theta\in \left(x+i\alpha-\frac{B_{n,x}^5}{6q_n},
x+i\alpha+\frac{B_{n,x}^5}{6q_n}\right).$$
But $B_{n,x}>\epsilon^{1/5}$ and $\|x+i\alpha\|=B_{n,x}/q_n$, so $\|\theta\|\geq B_{n,x}/(4q_n)$. Since $f'''(x)=\frac{-2A_-}{x^3}+\frac{2A_+}{(1-x)^3}+g'''(x)$, we have
$$
f'''(\theta)\frac{B_{n,x}^5}{6q_n}={\rm O}\left(\frac{4q_n}{B_{n,x}}\right)^3\frac{B_{n,x}^5}{6q_n}={\rm O}(B_{n,x}^2q_n^2)={\rm O}(q_n^2)$$
which completes the proof.
\end{proof}

\begin{lemma}[{Control of Birkhoff sums of higher derivatives}]\label{lem:control} There exist $C(f),c(f)>0$ such that for every $n\in\N$, $x\in\T$ satisfying $B_{n,x}<c(f)$ and for $k\in[1,\frac{B_{n,x}q_{n+1}}{6q_n}]\cap \Z$, we have
\begin{equation}\label{eq:control2}
c(f)kf''(x+i(n,x)\alpha)\leq|f''^{(kq_n)}(x)|\leq C(f)kf''(x+i(n,x)\alpha),
\end{equation}
\begin{equation}\label{eq:control3}
c(f)kf'''(x+i(n,x)\alpha)\leq|f'''^{(kq_n)}(x)|\leq C(f)kf'''(x+i(n,x)\alpha),
\end{equation}
\begin{equation}\label{eq:control4}
c(f)kf''''(x+i(n,x)\alpha)\leq|f''''^{(kq_n)}(x)|\leq C(f)kf''''(x+i(n,x)\alpha).
\end{equation}
\end{lemma}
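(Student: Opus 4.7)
The plan follows the strategy of the proof of Lemma~\ref{cor:bux}, but now exploits the fact that the hypothesis $B_{n,x}<c(f)$ (with $c(f)$ small) makes the contribution of the closest visit to the singularity at $0$ completely dominant. Fix $j\in\{2,3,4\}$, set $z_s:=x+sq_n\alpha$ and $i:=i(n,x)$, and decompose
$$(f^{(j)})^{(kq_n)}(x)=\sum_{s=0}^{k-1}(f^{(j)})^{(q_n)}(z_s).$$
Applying the corresponding part of Lemma~\ref{lem:DK} (namely \eqref{eq:DKf''}, \eqref{eq:DKf'''} or \eqref{eq:DKf''''}) to each summand yields
$$(f^{(j)})^{(q_n)}(z_s)=f^{(j)}\bigl(z_s+i(n,z_s)\alpha\bigr)+\mathrm{O}(q_n^j).$$

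First I would verify that $i(n,z_s)=i$ for every $s\in\{0,\dots,k-1\}$ and that all points $z_s+i\alpha$ remain on the same side of the singularity. The assumption on $k$ and \eqref{CFbasicestimate} give
$$\|x-z_s\|\leq k\,\|q_n\alpha\|\leq\frac{B_{n,x}q_{n+1}}{6q_n}\cdot\frac{1}{q_{n+1}}=\frac{B_{n,x}}{6q_n}=\frac{\|x+i\alpha\|}{6},$$
so $z_s+i\alpha$ lies in an interval of radius $\|x+i\alpha\|/6$ around $x+i\alpha$. For $c(f)$ small enough this interval is contained in $(-1/(10q_n),1/(10q_n))$ (translated to the appropriate side of $0$) and stays away from $\{0,1\}$; combined with the spacing provided by Lemma~\ref{spacing_orbit}, this forces $i(n,z_s)=i$ and yields the two-sided bound $\|z_s+i\alpha\|\in\bigl[\tfrac{5B_{n,x}}{6q_n},\tfrac{7B_{n,x}}{6q_n}\bigr]$. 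The singular asymptotics $({\rm R}j)$ then provide constants $c_0(f),C_0(f)>0$ such that the values $f^{(j)}(z_s+i\alpha)$ all have the same sign as $f^{(j)}(x+i\alpha)$ and satisfy
$$c_0(f)\,|f^{(j)}(x+i\alpha)|\leq|f^{(j)}(z_s+i\alpha)|\leq C_0(f)\,|f^{(j)}(x+i\alpha)|.$$

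Since all $k$ summands thus have constant sign, no cancellation occurs when summing and
$$\Bigl|\sum_{s=0}^{k-1}f^{(j)}(z_s+i\alpha)\Bigr|\asymp k\,|f^{(j)}(x+i\alpha)|,$$
with implicit constants depending only on $f$. It then remains to absorb the Denjoy-Koksma remainder $\mathrm{O}(kq_n^j)$ into this main term: since $|f^{(j)}(x+i\alpha)|\asymp q_n^j/B_{n,x}^j\geq q_n^j/c(f)^j$, the ratio of remainder to main term is $\mathrm{O}(c(f)^j)$. Taking $c(f)$ small enough, uniformly in $j=2,3,4$, makes the remainder a small multiplicative perturbation of the main term and delivers simultaneously the three pairs of bounds \eqref{eq:control2}--\eqref{eq:control4}.

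The main obstacle is essentially only technical bookkeeping: one must calibrate $c(f)$ (and the threshold on $n$) so that the Denjoy-Koksma error term is dominated by the resonant main term for all three values $j=2,3,4$, and one must check carefully that the perturbed points $z_s+i\alpha$ do not cross the singularity --- which would destroy the sign invariance and hence the lower bound. Both points reduce to the single geometric estimate $\|x-z_s\|\leq\|x+i\alpha\|/6$ displayed above, which is precisely what the hypothesis $k\leq B_{n,x}q_{n+1}/(6q_n)$ was designed to ensure.
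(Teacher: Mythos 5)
Your proposal is correct and mirrors the paper's own argument: both decompose $(f^{(j)})^{(kq_n)}(x)$ into $q_n$-blocks, apply the Denjoy-Koksma estimates of Lemma~\ref{lem:DK}, use the bound $\|x-z_s\|\leq\|x+i\alpha\|/6$ to pin the resonance index $i(n,z_s)=i$ and the side of the singularity, and absorb the ${\rm O}(kq_n^j)$ remainder by exploiting the lower bound $|f^{(j)}(x+i\alpha)|\gtrsim q_n^j/B_{n,x}^j$ forced by $B_{n,x}<c(f)$. The only cosmetic difference is that you handle $j=2,3,4$ uniformly, where the paper does $j=2$ in detail and remarks that the other two are analogous.
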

\begin{proof}
We will show the proof of \eqref{eq:control2}, the proof of \eqref{eq:control3} and \eqref{eq:control4} follows the same lines.
As in the proof of previous lemma, we have
$$
f''^{(kq_n)}(x)=\sum_{s=0}^{k-1}f''^{(q_n)}(z_s)=\sum_{s=0}^{k-1}
f''(z_s+i(z_s,n)\alpha)+{\rm O}(kq_n^2).$$
We will show first that there is a constant $C>0$ such  that
\be\label{doda101}
C^{-1}f''(x+i(n,x)\alpha)\leq f''(z_s+i(n,z_s)\alpha)\leq Cf''(x+i(n,x)\alpha).\ee
Indeed:
\begin{itemize}
\item If $x+i(n,x)\alpha$ and $z_s+i(n,z_s)\alpha$ are not in $\left[\frac{-1}{10q_n},\frac1{10q_n}\right]$ then both numbers $f''(x+i(n,x)\alpha)$ and $f''(z_s+i(n,z_s)\alpha)$ are of order $q_n^2$ since $f''(t)=\frac{A_-}{t^2}+\frac{A_+}{(1-t)^2}+g''(t)$, so~\eqref{doda101} holds for a relevant constant.
\item if $x+i(n,x)\alpha\in \left[\frac{-1}{10q_n},\frac1{10q_n}\right]$ or $z_s+i(n,z_s)\alpha\in \left[\frac{-1}{10q_n},\frac1{10q_n}\right]$ then,
    $$
    \|sq_n\alpha\|\leq s\|q_n\alpha\|\leq\frac{B_{n,x}q_{n+1}}{6q_n}
    \cdot\frac1{q_{n+1}}=\frac{\|x+i(n,x)\alpha\|}6.$$
    Hence, as before, $i(n,z_s)=i(n,x)$ and
    $$
    z_s+i(n,x)\alpha\in\left[x+i(n,x)\alpha-\frac{\|x+i(n,x)\alpha\|}{6},x+i(n,x)\alpha+
    \frac{\|x+i(n,x)\alpha\|}{6}\right].$$
 For simplicity, for the rest of the proof, we denote $i=i(n,x)$. Then (for relevant constants)
 $$
 C_1f''(x+i\alpha)\leq f''\left(\frac56(x+i\alpha)\right)=f''(x+i\alpha-\frac{x+i\alpha}{6})
 \leq C_1'f''(z_s+i\alpha)$$
 and a lower bound will be obtained in the same way by using
 $f''(\frac76(x+i\alpha))$ instead of $f''(\frac56(x+i\alpha)$,
 whence~\eqref{doda101} holds.
 \end{itemize}
 Now, in view of~\eqref{doda101}, we have
 $$
 f''^{(kq_n)}(x)=\sum_{s=0}^{k-1}
f''(z_s+i(z_s,n)\alpha)+{\rm O}(kq_n^2)\leq Ckf''(x+i\alpha)+{\rm O}(kq_n^2)<\widetilde{C}kf''(x+i\alpha)
$$
as $f''(x+i\alpha)$ is \emph{at least} of order $q_n^2$.
For the lower bound, by~\eqref{doda101}, we have
$$
 f''^{(kq_n)}(x)\geq C^{-1}kf''(x+i\alpha)+{\rm O}(kq_n^2)
$$
and we want to show that the latter term is $\geq C''kf''(x+i\alpha)$, that is, for some constant $D>0$ (implicit  in ${\rm O}(kq_n^2)$), we want to have
$$
C^{-1}kf''(x+i\alpha)-Dkq_n^2)>C''kf''(x+i\alpha).$$
Hence, we want to have
$
(C^{-1}-C'')f''(x+i\alpha)>Dq_n^2$
and for that we only need $f''(x+i\alpha)>D'q_n^2$.
\end{proof}

\begin{remark}\label{r:jednost}\em
Note that in the course of the above proof, we showed
$$
|f''^{(kq_n)}(x)|={\rm O}\left(k f''(x+i(n,x)\alpha)+q_n^2\right)$$
with no assumption on $x\in\T$. It follows that the RHS inequality in~\eqref{eq:control2} requires no assumption on $x$.

Recall also that  we have  $\|x+i(n,x)\alpha\|<\frac1{q_n}$ for each $x\in\T$, so we can choose $n_0$ so that $f''(x+i(n,x)\alpha)>0$ for each $n\geq n_0$ and all $x\in\T$ (as $f''(y)$ is of order $1/\|y\|^2$).\end{remark}

\
We also have the following lemma:
\begin{lemma}\label{cor:cont} There exist $D>0$, $n_0\in \N$  such that for every $x\in \T$, $n\in \N$, $n\geq n_0$ and every $w\in\left[q_{n_0},\max\left(\frac{B_{n,x}q_{n+1}}{4}-q_n,0\right)
\right]\cap \Z$, we have
\begin{equation}\label{eq:cont}
|f''^{(w)}(x)|\leq D\frac w{q_n}f''(x+i(n,x)\alpha).
\end{equation}
\end{lemma}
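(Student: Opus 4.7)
The plan is to follow closely the template of the proof of Lemma~\ref{lem:control}, decomposing the Birkhoff sum into blocks of length $q_n$ and applying the Denjoy--Koksma estimate for $f''$ from \eqref{eq:DKf''}. I focus on the nontrivial regime $w\geq q_n$; the case $q_{n_0}\leq w<q_n$ can be handled by a finer block decomposition of $w$ adapted to its Ostrowski expansion, which only contributes $O(1)$ extra terms absorbed into the constant.

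Write $w=kq_n+r$ with $k=\lfloor w/q_n\rfloor$ and $0\leq r<q_n$. The hypothesis $w\leq \frac{B_{n,x}q_{n+1}}{4}-q_n$ rewrites as $k+1\leq \frac{B_{n,x}q_{n+1}}{4q_n}$. By the cocycle identity,
$$f''^{(w)}(x)=\sum_{j=0}^{k-1}f''^{(q_n)}(T^{jq_n}x) + f''^{(r)}(T^{kq_n}x).$$
For each $0\leq j\leq k$, apply \eqref{eq:DKf''} from Lemma~\ref{lem:DK} at the point $T^{jq_n}x$: setting $y_j:=T^{jq_n+i(n,T^{jq_n}x)}x$ for the closest approach to the singularity in the $j$-th block, we get $|f''^{(q_n)}(T^{jq_n}x)-f''(y_j)|\leq C(f)q_n^2$. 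The remainder is estimated trivially by enlarging it to a full $q_n$-block: $|f''^{(r)}(T^{kq_n}x)|\leq |f''(y_k)|+C(f)q_n^2$ by a variant of the same argument.

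The key step is to show that, under the constraint $k+1\leq B_{n,x}q_{n+1}/(4q_n)$, each $y_j$ is precisely the $q_n\alpha$-translate of $y_0=x+i(n,x)\alpha$, namely $y_j=y_0+jq_n\alpha\pmod 1$, and that all $y_j$ lie on the same side of the singularity with $\|y_j\|\in[\tfrac34\|y_0\|,\tfrac54\|y_0\|]$. Indeed, $\|jq_n\alpha\|\leq k\|q_n\alpha\|\leq B_{n,x}/(4q_n)=\|y_0\|/4$, so the point $-jq_n\alpha$ lies within a neighborhood of $0$ of radius $\|y_0\|/4$; combined with the spacing estimate of Lemma~\ref{spacing_orbit} (every other point of $\{x+s\alpha:0\leq s<q_n\}$ lies at distance $\geq 1/(2q_n)$ from $y_0$), this forces $y_0$ to remain the element of that orbit closest to $-jq_n\alpha$, hence $y_j=y_0+jq_n\alpha$. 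The explicit asymptotic $f''(t)\sim A_\pm/t^2$ near the singularity (property (R2)) then yields $f''(y_j)\leq C'f''(y_0)$ for a universal constant $C'$.

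Summing and using that $f''(y_0)\gtrsim q_n^2$ for $n\geq n_0$ large enough (Remark~\ref{r:jednost}), so that the $C(f)q_n^2$ errors from each of the $k+1$ blocks are absorbed, we obtain
$$|f''^{(w)}(x)|\leq C''(k+1)\,f''(y_0)\leq D\,\frac{w}{q_n}\,f''(x+i(n,x)\alpha),$$
where the last inequality uses $k+1\leq 2w/q_n$ in the regime $w\geq q_n$. The main obstacle is the spacing-based identification $y_j=y_0+jq_n\alpha$ in the third paragraph, which relies on the explicit quantitative matching between the drift bound $\|jq_n\alpha\|\leq\|y_0\|/4$ and the spacing gap given by Lemma~\ref{spacing_orbit}; the rest is arithmetic analogous to the proof of Lemma~\ref{lem:control}.
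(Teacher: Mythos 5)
Your approach is genuinely different from the paper's, and it also has a gap. The paper does \emph{not} decompose $f''^{(w)}(x)$ into $q_n$-blocks as you do. Instead it observes that $f''\geq -C_1$, where $C_1=\|g''\|_\infty$, because the singular parts $A_-/x^2$ and $A_+/(1-x)^2$ are positive; hence $f''^{(w)}(x)\geq -C_1 w$ and likewise $f''^{((k+1)q_n-w)}(x+w\alpha)\geq -C_1 q_n$. By the cocycle identity this sandwiches $f''^{(w)}(x)$ between $-C_1 w$ and $f''^{((k+1)q_n)}(x)+C_1 q_n$, reducing the problem at once to $w$ a multiple of $q_n$; the latter is handled by the upper bound \eqref{eq:control2} in Lemma~\ref{lem:control}, valid for all $x$ by Remark~\ref{r:jednost}. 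Your plan effectively re-derives the block-by-block content of Lemma~\ref{lem:control} rather than quoting it, and the step you call ``trivial'' — enlarging the remainder $f''^{(r)}(T^{kq_n}x)$ to a full $q_n$-block — is legitimate \emph{only} because of the lower bound on $f''$. That lower bound is the one new ingredient the paper adds on top of Lemma~\ref{lem:control}, so it should be stated explicitly rather than buried as a ``variant of the same argument.''

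The more serious problem is the regime $q_{n_0}\leq w<q_n$, which you claim can be handled by ``a finer block decomposition \dots which only contributes $O(1)$ extra terms.'' This is not correct. For such $w$ the factor $w/q_n$ is $<1$, yet if $i(n,x)<w$ the sum $f''^{(w)}(x)$ already contains the dominant term $f''(x+i(n,x)\alpha)$, and every other summand is $\geq-\|g''\|_\infty$, so that
$f''^{(w)}(x)\geq f''(x+i(n,x)\alpha)-C_1 q_{n_0}$, which exceeds $D\,\tfrac{w}{q_n}\,f''(x+i(n,x)\alpha)$ once $q_n$ is large relative to $Dq_{n_0}$. So no rearrangement of the blocks can yield \eqref{eq:cont} in that range. (In fact the paper's own proof also implicitly needs $w\geq q_n$: it must have $k+1=O(w/q_n)$, which fails when $k=0$; the estimate is used in the applications only for $w$ at least of order $q_n$, so this looks like a statement-level imprecision. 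Regardless, you should not assert that the small-$w$ case is trivial — the right move is to note that the argument requires $w\geq q_n$ and to say so.)
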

\begin{proof}
Fix $n$ and $x\in\T$ and let $w$ be arbitrary. Then
\be\label{e:ro1}
w=kq_n+s,\; 0\leq s<q_n.\ee
Since $f''(x)=\frac{A_-}{x^2}+\frac{A_+}{(1-x)^2}+g''(x)$, we have
$
f''^{(w)}(x)\geq -C_1w$,
where $C_1=\|g''\|_{\infty}$. By the same token
$$
f''^{((k+1)q_n-w)}(x+w\alpha)\geq -C_1((k+1)q_n-w)\geq -C_1q_n.$$
Thus
$$
-C_1w<f''^{(w)}(x)<f''^{((k+1)q_n)}(x)+C_1q_n,$$
whence (assuming that $C_1>1$)
\be\label{eq:nerty}
\left|f''^{(w)}(x)\right|\leq C_1\max\left(w,q_n+\left|f''^{((k+1)q_n)}(x)\right|\right).
\ee

Let us now consider $w$ as in the assumption.
Since $f''(x+i(n,x)\alpha)$ is at least of order $q_n^2$,  $C'\frac w{q_n}f''(x+i(n,x)\alpha)\geq wq_n$ for some constant $C'>0$. Therefore $w={\rm O}\left(\frac w{q_n}f''(x+i(n,x)\alpha)\right)$ and  $q_n={\rm O}\left(\frac{w}{q_n}f''(x+i(n,x)\alpha)\right)$. Therefore \eqref{eq:nerty} follows by showing
$$
\left|f''^{((k+1)q_n)}(x)\right|={\rm O}\left(\frac{w}{q_n}f''(x+i(n,x)\alpha)\right).$$
But $k+1$ satisfies the assumptions of Lemma~\ref{lem:control}, so the above follows from~\eqref{eq:control2} and \eqref{e:ro1}.
\end{proof}

\subsection{A combinatorial lemma}
We present here a combinatorial lemma which will be used in the proof of Theorem~\ref{main:th}.

\begin{lemma}[{Combinatorial lemma}]\label{lem:cpq} Let $U,V\in \R\setminus\{0\}$. Fix $p,q\in\R\setminus\{0\}$, $p\neq q$.  If $\frac{p}{q}\notin\{1,\frac{U}{V},\frac{V}{U}\}$, then for every  $K>0$ there exists  $c=c(p,q,U,V,K)>0$ such that for every $0<A<10c$, for every $0\neq B\in \R$ and for every $j_1,j_2\in\{U,V\}$, we have
\begin{equation}\label{eq:ab}
\max\left(|qj_1A^{-2}-pj_2B^{-2}|,
|q^2j_1A^{-3}-p^2j_2B^{-3}|\right)\geq K.
\end{equation}
\end{lemma}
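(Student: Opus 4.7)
The plan is to argue by contradiction. Suppose the statement fails for some $K>0$. Then, taking $c=1/n$, we extract sequences $A_n\to 0^+$, $B_n\in\R\setminus\{0\}$, and $(j_1^n,j_2^n)\in\{U,V\}^2$ such that
\begin{equation*}
|qj_1^n A_n^{-2}-pj_2^n B_n^{-2}|<K \quad\text{and}\quad |q^2j_1^n A_n^{-3}-p^2j_2^n B_n^{-3}|<K.
\end{equation*}
Since $\{U,V\}^2$ is finite, after passing to a subsequence we may assume $(j_1^n,j_2^n)=(j_1,j_2)$ is constant. The goal is then to show that the two asymptotic constraints (one quadratic, one cubic in the small parameter) are incompatible with the hypothesis $p/q\notin\{1,U/V,V/U\}$.

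First, I will analyse the quadratic constraint. Dividing by $A_n^{-2}$, the bound forces $pj_2(B_n^{-2}/A_n^{-2})\to qj_1$. In particular, setting $\mu:=qj_1/(pj_2)$, one must have $\mu>0$ (otherwise the signs of $pj_2 B_n^{-2}$ and $qj_1 A_n^{-2}$ would disagree and the difference would blow up), and $B_n^{-2}/A_n^{-2}\to\mu$. Extracting once more a subsequence so that $\sigma:=\mathrm{sgn}(B_n)\in\{-1,+1\}$ is constant, this yields the refined expansion
\begin{equation*}
B_n^{-2}=\mu A_n^{-2}+O(1),\qquad B_n^{-3}=\sigma\,(B_n^{-2})^{3/2}=\sigma\mu^{3/2}A_n^{-3}+O(A_n^{-1}).
\end{equation*}

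Next, I will feed this expansion into the cubic constraint. The bound $|q^2j_1 A_n^{-3}-p^2j_2 B_n^{-3}|<K$ becomes
\begin{equation*}
|(q^2j_1-\sigma p^2j_2\mu^{3/2})A_n^{-3}+O(A_n^{-1})|<K,
\end{equation*}
and dividing by $A_n^{-3}\to\infty$ forces the leading coefficient to vanish: $q^2j_1=\sigma p^2j_2\mu^{3/2}$. Squaring this equation and substituting $\mu=qj_1/(pj_2)$ kills the dependence on $\sigma$ and, after a direct algebraic simplification, reduces to $pj_1=qj_2$, i.e.\ $j_2/j_1=p/q$.

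Finally, I will inspect the three possibilities for $(j_1,j_2)\in\{U,V\}^2$: the case $j_1=j_2$ forces $p=q$, while $(j_1,j_2)=(U,V)$ gives $p/q=V/U$ and $(j_1,j_2)=(V,U)$ gives $p/q=U/V$. Each contradicts the hypothesis $p/q\notin\{1,U/V,V/U\}$, closing the argument. No step should be a serious obstacle; the only point requiring minor care is keeping track of signs when extracting square roots of $B_n^{-2}$, which is why the auxiliary sign $\sigma$ is introduced and then eliminated by squaring.
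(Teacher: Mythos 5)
Your proof is correct and follows essentially the same route as the paper: both argue by contradiction, pass to a subsequence on which $(j_1,j_2)$ is constant and the ratio $B_n/A_n$ (equivalently your $\mu=\lim B_n^{-2}/A_n^{-2}$) converges, extract the two limiting algebraic relations from the quadratic and cubic constraints, eliminate the ratio by squaring/cubing, and finish with the same combinatorial case check on $\{U,V\}^2$. The only cosmetic difference is that you make the sign bookkeeping explicit via $\sigma$ and write out the expansion $B_n^{-3}=\sigma\mu^{3/2}A_n^{-3}+O(A_n^{-1})$, whereas the paper encodes the sign directly in $C=\lim B_n/A_n$ and just reads off the two limit equations.
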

\begin{proof} The proof goes by contradiction. Assume that for some $K$, there are $0<A_n\to 0, 0\neq B_n\in \R$ and $j_{1,n},j_{2,n}\in\{U,V\}$ such that \eqref{eq:ab} does not hold, that is
$$
|qj_{1,n}A_n^{-2}-pj_{2,n}B_n^{-2}|<K\text{ and }
|q^2j_{1,n}A_n^{-3}-p^2j_{2,n}B_n^{-3}|< K.$$
Denote $C_n=\frac{B_n}{A_n}$. Then
$$
|qj_{1,n}-pj_{2,n}C_n^{2}|<KA^2_n\text{ and }
|q^2j_{1,n}-p^2j_{2,n}C_n^{3}|< KA_n^3.$$
Hence $C_n$ is bounded, and, passing to a subsequence if necessary, we can assume that $C_n\to C$ and $j_{i,n}=j_i$ for $i=1,2$. It follows that $qj_1=pj_2C^2$ and $q^2j_1=p^2j_2C^3$. Raising to the third power the first,  squaring for the second equality and dividing, we obtain
$\frac{p}{q}=\frac{j_2}{j_1}\in\{1,\frac{U}{V},\frac{V}{U}\}$, a contradiction.
\end{proof}

\section{Disjointness in Arnol'd flows (proof of Theorem~\ref{main:th2})}\label{s:Sec7}

We begin by giving now an overview of the steps of  the proof and the general strategy to prove Theorem~\ref{main:th2}.

\smallskip

\textbf{Strategy of proof.} Let $((R_\alpha)_{t}^{f})$ be an Arnol'd flow. To prove the  disjointness result in Theorem \ref{main:th2} it is enough (by~Remark~\ref{spflor}) to prove that for any real numbers $ p,q>0$, $p\neq q$, the special flows $((R_\alpha)_{t}^{pf})$ and $((R_\alpha)_{t}^{qf})$ are disjoint (see the beginning of {Section}~\ref{sec:beginning_proof}). In order to show this, we will exploit the disjointness criterium for special flows given {by  Proposition~\ref{cocycle} proved in Section}~\ref{s:Sec5}.

Let us recall that Proposition~\ref{cocycle} involves the definition of sets $E_k$ (to which the initial condition $x$ belongs), with {the} corresponding automorphisms $A_k$ (so that  $x':=A_k(x)$) and a set $Z$ (to which the points $y,y'$ belong). In {Section}~\ref{sec:beginning_proof}, we first define the sets $E_k$ so that points $x \in E_k$ do  not  come too close to the singularity at all (large) scales (this is made precise by \eqref{eq:es}). More importantly, for $x\in E_k$, we want the $q_{n_k}$ orbit  {(here $q_{n_k}$ are the denominators of $\alpha$ at the special times $(n_k)$ given by ($\cD$2) in the Diophantine condition, see Definition~\ref{rotationassumptions})} to come close to {the} singularity but in a controlled way (see \eqref{Apq}), where {the} closeness  {is given} by a crucial  parameter $1\gg c_{p,q}>0$. We set $x'=A_{k}(x)=x+\frac{1}{q_{n_k+1}}$. Remark that, because of the definition of $(n_k)$, the orbit of $x$ of length $kq_n$, $k\leq \frac{c_{p,q}q_{n_k+1}}{q_{n_k}}$,
is almost a copy of the orbit of length $q_{n_k}$ (we have \emph{resonance}) and this crucially happens iterating both forward and backward.

The set $Z$ is such that the points $y,y'$ satisfy a Ratner-type form of shearing \emph{either} going forward \emph{or} backward in time (see \eqref{eqw}). {This is essentially} the set on which the \emph{switchable} Ratner property (see Section~\ref{Rproperties}) holds for the Arnold flow and the definition  is indeed the same as  the set $Z$ {is  in} \cite{Fa-Ka} or \cite{Ka-Ku-Ul}.
 If $y,y'$ 
 display this good form of Ratner-like shearing going \emph{forward}, we then show that the \emph{Forward} assumptions {$(F1)-(F4)$} in {\textbf F.}~of {Proposition}~\ref{cocycle} hold, while  if the Ratner-like form of shearing happens \emph{backward}, we show that the \emph{Backward} assumptions {$(B1)-(B4)$} in {\textbf B.}~of {Proposition}~\ref{cocycle} hold. Notice that here it is crucial that the  definition of $E_k$ is such that $x,x'$ have controlled shearing \emph{both} going forward \emph{and} backward, since the choice of whether to verify the set of assumptions {\textbf F.}~or {\textbf B.}~depends on the pair $y,y'$ only.

Let us outline the proof if $y,y'$ are \emph{forward} 'good'. The properties $(F1)$, $(F3)$ and $(F4)$ of {\textbf F.}~of {Proposition}~\ref{cocycle} all depend on a \emph{slow} and controlled form of shearing between orbits and follow by the fact that $x,x',y,y'$ approach the singularity in a controlled way (they {do not} come {too} close) and we may use Denjoy-Koksma type estimates (see Section \ref{sec.DK}). This is the content of Lemma \ref{lem:cruc}, which is then proved in {Section~\ref{sec:cruc}}.
The main difficulty is to prove property $(F2)$ of {Proposition}~\ref{cocycle}, which gives the \emph{splitting} {\'a} la Ratner of nearby trajectories. This is the content of Proposition \ref{prop:xy} (in the forward case, the backward being analogous). 
 Here the proof splits into two cases, according to what is the relative order of {magnitude} of the distances between the points $x,x'$ (given by \eqref{eq:xdist}) and between the points $y,y'$ (see \eqref{eq:ydist})

If \textbf{Case 1.}~(called \emph{asynchronous splitting} case) holds, these {orders} of magnitude are different. In this case the proof {becomes} simpler. We know in this case that $x,x'$ and $y,y'$ both display a Ratner-like shearing property going forward, but since their distances are \emph{not} comparable, one pair will start splitting and the other one will still stay close together (depending on which distance is larger), this is why we say that the shearing is \emph{asynchronous}. Hence property $(F2)$ of {Proposition}~\ref{cocycle} in this case reduces essentially to the Ratner property (forward) for one of the two flows (i.e. either $|f^{(n)}(x)-f^{(n)}(x')|\leq \epsilon$ or $|f^{(\zeta n)}(y)-f^{(\zeta n)}(y')|\leq \epsilon$ and the realignement with a shift $r_{p,q}$ is caused only by splitting for the other term).

\textbf{Case 2.} (dabbed \emph{second order splitting}) {corresponds to the delicate case when} the distances between $x,x'$ and $y,y'$ are such that the shearing phenomena in the two flows, in the \emph{main order}, are comparable and hence there is {\Blue a cancellation} of the main order of shearing. This is  the most difficult (and technical) part of the proof. It is here that one has to understand the behaviour of the second order terms and use fine estimates on Birkhoff sums growth for further {derivatives}. The arguments in  {this case} consist of two parts.  The first part consists {in} showing that if the Birkhoff sums split by some $p\in P$ then they stay $\epsilon$-close for a $\kappa$ proportion of time (see \eqref{eq:awf}) and {the} second is that they will split at some point (see \eqref{eq:inp}). This part is split into two further subcases \textbf{2(a)} and \textbf{2(b)}. We give a local outline of the arguments used in this part in {Section~\ref{sec:xy}}.

Let us finish the outline by giving the scale { of the} constants that appear{:} we will show that the \emph{shifts} space is $P=\{-r_{p,q},r_{p,q}\}$ and the constant $c_{p,q}$ involved in the definition of the sets $(E_k)$ is such that $1\gg c_{p,q}\gg r_{p,q}>0$, which should be understood that $c_{p,q}$ is smaller than any algebraic expression involving constants, $p$ and $q$ and similarly $r_{p,q}$
is smaller than any algebraic expression involving $c_{p,q}$ (it is here that the combinatorial Lemma~\ref{lem:cpq} is used).

\subsection{Set up and beginning of the proof}\label{sec:beginning_proof}
Let us first remark that,  for any given $\R\ni p,q>0$, $p\neq q$, in order to show that $((R_\alpha)_{p^{-1}t}^{f})\perp ((R_\alpha)_{q^{-1}t}^{f})$, it is enough to prove that $((R_\alpha)_{t}^{pf})\perp ((R_\alpha)_{t}^{qf})$. This follows from Remark~\ref{spflor}, since for a fixed $0\neq r\in \R$, the flow $((R_\alpha)_{r^{-1}t}^{f})$ acting on $(\T^f,\lambda^f)$ is isomorphic to $((R_\alpha)_{t}^{rf})$ acting on $(\textcolor{red}{\T^{rf}},\lambda^{rf})$.

To show this, we will use Proposition~\ref{cocycle} for $f,g$ replaced with $pf$ and $qf$, respectively. As no confusion arises, to simplify notation we will drop all ' in Proposition~\ref{cocycle} (so we will have $X_k$, $A_k$, $\epsilon$, $N$, etc.). From now on $p,q\in\R$ are fixed. We will assume that $0<p<q$ and hence $\zeta<1$.

\smallskip
To apply  Proposition~\ref{cocycle}, we first of all need to define the sets $(E_k)$, the {automorphisms}  $(A_k)$ and the set $Z$ which {appear} in it. This is the goal of this subsection. In the proof, the  sets will not depend on $\epsilon$ (i.e.\ they are defined globally).

\smallskip
\noindent \textbf{Definition of {the sets} sequences $(E_k)$, $(X_k)$ and $(A_k)$.}
Let us first define the sets $E_k$ so that {each} $x\in E_k$ {does not come} too close to the singularity at all (large) scales (this will be given by {\eqref{eq:es}}), but the $q_{n_k}$ orbit does come close to {the} singularity in a controlled way (this will be given by~\eqref{Apq}). Given $s\in \N$, we let us first define
\begin{equation}\label{eq:es}
E'(s):=\left\{x\in\T\;:\; x\notin\bigcup_{i=-q_s}^{q_s}R_\alpha^i
\left[-\frac{1}{q_s\log^2q_s},\frac{1}{q_s\log^2q_s}\right]\right\}
\end{equation}
and let $E^{s_0}=\bigcap_{s\geq s_0}E'(s)$. Notice that since $(q_s)$ grows exponentially, $\lambda(E^{s_0})\to 1$ as $s_0\to +\infty$.

Let $(q_{n_k})$ be the sequence from ($\cD$2). We now specify the important constant $c_{p,q}>0$. We want $c_{p,q}>0$ to be a small number, smaller than any expression involving $1,p,q$ which will appear in the course of the proof (below). In particular, we require (see Lemma~\ref{cor:bux} for $C'(f)$ and Lemma~\ref{lem:cpq} for $c(\cdot,\cdot,\cdot,\cdot,\cdot)$)
$$
c_{p,q}<\frac{1}{100}c(p,q,A_-,A_+,
\min((C'(f)+1)|p|+|q|+1,(C'(f)+1)(p^2+q^2)+1)).
$$
Define
\begin{equation}\label{Apq}
E_k:=\left(\bigcup_{i=0}^{q_{n_k}-1} R^i_\alpha\left[\frac{2c_{p,q}}{q_{n_k}}, \frac{3c_{p,q}}{q_{n_k}}\right]\right)\cap E^{s_0},
\end{equation}
where $s_0$ is such that  for every $k\in \N$, $\lambda(E_k)\geq c_{p,q}/2>0$.

\smallskip
We now define the sequences $(A_k)$ and $(X_k)$. Let $\delta_k:=\frac{1}{q_{n_k+1}}$, $X_k:=\T$ and $A_k(x)=x+\delta_k$. Notice that $A_k\to Id$ uniformly and obviously $\lambda(X_k)\to \lambda(\T)$. When $k$ is fixed, in view of
the definitions of $E_k$ and $A_k$, by taking $x\in E_k$ and letting $x'=A_kx$, we obtain:
\begin{equation}\label{eq:forback}
\left(\bigcup^{[c_{p,q}q_{n_k+1}]}_{i=-[c_{p,q}q_{n_k+1}]}
R_\alpha^{i}[x,x']\right)\cap
\left[-\frac{c_{p,q}}{2q_{n_k}},\frac{c_{p,q}}{2q_{n_k}}\right]
=\emptyset.
\end{equation}

\smallskip
\noindent \textbf{Definition of $P$ and $Z$.}
We {first} define the set  $P:=\{-r_{p,q},r_{p,q}\}$, where $r_{p,q}>0$ is chosen as follows. Let $\bar{p}=\min(p,p^{-1},q,q^{-1},\zeta,\zeta^{-1})$ and
$$
r_{p,q}:=\left(\frac{c_{p,q}\bar{p}}{100}\right)^{200}.
$$
Fix $\epsilon>0$ much smaller than $r_{p,q}$, $N\in \N$ and let $\kappa:=\epsilon^{10}$. We now define the set $Z=Z(\epsilon,N)$ (in fact, $Z$ will depend only on $\epsilon$). {Set}
\begin{equation}\label{eqw}
W_s:=\left\{y\in \T\;:\; y\notin \bigcup_{i=-q_s}^{q_s}R_\alpha^i\left[-\frac{1}{q_s\log^{7/8}q_s},
\frac{1}{q_s\log^{7/8}q_s}\right]\right\}
\end{equation}
and let $Z:=\left(\bigcap_{s\geq s_0,s\notin K_\alpha}W_s\right)\cap E^{s_0}$, where $s_0=s_0(\epsilon)$ is such that $\lambda(Z)\geq 1-\epsilon$ ($s_0$ does exist since $\lambda(W_s)\geq 1-\frac{2}{\log^{7/8}q_s}$ and ($\cD1$) holds for $\alpha$, cf.~\eqref{eq:es}). Let $\delta=\delta(\epsilon,N):= \frac{1}{q_{s'}\log q_{s'}}$, where $s':=[\kappa^{-20}\max\{N,s_0^2\}]$. We will moreover assume that
\be\label{eq:stycz}
\frac1{\log^{1/2}\delta}<\kappa^2.\ee

Assume $d(A_k,Id)<\delta$. We now take any  $E_k\ni x$, set $x':=A_kx$,  and  let $y,y'\in Z$, $\|y-y'\|<\delta$ and we have to show that either of A.\ or B.\ in Proposition~\ref{cocycle} holds for $x,x',y,y'$. Let $v\in \N$ be unique such that
\begin{equation}\label{eq:ydist}
\frac{1}{q_{v+1}\log q_{v+1}}<\|y-y'\|\leq \frac{1}{q_v\log q_v}. \qquad { \text{($y$-variation scale)}}
\end{equation}
 By the definition of $A_k$ and ($\cD$2), we have
\begin{equation}\label{eq:xdist}
\|x-x'\|=\frac{1}{q_{n_k+1}}\in\left[\frac{1}{q_{n_k+1}\log q_{n_k+1}},\frac{1}{q_{n_k}\log q_{n_k}}\right]. \qquad { \text{($x$-variation scale)}}
\end{equation}
Let, moreover
\begin{equation}\label{eq:time}
T=T(x,x',y,y'):=\zeta c_{p,q}\min(\|x-x'\|^{-1},\|y-y'\|^{-1}, q_{v+1}).\qquad { \text{(choice of $T$)}}
\end{equation}

\subsection{Forward or backward shearing and concluding arguments}
We {\Blue now} explain the mechanism which allows {one} to choose whether to verify the \emph{Forward} assumptions \textbf{F.}~or the \emph{Backward} assumptions \textbf{F.} of Proposition~\ref{cocycle}. We state some {lemmas} which correspond to the verification of these assumptions and then give the concluding arguments of the {proof of
Theorem~\ref{main:th2}}.

{In the two following subsections all parameters (i.e.\ $\epsilon, \kappa, Z,\delta$) are as defined above}.

\subsubsection{Forward or backward continuity}
The first ingredient needed for the proof of Theorem~\ref{main:th2} is  the following lemma, which was proved as Lemma~4.7 in~\cite{Fa-Ka} and allows to show  that certain orbits of nearby points,  \emph{either} in the \emph{past}, \emph{or} in the \emph{future}, do not get too close to the singularity at the origin. This {lemma} plays a crucial role in~\cite{Fa-Ka} to prove the {switchable} Ratner (SR) property and determines whether the shearing needed in the SR-property can be seen moving \emph{forward} (or in the future), or \emph{backward} (or in the past).

\begin{lemma}[Forward or backward continuity]\label{lem:forback}Fix $s\in \N$ and let $y,y'\in\T$.
If at least one of the following holds:\\
(a) $s\in K_\alpha$,\\
(b) $s\notin K_\alpha$ and
\begin{equation}\label{eq:forback2}
[y,y']\cap \bigcup_{i=-q_s}^{q_s}R_\alpha^i\left[-\frac{1}{q_s\log^{7/8}q_s},
\frac{1}{q_s\log^{7/8}q_s}\right]=\emptyset,
\end{equation}
then one of the following holds:
\begin{equation}\label{eq:forwsing}[y,y']\cap \bigcup_{i=0}^{[\frac{q_{s+1}}{4}]}
R_\alpha^{-i}\left[-\frac{1}{q_s\log^{7/8}q_s},
\frac{1}{q_s\log^{7/8}q_s}\right]=\emptyset.
\end{equation}
or
\begin{equation}\label{eq:backsing}
[y,y']\cap \bigcup_{i=0}^{[\frac{q_{s+1}}{4}]}
R_\alpha^{i}[-\frac{1}{q_s\log^{7/8}q_s},
\frac{1}{q_s\log^{7/8}q_s}]=\emptyset.
\end{equation}
\end{lemma}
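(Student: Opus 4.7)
The approach will be by contradiction, exploiting the best-approximation properties of $\alpha$ together with the spacing estimate of Lemma~\ref{spacing_orbit}.

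First, I would suppose that both \eqref{eq:forwsing} and \eqref{eq:backsing} fail and extract \emph{minimal} witnesses, i.e.\ integers $0 \le i_+, i_- \le \lfloor q_{s+1}/4\rfloor$ such that
\[
R_\alpha^{-i_+}(I_s) \cap [y,y'] \neq \emptyset, \qquad R_\alpha^{i_-}(I_s) \cap [y,y'] \neq \emptyset,
\]
where $I_s := [-\eta_s, \eta_s]$ and $\eta_s := 1/(q_s \log^{7/8} q_s)$. Under hypothesis (b), the assumption upgrades the choice of witnesses to $i_+, i_- > q_s$, while under hypothesis (a), the relevant range $[0, q_{s+1}/4]$ is forced to be comparatively small, since $q_{s+1} \le q_s \log^{7/8} q_s$.

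Next, picking points $z_\pm \in [y,y']$ with $z_+ + i_+\alpha,\ z_- - i_-\alpha \in I_s$, and subtracting yields the \emph{resonance estimate}
\[
\|(i_+ + i_-)\alpha\|_\T \le 2\eta_s + |y'-y|,
\]
where the right-hand side is bounded by $2\eta_s + \delta$. Setting $N := i_+ + i_-$, we have $1\le N \le q_{s+1}/2$, so this must be compatible with the best-approximation property: for $1 \le N < q_{s+1}$, $\|N\alpha\|_\T \ge \|q_s\alpha\|_\T$, and one has the finer three-distance structure with $\|kq_s\alpha\|_\T = k\|q_s\alpha\|_\T$ as long as $kq_s < q_{s+1}$, together with $\|N\alpha\|_\T \ge (1-o(1))/q_s$ for $N$ bounded away from multiples of $q_s$.

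I would then handle the two cases separately. In case (a), the smallness of $q_{s+1}$ relative to $q_s \log^{7/8} q_s$ constrains $N$ to be at most a small multiple of $q_s$; the explicit formula $\|kq_s\alpha\|_\T = k\|q_s\alpha\|_\T$ combined with the minimality of $i_\pm$ yields a direct contradiction against the $2\eta_s + \delta$ bound. In case (b), we have $N > 2q_s$, so if $N = kq_s$ then $k \ge 3$; using that $\|q_s\alpha\|_\T < \eta_s$ in this regime, the intervals $I_s - kq_s\alpha$ and $I_s - (k-1)q_s\alpha$ overlap substantially, which allows an iterative descent showing that $R_\alpha^{-(k-1)q_s}(I_s)$ also meets $[y,y']$, violating the minimality of $i_+$. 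Non-multiples of $q_s$ in this range are ruled out by the much stronger lower bound $\|N\alpha\|_\T \ge 1/(2q_s)$, which greatly exceeds $2\eta_s + \delta$ for $s$ large.

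The main obstacle is the resonant case $N = kq_s$ in case (b): here the sharp quantitative matching of $\eta_s$ and $\|q_s\alpha\|_\T$ (both of order $q_{s+1}^{-1}$ up to the $\log^{7/8}$ factor) requires the geometric descent argument based on the minimal choice of $i_\pm$, and some care is needed to ensure that the overlap between consecutive translates $I_s - jq_s\alpha$ transfers the intersection with $[y,y']$ to the preceding index $(k-1)q_s$.
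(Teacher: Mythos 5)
The paper does not actually prove this lemma: it is quoted as Lemma~4.7 of \cite{Fa-Ka}, so there is no in-text proof to match. Judging your argument on its own merits, I think there is a genuine gap in the treatment of the resonant case, and the descent you propose does not close it.

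Your resonance identity is fine: if $z_+ + i_+\alpha\in I_s$ and $z_- - i_-\alpha\in I_s$ with $z_\pm\in[y,y']$, then $\|(i_++i_-)\alpha\|\le 2\eta_s+\|y-y'\|$. The problem is what this buys you when $N:=i_++i_-$ is a (small) multiple of $q_s$. You assert a direct contradiction in case~(a) and a descent in case~(b); neither survives inspection. In case~(a) one only has $\|q_s\alpha\|\in\bigl(\tfrac1{2q_{s+1}},\tfrac1{q_{s+1}}\bigr)$ and $\eta_s\le \tfrac1{q_{s+1}}$, so $\|q_s\alpha\|>\eta_s/2$ but not $\|q_s\alpha\|>2\eta_s$; consequently $k\|q_s\alpha\|$ can sit well inside $2\eta_s+\|y-y'\|$ for $k=1,2,3$, and ``minimality of $i_\pm$'' does not rescue it because minimality controls the indices $i_\pm$, not the sum $N$. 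In case~(b) your descent step quietly replaces the witness $i_+$ by $kq_s$ where $N=kq_s$; these are unrelated ($i_-$ absorbs an arbitrary part of the sum), so showing $R_\alpha^{-(k-1)q_s}(I_s)\cap[y,y']\ne\emptyset$ would not contradict minimality of $i_+$. Even setting that aside, the overlap of consecutive translates $I_s-jq_s\alpha$ does not by itself transfer the intersection with $[y,y']$: the interval $[y,y']$ may touch $R_\alpha^{-i_+}(I_s)$ only at its far edge, away from the overlap. Finally, you have not addressed the degenerate case $i_+=i_-=0$ under (a), which makes $N=0$ and voids the resonance; this already signals that the statement needs an implicit condition like $[y,y']\cap I_s=\emptyset$ (or a smallness hypothesis on $\|y-y'\|$), which is indeed supplied in the application but not in the statement or in your argument.

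The mechanism that actually works (and which I believe is what \cite{Fa-Ka} exploits) is a \emph{separating--interval} argument rather than a pure resonance bound. One first observes that, within the admissible range $|i|\le q_{s+1}/4$, the translates $R_\alpha^i(I_s)$ cluster according to $i\bmod q_s$, clusters being separated by gaps of order $1/q_s$; a short interval $[y,y']$ avoiding $\bigcup_{|i|\le q_s}R_\alpha^i(I_s)$ can therefore meet intervals only within a single cluster, and within that cluster the avoided block $\bigcup_{|i|\le q_s}R_\alpha^i(I_s)$ sits topologically \emph{between} the forward translates $R_\alpha^{-i_+}(I_s)$ (with $i_+>q_s$, shifted one way by $\ge 2\|q_s\alpha\|$) and the backward translates $R_\alpha^{i_-}(I_s)$ (shifted the other way). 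An interval that hits both sides must cross the block, contradicting the avoidance hypothesis. This argument handles all $k$ at once, makes explicit where the hypotheses on $K_\alpha$ and on $[y,y']$ enter, and does not require tracking the arithmetic of $N$; your resonance estimate only covers the non-resonant indices, and a correct proof needs a geometric ingredient of this kind to kill the resonant ones.
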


\subsubsection{Forward control}\label{sec:for}
In this subsection we will assume that $x,x'$ are as in \eqref{eq:xdist} and satisfy \eqref{eq:forback}, and $y,y'$ are as in \eqref{eq:ydist} and satisfy \eqref{eq:forwsing} (with $s=v$) and $T$ is as in \eqref{eq:time}.

\begin{remark}\em \label{r:wyja}
Notice that if $v\in K_\alpha$ then either~\eqref{eq:forwsing} or~\eqref{eq:backsing} holds. We assume here that~\eqref{eq:forwsing} holds,~\eqref{eq:backsing} will be treated in the next subsection. If $v\notin K_\alpha$ then the same is true provided that we show the validity of~\eqref{eq:forback2}. It holds indeed as $y,y'\in Z$, cf.~\eqref{eqw}.
\end{remark}

We have the following two lemmas.
\begin{lemma}\label{lem:cruc} The following is true:
\begin{enumerate}
\item[(H1)] If $t$ is such that $n(x,s,t)\in [\frac{T}{\log^2 T},T]\cap \Z$, then~property (F1) of {Proposition}~\ref{cocycle} holds (for each $r\in\R$; we deal here with $S^g$ replaced with $(R_\alpha)^{qf}$, and $n(x,s,t)$ is considered for $(R_\alpha)^{pf}$ with $s\in\R$ {arbitrary, $(x,s)\in \T^{pf}$}).
\item[(H2)] If $w\in[0,T]\cap \Z$, then~(F3) of {Proposition}~\ref{cocycle} holds (here and in (H3), $f$, $g$ are replaced with $pf$, $qf$, respectively).
\item[(H3)] If $w,u\in [0,T]\cap \Z$, $|w-u|<T^{2/3}$, then~(F4) of {Proposition}~\ref{cocycle} holds.
\end{enumerate}
\end{lemma}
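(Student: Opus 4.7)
The plan is to verify each of (H1), (H2), (H3) by applying the Denjoy--Koksma type estimates from Section~\ref{specialtime:sec} together with the singularity-avoidance properties of the orbits of $x,x',y,y'$. By construction, both $E_k \subset E^{s_0}$ and $Z \subset E^{s_0}$ (cf.~\eqref{eq:es}), so all four points have orbits of every length that avoid a neighborhood of $0$ at a scale controlled by Diophantine data of $\alpha$. Recalling that the roof functions in Proposition~\ref{cocycle} are $pf$ and $qf$, so that $\int pf\,d\lambda = p$ and $\int qf\,d\lambda = q$, conditions (H1), (H2), (H3) are precisely the Forward assumptions (F1), (F3), (F4) of that criterion.

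For (H1), I would apply Lemma~\ref{lem:dk2} to both $x$ and $y$: using $x \in E^{s_0}$ and picking the unique $J = J(T)$ with $q_{J-1} < T \leq q_J$, Diophantine condition ($\cD$3) gives $q_J \leq q_{J-1}\log^2 q_{J-1} \leq T\log^2 T$, so the avoidance at scale $1/(q_J \log^2 q_J)$ guaranteed by $x \in E'(J)$ dominates the required $1/(2T\log^4 T)$ threshold, yielding $|f^{(n)}(x) - n| < T^{1/5}$ for every $n \in [0,T]\cap\mathbb{Z}$; the same holds at $y$. Unwinding the definition of $n(x,s,t)$ gives $|t+s - pf^{(n(x,s,t))}(x)| \leq pf(T^{n(x,s,t)}x) = O(\log T)$, the latter from the $E^{s_0}$-avoidance. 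Combining, $|pn(x,s,t) - t| = O(T^{1/5} + \log T)$, which is dominated by $t^{1/3}$ once $n(x,s,t) \geq T/\log^2 T$; an analogous estimate handles $m(y,r,t)$.

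For (H2), a first-order Taylor expansion gives $f(T^w x) - f(T^w x') = (x'-x)\int_0^1 f'(T^w x + \tau(x'-x))\,d\tau$; by~\eqref{eq:forback}, the interval $R_\alpha^w[x,x']$ avoids $[-c_{p,q}/(2q_{n_k}), c_{p,q}/(2q_{n_k})]$ for every $|w| \leq c_{p,q}q_{n_k+1}$ and in particular for $w \in [0,T]$, so $|f'|$ at the intermediate points is $O(q_{n_k}/c_{p,q})$. Combined with $\|x-x'\| = 1/q_{n_k+1}$ and ($\cD$2), one obtains $|pf(T^w x) - pf(T^w x')| = O(1/(c_{p,q}\log q_{n_k})) < \kappa$ for $k$ large. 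The analogous estimate for $y,y'$ uses~\eqref{eq:forwsing}, $\|y-y'\| \leq 1/(q_v\log q_v)$ and~\eqref{eq:stycz}. For (H3), without loss of generality assuming $w > u$, the cocycle identity gives
\[
\frac{1}{q}\bigl[(g^{(w)}(y)-g^{(w)}(y')) - (g^{(u)}(y)-g^{(u)}(y'))\bigr] = (y-y')\int_0^1 f'^{(w-u)}\bigl(y'+u\alpha+\tau(y-y')\bigr)\,d\tau,
\]
and since $R_\alpha^{u+i}[y,y']$ avoids the singularity region for all $0 \leq i \leq w-u \leq T^{2/3}$ (using $T < q_{v+1}/4$) by~\eqref{eq:forwsing}, Lemma~\ref{lem:bsums} controls $|f'^{(w-u)}|$ by $C(w-u)\log(w-u)$ in the regime $w-u \geq \epsilon^4 q_v$ and by $\epsilon^2 q_v \log q_v$ otherwise; multiplying by $\|y-y'\|$ and exploiting $T\|y-y'\| \leq \zeta c_{p,q}$, the bound falls below $\epsilon$ in either regime.

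The hard part is bookkeeping rather than conceptual: ensuring that across the various parameter regimes dictated by the relative sizes of $\|x-x'\|$, $\|y-y'\|$, $q_{n_k}$ and $q_v$, the actual singularity-avoidance scales enjoyed by each of the four orbits do dominate the thresholds required by Lemma~\ref{lem:dk2} and Lemma~\ref{lem:bsums}, and that the Birkhoff-sum estimates can be genuinely invoked (with appropriate choice of the auxiliary parameter $M$). This tracking rests delicately on the Diophantine conditions ($\cD$2) and ($\cD$3).
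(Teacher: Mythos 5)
Your proposal follows essentially the same approach as the paper's proof: (H2) and (H3) are verified via the mean value theorem (cocycle identity) plus the singularity-avoidance conditions~\eqref{eq:forback} and~\eqref{eq:forwsing}, with the Birkhoff-sum estimate~\eqref{eq:smallder} from Lemma~\ref{lem:bsums} handling~(H3), and (H1) is reduced via~Lemma~\ref{lem:dk2} to the boundary terms, which are controlled by the $E^{s_0}$-avoidance at the scale guaranteed by ($\cD$3). One small simplification you miss: in~(H3) the paper observes that $T^{2/3}\leq q_{v+1}^{2/3}\leq\epsilon^4 q_v$ (via ($\cD$3)), so only the~\eqref{eq:smallder} regime ever occurs and the case split you introduce is unnecessary; apart from that and the parameter bookkeeping you explicitly defer (e.g.\ verifying $\ell>s_0$ and the lower bound $t\gtrsim T/\log^3T$), the argument matches.
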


Let $a_w:=\left((pf)^{(w)}(x)-(pf)^{(w)}(x')\right)-\left((qf)^{([\zeta w])}(y)-(qf)^{([\zeta w])}(y')\right)$.

\begin{lemma}\label{prop:xy}There exists an interval $[R,S]\subset \left[\frac{T}{\log^2T},T\right]$ such that for all $w\in[R,S]\cap \Z$ such that $[w,w+\kappa w]\subset[R,S]$ and all $s\in [w,w+\kappa w]\cap \Z$, we have
\begin{equation}\label{eq:awf}
|a_{w}-a_s|<\epsilon^{3/2}
\end{equation}
and, moreover, there exists $w_0\in [R,(1-\epsilon)S]\cap \Z$ such that
\begin{equation}\label{eq:inp}
\left|a_R\right|<\frac{r_{p,q}}2,\;\;\left|a_{w_0}\right|\geq r_{p,q}.
\end{equation}
\end{lemma}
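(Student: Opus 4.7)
The plan is to expand $a_w$ to second order using Taylor's formula. Setting $\Delta_x:=x'-x$ and $\Delta_y:=y'-y$,
$$
a_w=p\Delta_x\, f'^{(w)}(x)-q\Delta_y\, f'^{([\zeta w])}(y)+E_w,
$$
where $E_w$ is a $(\Delta_x^2,\Delta_y^2)$-error involving $f''$. By the definitions of $E_k$, $Z$ and condition~\eqref{eq:forwsing}, the orbits of $x$ and $y$ avoid the singularity at scale $1/(q_{n_k}\log^{7/8}q_{n_k})$ and $1/(q_v\log^{7/8}q_v)$ throughout the window $[T/\log^2T,T]$, so Lemma~\ref{lem:canc} applies and yields $f'^{(w)}(x)=(A_--A_+)w\log q_{n_k}+{\rm O}(T)$ and analogously for $y$ with $q_v$. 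Since $\zeta=p/q$ (the roof functions being $pf,qf$), the first-order part of $a_w$ reads
$$
p(A_--A_+)\bigl(\Delta_x\,w\log q_{n_k}-\Delta_y\,[\zeta w]\log q_v\bigr)+{\rm O}((|\Delta_x|+|\Delta_y|)T).
$$
The idea is to choose $[R,S]\subset[T/\log^2T,T]$ so that the dominant contribution above grows monotonically in $w$ from values below $r_{p,q}/2$ to values of order at least $r_{p,q}$, and to use the cancellation estimate~\eqref{eq:cancshort} of Lemma~\ref{lem:canc} to control the increments on scales $\kappa w$.

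The main division is \textbf{Case 1 (asynchronous splitting)} versus \textbf{Case 2 (synchronous splitting)} according to whether the scales $\|x-x'\|\asymp q_{n_k+1}^{-1}$ and $\|y-y'\|$ (see \eqref{eq:xdist}, \eqref{eq:ydist}) are separated or comparable. In Case~1, up to swapping the two flows, suppose $|\Delta_x|\ll|\Delta_y|$; then $T\asymp c_{p,q}|\Delta_y|^{-1}$ and at $w=T$ the $x$-contribution $p\Delta_x(A_--A_+)T\log q_{n_k}$ is much smaller than $r_{p,q}$ while the $y$-contribution $q\Delta_y(A_--A_+)[\zeta T]\log q_v$ is of size comparable with $c_{p,q}\log q_v$, which dwarfs $r_{p,q}$ by the choice of $r_{p,q}$. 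A choice $R\asymp r_{p,q}T/(c_{p,q}\log q_v)$ and $S=T$ then makes $a_R$ small and $a_S$ of definite size, producing the required $w_0$ by monotonicity of the dominant term together with the unit-step continuity of the Birkhoff sum. The slow-variation bound~\eqref{eq:awf} follows from~\eqref{eq:cancshort} applied separately to the orbits of $x$ and of $y$: each increment $|a_s-a_w|$ is bounded by $|\Delta_x|((A_--A_+)(s-w)\log q_{n_k}+\epsilon^2T)+|\Delta_y|((A_--A_+)(s-w)\log q_v+\epsilon^2T)$, which is much smaller than $\epsilon^{3/2}$ once one uses $s-w\le\kappa w=\epsilon^{10}w$ together with $|\Delta_x|T,|\Delta_y|T\lesssim c_{p,q}$.

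The delicate \textbf{Case 2 (synchronous splitting)} occurs when the two scales are comparable, so $v=n_k+{\rm O}(1)$ and the two first-order terms are of the same order; it further splits into sub-case 2(a) where the leading terms do not cancel (the argument of Case~1 applies verbatim) and sub-case 2(b) where they cancel to within $\kappa$. In 2(b) I would pass to second order: using Lemma~\ref{cor:bux} at times $w=kq_{n_k}$ with $k\le B_{n_k,x}^5 q_{n_k+1}/(6q_{n_k})$, one has $f''^{(w)}(x)=kf''(x+i(n_k,x)\alpha)+{\rm O}(kq_{n_k}^2)$; for $x\in E_k$ the closest-return value is of order $j_1 q_{n_k}^2/A^2$ with $A:=\|x+i(n_k,x)\alpha\|\asymp c_{p,q}/q_{n_k}$ and $j_1\in\{A_-,A_+\}$, and similarly for $y$ with $j_2,B$. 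The second-order contribution to $a_w$ is then, up to lower order, $\tfrac{k}{2}\bigl(p\Delta_x^2 j_1 q_{n_k}^2 A^{-2}-q\Delta_y^2 j_2 q_v^2 B^{-2}\bigr)$; combined with the near-equality $p\Delta_x\log q_{n_k}\approx q\Delta_y\log q_v$ forced by the first-order synchronous cancellation (which links $\Delta_y$ to $\Delta_x$ up to controlled error), this coefficient reduces to a nonzero multiple of either $qj_1A^{-2}-pj_2B^{-2}$ or, after a further descent, $q^2j_1A^{-3}-p^2j_2B^{-3}$. The non-degeneracy hypothesis $p/q\notin\{1,A_-/A_+,A_+/A_-\}$ together with the combinatorial Lemma~\ref{lem:cpq} bounds this quantity below by a constant depending only on $p,q,A_\pm$, producing the required splitting of size $r_{p,q}$, while the third- and fourth-order remainders are absorbed using Lemmas~\ref{lem:control} and~\ref{cor:cont}.

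The principal obstacle is sub-case 2(b): one must simultaneously (i)~identify a scale $w$ at which the first-order synchronous cancellation is sharp enough that the second-order term dominates, (ii)~read off the correct scalar combination $(j_1,j_2,A,B)$ so that Lemma~\ref{lem:cpq} can be invoked to rule out a degenerate second-order cancellation, and (iii)~propagate the slow-variation bound~\eqref{eq:awf} through the finer second-order analysis (now controlled by Lemma~\ref{cor:bux} rather than Lemma~\ref{lem:canc}, because of the dominance of the closest-return value in the Birkhoff sum of $f''$). The hierarchy $r_{p,q}=(c_{p,q}\bar p/100)^{200}\ll c_{p,q}\ll c(p,q,A_-,A_+,\cdot)$ built into the constants at the start of Section~\ref{s:Sec7} is precisely what keeps the error terms subordinate to $r_{p,q}$ and $\epsilon^{3/2}$ throughout the calculation.
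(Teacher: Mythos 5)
Your outline captures the high-level skeleton of the paper's argument: Taylor expansion of $a_w$, the dichotomy between asynchronous ($n_k\neq v$) and synchronous ($n_k=v$) scales, a further subdivision of the synchronous case, and the invocation of the combinatorial Lemma~\ref{lem:cpq} in the delicate sub-case. The Case 1 argument and the slow-variation bound \eqref{eq:awf} via Lemma~\ref{lem:canc} are essentially correct.

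However, there is a genuine gap in your plan for sub-case 2(b), and it concerns the \emph{source} of the second-order information. You propose to extract the combinatorial obstruction from the second-order Taylor terms $\tfrac{p}{2}\Delta_x^2 f''^{(w)}(\theta)-\tfrac{q}{2}\Delta_y^2 f''^{([\zeta w])}(\theta')$. But these terms are \emph{negligible}: the paper shows (via Lemma~\ref{cor:cont}, eqs.~\eqref{eq:jk} and \eqref{eq:uy}) that both are $O(\epsilon^{10})$ uniformly on $[0,C_{x,y}T/10]$, which is far below the required splitting size $r_{p,q}$. A quick sanity check confirms this: with $\Delta_x\asymp 1/q_{n_k+1}$, $k\lesssim q_{n_k+1}/q_{n_k}$ and $f''(x+i\alpha)\asymp q_{n_k}^2/B_{n_k,x}^2$, the leading contribution is of order $q_{n_k}/(q_{n_k+1}\,c_{p,q}^2)$, which tends to zero by condition~($\mathcal D$2). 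So your proposed route cannot produce the $r_{p,q}$-jump. The paper's actual mechanism, after discarding the $\Delta^2 f''$ Taylor terms, is to split the surviving first-order piece $\bar a_w=pf'^{(w)}(x)(x-x')-qf'^{([\zeta w])}(y)(y-y')$ into $a_{1,w}+a_{2,w}$, where $a_{1,w}=(pf'^{(w)}(x)-qf'^{([\zeta w])}(y))(y-y')$ exploits the built-in cancellation $\zeta=p/q$ and $a_{2,w}=pf'^{(w)}(x)((x-x')-(y-y'))$ isolates the distance mismatch. In sub-case 2(b), $a_{2,w}$ being uniformly bounded forces $|y-y'|\asymp 1/q_{v+1}$ (eq.~\eqref{eq:ry}); then the argument goes by contradiction on the cocycle increment $a_{1,q(m+l)q_v}-a_{1,qmq_v}-a_{1,qlq_v}$. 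It is the cocycle identity applied to the first-order Birkhoff sums $f'^{(kq_v)}$ that, through the mean value theorem, produces the Birkhoff sums of $f''$ and then $f'''$ along resonant times, and those are what yield the quantities $qj_1B_{v,x}^{-2}-pj_2B_{v,y}^{-2}$ and $q^2j_1B_{v,x}^{-3}-p^2j_2B_{v,y}^{-3}$ needed for Lemma~\ref{lem:cpq} (note also the $p$-$q$ roles you wrote are transposed and the spurious $\Delta^2$ factors should not appear). Your obstacles (i)--(iii) are correctly identified, but the mechanism that resolves them is different from the one your Taylor-based plan suggests.
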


We will give proofs of Lemma \ref{lem:cruc} and Lemma~\ref{prop:xy} in Sections \ref{sec:cruc} and \ref{sec:xy}.

\subsubsection{Backward control}\label{sec:back}
In this subsection we keep unchanged our assumptions on $x,x'$ and $y,y'$ and $T$. We assume that $y,y'$ satisfy~\eqref{eq:backsing}, cf.\ Remark~\ref{r:wyja}. We have the following two lemmas.
\begin{lemma}\label{lem:crucback} The following is true:
\begin{enumerate}
\item[(H1)] If $t$ is such that $n(x,s,-t)\in [\frac{T}{\log^2 T},T]\cap \Z$, then~(B1) of {Proposition}~\ref{cocycle} holds.
\item[(H2)] If $w\in[0,T]\cap \Z$, then~(B3) of {Proposition}~\ref{cocycle} holds.
\item[(H3)] If $w,u\in [0,T]\cap \Z$, $|w-u|<T^{2/3}$, then~(B4) of {Proposition}~\ref{cocycle} holds.
\end{enumerate}
\end{lemma}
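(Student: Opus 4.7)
The plan is to prove Lemma~\ref{lem:crucback} by essentially reproducing the proof of Lemma~\ref{lem:cruc} (which gives the forward statements (F1), (F3), (F4)), but everywhere replacing forward iterates by backward ones. This is possible for the following structural reason: the hypothesis \eqref{eq:forback} on the pair $(x,x')$ coming from membership of $x$ in $E_k$ is symmetric in $\pm i$, so it controls the distance from the singularity of $R_\alpha^i x, R_\alpha^i x'$ for all $i$ in the range $[-c_{p,q}q_{n_k+1}, c_{p,q}q_{n_k+1}]$; meanwhile, for the pair $(y,y')$, we are now under the hypothesis \eqref{eq:backsing} (rather than \eqref{eq:forwsing}), which gives exactly the analogous avoidance property for the backward orbit $R_\alpha^{-i}y, R_\alpha^{-i}y'$ on $0\le i\le q_{v+1}/4$. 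Thus the input information needed for the backward arguments is fully symmetric to that used in the forward case.

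First, to establish (B1), I would use the cocycle identity $f^{(-n)}(z)=-f^{(n)}(R_\alpha^{-n}z)$ together with $n(x,s,-t)\in[M',M'+L']$ to reduce the estimate on $|n(x,s,-t)\int pf\,d\lambda - t|$ to a Denjoy--Koksma type estimate for $(pf)^{(n)}$ at the backward shift $R_\alpha^{-n}x$. Concretely, I would apply Lemma~\ref{lem:dk2} to the point $R_\alpha^{-n}x$ with parameter $T$ comparable to the one in \eqref{eq:time}; the required non-approach to the singularity of $\{R_\alpha^{-n}x+j\alpha:0\le j\le T\}$ is exactly the backward non-approach $\{R_\alpha^{i}x: -T\le i\le 0\}\cap[-1/(2T\log^4 T),1/(2T\log^4 T)]=\emptyset$, which follows from \eqref{eq:forback} for $x$ (and a similar application of \eqref{eq:backsing} for $y$). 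Then the same inversion gives $|m(y,r,-t)\int qf\,d\lambda - t|<t^{1/3}$.

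Next, for (B3), I would estimate $|f(R_\alpha^{-w}x)-f(R_\alpha^{-w}x')|$ by the mean-value theorem. By \eqref{eq:forback}, both points $R_\alpha^{-w}x$ and $R_\alpha^{-w}x'$ lie at distance at least $c_{p,q}/(2q_{n_k})$ from $0$ when $w$ is in the allowed range (and using \eqref{eq:time} to control this range), while $\|R_\alpha^{-w}x-R_\alpha^{-w}x'\|=\|x-x'\|\le 1/q_{n_k+1}$ by isometry. Since $|f'|=O(q_{n_k}^2/c_{p,q}^2)$ on the complement of this neighborhood (by (Rj)), the difference is $O(q_{n_k}^2/(c_{p,q}^2q_{n_k+1}))$, which is much smaller than $\kappa(\epsilon)$ for large $k$ using property ($\cD$3). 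The same argument for $g(R_\alpha^{-w}y)-g(R_\alpha^{-w}y')$ uses \eqref{eq:backsing} and the choice of $\delta$ via \eqref{eq:stycz}.

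Finally, for (B4), I would rewrite
\[
(g^{(-w)}(y)-g^{(-w)}(y'))-(g^{(-u)}(y)-g^{(-u)}(y'))=g^{(u-w)}(R_\alpha^{-u}y)-g^{(u-w)}(R_\alpha^{-u}y')
\]
via the cocycle identity, with $|u-w|\le M'^{1/2}\le T^{2/3}$. This reduces the claim to controlling a short Birkhoff sum of $g$ at two nearby points whose orbits, by \eqref{eq:backsing}, stay at controlled distance from the singularity. Splitting into blocks of length $q_j$ and applying the Denjoy--Koksma type bounds of Lemma~\ref{lem:DK} to $g=qf$ (a factor of $q$ above), combined with the Lipschitz estimate on the truncated $g'$ in the non-singular region, yields the required bound by $\epsilon'$.

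The main technical obstacle, as in the forward case, will be verifying that the Denjoy--Koksma estimates for $f'$ (Lemma~\ref{lem:bsums}, Lemma~\ref{lem:canc}) can be applied along the backward trajectory for both $x$ and $y$; once one checks that the avoidance sets $\Sigma_n(M)$ are symmetric under $R_\alpha \leftrightarrow R_\alpha^{-1}$ (which follows directly from their definition \eqref{eq:sigma} since the backward orbit of $R_\alpha^{-n}x$ coincides, up to re-indexing, with a piece of the forward orbit), the backward version of each step is essentially verbatim the forward one. No new ideas beyond those in the proof of Lemma~\ref{lem:cruc} will be needed.
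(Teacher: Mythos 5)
Your proposal is correct and takes essentially the same route as the paper, which does not write out a separate proof of Lemma~\ref{lem:crucback} but states at the start of Section~\ref{sec:cruc} that ``the proof of the parallel Lemma~\ref{lem:crucback} for the backward case is analogous'' --- precisely the structured time-reversal you carry out. You correctly identify the two sources of symmetry (the $\pm i$-symmetric condition~\eqref{eq:forback} on $(x,x')$, and the backward avoidance~\eqref{eq:backsing} for $(y,y')$ replacing~\eqref{eq:forwsing}) and you route (B1), (B3), (B4) through the cocycle identity to the forward Birkhoff-sum estimates at the backward-shifted starting point, which is what ``analogous'' means here. One small imprecision worth correcting: $\Sigma_n(M)$ in~\eqref{eq:sigma} is built using only $R_\alpha^{-i}$ for $i\ge 0$, so it is \emph{not} literally symmetric under $R_\alpha\leftrightarrow R_\alpha^{-1}$; what you actually use, and what your parenthetical observation correctly contains, is the re-indexing that if the backward orbit of $\theta$ of length $r$ avoids the singular neighbourhood then the forward orbit of $R_\alpha^{-r}\theta$ of length $r$ does too --- and this suffices because the proofs of Lemmas~\ref{lem:bsums} and~\ref{lem:canc} only exploit non-approach along the orbit of the same length as the Birkhoff sum, not the nominally larger length $M q_{n+1}$ appearing in the definition of $\Sigma_n(M)$. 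Framed as orbit-segment re-indexing rather than set symmetry, your argument is exactly the one the paper intends.
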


Let $a_w:=\left((pf)^{(w)}(x)-(pf)^{(w)}(x')\right)-\left((qf)^{([\zeta w])}(y)-(qf)^{([\zeta w])}(y')\right)$.

\begin{lemma}\label{prop:xyback}There exists an interval $[R,S]\subset \left[\frac{T}{\log^2T},T\right]$ such that for all $w\in[R,S]\cap \Z$ such that $[w,w+\kappa w]\subset[R,S]$ and all $s\in [w,w+\kappa w]\cap \Z$, we have
\begin{equation}\label{eq:awfback}
|a_{-w}-a_{-s}|<\epsilon^{3/2}
\end{equation}
and, moreover, there exists $w_0\in [R,(1-\epsilon)S]$ such that
\begin{equation}\label{eq:inpback}
\left|a_{-R}\right|<\frac{r_{p,q}}2,\;\;\left|a_{-w_0}\right|\geq r_{p,q}.
\end{equation}
\end{lemma}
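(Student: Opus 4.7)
The plan is to prove Lemma~\ref{prop:xyback} by reducing it to the forward counterpart Lemma~\ref{prop:xy} via time-reversal. First I would exploit the cocycle identity
$$
(pf)^{(-w)}(x)=-(pf)^{(w)}(R_\alpha^{-w}x), \qquad (qf)^{(-[\zeta w])}(y)=-(qf)^{([\zeta w])}(R_\alpha^{-[\zeta w]}y),
$$
and similarly for $x',y'$, so that $a_{-w}$ becomes a difference of forward Birkhoff sums at the new base points $\tilde x:=R_\alpha^{-w}x$, $\tilde x':=R_\alpha^{-w}x'$, $\tilde y:=R_\alpha^{-[\zeta w]}y$, $\tilde y':=R_\alpha^{-[\zeta w]}y'$. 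The crucial observation is that, under the backward hypothesis~\eqref{eq:backsing} on $(y,y')$ (as in the setup of Section~\ref{sec:back}) and the symmetric hypothesis~\eqref{eq:forback} on $(x,x')$, the \emph{forward} orbits of the new base points avoid the singular neighbourhoods of $0$ of Section~\ref{s:Sec6} for times up to $\sim q_{v+1}$ (respectively $\sim c_{p,q}q_{n_k+1}$); hence all the Birkhoff sum estimates (Lemmas~\ref{lem:dk2},~\ref{lem:bsums},~\ref{lem:canc},~\ref{cor:bux}, and~\ref{cor:cont}) apply verbatim at $\tilde x,\tilde x',\tilde y,\tilde y'$.

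With this reduction in place, I would then run the same dichotomy appearing in the proof of Lemma~\ref{prop:xy}. In the \emph{asynchronous splitting} regime, where the scales $\|x-x'\|$ and $\|y-y'\|$ given by~\eqref{eq:xdist}--\eqref{eq:ydist} differ substantially, one pair shears faster than the other under Lemma~\ref{lem:bsums}, so that $a_{-w}$ is governed (up to~$\epsilon^{3/2}$ error) by a single term $(A_--A_+)\Delta\cdot w\log q_*$ with $\Delta$ essentially $\|x-x'\|$ or $\|y-y'\|$; the intermediate value theorem then produces $R,S$ and $w_0\in[R,(1-\epsilon)S]$ realizing~\eqref{eq:inpback}, while the slow drift~\eqref{eq:awfback} follows from~\eqref{eq:cancshort} with $|r-s|\leq\kappa w\leq\epsilon^3 B\, T$. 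In the \emph{second-order splitting} regime, where the two leading $(A_--A_+)(\cdot)\log$ contributions almost cancel, I would develop $a_{-w}$ up to second order in the Taylor expansion of $f$ at the respective singularities, control $f''^{(w)}$ and $f'''^{(w)}$ through Lemmas~\ref{cor:bux},~\ref{lem:control}, and~\ref{cor:cont}, and apply the combinatorial Lemma~\ref{lem:cpq} with constants $(U,V)\in\{A_-,A_+\}^2$ and $A,B$ of the order of $\|x-x'\|,\|y-y'\|$ to conclude that the sub-leading terms cannot simultaneously vanish under the hypothesis $p/q\notin\{1,A_-/A_+,A_+/A_-\}$. This yields $|a_{-w_0}|\geq r_{p,q}$ for some $w_0$ inside an interval $[R,S]\subset[T/\log^2 T,T]$ of polynomial length, while keeping $|a_{-R}|<r_{p,q}/2$ by choosing $R$ at the left endpoint of the growing regime.

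The slow-drift estimate~\eqref{eq:awfback} in both regimes is the backward analogue of Lemma~\ref{lem:cruc}(H3): over an interval $[w,w+\kappa w]$ the increment $a_{-w}-a_{-s}$ is a difference of Birkhoff sums of $pf$ and $qf$ of length $|w-s|\leq\kappa w\leq\epsilon^{10}T$, hence controlled by Lemma~\ref{lem:canc} and Lemma~\ref{lem:dk2} (applied to $\tilde x,\tilde y$) to be $O(\kappa^{1/2}T)+O(\epsilon^2)<\epsilon^{3/2}$ once $\kappa=\epsilon^{10}$ and $\epsilon$ is small.

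The main obstacle, exactly as in the proof of Lemma~\ref{prop:xy}, is the second-order splitting case: one has to implement the cancellation of the principal $\log$-term between the $x$-shearing and the $y$-shearing \emph{and} control the residual term precisely enough to invoke Lemma~\ref{lem:cpq}, keeping track of the signs introduced by time reversal in the cocycle identity (which flip whether $\tilde y$ approaches the singularity from the $A_-$-side or the $A_+$-side). This bookkeeping is the only genuinely new content of the backward case; once it is carried out, the rest of the argument is mechanically identical to the forward one.
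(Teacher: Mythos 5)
The paper itself gives no separate proof of Lemma~\ref{prop:xyback}, deferring to the forward Lemma~\ref{prop:xy} (``the proofs follow the same lines''), and your time-reversal plan via the cocycle identity $f^{(-w)}(x)=-f^{(w)}(R_\alpha^{-w}x)$ is precisely how that deferral is meant to be cashed out, so the approach matches the paper's. One caution on the wording: since $\tilde y=R_\alpha^{-[\zeta w]}y$ is $w$-dependent, the claim that its \emph{forward} orbit avoids the singular neighbourhoods up to time $\sim q_{v+1}$ is not literally true (only the first $[\zeta w]$ forward iterates of $\tilde y$, i.e.\ the backward iterates of $y$, are controlled by~\eqref{eq:backsing}, whereas $\tilde y\notin\Sigma_v(\cdot)$ would require control of the genuine forward orbit of $y$ as well); the Section~\ref{s:Sec6} estimates still go through because their proofs only use the orbit of length $r=[\zeta w]$, but the cleaner route is to re-run those estimates directly for the rotation $R_{-\alpha}$ (same denominators, fixed base point) rather than invoking them ``verbatim'' at moving base points.
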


\subsubsection{Concluding arguments}\label{sec:conclusion}
We now show that, using the previous {lemmas}, we can conclude the proof of Theorem~\ref{main:th2}. The next section will be then devoted only to the proof of the {lemmas}.

\begin{proof}[Proof of Theorem \ref{main:th2}]
As we {have} already remarked at the beginning of {Section~\ref{sec:beginning_proof}}, it is sufficient to verify the assumptions of Proposition \ref{cocycle}, for $(E_k), (A_k)$ and $Z$ defined as in {that section}.

If~\eqref{eq:forwsing} holds, we show that A.\ in Proposition \ref{cocycle} is true using Lemma~\ref{lem:cruc} and Lemma~\ref{prop:xy}. If~\eqref{eq:backsing} holds, we show that B.\ is true using Lemma~\ref{lem:crucback} and Lemma~\ref{prop:xyback}. Since the proofs follow the same lines, we will assume that \eqref{eq:forwsing} is satisfied and show that A. holds.

We will show that A.\ in Proposition~\ref{cocycle} holds.
Notice that by~\eqref{eq:awf}, we have  $|a_{w+1}-a_w|\leq \epsilon^{3/2}$ for $w\in[R,S]$. This and~\eqref{eq:inp} imply that there exists $w'\in [R,(1-\epsilon)S]$ such that $|a_{w'}\pm r_{p,q}|<2\epsilon^{3/2}$. Define $M:=w'$ and $L:=\kappa w'$. Then by~\eqref{eq:awf}, for $s\in[M,M+\kappa M]\cap \Z$, we get
$$
|a_s\pm r_{p,q}|\leq |a_s-a_{w'}|+|a_{w'}\pm r_{p,q}|\leq \epsilon^{3/2}+2\epsilon^{3/2}<\epsilon.
$$
This implies that property (F2) of {Proposition}~\ref{cocycle} holds on $[M,M+L]\cap \Z$ (we recall that $P=\{r_{p,q},-r_{p,q}\}$). Moreover, $[M,M+L]\subset [R,S]\subset [\frac{T}{\log^2 T},T]$, whence  (H1), (H2) and (H3) (in Lemma~\ref{lem:cruc}) apply in particular on $[M,M+L]$. Therefore,~property (F1), (F3) and (F4) of {Proposition}~\ref{cocycle} hold. This finishes the proof of A.
\end{proof}

\subsection{Slow shearing properties: proof of Lemma \ref{lem:cruc}}\label{sec:cruc}
In this section we give {the} proof of Lemma \ref{lem:cruc}, in which the properties of the disjointness criterium  which depends on \emph{slow} and controlled shearing are verified for the \emph{forward} case F. The proof of the parallel Lemma~\ref{lem:crucback} for the \emph{backward} case is analogous. In the next subsection {we will  prove} Lemma~\ref{prop:xy} which verifies the crucial property~(F3) (which gives the \emph{splitting} of orbits) for the \emph{forward} case F.

\begin{proof}[Proof of Lemma \ref{lem:cruc}]
We first show that~(H2) holds. Notice that by~\eqref{eq:forback}, { the $x$-variation scale~{\eqref{eq:xdist}}} (indeed, $\|x-x'\|^{-1}=q_{n_k+1}$, whence $T<\zeta c_{p,q}q_{n_k+1}$),  ~\eqref{eq:forwsing}, the { $y$-variation scale~\eqref{eq:ydist}} and the { choice of $T$ in \eqref{eq:time}}, we have
\begin{equation}\label{eq:nojump}
0\notin[x+w\alpha,x'+w\alpha],\; 0\notin [y+w\alpha,y'+w\alpha], \mbox{ for every } w\in [0,T]\cap \Z.
\end{equation}
Therefore, for every $w\in[0,T]\cap \Z$, some $\theta_w\in[x,x']$, by \eqref{eq:forback}, { the $x$-variation scale~{\eqref{eq:xdist}}} and ($\cD$2) together with~\eqref{eq:stycz}, we have
$$
|pf(R_\alpha^wx)-pf(R_\alpha^wx')|=p|f'(R_\alpha^w\theta_w)|\|x-x'\|
\leq \frac{\tilde{A}pq_{n_k}}{c_{p,q}}\frac{1}{q_{n_k+1}}\leq \frac{1}{\log^{1/2}q_{n_k}}<\kappa^2,
$$
where $\tilde{A}$ is a constant depending on $A_-,A_+$ and $g$.
Similarly, for some $\theta'_w\in[y,y']$, by~\eqref{eq:forwsing} and the  { $y$-variation scale~\eqref{eq:ydist}}, we have
$$
|qf(R_\alpha^wy)-qf(R_\alpha^wy')|=q|f'(R_\alpha^w\theta'_w)|\|y-y'\|
\leq \tilde{A}qq_v\log^{7/8}q_v\frac{1}{q_{v}\log q_v}\leq$$$$ \frac{1}{\log^{1/9}q_{v}}< \frac{1}{\log^{1/9}q_{s'}}\leq \frac{1}{s'^{1/10}}<\kappa^2.
$$
Therefore~(F3) of {Proposition}~\ref{cocycle} holds. By the cocycle identity (applied to $qf^{(u)}(y)-qf^{(w)}(y)$ and then $y'$) and~\eqref{eq:nojump}, we have that~(H3) follows by showing that
$$
|qf'^{(w-u)}(\theta_u)|\|y-y'\|\leq \epsilon,
$$
for $|w-u|\in [0,T^{2/3}]\cap \Z$, $u\leq T$ and some $\theta_u\in[y+u\alpha,y'+u\alpha]$. By~\eqref{eq:forwsing} (since $u\leq T\leq c_{p,q}q_{v+1}$, cf.~the { choice of $T$ in~\eqref{eq:time}}), it follows that $\theta_u\notin \Sigma_v(1/4)$ (see~\eqref{eq:sigma}). Therefore, by~\eqref{eq:smallder} for $n=v$, since $T^{2/3}\leq q_{v+1}^{2/3}\leq \epsilon^4 q_v$ (cf.~the assumption ($\mathcal{D}3$) on the rotation number)
 and by the $y$-variation {scale~\eqref{eq:ydist}}, we get
$$
|qf'^{(w-u)}(\theta_u)|\|y-y'\|\leq q\epsilon^2q_v\log q_v\cdot \frac1{q_v\log q_v}= q\epsilon^2<\epsilon.
$$
This finishes the proof of~(H3). It remains to show~(H1). We will show the first part of property~(F1) of {Proposition}~\ref{cocycle} (i.e.\ the inequality involving  $n(x,s,t)$), the proof of the second one following the same lines. By definition
\be\label{eq:szac}
-pf(x)<-s\leq t-pf^{(n(x,s,t))}(x)\leq pf(R_\alpha^{n(x,s,t)}x).
\ee
Let $\ell\in \N$ be unique such that $q_\ell\leq T<q_{\ell+1}$. We have
$$
q_{\ell+1}>T>\zeta c_{p,q}\min(q_v\log q_v,q_{n_k},q_{v+1})>\zeta c_{p,q}\min(q_v,q_{n_k})>\min(q_{v-\tilde b},q_{n_k-\tilde b}),$$
where $\tilde{b}$ is a constant (in fact, $\tilde{b}=2\log(\zeta c_{p,q})$),
so $\ell+1>\min(v-\tilde b,n_k-\tilde b)$. Furthermore,
$$
\frac1{q_{s'}}>\frac1{q_{s'}\log q_{s'}}=\delta>\|x-x'\|=\frac1{q_{n_k+1}},$$
$$\frac1{q_{s'}\log q_{s'}}=\delta>\|y-y'\|=\frac1{q_v\log q_v},$$
whence $s'<v$ and $s'<n_k+1$. It follows that $\ell+1>s'-1-\tilde b>s'/2>s_0^2$ and finally $\ell>s_0$.

Since $x\in E_{k}\subset E^{s_0}\subset E'(\ell+1)$ (see \eqref{eq:es} and~\eqref{Apq}) and $n(x,s,t)\leq T<q_{\ell+1}$, it follows that $\min(\|x\|,\|R_\alpha^{n(x,s,t)}x\|)\geq \frac1{q_{\ell+1}\log^2 q_{\ell+1}}$. Therefore, by the definition of $f$, we have
$$
\max(pf(x),pf(R_\alpha^{n(x,s,t)})<\tilde{A}_1\log q_{\ell+1}<T^{1/10}
$$
cf.\ ($\cD$3) (of Definition \ref{rotationassumptions}).
Coming back to~\eqref{eq:szac}, we have shown that
$$
|t-pf^{(n(x,s,t)}(x)|<T^{1/10}.$$
Moreover, in view of~\eqref{eq:szac},
$t\geq pf^{(n(x,s,t))}(x)-T^{1/10}\geq pn(x,s,t)\inf_\T f  -T^{1/10}\geq \frac{T}{\log^3T}$, the latter inequality follows from the lower bound on $n(x,s,t)$.

Since
$$
|t-n(x,s,t)\int pf\,d\lambda|\leq
|pf^{(n(x,s,t))}(x)-n(x,s,t)\int pf\,d\lambda|+|t-pf^{(n(x,s,t))}(x)|$$
and the second summand is bounded by $T^{1/10}$, it is enough to show that the first summand is bounded by $T^{1/4}$ as $T^{1/4}+T^{1/10}<t^{1/3}$ (by the lower bound on $t$ we have just shown).
Therefore, we need to show that
$$
|pf^{(n(x,s,t))}(x)-n(x,s,t)\int_\T pf\, d\lambda|<T^{1/4}.
$$
This, in turn, will follow if we show that
(since $\int_\T f d\lambda=1$)
$$
|f^{(n)}(x)-n|<T^{1/5},
$$
for $n\in [\frac{T}{\log^2 T},T]\cap \Z$. Notice that since $x\in E^{s_0}\subset E'(\ell+1)$, for $j<T<q_{\ell+1}$, (by ($\cD$3) of Definition~\ref{rotationassumptions}), we have
$$
\|x+j\alpha\|\geq \frac1{q_{\ell+1}\log^2q_{\ell+1}}\geq
\frac1{q_\ell\log^2q_\ell\cdot(2\log ^2q_\ell)}\geq\frac1{2T\log^4T}.$$
It follows that the assumptions of Lemma~\ref{lem:dk2} are satisfied. Hence (H1) holds and the proof of Lemma~\ref{lem:cruc} is finished.
\end{proof}

\subsection{Splitting of orbits: proof of Lemma~\ref{prop:xy}}\label{sec:xy}

In this section we give {the} proof of Lemma~\ref{prop:xy}, which gives the splitting of nearby trajectories and is the most delicate and technical part of the proof {of~Theorem~\ref{main:th2}}. We give at the beginning an outline of the proof, to help the reader to go trough the (sometimes complicated technically) parts of the proof.\\

\smallskip
Let us first recall that
$$
a_w^p:=pf^{(w)}(x)-pf^{(w)}(x'), \qquad
a_w^q:=qf^{([\zeta w])}(y)-qf^{([\zeta w])}(y')
$$
and remark that, by mean value (which can be applied thanks to~\eqref{eq:nojump}), for every $w\in[0,T]\cap \Z$ there exist $\theta_{w,x,x'}\in[x,x']$, $\theta_{w,y,y'}\in[y,y']$ such that
$$
a_w^p
=pf'^{(w)}(\theta_{w,x,x'})(x-x'), \quad \text{and} \qquad
a_w^q: =qf'^{([\zeta w])}(\theta_{w,y,y'})(y-y').
$$
The distances {$\|x-x'\|$ and $\|y-y'\|$}  play hence a fundamental role in comparing $a_w^p$ and $a_w^q$. Recall that the denominators $q_{n_k}$ and $q_v$ which encode the magnitude of these distances are defined by the
 { $y$-variation scale~\eqref{eq:ydist}} and $x$-variation {scale~\eqref{eq:xdist}}. \\

\smallskip
\textbf{Outline of the proof.}
Let $n_k$ and $v$ be the {indices} of the
denominators 
which encode the distances between $x,x'$ and $y,y'$, respectively (defined by~\eqref{eq:ydist} and $x$-variation scale~\eqref{eq:xdist}).
As already anticipated in the general outline given at the beginning of {Section~\ref{s:Sec7}} that the proof of the splitting of nearby orbits has two separate cases, namely
 \textbf{Case 1.}~(\emph{asynchronous splitting} case) when $v=n_k$ and \textbf{Case 2.}~(\emph{second order splitting}) when $v\neq n_k$.

The asynchronous splitting case is treated first and is not so difficult, since {the} property $(F2)$ of {Proposition}~\ref{cocycle} can {(in this case)} be deduced by the Ratner property (forward) for one of the two flows.

The arguments in {\textbf{Case 2.}} consist of two parts. { The first part consists {in}}  showing that if the Birkhoff sums split by some $p\in P$ then they stay {$\epsilon$-close} for a $\kappa$ proportion of time (see \eqref{eq:awf}) and {the} second is that they will split at some point (see \eqref{eq:inp}). This part is split into two further subcases \textbf{2(a)} and \textbf{2(b)}.

For this we split the cocycle inequality in property (F2) of {Proposition}~\ref{cocycle} according to \eqref{bara} and show \eqref{eq:awf} separately for $a_{1,w}$ and $a_{2,w}$ (considering two subcases \textbf{2(a)} and \textbf{2(b)}). For $a_{1,w}$ (see \eqref{aw1} and \eqref{eq:aaaw}) we use \eqref{eq:cancshort} and for $a_{2,w}$ (see \eqref{aw2}, \eqref{eq:aaw}) we use \eqref{eq:canc} (we have very precise estimates for the growth of the first derivative). The last part is to show \eqref{eq:inp} (which is also the most technical one).

In \textbf{2(a}, we use the fact that $a_{2,w}$ dominates $a_{1,w}$ and so $a_w$ is large for some $w$ since by {the} assumptions of \textbf{I}, $a_{2,w}$ is large.

Now, in case \textbf{2(b)}, we study $a_w$ for $w=mq_n$ (along {these} times we have the best control). We argue by contradiction that $a_{mq_n}$ is always small. Then $a_{(m+l)q_n}-a_{mq_n}-a_{lq_n}$ also has to be small. We show (see \eqref{a2sm} and \eqref{a2sm2}) that for $a_{2,\cdot}$  the above expression is small and hence we deal only with $a_{1,\cdot}$ (see \eqref{eq:smalldev2}). By {the} cocycle identity (and estimating the third derivative), we derive \eqref{eq:estsec} and by an analogous reasoning (using \eqref{eq:estsec}), we get \eqref{eq:estthi}. Now, by the choice of $c_{p,q}$, it follows that the main contribution to the Birkhoff sums of {the} second and third {derivatives} is given by the closest visit. Roughly, if $\frac{A}{q_n}$ denotes the closest visit of $x$ and $\frac{B}{q_n}$ the closest visit of $y$, by \eqref{eq:estsec} and \eqref{eq:estthi}, we get that $|pA^{-2}-qB^{-2}|$ and $|p^2A^{3}\pm q^2B^{-3}|$ both have to be small. This however cannot hold simultaneously (see Lemma \ref{lem:cpq}). Therefore for some $m$, $a_{qmq_n}$ is large and \eqref{eq:inp} holds. This concludes the outline {of the proof}.

\smallskip
{\Blue In the rest of the section we now present the proof by discussing separately the two cases outlined above.}

\smallskip
\noindent \textbf{Case 1: {Asynchronous} shearing.}
We assume in this case that $n_k\neq v$. Let $\tilde{T}:=\frac{T}{\log T}$. We will find $[R,S]$ {which is  contained} in $[\frac{T}{\log^2T},\tilde{T}]$.
By~\eqref{eq:forwsing} and since $\theta_{w,y,y'}\in[y,y']$, it follows that for every $w\in[0,T]\cap \Z$ (see \eqref{eq:sigma}), {we have}
\begin{equation}\label{eq:thy}
\theta_{w,y,y'}\notin \Sigma_{v}(1/4).
\end{equation}
Similarly, by \eqref{eq:forback} and since $\theta_{w,x,x'}\in[x,x']$, it follows that for every $w\in[0,T]\cap \Z$, {we have}
\begin{equation}\label{eq:thx}
\theta_{w,x,x'}\notin \Sigma_{n_k}(c_{p,q}).
\end{equation}
Assume first that $n_k<v$.
 In this case, by the { choice of $T$ in \eqref{eq:time}} and the  { $y$-variation scale~\eqref{eq:ydist}}, $T=c_{p,q}\zeta q_{n_k+1}\leq c_{p,q}q_v$. Therefore $\tilde{T}\leq \frac{q_v}{\log q_v}<\epsilon q_v$ (the function $u/\log u$ is increasing on the interval $(e,\infty)$)  and
by~\eqref{eq:smallder} together with~\eqref{eq:thy}, for $n=v$, $M=\frac14$ and $x=\theta_{w,y,y'}$, and then by the  { $y$-variation scale~\eqref{eq:ydist}}, we get for $w\in [0,\tilde{T}]\cap \Z$
\begin{equation}\label{eq:smallg}
|a_w^q|=|qf'^{([\zeta w])}(\theta_{w,y,y'})(y-y')|\leq q\epsilon^2q_v\log q_v\|y-y'\|\leq \epsilon^{5/3}.
\end{equation}
By~\eqref{eq:dercont} in Lemma~\ref{lem:bsums} for $n=n_k$, $M=c_{p,q}$ and $x=\theta_{w,x,x'}$, we get for $w\in [q_{n_k},\tilde{T}]\cap \Z$
\begin{equation}\label{eq:nwc}
p((A_--A_+)-\epsilon^2)w(\log w)\|x-x'\|\leq |a_w^p|\leq p((A_--A_+)+\epsilon^2)w(\log w)\|x-x'\|.
\end{equation}
Now, set
$$
R:=q_{n_k}\text{ and }S:=\tilde T.$$
Take $w,s\in[R,S]$ with $s\in[w,w+\kappa w]$. We
have
$$
|a_w-a_s|=|(a^p_w-a^q_w)-(a^p_s-a^q_s)|\leq |a^p_w-a^p_s|+|a_w^q|+|a^q_s|\leq |a^p_w-a^p_s|+2\epsilon^{5/3}$$
by~\eqref{eq:smallg}. Now, by \eqref{eq:nwc},
\be\label{eq:szac0}
|a^p_w-p(A_--A_+)w(\log w)\|x-x'\||<p\epsilon^2w(\log w)\|x-x'\|\ee
(and the same holds for $s$; if $a^p_w$ is negative we have to modify signs). Moreover, $w<\tilde T=T/\log T$, whence
$$
w\log w<\frac T{\log T}\log\left(\frac T{\log T}\right)<2T=2\zeta c_{p,q}q_{n_k+1}.$$
It follows that
\be\label{eq:szac2}
p\epsilon^2w(\log w)\|x-x'\|=2 p\zeta c_{p,q}\epsilon^2<\epsilon^{5/3}
\ee
(remembering that $\|x-x'\|=\frac1{q_{n_k+1}}$). Thus
$$
|a^p_w-a^p_s|=p(A_--A_+)\|x-x'\||w\log w-s\log s|+{\rm O}(\epsilon^{5/3}).$$
But $s\in [w,w+\kappa w]$, so
$$
|w\log w-s\log s|\leq |w\log w-(w+\kappa w)\log(w+\kappa w)|=
{\rm O}(\kappa w\log w)$$
and therefore (see~\eqref{eq:szac2})
$$
|a^p_w-a^p_s|={\rm O}(p(A_--A_+)\|x-x'\|\kappa w\log w)+{\rm O}(\epsilon^{5/3})={\rm O}(\kappa)+{\rm O}(\epsilon^{5/3})<\epsilon^{3/2}$$
which completes the proof of~\eqref{eq:awf}.

Moreover, by~\eqref{eq:szac0}
$$
|a^p_{q_{n_k}}-p(A_--A_+)q_{n_k}(\log q_{n_k})\|x-x'\||<p\epsilon^2q_{n_k}(\log q_{n_k})\|x-x'\|.$$
But by ($\cD$3) (of Definition \ref{rotationassumptions}), $q_{n_k}(\log q_{n_k})/q_{n_k+1}\leq 1/\log\log q_{n_k}$, so $|a_R|<r_{p,q}/2$. Finally, setting $w_0=[(1-\epsilon)\tilde{T}]$, by~\eqref{eq:szac0} and~$\eqref{eq:szac2}$, we obtain
$$
|a^p_{w_0}-p(A_--A_+)w_0(\log w_0)\|x-x'\|<\epsilon^{5/3}.$$
But
$$w_0\log w_0\asymp((1-\epsilon)\tilde T\log ((1-\epsilon)\tilde T))\asymp$$$$(1-\epsilon)\frac{T}{\log T}\log\left((1-\epsilon)\frac{T}{\log T}\right)\asymp(1-\epsilon)T\asymp q_{n_k+1}.
$$
Now, by~\eqref{eq:smallg} and~\eqref{eq:nwc} for a constant $D>0$, it follows that
$$
|a_{w_0}|\geq |a_{w_0}^p|-\epsilon^{5/3}\geq D c_{p,q}-\epsilon^{5/3}>r_{p,q},
$$
so~\eqref{eq:inp} holds and the proof is finished.

Now if $n_k>v$ the proof is symmetric. One restricts the interval to $[\frac{T}{\log^2T},\tilde{T}]:=[\frac{T}{\log^2T},\frac{q_{n_k}}{\log q_{n_k}}]$ then shows that \eqref{eq:smallg} holds for $a_w^f$ and \eqref{eq:nwc} holds for   $a_w^g$. This finishes the proof in case $n_k\neq v$.

\medskip
\noindent \textbf{Case 2: Second order splitting.} $n_k=v$.  Let $C_{x,y}:=\min(B_{v,x},B_{v,y})$, cf.\ \eqref{eq.distsing}.
Notice that by \eqref{eq:forback} for $x$ and \eqref{eq:forwsing} for $y$, we have
\begin{equation}\label{eq:axby}
B_{v,x}\geq c_{p,q}/2\;\text{ and }\; B_{v,y}\geq \log^{-7/8}q_v.
\end{equation}
We will find numbers $R,S$ satisfying additionally $[R,S]\subset [\frac{T}{\log T},\frac{C_{x,y}T}{10}]$ (notice that by \eqref{eq:axby} and by the { choice of $T$ in \eqref{eq:time}}, $\frac{C_{x,y}T}{10}>2\frac{T}{\log T}$).
By \eqref{eq:nojump}, for every $w\in[0,T]\cap \Z$ there exist $\theta_{w,x,x'}\in[x,x']$, $\theta_{w,y,y'}\in[y,y']$ such that
$$
a_w^p=pf'^{(w)}(x)(x-x')+
\frac p2f''^{(w)}(\theta_{w,x,x'})(x-x')^2
$$
and
$$
a_w^q= f'^{([\zeta w])}(y)(y-y')+
\frac q2f''^{([\zeta w])}(\theta_{w,y,y'})(y-y')^2.
$$

Notice that since $\theta_{w,x,x'}\in[x,x']$, by \eqref{eq:forback} and \eqref{eq:xdist}, we have $B_{v,\theta_{w,x,x'}}\geq\frac{c_{p,q}}{2}$ for every $w\in [0,\frac{C_{x,y}T}{10}]\cap \Z$. So, by \eqref{eq:cont} in Lemma  \ref{cor:cont}, for every $w\in [0,\frac{C_{x,y}T}{10}]\cap \Z$ (recall the { choice of $T$ in \eqref{eq:time}})
\begin{equation}\label{eq:jk}
|f''^{(w)}(\theta_{w,x,x'})|(x-x')^2\leq DC_{x,y}Tq_v\frac{4}{c^2_{p,q}}\frac{1}{q_{v+1}^2}\leq \epsilon^{10}
\end{equation}
(here, we use the fact that $v=n_k$ satisfies {$(\mathcal{D}2)$}
).
Similarly, since $\theta_{w,y,y'}\in[y,y']$, by \eqref{eq:forwsing} and the  { $y$-variation scale~\eqref{eq:ydist}}, we have $B_{v,\theta_{w,y,y'}}\geq \frac1{\log^{7/8}q_v}$ for every $w\in [0,\frac{C_{x,y}T}{10}]\cap \Z$. So by \eqref{eq:cont} in Lemma  \ref{cor:cont}, for every $w\in [0,\frac{C_{x,y}T}{10}]\cap \Z$, since $C_{x,y}\leq B_{v,y}$ and $B_{v,y}\geq \log^{-7/8}q_v$, we have (recall the choice of $T$ in~\eqref{eq:time})
\begin{equation}\label{eq:uy}
f''^{([\zeta w])}(\theta_{w,y,y'})(y-y')^2
\leq \frac{\zeta 4DC_{x,y}Tq_vB^{-2}_{v,y}}
{Tq_v\log q_v}={\rm O}\left( \frac{ \log^{7/8}q_v}{\log q_v}\right)\leq \epsilon^{10}.
\end{equation}
Since $a_w=a_w^p+a_w^q$, by \eqref{eq:jk}, \eqref{eq:uy}, it is enough to study the expression $\bar{a}_w=pf'^{(w)}(x)(x-x')-qf'^{([\zeta w])}(y)(y-y')$ for $w\in[0,\frac{C_{x,y}T}{10}]$. We rewrite it in the form
\begin{align}\label{bara}
\bar{a}_w=a_{1,w}+a_{2,w},\qquad  \text{where}\quad & a_{1,w}:=\left(pf'^{(w)}(x)-qf'^{([\zeta w])}(y)\right)(y-y'),\\
& a_{2,w}:=pf'^{(w)}(x)\left((x-x')-(y-y')\right). \nonumber
\end{align}
\smallskip
 We consider now two subcases (Case 2(a) and Case 2(b)):\\

\smallskip
\noindent \textbf{Case 2(a).} In this subcase, we assume that there exists $w_0\in[\frac{T}{\log T},\frac{C_{x,y}T}{30}]\cap \Z$ such that
$$
|a_{2,w_0}|\geq 10.
$$
Notice that by the { choice of $T$ in \eqref{eq:time}}, $\frac{T}{\log T}\geq c_{p,q}\zeta q_v$. Therefore and since $x\notin \Sigma_v(c_{p,q}/2)$ (cf.\ \eqref{eq:forback}) by \eqref{eq:dercont}, \eqref{eq:xdist} and the  { $y$-variation scale~\eqref{eq:ydist}}, we have
$$
|a_{2,[\frac{T}{\log T}]}|\leq 2p|A_--A_+|T\frac{2c_{p,q}}{T}\leq 5.
$$
Moreover, for every $w\in[\frac{T}{\log T},\frac{C_{x,y}T}{10}]\cap \Z$ by \eqref{eq:forback}, \eqref{eq:xdist} and the  { $y$-variation scale~\eqref{eq:ydist}}, we have
$$|a_{2,w+1}-a_{2,w}|=p|f'(x+w\alpha)|\left|(x-x')-(y-y')\right|\leq 4|A_--A_+| pc_{p,q}^{-1}q_v\frac{2}{q_v\log q_v}<\epsilon^3$$
and therefore there exists $w'\in[\frac{T}{\log T},\frac{C_{x,y}T}{30}]\cap \Z$ such that
\begin{equation}\label{a2w}
|a_{2,w'}\pm10|<\epsilon^2.
\end{equation}
In particular, using additionally~\eqref{eq:dercont}, we have
\be\label{eq:ad}
w'\log w'\|(x-x')-(y-y')\|={\rm O}(1).\ee
Define $R=w'$, $S:=(1+\epsilon)w'$. Then for every $w\in [R,S]$ such that $[w,w+\kappa w]\subset[R,S]$ and every $s\in[w,w+\kappa w]$, by \eqref{eq:dercont} and \eqref{eq:ad}, we have
\begin{multline}\label{aw2}
|a_{2,w}-a_{2,s}|\leq \\ \left(p|A_--A_+|(s\log s-w\log w)+{\rm O}(\epsilon^2 w\log w+\epsilon^2s\log s)\right)\|(x-x')-(y-y')\|=\\
{\rm O}(\kappa w'\log w')\|(x-x')-(y-y')\|+{\rm O}(\epsilon^2)={\rm O}(\epsilon^2).
\end{multline}
 Moreover, by \eqref{eq:cancshort} (for $x$ and $y$), we have (recall that $\zeta=p/q$) 
\begin{multline}\label{aw1}
|a_{1,w}-a_{1,s}|=\|y-y'\|\left|(pf'^{(w)}(x)-pf'^{(s)}(x))-
(qf'^{([\zeta w])}(y)-qf'^{([\zeta s])}(y))\right|\\ ={\rm O}\left( \frac{\epsilon^{2}T}{T}\right)={\rm O}( \epsilon^{2}).
\end{multline}
By the two above, for every $w\in [R,S]$, $s\in[w,w+\epsilon^3 w]$, we have
$$
|a_w-a_s|\leq \epsilon^{3/2}.
$$
which gives \eqref{eq:awf}. Moreover, by \eqref{eq:canc} (recalling that $\zeta=\frac{p}{q}$) and by the { choice of $T$ in \eqref{eq:time}}, we have for $w'$
$$
|a_{w'}|\geq |a_{2,w'}|-|a_{1,w'}|\geq 10-\epsilon^2-
\|y-y'\|T\geq 8>r_{p,q},
$$
and so \eqref{eq:inp} holds. This finishes the proof in case \textbf{1(a)}.\\

\smallskip
\noindent \textbf{Case 2(b).} In this case, we assume that, contrary to \textbf{Case 2(a)}, for every $w\in[\frac{T}{\log T},\frac{C_{x,y}T}{30}]\cap \Z$, we have
$$
|a_{2,w}|\leq 10.
$$
This, by \eqref{eq:dercont} (since $x\notin \Sigma_l$ by \eqref{eq:forback}), means that for every $w,s\in [\frac{T}{\log T},\frac{C_{x,y}T}{30}]\cap \Z$, $s\in [w,w+\epsilon^3w]$, we have (similarly to \textbf{1(a)})
\begin{equation}\label{eq:aaw}
|a_{2,w}-a_{2,s}|\leq \epsilon^{5/3}.
\end{equation}
Moreover, for $w,s\in [\frac{T}{\log T},\frac{C_{x,y}T}{30}]$, $s\in [w,w+\epsilon^3w]$, by \eqref{eq:cancshort} and the definition of $a_{1,w}$, we have (similarly to \textbf{1(a)}.)
\begin{equation}\label{eq:aaaw}
|a_{1,w}-a_{1,s}|\leq \epsilon^{5/3}.
\end{equation}
So, by \eqref{eq:aaw} and \eqref{eq:aaaw}, it follows that \eqref{eq:awf} holds for every interval $[R,S]\subset [\frac{T}{\log T},\frac{C_{x,y}T}{30}]\cap \Z$. Therefore to finish the proof it is enough to show that there exists $w_0\in [\frac{T}{\log T},\frac{C_{x,y}T}{40}]$ such that \eqref{eq:inp} holds.

By assumption, $10\geq|a_{2,w}|=|pf'^{(w)}(x)|\left|(x-x')-(y-y')\right|$. Moreover, by the { choice of $T$ in \eqref{eq:time}}, $T\geq c_{p,q}\zeta q_v\log q_v$.
Therefore, by \eqref{eq:dercont} (notice that $x\notin \Sigma_l$ by \eqref{eq:forback}) for $w=[\frac{C_{x,y}T}{40}]$, we get (using \eqref{eq:axby} and {\Blue $(\mathcal{D}3)$}.)
\begin{multline}\label{eq:smalldif}
10\geq \frac{p}{100}\left|(x-x')-(y-y')\right|C_{x,y}T\log T\geq\\
\frac{pc_{p,q}\zeta}{200}q_v\log^2q_v\left|(x-x')-(y-y')\right|\geq
q_{v+1}\log(\log q_v)\left|(x-x')-(y-y')\right|.
\end{multline}
This, by the $x$-scale \eqref{eq:xdist} and the $y$-scale \eqref{eq:ydist}, means that
\begin{equation}\label{eq:ry}
\frac{1}{2q_{v+1}}<\|y-y'\|<\frac{2}{q_{v+1}}.
\end{equation}
So, by the { choice of $T$ in \eqref{eq:time}}, we get
\begin{equation}\label{eq:smt}
\frac{\zeta c_{p,q}q_{v+1}}{2}<T<2\zeta c_{p,q}q_{v+1}.
\end{equation}
We will show that \eqref{eq:inp} holds for $w_0=qmq_v$ for some $m\in J_{v,T}:=[\frac{T}{qq_v\log T},\frac{C_{x,y}T}{40qq_v}]\cap \Z$. This follows by showing that for some $m,l\in J_{v,T}$  such that $m+l\in J_{v,T}$, we have
\begin{equation}\label{eq:smalldev}
|a_{q(m+l)q_v}-a_{qmq_v}-a_{qlq_v}|>3r_{p,q}.
\end{equation}
Notice that, by definition of $a_{2,w}$, we have for some $\theta'\in[x,x+qmq_v\alpha]$ (by {the} cocycle identity)
\begin{equation}\label{a2sm}
|a_{2,q(m+l)q_v}-a_{2,qmq_v}-a_{2,qlq_v}|=
|pf''^{(qlq_v)}(\theta')qm\|q_v\alpha\|\left((x-x')-(y-y')\right)|
\end{equation}
Notice that $m\in J_{v,T}$, so, by \eqref{eq:smt}, it follows that $qm\|q_v\alpha\|\leq \frac{4c_{p,q}^2}{2}$ and therefore since $\theta'\in[x,x+qmq_v\alpha]$ and \eqref{eq:forback} holds, we have $B_{u,\theta'}\geq \frac{c_{p,q}}{2}$. Therefore, by Lemma \ref{lem:control} and \eqref{eq:smalldif}, {we obtain}
\begin{equation}\label{a2sm2}
p|f''^{(qlq_v)}(\theta')|qm\|q_v\alpha\||\left((x-x')-(y-y')\right)|\leq \frac{80pq^2mlC(f)c_{p,q}^{-2}q_v^2}{q_{v+1}^2\log (\log q_v)}\leq \epsilon,
\end{equation}
{where the last inequality holds} since $m,l\in J_{v,T}$ so in particular (by \eqref{eq:smt}) $m,l\leq \frac{q_{v+1}}{40qq_v}$. Hence \eqref{eq:smalldev} follows by showing
\begin{equation}\label{eq:smalldev2}
|a_{1,q(m+l)q_v}-a_{1,qmq_v}-a_{1,qlq_v}|>4r_{p,q}.
\end{equation}
We argue by contradiction assuming that for every $m,l\in J_{v,T}$ such that $m+l\in J_{v,T}$, \eqref{eq:smalldev2} does not hold.

We will often use the following condition: If $\theta\in [x,x+qmq_v\alpha]$, $\theta'\in[y,y+\zeta pmq_v\alpha]$ and $m\in J_{v,T}$, {then}
\begin{equation}\label{eq:bvt}
2B(v,x)\geq B_{v,\theta}\geq \frac{B_{v,x}}{2}\text{ and } 2B(v,y)\geq B_{v,\theta'}\geq \frac{B_{v,y}}{2}.
\end{equation}
We will first show that
\begin{equation}\label{eq:cpq}
C(f,p,q)c_{p,q}\leq C(f,p,q)B_{v,x}<B_{v,y}<C(f,p,q)^{-1}B_{v,x}.
\end{equation}
for some constant $C(f,p,q)>0$, with $C(f,p,q)^{9}\geq r_{p,q}$. Notice that by~\eqref{eq:axby}, the right inequality is immediate ($B_{v,y}\leq 1$ and $B_{v,x}\geq c_{p,q}$). If $B_{v,y}\geq c(f)$
(see Lemma \ref{lem:control}) then~\eqref{eq:cpq} follows (since $B(v,x)<1$). Hence, we will assume that $B(v,y)\leq c(f)$
so that we can use Lemma \ref{lem:control} for $y$ and $v$. Notice  that we can use Lemma~\ref{lem:control} for $x$ since $B_{v,x}\leq c_{p,q}\leq c(f)$ (see \eqref{Apq})
Notice {also that by the} cocycle identity, for some $\theta_{m,l}\in [x,x+qmq_v\alpha]$,
$$
pf'^{(q(m+l)q_v)}(x)-pf'^{(qmq_v)}(x)-pf'^{(qlq_v)}(x)=
pf''^{qlq_v}(\theta_{m,l})qm\|q_v\alpha\|.
$$
 and for some $\theta'_{m,l}\in[y,y+\zeta qmq_v\alpha]$, we get (using that $\zeta=p/q$)
$$
qf'^{(q(m+l)q_v)}(x)-qf'^{(qmq_v)}(x)-qf'^{(qlq_v)}(x)=
qf''^{qlq_v}(\theta'_{m,l})pm\|q_v\alpha\|.
$$
Hence, by the definition of $a_{1,w}$ (recall that, {for sake of contradiction}, we assume that~\eqref{eq:smalldev2} does not hold), {it follows that}
$$
4r_{p,q}>\|y-y'\|\left|pf''^{(qlq_v)}(\theta_{m,l})qm\|q_v\alpha\|
-qf''^{(plq_v)}(\theta'_{m,l})pm\|q_v\alpha\|\right|.
$$
This, for $m=l=[\frac{C_{x,y}T}{80qq_v}]$ (then $m,l,m+l\in J_{v,T}$), imply by \eqref{eq:ry}, \eqref{eq:bvt} (for $\theta_{m,l}$ and $\theta'_{m,l}$), \eqref{eq:control2} and \eqref{eq:smt} that
\begin{multline*}
\frac{16r_{p,q}}{pq}q_{v+1}^2\geq m|f''^{(qlq_v)}(\theta_{m,l})-f''^{(plq_v)}(\theta'_{m,l})|\geq mlq_v^2(C(f)pB_{v,y}^{-2}-c(f)qB_{v,x}^{-2})\geq\\ q_{v+1}^2\frac{\zeta^2c^2_{p,q}}{10^6q}C^2_{x,y}
(C(f)pB_{v,y}^{-2}-c(f)qB_{v,x}^{-2}).
\end{multline*}
This, by the definition of $C_{x,y}$, finishes the proof of  \eqref{eq:cpq} since if it does not hold then $C_{x,y}=B_{v,y}$ and then the inequality above shows that
$$
C(f)p\leq 3c(f)qc_{p,q}^{-3}B_{v,y}^2+ \frac{10^{8}qr_{p,q}}{\zeta^2c_{p,q}^2},
$$
which is not true since $r_{p,q}>0$ is taken sufficiently small.
 Hence (see \eqref{eq:axby}), $C_{x,y}\geq c_{p,q}\min(1,C(f,p,q))$ and so the right endpoint of  $J_{v,T}$ is a fixed (depending on $p,q$ only) proportion of $q_{v+1}$ (see \eqref{eq:smt}). To emphasize that denote $C_{x,y}:=C_{p,q}$.

Notice that for some $\theta_{m,l}\in [x,x+qmq_v\alpha]$, we have
$$
pf'^{(q(m+l)q_v)}(x)-pf'^{(qmq_v)}(x)-pf'^{(qlq_v)}(x)=
pf''^{(qlq_v)}(x)qm\|q_v\alpha\|+
pf'''^{(qlq_v)}(\theta_{m,l})(qm\|q_v\alpha\|)^2.
$$
Moreover, by \eqref{eq:bvt} and \eqref{eq:control3}, we get
$$
pf'''^{(qlq_v)}(\theta_{m,l})(qm\|q_v\alpha\|)^2\leq \frac{C'(f)pq^3m^2lq_v^{3}C_{p,q}^{-3}}{q_{v+1}^2}
$$
Similarly, for some $\theta'_{m,l}\in [y,y+\zeta qmq_v\alpha]$, we have
$$
qf'^{(p(m+l)q_v)}(y)-qf'^{(pmq_v)}(x)-qf'^{(plq_v)}(x)=
qf''^{(plq_v)}(x)pm\|q_v\alpha\|+
qf'''^{(plq_v)}(\theta'_{m,l})(pm\|q_v\alpha\|)^2
$$
and by \eqref{eq:bvt} and \eqref{eq:control3}, we get
$$
qf'''^{(plq_v)}(\theta'_{m,l})(pm\|q_v\alpha\|)^2\leq \frac{C'(f)qp^3m^2lq_v^{3}C_{p,q}^{-3}}{q_{v+1}^2}.
$$
Therefore, by the definition of $a_{1,w}$ and by \eqref{eq:smalldev2}, for all $m,l,m+l\in J_{v,T}$, we have (since $p<q$ and so $p^3q<pq^3$)
\begin{equation}\label{eq:estsec}
|y-y'|m\|q_v\alpha\| pq|f''^{qlq_v}(x)-f''^{plq_v}(y)|\leq 10r_{p,q}+2\frac{C'(f)pq^3m^2lq_v^{3}C_{p,q}^{-3}}{q_{v+1}^3}.
\end{equation}
Denote the {RHS} of \eqref{eq:estsec} by $e_1(l,m)$.

Let $h,l\in J_{v,T}$ be such that $m+l+h\in J_{v,T}$. Notice that {for $b\in\{p,q\}$}, for some $\theta^b_{h,l}\in[x,x+qlq_v\alpha]$, we have
$$
f''^{(b(l+h)q_v)}(x)-f''^{(blq_v)}(x)-f''^{(bhq_v)}(x)=
f'''^{(bhq_v)}(x)bl\|q_v\alpha\|+f''''^{(bhq_v)}(\theta^b_{h,l})
(bl\|q_v\alpha\|)^2.
$$
By \eqref{eq:bvt} for $\theta^b_{h,l}$ and \eqref{eq:control4}, we have
$$
f''''^{(bhq_v)}(\theta_{h,l})
(bl\|q_v\alpha\|)^2\leq \frac{4C'(f)b^3l^2hq_v^4C_{p,q}^{-4}}{q_{v+1}^2}.
$$
Using this for $b=p$ and $b=q$, {in view of~\eqref{eq:estsec}, for $l+h,l,h$, we have}
\begin{equation}\label{eq:estthi}
|y-y'|ml\|q_v\alpha\|^2 |qf'''^{(qhq_v)}(x)-pf'''^{(phq_v)}(x)|\leq e_2(m,l,h),
\end{equation}
where
$$|e_2(m,l,h)|\leq 6\left(10r_{p,q}+2\frac{C'(f)pq^3m^2lq_v^{3}C_{p,q}^{-3}}{q_{v+1}^3}\right)
+\frac{20C'(f)pq(p^4+q^4)hml^2q_v^4C_{p,q}^{-4}}{q^4_{v+1}}.
$$
We will show that \eqref{eq:estsec} and \eqref{eq:estthi} can not both be true. Consider $l,m,h$ for which \\
$\max(p,q,1) \max(l,m,h)\leq C_{p,q}^6$. Then, by \eqref{eq:cpq}, the assumptions of Lemma \ref{cor:bux} are satisfied for $x,y,v$. So, by \eqref{cor:co2}, for some $j(x,v),j(u,v)\in \{A_-,A_+\}$, we have
$$
|f''^{qlq_v}(x)-f''^{plq_v}(y)|\geq
|qlq_v^2j(x,v)B_{v,x}^{-2}-plq_v^2j(y,v)B_{v,y}^{-2}|- 2(C'(f)+1)(|p|+|q|)lq_v^2
$$
and by \eqref{cor:co3}
$$
|qf'''^{qhq_v}(x)-pf'''^{phq_v}(x)|\geq$$$$
|2j(x,v)q^2hq_v^{3}B_{v,x}^{-3}\pm 2j(y,v)p^2hq_v^{3}
B_{v,y}^{-3}|-2(p^2+q^2+1)(C'(f)+1)qhq_v^3.
$$
Therefore in \eqref{eq:estsec}, using \eqref{eq:ry}, we have
\begin{equation}\label{eq:lk}
|qj(x,v)B_{v,x}^{-2}-pj(x,v)B_{v,y}^{-2}|-2(C(f)+1)(|p|+|q|)\leq e_1(m,l)\frac{q_{v+1}^2}{pqmlq_v^2}
\end{equation}
and
\begin{equation}\label{eq:kj}
|2q^2j(x,v)B_{v,x}^{-3}\pm p^2j(y,v)B_{v,y}^{-3}|-2(C'(f)+1)(p^2+q^2)\leq e_2(m,l,h)\frac{q_{v+1}^3}{pqmlhq_v^3}.
\end{equation}
Let  $m=[\frac{Jq_{v+1}}{q_v}], l=[\frac{Kq_{v+1}}{q_v}], h=[\frac{Wq_{v+1}}{q_v}]$. Then, by the definition of $e_1(m,l)$, we have
$$
|e_1(m,l)\frac{q_{v+1}^2}{pqmlq_v^2}|\leq \frac{r_{pq}}{pqJK}+ 2JC'(f)q^2C_{p,q}^{-3}
$$
and
$$
|e_2(m,l,h)\frac{q_{v+1}^3}{pqmlhq_v^3}|\leq 6\left(\frac{10r_{p,q}}{pqJKW}+\frac{2C'(f)q^2JC_{p,q}^{-3}}{W}\right)+
20C'(f)(p^4+q^4)KC_{p,q}^{-4}.
$$
If we now choose $J:=[\frac{C_{p,q}^6}{2C'(f)q^2}]$, $K:=\left[\frac{C_{p,q}^{4}}{20C'(f)(p^4+q^4)}\right]$ and $W:=[C_{p,q}^3]$ and {remember that $r_{p,q}<(pq)^{-1} C_{p,q}^{30}$ then}, by the above estimates and~\eqref{eq:lk},~\eqref{eq:kj}, we have
$$
|qj(x,v)B_{v,x}^{-2}-pj(y,v)B_{v,y}^{-2}|\leq 2(C(f)+1)(|p|+|q|)+10
$$
and
$$
|q^2j(x,v)B_{v,x}^{-3}\pm p^2j(y,v)B_{v,y}^{-3}|<2(C(f)+1)(p^2+q^2)+10.
$$
If in the {latter inequality ``$\pm$'' is in fact ``$+$''}, then
$$|q^2j(x,v)B_{v,x}^{-3}+ p^2j(y,v)B_{v,y}^{-3}|\geq q^2j(x,v)B_{v,x}^{-3}\geq $$$$ 10^{-3}q_2\min(A_-,A_+)c_{p,q}^{-3}>2(C(f)+1)(p^2+q^2)+10,
$$
by the choice of $c_{p,q}>0$ and this gives a contradiction. So we have
$$
|qj(x,v)B_{v,x}^{-2}-pj(y,v)B_{v,y}^{-2}|\leq 2(C(f)+1)(|p|+|q|)+10
$$
and
$$
|q^2j(x,v)B_{v,x}^{-3}- p^2j(y,v)B_{v,y}^{-3}|<2(C(f)+1)(p^2+q^2)+10.
$$
But, by Lemma \ref{lem:cpq} for $U=A_-$ and $V=A_+$, this means (since $B_{v,x}\leq c_{p,q}$) that $\frac{p}{q}\in
\{1,\frac{A_-}{A_+},\frac{A_+}{A_-}\}$  which is a contradiction with the assumptions on $p,q$. This contradiction shows that \eqref{eq:smalldev2} and hence also \eqref{eq:smalldev} holds, which in turn implies \eqref{eq:inp}. This finishes the proof.

\section{Disjointness of time changes of horocycle flows and Arnol'd flows \\ (proof of Theorem~\ref{main:th})}\label{s:Sec9}
In this section we will prove Theorem \ref{main:th}. Since $\tau\in C^1(M)$ is fixed, we denote $(\tilde{h}_t^\tau)$ by $(\tilde{h}_t)$. We will divide the proof {into} several steps. We will start with the following general lemma.  We recall that $d^f$ stands for the product metric.
\begin{lemma}\label{lem:add}For every $0<\epsilon<1/100$, if $d^f((y,s),(y',s'))<\epsilon^3$~
 and $t\in \R$
is such that $((R_\alpha)^f)_t(y,s)\in \{(x,u)\in\T^f\;:\; \epsilon^2<u<f(x)-\epsilon^2\}$ and for $w\in\{n(y,s,t),n(y,s,t)+1\}$, we have
$$
|f^{(w)}(y)-f^{(w)}(y')-\tilde{b}(t)|<4\epsilon^{8/3}
$$
for some function $\tilde{b}:\R\to \R$, then
$$
d^f\left((R_\a)^f_t(y,s),(R_\a)^f_{t-\tilde{b}(t)}(y',s')\right)<\epsilon^2.
$$
\end{lemma}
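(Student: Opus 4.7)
The strategy is to show that, with the time reparametrization $t\mapsto t-\tilde b(t)$, the special flow on $(y',s')$ lands on the same ``floor'' of the tower as the flow on $(y,s)$ at time $t$, after which the distance bound follows from a direct computation in the product metric. Concretely, set $n:=n(y,s,t)$, so that by the definition~\eqref{do1} of the special flow one has $((R_\alpha)^f)_t(y,s)=(R_\alpha^n y,\,s+t-f^{(n)}(y))$. The first step will be to prove that $n(y',s',\,t-\tilde b(t))=n$ as well; the second step will then be to estimate the product-metric distance using the isometry property of $R_\alpha$ and the Birkhoff-sum hypothesis.

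To establish the equality $n(y',s',t-\tilde b(t))=n$, one must check the two defining inequalities
$$
f^{(n)}(y')\;\leq\;s'+t-\tilde b(t)\;<\;f^{(n+1)}(y').
$$
For the lower bound, write
$$
s'+t-\tilde b(t)-f^{(n)}(y')=(s'-s)+\bigl(s+t-f^{(n)}(y)\bigr)+\bigl(f^{(n)}(y)-f^{(n)}(y')-\tilde b(t)\bigr),
$$
and bound the three terms by $\epsilon^3$ (from $d^f((y,s),(y',s'))<\epsilon^3$), by a quantity strictly larger than $\epsilon^2$ (from the assumption $((R_\alpha)^f)_t(y,s)\in\{\epsilon^2<u<f(x)-\epsilon^2\}$), and by $4\epsilon^{8/3}$ (from the hypothesis with $w=n$), respectively. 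For the upper bound, use the analogous decomposition
$$
f^{(n+1)}(y')-s'-t+\tilde b(t)=(s-s')+\bigl(f^{(n+1)}(y)-s-t\bigr)+\bigl(f^{(n+1)}(y')-f^{(n+1)}(y)+\tilde b(t)\bigr),
$$
where now the middle term is $f(R_\alpha^n y)-(s+t-f^{(n)}(y))>\epsilon^2$, and the remaining terms are again $\text{O}(\epsilon^3)$ and $\text{O}(\epsilon^{8/3})$, this time applying the hypothesis with $w=n+1$. Since $8/3>2$ and $\epsilon<1/100$, the $\epsilon^3$ and $\epsilon^{8/3}$ contributions are dominated by $\epsilon^2$, so both inequalities hold strictly and the equality of returns follows.

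Given this, write
$$
((R_\alpha)^f)_{t-\tilde b(t)}(y',s')=\bigl(R_\alpha^n y',\; s'+t-\tilde b(t)-f^{(n)}(y')\bigr).
$$
The first coordinate's contribution to $d^f$ is $d(R_\alpha^n y,R_\alpha^n y')=d(y,y')<\epsilon^3$ since $R_\alpha$ is an isometry. The second coordinate's contribution equals
$$
\bigl|(s-s')+\bigl(f^{(n)}(y')-f^{(n)}(y)+\tilde b(t)\bigr)\bigr|\;\leq\;\epsilon^3+4\epsilon^{8/3},
$$
again by the two hypotheses. An elementary check shows $\epsilon^3+4\epsilon^{8/3}<\epsilon^2$ for $\epsilon<1/100$, which yields the claim.

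The only subtle point (and there is no real obstacle) is the bookkeeping that justifies using the hypothesis at both $w=n$ and $w=n+1$: the former is needed both to verify the lower inequality $f^{(n)}(y')\leq s'+t-\tilde b(t)$ and to control the fiber distance, while the latter is exactly what is needed for the strict upper inequality $s'+t-\tilde b(t)<f^{(n+1)}(y')$, and this is why the hypothesis is phrased in terms of both values of $w$.
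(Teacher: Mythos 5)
Your proof is correct and follows essentially the same route as the paper: show that $n(y',s',t-\tilde b(t))=n(y,s,t)$ by checking the two defining inequalities for the special flow using the Birkhoff-sum hypothesis at $w=n$ and $w=n+1$, then bound the two coordinates of $d^f$ separately. The decompositions you write are algebraically equivalent to the paper's chained inequalities, and your explicit verification of the fiber distance is a harmless expansion of the paper's terser ``by our assumptions, finishes the proof.''
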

\begin{proof}We will show the proof in case $t>0$; the proof of case $t<0$ follows the same lines. By assumptions
$$
f^{(n(y,s,t)}(y')\leq f^{(n(y,s,t)}(y)-\tilde{b}(t)+4\epsilon^{8/3}\leq t-\epsilon^2+s'-\tilde{b}(t)+|s-s'|+4\epsilon^{8/3}\leq t-\tilde{b}(t)+s'
$$
and
$$
f^{(n(y,s,t)+1)}(y')\geq f^{(n(y,s,t)+1}(y)-\tilde{b}(t)-4\epsilon^{8/3}\geq t+\epsilon^2+s'-\tilde{b}(t)-|s-s'|-4\epsilon^{8/3}\geq t+s'-\tilde{b}(t).
$$
Therefore, $(R_\alpha)^f_{t-\tilde{b}(t)}(y',s')=(R_\alpha^{n(y,s,t)}y', t+s'-\tilde{b}(t)- f^{(n(y,s,t)}(y'))$ and, by definition, $(R_\alpha)^f_{t}(y,s)=(R_\alpha^{n(y,s,t)}y, t+s- f^{(n(y,s,t)}(y))$. This, by our assumptions, finishes the {proof.}
\end{proof}

\noindent \textbf{The set $\mathcal{D'}$.} Let us define the set $\mathcal{D'}$ by
$$
\mathcal{D'}:=\{\alpha\in[0,1]\setminus \Q\;:\; \text{($\cD$1) and ($\cD$3) (of Definition \ref{rotationassumptions}) are satisfied}\}.
$$
For the proof of Theorem~\ref{main:th}, we will show that the assumptions of  Theorem~\ref{disjoint.flows} are satisfied for $(T_t)=(\tilde{h}_t)$ and $(S_t)=((R_\alpha)_t^f)$. To simplify notation, we assume that $\int_\T fd\lambda=1$.

\smallskip
\noindent \textbf{The set $P$. } Let $\bar{A}=\max(|A_--A_+|^{\pm1})$ and define
$$P:=\left\{\frac{1}{32\cdot2018\bar{A}^2},\frac{1}{32\cdot2018\bar{A}^2}\right\}.
$$

\smallskip
\noindent \textbf{Construction of $(X_k)$ and $(A_k)$.}  For $k\in \N$, {set} $X_k:=M$ and $A_kx{:=}v_{1/k}x$ (here $(v_t)$ denotes the opposite horocycle flow). Obviously $A_k\to Id$ uniformly on $M$.

\smallskip
\noindent \textbf{Construction of $(E_k)$.} In the statement of Theorem \ref{disjoint.flows} the sets $(E_k(\epsilon))$ depend on the parameter $\epsilon>0$. However in our case we simply set $E_k(\epsilon)=M$ for every $k\in \N$ and $\epsilon>0$.

Fix $\epsilon>0$ and $N\in \N$. Let $\kappa=\kappa(\epsilon)=\epsilon^{20}$.
By ergodic theorem it follows that there exist $V_\epsilon>0$ and a set $\ov{W}_\epsilon\subset \T^f$, $\lambda^f(\ov{W}_\epsilon)>1-\epsilon^2$ such that for every $(y,s)\in \ov{W}_\epsilon$ and every $|V|>V_\epsilon$, $U\geq \kappa |V|$, we have 
\begin{multline} \label{eq:betnew}
\lambda\left(\left\{t\in [V,V+U]\;:\; ((R_\alpha)^f)_t(y,s)\in \{(x,s)\in \T^f\;:\; \epsilon^2<s<f(x)-\epsilon^2\}\right\}\right) \geq \\ (1-\epsilon^{5/3})U
\end{multline}
and for every $(y,s)\in W_\epsilon$ and $|t|\geq V_\epsilon$, we have
\be\label{eq:birert}
|n(y,s,t)-t|<\epsilon^3|t|.
\ee

We will now define a set $Z_\epsilon\subset \T$. First, let
$$
W_s:=\left\{y\in \T\;:\; y\notin \bigcup_{i=-q_s}^{q_s}R_\alpha^i\left[-\frac{1}{q_s\log^{7/8}q_s}, \frac{1}{q_s\log^{7/8}q_s}\right] \right\}.
$$
and {set} $Z(u){:=}\bigcap_{s\geq u, s\notin K_\alpha}W_s$. Since $\lambda(W_s)\geq 1-\frac{2}{\log^{7/8}q_s}$, by {($\mathcal{D}$1)}, 
 it follows that $\lim_{u\to +\infty}\lambda(Z(u))=1$. Let $u_\epsilon\in \N$ be such that $Z(u_\epsilon)> 1-\epsilon^2$ and define $Z_\epsilon:=Z(u_\epsilon)$. We then set
$$
Z:=\ov{W}_\epsilon\cap Z_\epsilon^f\subset \T^f.
$$
By the definition of $f$, we have $\lambda^f(Z_\epsilon^f)\geq 1-\epsilon^{5/3}$. Therefore, $\lambda^f(Z)\geq 1-\epsilon$.

Finally, let $\delta=\delta(\epsilon,N):=\min\left(\kappa^{20},V_\epsilon^{-4}, \frac{1}{q_{s'}\log q_{s'}}, \bar{\delta}(\epsilon^2),N_{\epsilon^3}^{-3}, n_0(\epsilon^2)^2\right)$, where $s':=\kappa^{-1}\max(N,u_\eps^2)$ and $\bar{\delta}(\epsilon^2),N_{\epsilon^3}^{-3}$ come from Proposition~\ref{thm:hor} (with $K=1$) and $n_0(\epsilon^2)$ comes from Lemma \ref{lem:bsums}.
We now take $x,x':=A_kx$ with $d_M(A_k,Id)\leq \delta$ and $(y,s),(y',s')\in Z$ with $d^f((y,s),(y',s'))<\delta$ and we will show that \eqref{forw} or \eqref{backw} holds for $x,x'$ and $(y,s),(y',s')$.

Let $v\in \N$ be unique such that
\be\label{eq:distv}
\frac{1}{q_{v+1}\log q_{v+1}}\leq \|y-y'\|\leq \frac{1}{q_v\log q_v},
\ee
and let $\ell\in \R$, $\ell\leq q_{v+1}$ be unique such that
\be\label{eq:distexact}
\|y-y'\|=\frac{1}{\ell\log\ell}.
\ee
By the definition of $Z$ (in particular by the definition of $W_v$), it follows that the assumption of Lemma~\ref{lem:forback} are satisfied for $s=v$ and $y,y'\in \T$. If~\eqref{eq:forwsing} holds, we will show~\eqref{forw} and if~\eqref{eq:backsing} holds we will show~\eqref{backw}. Since the proofs of both cases are analogous, we will assume that~\eqref{eq:forwsing} is true for $y,y'$.
 Let
 \be\label{eq:timeell}
 \tilde{T}:=\frac{1}{8}\min(k^{1/2}, \ell).
 \ee
By Proposition~\ref{thm:hor} (for $K=1$ and $\epsilon^2$) and \eqref{eq:timeell}, it follows that for every $t\in [N_\epsilon, \tilde{T}]$, we have that \eqref{distgt} holds for $x,x'$ (with $\epsilon^2$ instead $\epsilon$). Since $x'=v_{k^{-1}}x$, by the definition of $\chi(\cdot)=\chi_{x,x'}(\cdot)$, it follows that $\chi(t)=t-k^{-1}t^2$ (cf. \eqref{defchi}) and moreover, by \eqref{axt}, we get that $|A_x(t)|\leq \epsilon^2$ (since $s=0$ for $x$ and $x'$) for $t\in [N_{\epsilon},\tilde{T}]$. Therefore, by \eqref{distgt}, we have
\be\label{dist:klm}
d_M(\tilde{h}_tx,\tilde{h}_{t-k^{-1}t^2}x')<2\epsilon^2\;\text{ for }\; t\in [N_\epsilon,\tilde{T}].
\ee

Notice that by \eqref{eq:forwsing} and \eqref{eq:timeell} (recall that $\ell\leq q_{v+1}$) it follows that for every $n\in [0,\tilde{T}]$, by the mean value theorem, we have
\be \label{eq:fny'}
f^{(n)}(y)-f^{(n)}(y')=f'^{(n)}(\theta_n)(y-y'),
\ee
where $\theta_n\in[y,y']$. Moreover, since $\theta_n\in[y,y']$, it follows  by \eqref{eq:forwsing} that $\theta_n\notin \Sigma_v(1/4)$ (see \eqref{eq:sigma}) and hence we can use Lemma \ref{lem:bsums}. Therefore, for every $n\in [\epsilon^4q_v,\tilde{T}]$, by \eqref{eq:fny'}, \eqref{eq:distexact} and \eqref{eq:dercont}, we have
\be \label{eq:nvc1}
\left|f^{(n)}(y)-f^{(n)}(y')-(A_--A_+)\frac{n\log n}{\ell \log \ell}\right|={\rm O}(\epsilon^2).
\ee
Similarly, by \eqref{eq:fny'}, \eqref{eq:distexact} and \eqref{eq:smallder} for every $n\in [0,\epsilon^4q_v]$, we have
\be \label{eq:nvc2}
|f^{(n)}(y)-f^{(n)}(y')|={\rm O}(\epsilon^2).
\ee
Notice that by \eqref{eq:distv} and \eqref{eq:distexact}, for $n\in [0,\epsilon^4q_v]$, we have
$|(A_--A_+)\frac{n\log n}{\ell \log \ell}|={\rm O}(\epsilon^4)$. Therefore and by \eqref{eq:nvc1} and \eqref{eq:nvc2} for every $n\in [0,\tilde{T}]$, we have
\be \label{eq:nvc3}
\left|f^{(n)}(y)-f^{(n)}(y')-(A_--A_+)\frac{n\log n}{\ell \log \ell}\right|= {\rm O}(\epsilon^{2}).
\ee
Moreover, since $(y,s)\in Z^f$, by \eqref{eq:birert}, for $t\leq (1-\epsilon^2)\tilde{T}$, we have $$n(y,s,t)\leq \tilde{T}-1$$
and using \eqref{eq:birert} again, we have
$$
|(A_--A_+)\frac{n(y,s,t)\log n(y,s,t)}{\ell \log \ell}-(A_--A_+)\frac{t\log t}{\ell \log \ell}|\leq\\
\frac{|A_--A_+|}{\ell\log \ell}2\epsilon^{3}t\log t ={\rm O}( \epsilon^3),
$$
the last inequality by \eqref{eq:timeell}.
The two above inequalities and
\eqref{eq:nvc3} imply that for every $t\in [0,(1-\epsilon^2)\tilde{T}]$, for $w_t\in\{n(y,s,t),n(y,s,t)+1\}$, {we have}
\begin{equation}\label{eq:newnev}
|f^{(w_t)}(y)-f^{(w_t}(y')-\tilde{b}(t)|\leq 2\epsilon^{8/3},
\end{equation}
where $\tilde{b}(t)=(A_--A_+)\frac{t\log t}{\ell \log \ell}$.\footnote{In what follows we will often use the following property of $\tilde{b}(\cdot)$: for an interval $[R,S]\subset [0,\tilde{T}]$ such that $S\leq R^{1+\epsilon^5}$ and every $t\in [R,S]$, we have
$$|\tilde{b}(t)-(A_--A_+)\frac{t\log R}{\ell \log \ell}|\leq \epsilon^5 |A_--A_+|\frac{t\log R}{\ell \log \ell}|\leq \epsilon^3,
$$
since $t,R\leq \tilde{T}$. This means that for such $[R,S]$, the function $\tilde{b}(\cdot)$ is almost linear with coefficient $(A_--A_+)\frac{\log R}{\ell \log \ell}$.}

{But} Lemma~\ref{lem:add} (cf.\ \eqref{eq:newnev}) implies that for every $t\in [0,\tilde{T}]$
for which $(R_\a)^f_t(y,s)\in \{(y,s)\in\T^f:\; \epsilon^2<s<f(y)-\epsilon^2\}$, we have (since $d^f((y,s),(y',s'))<\delta<\epsilon^3$)
\be \label{eq:conttime}
d^f((R_\a)^f_t(y,s),(R_\a)^f_{t-\tilde{b}(t)}(y',s'))<\epsilon^2.
\ee

Let $\tilde{c}(t):=-k^{-1}t^2+\tilde{b}(t)$.
We {claim} the following:
\be\label{eq:stdrift}
|\tilde{c}(T_0)|> \frac{1}{32\cdot2018\bar{A}^2},\;\text{ for some } \;T_0\in \left[\frac{\tilde{T}}{8\bar{A}},\frac{\tilde{T}}{2\bar{A}}\right].
\ee
Indeed, denote $K_0:=\frac{\tilde{T}}{8\bar{A}}$.
{Then,} notice that by~\eqref{eq:timeell}, we have
$$
|\tilde{c}(4K_0)-16\tilde{c}(K_0)
+12(A_--A_+)\frac{K_0\log K_0}{\ell \log \ell}|=|A_--A_+|\frac{4K_0\log4 K_0-4K_0\log K_0}{\ell \log \ell}<\epsilon^2,
$$
{and}
$$
|\tilde{c}(4K_0)-4\tilde{c}(K_0)-12k^{-1}K_0^{2}|<\epsilon^2.
$$
{Furthermore,} by the definition of $K_0$ and \eqref{eq:timeell}, it follows that
$$
\max(12k^{-1}K_0^{2}, 12|A_--A_+|\frac{K_0\log K_0}{\ell \log \ell})\geq \frac{1}{1000\bar{A}^2}.
$$
The three above equations imply that $\max(|\tilde{c}(4K_0)|,|\tilde{c}(K_0)|)>\frac{1}{32\cdot2018\bar{A^2}}$. This finishes the proof {of the claim}~\eqref{eq:stdrift}.

Let $u:=\frac{1}{32\cdot2018\bar{A}^2}$. Notice that by \eqref{eq:timeell} and the definition of $u$, we have
$$\tilde{c}(u^2\tilde{T})\leq k^{-1}\left(u^2\tilde{T}\right)^2+
|A_--A_+|\frac{u^2\tilde{T}\log u^2 \tilde{T}}{\ell \log \ell}\leq u.
$$
Therefore and since $\tilde{c}(\cdot)$ is continuous,  by \eqref{eq:stdrift}, we have $|\tilde c(T_1)|=u$ for some $T_1\in [u^2\tilde{T}, \frac{\tilde{T}}{2\bar{A}}]$.
Moreover, for every $t\in [T_1,(1+\kappa)T_1]$, we have (since $T_1\leq \tilde{T}$ and by \eqref{eq:timeell})
\begin{multline}\label{eq:tightdrift}
|\tilde{c}(t)-\tilde{c}(T_1)|\leq k^{-1}(t^2-T_1^2)+ \frac{|A_--A_+|}{\ell \log \ell}(t\log t- T_1\log T_1)\leq\\
 k^{-1}(2\kappa \tilde{T}+\kappa^2\tilde{T}^2)+ \frac{2\kappa|A_--A_+|\tilde{T}\log\tilde{T}}{\ell \log \ell} \leq \epsilon^3,
\end{multline}
{where the last inequality follows} by the definition of $\kappa$.

We set $M:=T_1$ and $L:=\kappa T_1$. Let
\be\label{eq:defuu}
U:=
\left\{t\in[M,M+L]\;:\;  (R_\a)^f_t(y,s)\in \{(y,s)\in\T^f:\; \epsilon^2<s<f(y)-\epsilon^2\}\right\}.
\ee

By \eqref{eq:betnew}, it follows that $|U|\geq (1-\epsilon^{3/2})|L|$. Moreover, by \eqref{eq:defuu}, we have $U=\bigcup_{i=1}^w(c_i,d_i)$, where $|d_i-c_i|\geq \inf_\T f-2\epsilon^2$ for every $i=2,\ldots w-1$ and hence $w\leq \frac{|L|}{\bar{d}}$, where $\bar{d}=\frac{\inf_\T f}{2}$. Let
$a(t):=t-\tilde{b}(c_i)$ on $(c_i,d_i)$, $i=1,\ldots w$ and we set $a(t)=0$ for $t\in [M,M+L]\setminus U$. Notice that by~\eqref{eq:timeell} (since $c_1\leq \tilde{T}$), {we have}
\be\label{eq:evgo}
|\tilde{b}(c_1)|=|A_--A_+|\frac{c_1\log c_1}{\ell\log \ell}\leq |A_--A_+|\leq \epsilon^2 L,
\ee
since $\epsilon^2L=\epsilon^2\kappa T_1\geq \kappa^3\tilde{T}\geq \kappa^4 \delta^{-1/2}>\kappa^{-1}>|A_--A_+|$.
Moreover,  by \eqref{eq:forwsing}, for $i=1,\ldots, w$, $|c_{i+1}-c_i|\leq |f^{(n(y,s,c_i))}(y)|+2\epsilon^2={\rm O}(\log q_v)<\epsilon^3 q_v<\epsilon^3 \ell)$ and hence, we have (since $c_{i+1}\leq \tilde{T}\leq \ell$)
\be\label{eq:evgo2}
|\tilde{b}(c_{i+1})-\tilde{b}(c_i)|\leq 2|A_--A_+|\frac{(c_{i+1}-c_i)\log c_{i+1}}{\ell \log \ell}\leq \epsilon^3
\ee
since $|c_{i+1}-c_i|\leq \epsilon^3 \ell$ and $c_{i+1}\leq \tilde{T}\leq \ell$ (see \eqref{eq:timeell})
for $i=1,\ldots ,w$.
Therefore, by \eqref{eq:evgo},~\eqref{eq:evgo2} and the definition of $U$, $(a,U,\bar{d})$ is $\epsilon^{3/2}$-good (see (PAL) in Definition \ref{agood}). Moreover, for $t\in (c_i,d_i)$, $i=1,\ldots w$, by \eqref{eq:evgo2}, we have
\be\label{eq:attil}
|a(t)-(t-\tilde{b}(t))|=|-\tilde{b}(c_i)+\tilde{b}(t)|\leq |\tilde{b}(c_{i+1})-\tilde{b}(c_{i})|\leq \epsilon^3.
\ee

By \eqref{eq:conttime}, \eqref{eq:defuu} and \eqref{eq:attil}, since $[M,M+L]\subset [0,\tilde{T}]$, we have
\be \label{eq:fin1}
d^f((R_\a)^f_t((y,s)),(R_\a)^f_{a(t)}((y',s'))<\epsilon\;\text{ for every } t\in U\cap [M,M+L].
\ee

Furthermore, by \eqref{eq:attil}, we have $t-k^{-1}t^2:=(t-\tilde{b}(t))+\tilde{c}(t)=a(t)+\tilde{c}(t)+{\rm O}(\epsilon^3)$. Since $\tilde{c}(M)=\pm u=\pm\frac{1}{32\cdot2018\bar{A}^2}\in P$, by \eqref{dist:klm} and \eqref{eq:tightdrift}, we have
\be \label{eq:fin2}
d_{G/\Gamma}(\tilde{h}_tx,\tilde{h}_{a(t)+p}x')<\epsilon\;\text{ for every } t\in [M,M+L].
\ee
By \eqref{eq:fin1} and \eqref{eq:fin2}, it follows that \eqref{forw} holds and this, by Theorem \ref{disjoint.flows}, finishes the proof of Theorem \ref{main:th}.

\section{Time changes of horocycle flows and Sarnak's conjecture \\ (answer to M. Ratner's question)}\label{s:Sarnak}
We now turn our attention to the problem of M\"obius disjointness of time automorphisms of flows considered in the present paper.
Recall that the M\"obius function 
$\mob:\N\to\{-1,0,1\}$ is defined as $\mob(1)=1$, $\mob(p_1\ldots p_k)=(-1)^k$ for pairwise different prime numbers $p_1,\ldots, p_k$ and $\mob(n)=0$ for the remaining $n\in\N$.
It is not {hard} to see that $\mob(mn)=\mob(m)\mob(n)$ whenever $m$ and $n$ are coprime, {i.e.\ $\mob$ is an example of arithmetic {\em multiplicative} function}. {Hence}, $\mob$ is a member of
$$
{\cal M}_1:=\{\bfu:\N\to\C:\:\bfu\text{ is multiplicative and }|\bfu|\leq1\}.$$
A basic method to prove disjointness of a bounded numeric sequence $(a_n)\subset\C$ with all members of ${\cal M}_1$  (in fact, with all bounded multiplicative functions) is the following criterion:

\begin{proposition}[\cite{Ka},\cite{Bo-Sa-Zi}] If for all sufficiently large prime numbers $p\neq q$ we have
\be\label{ortog0}
\lim_{N\to\infty}\frac1N\sum_{n\leq N}a_{pn}\overline{a}_{qn}=0,\ee
then
\be\label{orthog}\lim_{N\to\infty}\frac1N\sum_{n\leq N}a_n\bfu(n)=0\ee
for each $\bfu\in{\cal M}_1$.\end{proposition}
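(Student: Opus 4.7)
The plan is to derive \eqref{orthog} from \eqref{ortog0} by combining three classical ingredients: the multiplicativity of $\bfu$, the Turán-Kubilius inequality for the restricted divisor function, and a Cauchy-Schwarz argument in a dual variable. Write $S_N:=\sum_{n\leq N}a_n\bfu(n)$, and after rescaling assume $|a_n|\leq 1$; the goal is $S_N=o(N)$. Fix a large parameter $K$ and set $P_K:=\{p\text{ prime}:p_0\leq p\leq K\}$, where $p_0$ is the threshold beyond which the hypothesis \eqref{ortog0} applies. Multiplicativity is used to \emph{linearise along primes}: for $p\in P_K$ with $p\|n$, writing $n=pm$ with $p\nmid m$ gives $\bfu(n)=\bfu(p)\bfu(m)$, hence
$$\sum_{\substack{n\leq N\\p\|n}} a_n\bfu(n)=\bfu(p)\,T_p,\qquad T_p:=\sum_{\substack{m\leq N/p\\p\nmid m}} a_{pm}\bfu(m).$$
Summing over $p\in P_K$ and reading the left-hand side as $\sum_{n\leq N}a_n\bfu(n)\,\omega^*_{P_K}(n)$ with $\omega^*_{P_K}(n):=\#\{p\in P_K:p\|n\}$, the Turán-Kubilius inequality (plus the elementary bound $\#\{n\leq N:p^2|n\}\leq N/p^2$) compares $\omega^*_{P_K}(n)$ in $L^2$-mean with its average value $\sum_{p\in P_K}1/p\sim\log\log K$ (Mertens). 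Combined with Cauchy-Schwarz this yields
$$S_N\cdot\sum_{p\in P_K}\frac{1}{p}=\sum_{p\in P_K}\bfu(p)\,T_p+O\Big(N\,\Big(\sum_{p\in P_K}1/p\Big)^{1/2}\Big),$$
so dividing by $\sum 1/p\sim\log\log K$ gives $|S_N|^2\ll(\log\log K)^{-2}\bigl|\sum_{p\in P_K}\bfu(p)T_p\bigr|^2+O(N^2/\log\log K)$.

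The second step brings in the cross-correlations from \eqref{ortog0} by dualising in the outer variable. Exchanging summation, $\sum_{p\in P_K}\bfu(p)T_p=\sum_m\bfu(m)\tilde a_m$ with $\tilde a_m:=\sum_{p\in P_K,\,pm\leq N,\,p\nmid m}\bfu(p)a_{pm}$; Cauchy-Schwarz in $m$ gives
$$\Big|\sum_{p\in P_K}\bfu(p)T_p\Big|^2\leq N\sum_m|\tilde a_m|^2=N\sum_{p,q\in P_K}\bfu(p)\overline{\bfu(q)}\,C_{p,q}(N),$$
where $C_{p,q}(N):=\sum_{m\leq N/\max(p,q),\,p,q\nmid m}a_{pm}\overline{a_{qm}}$. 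The diagonal $p=q$ contributes at most $\sum_{p\in P_K}N/p=O(N\log\log K)$. For each off-diagonal pair $p\neq q$ in $P_K$, both primes are $\geq p_0$, and so the hypothesis \eqref{ortog0} forces $C_{p,q}(N)=o_N(N)$ (the coprimality restrictions differ from the unconstrained sum by $O(N/pq)$, which is harmless by inclusion-exclusion). Since $P_K$ is finite, the off-diagonal contribution is $o_N(N^2)$ for each fixed $K$, hence $\sum_m|\tilde a_m|^2\leq N\log\log K + o_N(N)$.

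Collecting the estimates yields $\limsup_{N\to\infty}|S_N|^2/N^2\ll 1/\log\log K$ for every fixed $K$; letting $K\to\infty$ gives $S_N=o(N)$, which is \eqref{orthog}. The main technical point is the quantitative interplay in the first step: the Turán-Kubilius error must be dominated by the Mertens gain, and this is precisely why one works with the infinite-harmonic-sum family $P_K$ (with $\sum_{p\in P_K}1/p\to\infty$) rather than a bounded set of primes. A secondary subtlety is the removal of the coprimality restrictions $p,q\nmid m$ in $C_{p,q}(N)$, needed to reduce exactly to the form of the hypothesised correlations \eqref{ortog0}; this is immediate from inclusion-exclusion since the discarded terms contribute only $O(N/\max(p,q)^2)$, summable over $P_K\times P_K$.
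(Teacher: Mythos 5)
The paper states this proposition without proof, simply citing K\'atai \cite{Ka} and Bourgain--Sarnak--Ziegler \cite{Bo-Sa-Zi}, so there is no in-paper argument to compare against; your Tur\'an--Kubilius--plus--Cauchy--Schwarz scheme is indeed in the spirit of K\'atai's method. However, there is a genuine gap in the way the coprimality restrictions $p\nmid m$, $q\nmid m$ are handled, and as written the estimate does not close.

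The problematic step is the removal of the constraints in $C_{p,q}(N)=\sum_{m\leq N/\max(p,q),\,p,q\nmid m}a_{pm}\overline{a_{qm}}$. You assert the discarded terms contribute $O(N/\max(p,q)^2)$, ``summable over $P_K\times P_K$,'' and hence that $C_{p,q}(N)=o_N(N)$. Neither is correct. The discarded $m$ are those with $p\mid m$ or $q\mid m$ in the range $m\leq M:=N/\max(p,q)$, and their number is $\sim M/p+M/q$; for $p<q$ this is $\sim N/(pq)+N/q^2$, with dominant term $N/(pq)$, not $N/q^2$. For a \emph{fixed} pair $p\neq q$ this is a positive proportion of $N$, so $C_{p,q}(N)$ is \emph{not} $o_N(N)$; only the unrestricted correlation $\sum_{m\leq M}a_{pm}\overline{a_{qm}}$ is. Summing the error over pairs gives $\sum_{p\neq q}N/(pq)\asymp N\,E_K^2$ with $E_K=\sum_{p\in P_K}1/p$, so your bound should read $\sum_m|\tilde a_m|^2 \leq N E_K + o_N(N) + O(NE_K^2)$. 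Running this through the rest of the argument, the $O(NE_K^2)$ term, multiplied by the Cauchy--Schwarz factor $N$ and divided by $E_K^2$, contributes $\Theta(N^2)$ to $|S_N|^2$ with no small factor, so the final conclusion $\limsup|S_N|^2/N^2\ll 1/\log\log K$ is not established --- the bound degenerates to the trivial $\limsup|S_N|^2/N^2\ll 1$.

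The standard way to avoid this is to discard the coprimality conditions on $m$ \emph{before} the Cauchy--Schwarz step, not after. Replace $T_p$ by $T_p':=\sum_{m\leq N/p}a_{pm}\bfu(m)$; since the difference involves only $m\leq N/p$ with $p\mid m$, and $\bfu(pm)\neq\bfu(p)\bfu(m)$ can occur only when $p^2\mid n$, the total correction is $O\big(\sum_{p\in P_K}N/p^2\big)=O(N/p_0)$, which is negligible (a one-parameter, not a two-parameter, family of errors, costing $N/p^2$ per prime rather than $N/(pq)$ per pair). With $T_p'$ the off-diagonal correlations are \emph{exactly} the unrestricted sums to which \eqref{ortog0} applies, the diagonal remains $O(NE_K)$, and the argument closes, giving $\limsup_N |S_N|^2/N^2 \ll 1/E_K\to 0$ as $K\to\infty$. (One can also salvage your version by using the sharper Cauchy--Schwarz factor $N/p_0$ in place of $N$ \emph{and} additionally letting the threshold $p_0\to\infty$, which the hypothesis ``for all sufficiently large primes'' permits; but this is more delicate, and the cleaner route is the one just described.)
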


{Given a topological dynamical system $(X,T)$, where $T$ is a homeomorphism of a compact metric space $X$, we} want to study {\em disjointness} of $(X,T)$ with $\bfu\in {\cal M}_1$, {meaning that} $\lim_{N\to\infty}\frac1N\sum_{n\leq N}f(T^nx)\bfu(n)=0$ for each $f\in C(X)$ and $x\in X$. As constant functions are continuous, to have such a disjointness, we need to assume that, additionally to ${\bfu}\in{\cal M}_1$,  the mean of $\bfu$ exists and equals zero:
\be\label{mobius21}
M(\bfu):=\lim_{N\to \infty}\frac1N\sum_{n\leq N}\bfu(n)=0\ee
($\mob$ has mean zero, by the Prime Number Theorem).

{In what follows, we consider the problem of disjointness of $(X,T)$ only with the members in ${\cal M}_1$ satisfying~\eqref{mobius21}.}

Now, assume {additionally that} $(X,T)$ is a uniquely ergodic topological dynamical system, with {the} unique $T$-invariant measure $\mu$ which makes $(X,\mu,T)$ totally ergodic. If we fix $x\in X$ then each accumulation point $\rho$ of the empiric measures $\frac1N\sum_{n\leq N}\delta_{(T^p\times T^q)^n(x,x)}$, $N\geq1$,
yields a member of $J(T^p,T^q)$ (indeed, $T^p$ and $T^q$ are also uniquely ergodic, {and $\mu$ is their only invariant measure}). If we take a zero mean $f\in C(X)$ and $T^p\perp T^q$ (for $p\neq q$ sufficiently large), then immediately, {$T^p\times T^q$ is still uniquely ergodic (with $\mu\otimes\mu$ being the unique invariant measure)} and we find that the sequence $(a_n)$, $a_n=f(T^nx)$, satisfies~\eqref{ortog0} (indeed, the limit equals {$\int_{X\times X}f\otimes\overline{f}\,d(\mu\otimes\mu)=0$}). Hence, cf.\ \cite{Bo-Sa-Zi},  if the different, {sufficiently large, prime powers of $T$ are disjoint then we find that the system $(X,T)$ is orthogonal to any $\bfu\in{\cal M}_1$ satisfying~\eqref{mobius21}}.

{Notice that the flows $(T_t)$ we are considering are mixing. Hence, all non-zero time automorphisms $T_{t_0}$ are totally ergodic. When considering any uniquely ergodic model of $T_{t_0}$ (in fact, those considered in Theorem~\ref{main:prop} are themselves uniquely ergodic),  they satisfy the disjointness assumption:  $T_{pt_0}\perp T_{qt_0}$ for $p\neq q$ prime numbers sufficiently large, whence the assumption~\eqref{ortog0} is satisfied for each $x$ and zero mean $f$ (in the model) and we obtain disjointness with $\bfu$. {As a matter of fact, the result holds also for $t_0=0$ as the mean of $\bfu$ equals~0}.}

On the other hand, to distinguish between M\"obius disjointness of zero entropy systems and positive entropy systems\footnote{As proved by Downarowicz and Serafin \cite{Do-Se}, there are  positive entropy systems which are M\"obius disjoint. {On the other hand no positive entropy system satisfies the strong $\mob$-MOMO property \cite{Ab-Ku-Le-Ru}.}}, it is proved in \cite{Ab-Ku-Le-Ru} that the following conditions are equivalent (for the definition of strong $\bfu$-MOMO, {see  Appendix}~\ref{s:Sec10}):

\begin{proposition}\label{p:equiv}The following conditions are equivalent:\\
(i) Sarnak's conjecture holds.\\
(ii)  Sarnak's conjecture holds  uniformly (in $x\in X$).\\
(iii) Strong $\mob$-MOMO property holds for each zero (topological)  entropy system.\end{proposition}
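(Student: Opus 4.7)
The plan is to establish the cycle of implications (i) $\Rightarrow$ (iii) $\Rightarrow$ (ii) $\Rightarrow$ (i). The implication (ii) $\Rightarrow$ (i) is tautological, since (i) is the pointwise version of the uniform statement (ii). For (iii) $\Rightarrow$ (ii), I would specialize the strong $\mob$-MOMO property to block decompositions of the form $b_k=kH$ with the points $x_k$ ranging over a dense countable subset of $X$, which, combined with the continuity of $f\in C(X)$, yields Sarnak's conjecture uniformly in the base point.

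For the non-trivial direction (i) $\Rightarrow$ (iii), I would argue by contraposition: assume strong $\mob$-MOMO fails for some zero-entropy system $(X,T)$. Concretely, there exist $f\in C(X)$, $\varepsilon>0$, an increasing sequence of integers $0 = b_0 < b_1 < \cdots$ with $H_k := b_{k+1}-b_k \to \infty$, and points $x_k \in X$ such that
\[
\limsup_{K\to\infty}\frac{1}{b_K}\sum_{k<K}\left|\sum_{b_k\le n<b_{k+1}} f(T^{n-b_k}x_k)\mob(n)\right|\ge\varepsilon.
\]
A standard pigeonhole on the argument of the inner block sums allows me, after passing to a subsequence of blocks (filling the discarded positions by a fixed orbit of $T$), to assume that all block contributions have real part at least $\varepsilon/4$. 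The goal is then to construct a new zero-entropy system $(Y,\sigma)$ and a point $y\in Y$ witnessing a failure of Sarnak's conjecture, contradicting (i).

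The construction I envisage is the following. Define $\bar y\in X^{\Z}$ by $\bar y(n)= T^{n-b_k}x_k$ for $n \in [b_k,b_{k+1})$ (and a fixed filler orbit on the negative coordinates), and let $Y\subset X^{\Z}$ be the orbit closure of $\bar y$ under the left shift $\sigma$. With the continuous evaluation functional $F\colon Y \to \C$, $F(z)=f(z(0))$, the Birkhoff averages of $\mathrm{Re}\,F$ against $\mob$ along $\bar y$ are, by construction, exactly the averaged block sums, whose real parts do not tend to zero. Hence Sarnak's conjecture applied to $\mathrm{Re}\,F\in C(Y)$ and the point $\bar y$ fails for $(Y,\sigma)$, yielding the desired contradiction with (i) provided $(Y,\sigma)$ has zero topological entropy.

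The main obstacle I anticipate is exactly this last verification. A priori, the freedom to concatenate orbit segments from arbitrarily distant base points $x_k$ could enlarge the language of $Y$ beyond that of $(X,T)$ and introduce positive entropy. I would overcome this by observing that any word of length $\ell$ appearing in $Y$ is either (a) a segment of a single $T$-orbit, contributing only $\exp(\ell\cdot h_{\mathrm{top}}(X,T))=1$ possibilities, or (b) a concatenation spanning at most one block boundary, which contributes at most a subexponential (in $\ell$) correction parametrised by the position of the boundary and by accumulation points of the sequence $(x_k)$ in $X$. Summing both contributions and letting $\ell\to\infty$ yields $h_{\mathrm{top}}(Y,\sigma)=0$, completing the argument via the variational principle.
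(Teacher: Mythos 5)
The paper does not prove this proposition; it cites \cite{Ab-Ku-Le-Ru}. So there is no internal proof to compare against, and I assess your argument on its own terms. The cycle $(i)\Rightarrow(iii)\Rightarrow(ii)\Rightarrow(i)$ is a sensible scheme and $(ii)\Rightarrow(i)$ is immediate, but both remaining arrows have genuine gaps.

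For $(iii)\Rightarrow(ii)$, the proposed block sequence $b_k=kH$ has \emph{constant} gap $b_{k+1}-b_k=H$, so it does not satisfy the hypothesis $b_{k+1}-b_k\to\infty$ in Definition~\ref{def1}, and strong MOMO says nothing about it. Even with admissible blocks, letting $x_k$ run over a dense set and invoking continuity of $f$ would only yield pointwise conclusions at points of that dense set; passing to all $x$ with a \emph{uniform} rate would require equicontinuity in $x$ of $N\mapsto\frac1N\sum_{n\le N}f(T^nx)\mob(n)$, which you do not have. The mechanism that actually produces uniformity is different: for a fixed admissible $(b_k)$ with $b_{K+1}/b_K\to1$, choose $x_k^*\in X$ \emph{maximising} $\bigl|\sum_{b_k\le n<b_{k+1}}f(T^nx)\mob(n)\bigr|$ (attained by compactness). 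Strong MOMO applied to the single sequence $(x_k^*)$ then bounds $\sup_{x\in X}\frac1{b_K}\sum_{k<K}\bigl|\sum_{b_k\le n<b_{k+1}}f(T^nx)\mob(n)\bigr|$, and this $\to 0$ with no $x$-dependence; uniform Sarnak follows. Density plus continuity is not what makes this work.

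For $(i)\Rightarrow(iii)$, the orbit-welding construction is the right idea, but the zero-entropy verification — which you correctly flag as the crux — is not resolved by your dichotomy (a)/(b). The assertion that a length-$\ell$ window of $\bar y$ crosses at most one block boundary is true only for windows starting at positions $n\ge b_{K_\ell}$, where $K_\ell$ is the first index with $b_{k+1}-b_k>\ell$; earlier windows can cross many boundaries, and there are on the order of $b_{K_\ell}$ of them. If the gaps $L_k=b_{k+1}-b_k$ tend to infinity slowly (say like $\log k$), then $b_{K_\ell}$ is of size $e^{\Theta(\ell)}$, so the multi-boundary contribution to the $(\ell,\delta)$-separated count can be exponential and your estimate fails to give $h_{\mathrm{top}}(Y,\sigma)=0$. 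You do not get to choose how fast $L_k\to\infty$ — the $(b_k)$ are handed to you by the failure of strong MOMO — so you must either justify passing to a coarsened block structure with fast-growing gaps while preserving the MOMO failure, or switch to a finite-alphabet symbolic model of $(X,T)$ where the complexity count can be closed, as in \cite{Ab-Ku-Le-Ru}. There is also a smaller gap earlier: ``filling the discarded positions by a fixed orbit of $T$'' does not guarantee that the filled blocks contribute $0$ (nor even nonnegative real part) to the averaged sum, so the cancellation you want to exclude can reappear; one standard fix is to enlarge $X$ to $X\sqcup\{\star\}$ with $\star$ a fixed point and $f$ extended by $0$, and fill with $\star$.
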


Motivated by some recent break-through results in multiplicative number theory \cite{Ma-Ra}, \cite{Ma-Ra-Ta} and the role of convergence on   (typical) short intervals (see \cite{Fe-Ku-Le} for more details), {we}  will now consider a fourth natural condition of orthogonality
which is a uniform  short interval convergence (USIC {for short}) in $(X,T)$:
\begin{definition}\label{def2} \em $(X,T)$ satisfies the {\em strong $\bfu$-USIC\footnote{Acronym for Uniform Short Intervals Convergence. Note that the strong $\bfu$-USIC property requires for $\bfu$ to satisfy convergence on (typical) short intervals which we obtain by taking $f=1$ in Definition~\ref{def2}. In view of \cite{Ma-Ra}, $\mob$ fulfills this requirement. In what follows, we consider only $\bfu$ satisfying the relevant short interval convergence.} property} if
$$
\frac1M\sum_{M\leq m<2M}\left|\frac1H\sum_{m\leq h<m+H}f(T^hx)\bfu(h)\right|\to0$$
when $H,M\to\infty$, $H={\rm o}(M)$, uniformly in $x\in X$.
\end{definition}

The meaning of convergence in Definition~\ref{def2} is the following: For all sequences $(M_\ell),(H_\ell)$ tending to infinity with $H_\ell/M_\ell\to0$ when $\ell\to\infty$, we have
\be\label{e1}
\lim_{\ell\to0}\frac1{M_\ell}\sum_{M_{\ell}\leq m<2M_{\ell}}\left|\frac1{H_{\ell}}\sum_{m\leq h<m+H_{\ell}}f(T^hx)\bfu(h)\right|=0\ee
uniformly in $x\in X$.

Note also that by considering $M, 2M,\ldots, 2^kM,\ldots$, both in Definition~\ref{def2} and in~\eqref{e1} we can consider ({and} we will) the sum from~1 to~$M$ instead of the sum from~$M$ to~$2M$.

{In  Appendix} \ref{s:Sec10} we show that a system $(X,T)$ satisfies the strong $\bfu$-MOMO property if and only if it satisfies the strong $\bfu$-USIC property.

\begin{corollary}\label{c:equiv21} The following condition\\
(iv) $\mob$-USIC property holds for each zero (topological) entropy system\\
is equivalent to (i) in Proposition~\ref{p:equiv} (and hence to (ii) and (iii)).\end{corollary}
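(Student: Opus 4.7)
The plan is to derive Corollary~\ref{c:equiv21} as a short formal consequence of two ingredients already available at this stage of the paper: Proposition~\ref{p:equiv} and the per-system equivalence between the strong $\bfu$-MOMO and strong $\bfu$-USIC properties which is announced as the content of Appendix~\ref{s:Sec10}. Concretely, I would first recall that by the work of Matom\"aki and Radziwi\l\l~\cite{Ma-Ra}, the M\"obius function $\mob$ satisfies the short-interval cancellation which is needed to make the strong $\mob$-USIC property meaningful for an arbitrary topological system $(X,T)$ (this is exactly the $f\equiv 1$ case of Definition~\ref{def2}).

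Next, I would invoke the appendix equivalence in the form: for any fixed $(X,T)$ and any fixed $\bfu\in\mathcal{M}_1$ with $M(\bfu)=0$ satisfying the required short-interval convergence, $(X,T)$ satisfies the strong $\bfu$-MOMO property if and only if it satisfies the strong $\bfu$-USIC property. Specializing to $\bfu=\mob$ and quantifying this equivalence over the class of all zero topological entropy systems yields immediately (iii)$\iff$(iv). Combining this with the chain (i)$\iff$(ii)$\iff$(iii) furnished by Proposition~\ref{p:equiv} closes the loop and gives (i)$\iff$(iv), which is Corollary~\ref{c:equiv21}.

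Thus the corollary itself is essentially formal once the two inputs are granted; the substantive work has been delegated to Proposition~\ref{p:equiv} (originating in~\cite{Ab-Ku-Le-Ru}) and to the appendix. The main obstacle — which I expect to be resolved in the appendix — is the translation between MOMO-style averages, in which absolute values of Birkhoff-type sums over disjoint blocks $[b_k,b_{k+1})$ computed at possibly \emph{different} base points $x_k$ are summed, and USIC-style averages, in which short windows $[m,m+H)$ are anchored at a \emph{single} point $x$ with the requirement of uniformity in $x\in X$. The natural route is a summation-by-parts argument combined with a covering of the window set $[M,2M)$ by blocks of length $H$ (to pass USIC $\Rightarrow$ MOMO) and, conversely, an observation that the uniform-in-$x$ MOMO control applied to orbits started at points spread out over $X$ produces the USIC averaging automatically.
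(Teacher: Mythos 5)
Your proposal is correct and takes the same route as the paper: Corollary~\ref{c:equiv21} is stated there without a written proof precisely because it follows formally by combining (i)$\iff$(iii) of Proposition~\ref{p:equiv} with the per-system equivalence of strong $\bfu$-MOMO and strong $\bfu$-USIC (Proposition~\ref{p1} in Appendix~\ref{s:Sec10}), specialized to $\bfu=\mob$. Your remarks about $\mob$ satisfying short-interval cancellation via Matom\"aki--Radziwi\l\l{} and about the block-versus-window translation being the substantive work (delegated to the appendix) match the paper's footnote and its Appendix~\ref{s:Sec10}.
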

Question
It is proved in \cite{Ab-Ku-Le-Ru} that the systems satisfying so called AOP property, in particular systems whose prime powers are disjoint, satisfy the strong $\bfu$-MOMO property. Collecting all the results, we obtain the following.

\begin{corollary}\label{shorti} For each flow $(T_t)$ being either a uniquely ergodic model of an Arnol'd flow satisfying the assumptions of Theorem~\ref{main:th2}  or a non-trivial smooth  time-change in $B^+(M)$  of a horocycle flow in the cocompact case,  we obtain the following: For each $t_0\in\R$ and each $f\in C(X)$, we have
$$
\frac1M\sum_{1\leq m<M}\left|\frac1H\sum_{m\leq h<m+H}f(T_{ht_0}x)\bfu(h)\right|\to0$$
when $H,M\to\infty$, $H={\rm o}(M)$, uniformly in $x\in X$.

In particular, {for such flows, Sarnak's conjecture holds uniformly}.\end{corollary}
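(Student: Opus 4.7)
The plan is to combine the disjointness of rescalings proved in Theorems~\ref{main:prop} and~\ref{main:th2} with the general AOP $\Rightarrow$ strong MOMO implication of~\cite{Ab-Ku-Le-Ru} and the strong-MOMO $\Leftrightarrow$ strong-USIC equivalence of Appendix~\ref{s:Sec10} to derive the strong $\bfu$-USIC property for the time-$t_0$ automorphism $T_{t_0}$; this is precisely the short-interval convergence of Definition~\ref{def2} which appears in the statement. Once this is proved, the uniform Sarnak assertion will follow by specializing $\bfu=\mob$, since strong $\mob$-USIC easily implies uniform convergence in~\eqref{sar1} (the long sum $\frac{1}{N}\sum_{n\leq N}f(T_{nt_0}x)\mob(n)$ is controlled by a sliding-window average of size $H={\rm o}(N)$, up to an error of order $H/N$).

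I would first dispose of the trivial case $t_0=0$: the inner average factors as $f(x)\cdot\frac{1}{H}\sum_{m\leq h<m+H}\bfu(h)$, and the Matom\"aki--Radziwi\l\l\ theorem~\cite{Ma-Ra} supplies the required uniform-in-$m$ short-interval cancellation for $\bfu=\mob$. Assuming now $t_0\neq 0$, the key reduction is to show that for all sufficiently large distinct primes $p\neq q$ the time automorphisms $T_{pt_0}$ and $T_{qt_0}$ are disjoint. This is where the disjointness results of the paper enter: Theorem~\ref{main:prop} gives, in the horocycle case, disjointness of the flow rescalings $(T_{pt})_{t\in\R}$ and $(T_{qt})_{t\in\R}$ for every pair of positive distinct reals, while Theorem~\ref{main:th2} yields the same conclusion in the Arnol'd case whenever $p/q\notin\{1,A_-/A_+,A_+/A_-\}$, a condition that excludes at most one exceptional pair of primes. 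The automorphisms $T_{pt_0}$ and $T_{qt_0}$ are precisely the time-$t_0$ maps of the rescaled flows $(T_{pt})_{t\in\R}$ and $(T_{qt})_{t\in\R}$; applying Remark~\ref{r:disTA} with $r=t_0$ to this pair of flows, together with the identity $J((T_t),(S_t))=J((T_{rt}),(S_{rt}))$ for $r\neq0$ recalled at the end of Section~\ref{sec:back1}, converts the flow-disjointness of the rescalings into the desired automorphism-disjointness of $T_{pt_0}$ and $T_{qt_0}$.

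The remaining steps are essentially assemblies of existing results. The flows under consideration are uniquely ergodic (by assumption in the Arnol'd case, and by~\cite{Ma} for smooth time-changes of cocompact horocycle flows) and mixing (by~\cite{SK} and~\cite{Ma} respectively), so $T_{t_0}$ on any uniquely ergodic realization is itself uniquely ergodic and totally ergodic; the prime-power disjointness just established therefore allows one to invoke the main theorem of~\cite{Ab-Ku-Le-Ru}, which asserts that any uniquely ergodic totally ergodic system whose sufficiently large distinct prime powers are pairwise disjoint satisfies the AOP property and, consequently, the strong $\bfu$-MOMO property for every $\bfu\in{\cal M}_1$ of zero mean satisfying short-interval cancellation---in particular for $\bfu=\mob$. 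The equivalence between the strong $\bfu$-MOMO and strong $\bfu$-USIC properties established in Appendix~\ref{s:Sec10} then yields the displayed short-interval convergence, uniformly in $x\in X$. The main obstacle, and essentially the only content that is not pure citation, is checking that our precise form of prime-power disjointness---valid only for \emph{sufficiently large} primes, and excluding at most one exceptional pair in the Arnol'd case---matches the hypothesis required by the main theorem of~\cite{Ab-Ku-Le-Ru}; once this compatibility is verified, the result is a clean chain of implications resting on the two disjointness theorems of the paper and on the MOMO/USIC equivalence of Appendix~\ref{s:Sec10}.
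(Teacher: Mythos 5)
Your proposal is correct and follows essentially the same route as the paper: disjointness of flow rescalings (Theorems~\ref{main:prop},~\ref{main:th2}) $\Rightarrow$ disjointness of large prime powers of $T_{t_0}$ via Remark~\ref{r:disTA} and $J(T_t,S_t)=J(T_{rt},S_{rt})$ $\Rightarrow$ AOP and strong $\bfu$-MOMO from~\cite{Ab-Ku-Le-Ru} $\Rightarrow$ strong $\bfu$-USIC by Proposition~\ref{p1}. Two small remarks: the single exceptional pair of primes in the Arnol'd case is harmless precisely because the K\'atai--BSZ/AOP framework only needs disjointness for \emph{all} $p,q$ exceeding a threshold (take the threshold above $\max(p_0,q_0)$), so this is not really an obstacle; and unique ergodicity of smooth time-changes of cocompact horocycle flows is Furstenberg's theorem (time changes preserve orbits), not~\cite{Ma}, which supplies mixing.
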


Corollary~\ref{shorti} brings the positive answer to M.\ Ratner's question (see ~7 in \cite{Fe-Ku-Le}). Although, M\"obius disjointness itself is known for horocycle flows \cite{Bo-Sa-Zi}, it remains however
an open question whether the assertions of Corollary~\ref{shorti} hold for horocycle flows themselves even when $\bfu=\mob$.


\begin{remark} \em If we have a dynamical system $(X,T)$ for which
the sums $$(\ast)\;\;\frac1N\sum_{n\leq N}h(T^nx)\mob(n)$$ converge to zero uniformly in $x\in X$ (for each $h\in C(X)$) then one can show (considering respectively the sums $(\ast)$ with $T^{-N}x$ and $T^{-1}$ or with $T^Nx$ and $T$) that we have also convergence for sums of the following type:
\be\label{newc1}
\lim_{N\to\infty}\frac1N\sum_{n\leq N}h(T^nx)\mob(N-n)=0,\ee
\be\label{newc2}
\lim_{N\to\infty}\frac1N\sum_{n\leq N}h(T^nx)\mob(N+n)=0. \ee
We do not know if \eqref{newc1}, \eqref{newc2} hold in case of horocycle flows or locally Hamiltonian flows on $\T^2$.\end{remark}

\appendix

\section{Consequences of shearing for time changes of horocycle flows}\label{app:horocycleR}
This appendix is devoted to the proof of Proposition~\ref{thm:hor}. We use the notation introduced in {Section}~\ref{sec:prelimh}
\begin{proof}[Proof of Proposition~\ref{thm:hor}] We will for simplicity assume that $\int_M \tau d\mu=1$. Fix $0<\epsilon<K^{-3}$. Using the unique ergodicity of $(h_t)$, let
$N_\epsilon>0$ be such that for every $T\in \R$ satisfying $|T|>N_\epsilon$ and every $y\in M$, remembering that $\int_MX\tau\,d\mu=0$, we have
\begin{equation}\label{bet}
\max\left(\left|\int_0^T\tau(h_ty)\,dt-T\right|,\left|\int_0^T(X\tau)
(h_ty)\,dt\right|
\right)\leq \epsilon^3 |T|
\end{equation}
and (cf.\ Remark~\ref{r:ue})
\begin{equation}\label{bet2}
|u(T,y)-T|\leq \epsilon^4 |T|.
\end{equation}
Moreover, let $\delta:=\min(\epsilon^{20},N_\epsilon^{-20})$. Take $x,y\in M$ with $\max(|r|,|s|,|\bar v|)<\delta$ and let $T\in \R$ be such that $|T|\in [N_\epsilon,K|r|^{-1/2}]$. We will additionally assume that $T>0$, the case $T<0$ is analogous.

We have
\begin{equation}\label{clos2}
h_Txy^{-1}h_{-\chi_{x,y}(T)}=h_{\bar v }\begin{pmatrix} e^s+rT& v(x,y,T)\\
r& e^{-s}-r\chi_{x,y}(T)
\end{pmatrix},
\end{equation}
where (using also~\eqref{defchi})
$$
v(x,y,T)=Te^{-s}-\chi_{x,y}(T)e^s-T\chi_{x,y}(T)r=e^{-3s}r^2T^3.$$
By the definition of $\delta$, we hence obtain (remembering that $|r|<\delta$)
\be\label{eq:newform}
|v(x,y,T)|= e^{-3s}|r|^2T^3\leq 2|r|^2K|r|^{-3/2}\leq 2K \delta^{1/2}<\epsilon^2.
\ee
 In what follows (since $x,y$ are now fixed), we will write $\chi$ instead of $\chi_{x,y}$.

By~\eqref{zamcz}) and~\eqref{funce}, we obtain
$$
A_x(T)=\int_{u(\chi(T),y)}^{u(\chi(T)+A_x(T),y)}\tau(h_\theta y)\,d\theta=
\int_{u(\chi(T),y)}^{\chi(u(T,x))}\tau(h_\theta y)\,d\theta.
$$
Representing the last term as the difference of two integrals, where $\int_0^{u(\chi(T),y)}\tau(h_\theta y)d\theta=\chi(T)$, the assertion~\eqref{axt} can be rewritten as
\begin{equation}\label{eq:gh}
\left|\int_0^{\chi(u(T,x))}\tau(h_ty)\,dt-\chi(T)
+e^{-2s}\int_0^{u(T,x)}\left(\tau-\tau\circ g_{-s}\right)(h_tx)\,dt\right|\leq \epsilon.
\end{equation}
Changing variables ($t$ replaced by $\chi(\theta)$), we have
\begin{multline}\label{eq:splint}
\int_0^{\chi(u(T,x))}\tau(h_ty)\,dt-\chi(T)= \int_0^{u(T,x)}\tau(h_{\chi(t)}y)\chi'(t)\,dt-\chi(T)=\\
e^{-2s}\int_0^{u(T,x)}\tau(h_{t}x)\,dt+
e^{-2s}\left(\int_0^{u(T,x)}(-\tau(h_tx)+\tau(h_{\chi(t)}y)\,dt)\right)
+\\
\int_0^{u(T,x)}\tau(h_{\chi(t)}y)(\chi'(t)-e^{-2s})\,dt-\chi(T).
\end{multline}

{We now claim} that
\begin{equation}\label{mult:1}
\left|\int_0^{u(T,x)}(-\tau(h_tx)+\tau(h_{\chi(t)}y))\,dt+
\int_0^{u(T,x)}\left(\tau-\tau\circ g_{-s}\right)(h_tx)\,dt\right|\leq \epsilon^{3/2}.
\end{equation}
Indeed, this claim is equivalent to showing
\be\label{eq:toshow}
\left|\int_{0}^{u(T,x)}(\tau(h_{\chi(t)}y)-\tau(g_{-s}h_tx))\,dt\right|\leq \epsilon^{3/2}.
\ee
Notice that by \eqref{clos2}, we have 
\be\label{matrix:dec}\begin{array}{c}
g_{-s}h_txy^{-1}h_{-\chi(t)}=h_{\bar ve^{-2s}}\begin{pmatrix}1+e^{-s}rt& e^{-s}v(x,y,t)\\
e^sr& 1-e^sr\chi(t)
\end{pmatrix} =\\ h_{w(t)+\bar ve^{-2s}}\begin{pmatrix}(1-e^{s}r\chi(t))^{-1}& 0\\
e^sr& 1-e^{s}r\chi(t)
\end{pmatrix},\end{array}
\ee
where $w(t)=\frac{e^{-s}v(x,y,t)}{1-e^{s}r\chi(t)}$ (recall that $v(x,y,t)=e^{-3s}r^2t^3$). We also have $v'(x,y,t)= 3 e^{-3s}r^2t^2\leq C' |r|$ for some constant $C'>0$ (since $t\leq K|r|^{-1/2}$). Moreover, $|r\chi(t)|={\rm O}(|r|t+r^2t^2)={\rm O}(r^{1/2})$ and
$$
|v(x,y,t)r\chi'(t)|={\rm O}(r^2t^3|r|(1+|r|t))={\rm O}(|r|^{3/2}).
$$
Therefore (enlarging $C'$ if necessary), we have
\be\label{small:w'}
|w'(t)|\leq 2C'|r|.
\ee
Now, \eqref{eq:toshow} (in particular) follows by showing that for every $S\in[0,T]$ and $\Psi\in \{\tau,X\tau\}$, we have
\be\label{shoint1}
\left|\int_{0}^{u(S,x)}(\Psi(h_{-w(t)-\bar{v}e^{-2s}}
g_{-s}h_tx)-\Psi(g_{-s}h_tx))\,dt\right|<
\frac{\epsilon^{3}}{2}
\ee
and
\be\label{shoint2}
\left|\int_{0}^{u(T,x)}(\tau(h_{\chi(t)}y)-\tau(h_{-w(t)-\bar{v}e^{-2s}}
g_{-s}h_tx))\,dt\right|<
\frac{\epsilon^{3/2}}{2}.
\ee
We will first show \eqref{shoint1}. Using $g_{-s}h_t=h_{e^{-2s}t}g_{-s}$, substituting $t'=e^{-2s}t$ and making use of~\eqref{small:w'}, we get
$$
\int_{0}^{u(S,x)}\Psi(h_{-w(t)-\bar{v}e^{-2s}}
g_{-s}h_t)\,dt=e^{2s}\int_{0}^{u(e^{-2s}S,x)}
\Psi(h_{t'-w(e^{2s}t')-\bar{v}e^{-2s}}
g_{-s}x)dt'=$$$$
e^{2s}\int_{0}^{u(e^{-2s}S,x)}
\Psi(h_{t-w(e^{2s}t)-\bar{v}e^{-2s}}
g_{-s}x)(1-e^{2s}w'(e^{2s}t))\,dt+{\rm O}(|r|S).
$$
Substituting $t'=t-w(e^{2s}t)-\bar{v}e^{-2s}$ and using $\max(w(0)+\bar{v}e^{-2s}, w(e^{2s}u(e^{-2s}S,x))-\bar{v}e^{-2s})<\delta^{1/4}$, we get
$$
e^{2s}\int_{0}^{u(e^{-2s}S,x)}
\Psi(h_{t-w(e^{2s}t)-\bar{v}e^{-2s}}
g_{-s}x)(1-e^{2s}w'(e^{2s}t))\,dt=$$$$e^{2s}\int_0^{u(e^{-2s}S,x)}
\Psi(h_{t'}g_{-s}x)\,dt'+{\rm O}(\delta^{1/4})=
\int_{0}^{u(S,x)}\Psi(g_{-s}h_tx)\,dt+{\rm O}(\delta^{1/4}),
$$
where the latter equality follows by substituting $t=e^{2s}t'$ and using the renormalization identity. This finishes the proof of \eqref{shoint1}.

We will now show \eqref{shoint2}. To begin the proof of {that} claim, note that by~\eqref{matrix:dec}, we have
$$
h_{-w(t)-\bar ve^{-2s}}g_{-s}h_tx=\begin{pmatrix}(1-e^{s}r\chi(t))^{-1}& 0\\
e^sr& 1-e^{s}r\chi(t)
\end{pmatrix}h_{\chi(t)}y,$$
whence $d_U(h_{-w(t)-\bar ve^{-2s}}g_{-s}h_tx,h_{\chi(t)}y)=0$.
Now, by Lemma~\ref{taylor:form}, we get that the LHS of \eqref{shoint2} is equal to
$$
\int_0^{u(T,x)}\sum_{W\in\{X,V\}}(W\tau)(h_{-w(t)-\bar{v}e^{-2s}}g_{-s}h_tx)
d_W(h_{-w(t)-\bar{v}e^{-2s}}g_{-s}h_tx,h_{\chi(t)}y)\,dt+$$$$
{\rm O}\left(\epsilon^3u(T,x)\sup_{t\in[0,u(T,x)]}\left|\sum_{W\in\{X,V\}}
d_W(h_{-w(t)-\bar{v}e^{-2s}}g_{-s}h_tx,h_{\chi(t)}y)\right|\right).
$$
Moreover, by \eqref{matrix:dec}, Remark \ref{r:odl} and \eqref{bet2}, the above expression is equal to
$$
\int_0^{u(T,x)}(X\tau)(h_{-w(t)-\bar{v}e^{-2s}}g_{-s}h_tx)e^{s}r\chi(t) dt+
\int_0^{u(T,x)}(V\tau)(h_{-w(t)-\bar{v}e^{-2s}}g_{-s}h_tx)e^sr dt+
{\rm O}(\epsilon^3T^2|r|).
$$
The middle summand we estimate by ${\rm O}(e^s|r|u(x,T))$ which by \eqref{bet2} and the fact that $|r|<\delta^{1/2}<\epsilon^3$ is ${\rm O}(\epsilon^3)$ and since $T\leq K|r|^{-1/2}$, the last expression is equal to
$$
re^{s} \int_0^{u(T,x)}\chi(t)(X\tau)(h_{-w(t)-\bar{v}e^{-2s}}g_{-s}h_tx)\,dt +{\rm O}(\epsilon^3).
$$
Since $re^s\chi(t)=e^{-s}rt+e^{-2s}r^2t^2=e^{-s}rt+{\rm O}(r)$, and $u(T,x)\leq 2T\leq2Kr^{-1/2}$ (see \eqref{bet2}), in order to show \eqref{shoint2}, it is enough to establish
\be\label{shosho}
\left|\int_0^{u(T,x)}t(X\tau)(h_{-w(t)-\bar{v}e^{-2s}}g_{-s}h_tx)\,dt
\right|=
{\rm O}(\epsilon^2|r|^{-1}).
\ee
Integration by parts yields
$$
\int_0^{u(T,x)}t(X\tau)(h_{-w(t)-\bar{v}e^{-2s}}g_{-s}h_tx)\,dt=$$$$
T\int_0^{u(T,x)}(X\tau)(h_{-w(t)-\bar{v}e^{-2s}}g_{-s}h_tx)\,dt-
\int_0^{u(T,x)}\left(
\int_0^t(X\tau)(h_{-w(\theta)-\bar{v}e^{-2s}}g_{-s}h_\theta x)\,d\theta\right)\,dt.
$$
Therefore and by \eqref{shoint1} (for $\Psi=X\tau$ and $S=u(T,x)$ and then $S=t$ {)},~\eqref{shosho} follows by showing {that}
$$
\left|\int_0^{t}(X\tau)(g_{-s}h_\theta x)\,d\theta\right|={\rm O}(\epsilon^3|r|^{-1/2}),~\footnote{Indeed, the first expression is then multiplied by $T$ and the second by evaluation {of} the integrand yields multiplication by $u(T,x)$, in both cases, we obtain ${\rm O}(\epsilon^3|r|^{-1/2}T)={\rm O}(\epsilon^3|r|^{-1})$.}
$$
this however, by the renormalization equation and substituting $t'=e^{-2s}\theta$, follows by showing {that}
$$
\left|\int_0^{e^{-2s}t}(X\tau)(h_{t'}g_{-s}x)\,dt'\right|=
{\rm O}(\epsilon^3|r|^{-1/2}),
$$
which is true by \eqref{bet} if $e^{-2s}t>N_\epsilon$ (if not, we estimate ${\rm O}(N_\epsilon)={\rm O}(\epsilon^3 |r|^{-1/2})$). This finishes the proof of \eqref{shosho} and hence also the proof of \eqref{shoint2}. {Now, the proof of~\eqref{eq:toshow} is complete and the claim~\eqref{mult:1} follows.}

Next, by definition,  $e^{-2s}\int_0^{u(T,x)}\tau(h_{t}x)dt-\chi(T)=e^{-3s}rT^2$, so to finish the proof of \eqref{eq:gh}, by \eqref{eq:splint} and \eqref{mult:1}, it is enough to show that
\begin{equation}\label{sho}
\left|\int_0^{u(T,x)}\tau(h_{\chi(t)}y)(\chi'(t)-
e^{-2s})\,dt+e^{-3s}rT^2\right|<\epsilon^{5/4}.
\end{equation}
Let us notice {that} we consider the function $\chi(t)=e^{-2s}t-e^{-3s}rt^2$ on the interval $[0,{\rm O}(r^{1/2})]$ and {that} its maximal value is obtained at $e^s/2r$. It follows that $\chi$ on the interval under consideration  is invertible and we denote the inverse function by $\chi^{-1}(w)=t$. Set $m(w):=1-\frac{e^{-2s}}{\chi'(\chi^{-1}(w))}$.
Moreover, notice that $m(w)=m(\chi(t))=\frac{-2e^{-3s}rt}{e^{-2s}-2e^{-3s}rt}$.
By a direct computation, we verify the following properties of $m$:
\begin{enumerate}
\item[(\textbf{z1})] $|wm(w)|= {\rm O}(1)$, for every $\chi^{-1}(w)\in[0,{\rm O}(|r|^{-1/2})]$; indeed, $wm(w)=\chi(t)\frac{-2e^{-3s}rt}{e^{-2s}-2e^{-3s}rt}={\rm O}(rt^2)={\rm O}(1)$.
\item[(\textbf{z2})] $|m(N_\epsilon)|<\delta^{1/2}$; indeed, $N_\epsilon\in[
\chi(e^{2s}N_\epsilon-1),\chi(e^{2s}N_\epsilon+1)]$
\item[(\textbf{z3})] $|m'(w)|>0$ for every $\chi^{-1}(w)\in[0,{\rm O}(|r|^{-1/2})]$; indeed, $m'(w)=m'(\chi(t)\chi'(t)$ and both terms in the product are of constant sign.
\end{enumerate}

Changing variable ($w=\chi(t)$) and then integrating by parts, we have
\begin{multline}\label{mult:a}
\int_0^{u(T,x)}\tau(h_{\chi(t)}y)(\chi'(t)-e^{-2s})\,dt=
\int_0^{\chi(u(T,x))}\tau(h_wy)m(w)\,dw=\\
 m(\chi(u(T,x)))\int_0^{\chi(u(T,x))}\tau(h_wy)\,dw-   \int_0^{\chi(u(T,x))}
\left(\int_0^w\tau(h_\theta y)\,d\theta\right)m'(w)\,dw.
 \end{multline}

Therefore, by \eqref{bet} and (\textbf{z1}), we have
 \begin{equation}\label{mult:b}
 \left|m(\chi(u(T,x)))\int_0^{\chi(u(T,x))}
 \tau(h_wy)\,dw-m(\chi(u(T,x)))\chi(u(T,x))\right| ={\rm O}(\epsilon^3).
\end{equation}
Moreover,
\be\label{mult:c}
\int_0^{\chi(u(T,x))}
 \left(\int_0^w\tau(h_\theta y)\,d\theta\right)m'(w)dw=\int_0^{\chi(u(T,x))}(w+p(w))m'(w)\,dw,
\ee
where, by \eqref{bet},  $|p(w)|<\epsilon^2 w$ for $w\geq N_\epsilon$, and
$|p(w)|={\rm O}(N_\epsilon)$ for $w\in[0,N_\epsilon]$.
Notice that by integrating by parts, we have
\begin{equation}\label{eq:asc}
\int_0^{\chi(u(T,x))}wm'(w)\,dw= m(\chi(u(T,x)))\chi(u(T,x))- \int_0^{\chi(u(T,x))}m(w)\,dw.
\end{equation}
But changing variables, $w=\chi(t)$, we obtain 
\begin{multline}\label{eq:asc2}
\int_0^{\chi(u(T,x))}m(w)\,dw =\int_{0}^{u(T,x)}(\chi'(t)-e^{-2s})\,dt = \\ \chi(u(T,x))-e^{-2s}u(T,x)=-e^{-3s}r(u(T,x))^2.
\end{multline}
In particular, from~\eqref{eq:asc}, \eqref{eq:asc2} and~(\textbf{z1}), we obtain 
\be\label{eq:ascz}
\int_0^{\chi(u(T,x))}wm'(w)\,dw= {\rm O}(1).\ee
By (\textbf{z3}), (\textbf{z2}), \eqref{bet2} and~\eqref{eq:ascz}, we get
\begin{multline}\label{mult:d}
\left|\int_0^{\chi(u(T,x))}p(w)m'(w)\,dw\right|\leq \\ \left|
\int_0^{N_\epsilon}p(w)m'(w)\,dw\right|+\left|
\int_{N_\epsilon}^{\chi(u(T,x))}p(w)m'(w)\,dw\right|\leq\\
 {\rm O}(N_\epsilon)m(N_\epsilon)+\epsilon^2 \left|\int_0^{\chi(u(T,x))}wm'(w)\,dw\right|={\rm O}(\epsilon^2).
\end{multline}

Finally, in view of \eqref{mult:a}, \eqref{mult:b}, \eqref{mult:c}, \eqref{eq:asc}, \eqref{eq:asc2} and \eqref{mult:d}, we  get
$$
\left|\int_0^{u(T,x)}\tau(h_{\chi(t)}y)(\chi'(t)-e^{-2s})\,dt+
e^{-3s}r(u(T,x))^2\right|= {\rm O}(\epsilon^{2}).
$$
Hence \eqref{sho} follows by the triangle inequality since by \eqref{bet2} and $T\leq Kr^{-1/2}$, we have
$$
\left|e^{-3s}r(u(T,x))^2-e^{-3s}rT^2\right|\leq e^{-3s}|r|(u(T,x)^2-T^2)\leq \epsilon^2e^{-3s}rT^2\leq \epsilon^{3/2}.$$
This finishes the proof of \eqref{axt}.

For the proof of \eqref{distgt},
notice that $\tilde{h}_T(x)=h_{u(T,x)}(x)$ and, by~\eqref{funce}, $\tilde{h}_{\chi(T)+A_x(T)}y=h_{u(\chi(T)+A_x(T),y)}y=
h_{\chi(u(T,x))}y$. The statement now follows by~\eqref{clos2} and~\eqref{eq:newform}, where $T$ is replaced by  $u(T,x)$.
\end{proof}



\section{Ergodic averages for horocycle flows}\label{app:BF}
The following {result (Lemma~\ref{lem:BF} below)} on  ergodic averages for horocycle flows follows from the work of Flaminio-Forni \cite{Fl-Fo} and Bufetov-Forni~\cite{BF}. We are indebted to Giovanni Forni for explaining it to us.

{For any smooth real-valued function $f \in W^r(M)$, with $r>11/2$, let
 $$
 f= \sum_{\mu \in \text{spec}(\square)} f_\mu
 $$
 denote the decomposition of $f$ with respect to a splitting of the space $L^2(M)$ into irreducible components, parametrized by the eigenvalues $\mu$ of the
 Casimir operator $\square$ (listed with multiplicities), cf.~Section~\ref{sec:deviations}. That is, $f_{\mu}$ is the projection of $f$ on the irreducible component of Casimir parameter $\mu$.
 Set
 $$
 \mu_f  := \min  \{ \mu \in \text{spec}(\square) \setminus \{0\}{:}\:  f_\mu \not =0\}.
 $$}

 \begin{lemma}[ergodic averages estimates]\label{lem:BF} For any real-valued function $\tau \in W^r(M)$ (for any $r>11/2$) with non-trivial support on the irreducible components of the
 complementary series there {exist} a function $\phi_\tau\in C^\infty(\R)$, with $\phi_\tau(0)=0$ and $\phi'_\tau (0) \not =0$,
 and a function $\beta_\tau \in C(M)$ such that the following holds. There {exist}  constants $C_r>0$,   $\alpha_\tau \in (0,1)$  and
 $\gamma_\tau \in (0, \alpha_\tau)$ such that, for every $(x,T)\in M\times \R$, we have
 $$
  \vert  T^{ - \alpha_\tau}  \int_0^T   (\tau-\tau\circ g_s) \circ h_t (x) dt - \phi_\tau(s) \beta_\tau (g_{\log T}x)   \vert \leq C_r s  \Vert \tau \Vert_r  T^{-\gamma_\tau}  \,.
 $$
 For any function $\tau \in W^r(M)$ (for any $r>11/2$) with trivial support on the irreducible components of the
 complementary series, and not fully supported on the discrete series, there exist functions  $\beta_\tau \in C^\infty(\R\times \T^\infty , C(M))$ and $\beta^{(1/4)}_\tau\in C(M)$
 with  $\beta_\tau(0, {\bf \theta}, x)  \equiv 0$ and $\frac{d}{ds}\beta_\tau(0, {\bf \theta}, x) \not\equiv 0$, and a vector ${\boldsymbol{v} \in \T^\infty}$,
  such that the following holds.  There {exist}  constants $C_r>0$  and  $\gamma  \in (0, 1/2)$ such that, for every $(x,T)\in M\times \R$, we have
$$
  \begin{aligned}
   \vert  T^{ - \frac{1}{2}}  \int_0^T  & (\tau-\tau\circ g_s) \circ h_t (x) dt -  \beta_\tau (s, {\bf v} \log T, g_{\log T}x) \\ & -  (e^{-s/2} -1) \beta^{(1/4)}_\tau (g_{\log T}x) \log T    \vert \leq C_r s  \Vert \tau \Vert_r  T^{-\gamma}  \,.
  \end{aligned}
 $$
 The function $\beta^{(1/4)}_\tau$ vanishes identically  if and only if the projection {$\tau_{1/4}$} of the function $\tau \in W^r(M)$ on the irreducible {component} of Casimir
 parameter $1/4$ is a coboundary of the horocycle flow.
 \end{lemma}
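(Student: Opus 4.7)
The plan is to reduce the estimate of the ergodic integral $\int_0^T (\tau - \tau\circ g_s)\circ h_t(x)\,dt$ to the spectral decomposition $\tau = \sum_\mu \tau_\mu$ with respect to the irreducible unitary $SL(2,\R)$-components of $L^2(M)$, and on each component apply the asymptotic formulas of Flaminio--Forni~\cite{Fl-Fo} and Bufetov--Forni~\cite{BF}. The key point is that on an irreducible component $H_\mu$, the horocycle ergodic integrals are controlled by the finitely many horocycle-invariant distributions of the representation, and the $SL(2,\R)$-renormalization $h_t g_s = g_s h_{e^{-2s}t}$ translates the operation $\tau\mapsto \tau-\tau\circ g_s$ into an explicit rescaling action on these distributions. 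First I would collect from~\cite{Fl-Fo, BF} the explicit shape of this asymptotic on each irreducible piece.

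For the first case (nontrivial complementary-series support), write $\mu_\tau = s_\tau(1-s_\tau)$ with $s_\tau \in (1/2,1)$ and set $\alpha_\tau := s_\tau$. For a complementary-series component $H_\mu$ with $\mu\in(0,1/4)$, the Bufetov--Forni normal form gives
\[
T^{-s_\mu}\int_0^T \varphi\circ h_t(x)\,dt \;=\; D_\mu(\varphi)\,\Phi_\mu(g_{\log T}x) \;+\; \mathrm{O}_r(\|\varphi\|_r T^{-\gamma_\mu}),
\]
where $D_\mu$ is the dominant horocycle-invariant distribution and $\Phi_\mu$ is a continuous $g_s$-eigenfunction of eigenvalue $e^{(2s_\mu-1)s}$. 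Applied to $\varphi = \tau - \tau\circ g_s$ and summed over $\mu$, only the component $\mu=\mu_\tau$ contributes to leading order $T^{\alpha_\tau}$, and the factor in front becomes $D_{\mu_\tau}(\tau_{\mu_\tau}) \cdot (1 - e^{(2\alpha_\tau - 1) s})$, which I would take as the definition of $\phi_\tau(s)\beta_\tau(g_{\log T}x)$; the derivative $\phi_\tau'(0) = (1-2\alpha_\tau) D_{\mu_\tau}(\tau_{\mu_\tau}) \neq 0$ by the assumption $\tau_{\mu_\tau}\neq 0$ and nonvanishing of $D_{\mu_\tau}$. The remainder from the finitely many complementary-series components with $\mu<1/4$ gives the exponent $\gamma_\tau = \alpha_\tau - \max\{s_\mu : \mu\in\mathrm{spec}(\square)\cap(\mu_\tau,1/4)\} > 0$, while contributions from $\mu\geq 1/4$ are smaller by a power and absorbed via the Sobolev norm after using $r>11/2$.

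For the second case (no complementary-series support, nontrivial principal-series support), the leading scaling is $T^{1/2}$ for \emph{every} principal-series component, but modulated by oscillations. For $\mu = 1/4 + \nu^2$ with $\nu>0$, the Bufetov--Forni normal form gives
\[
T^{-1/2}\int_0^T \varphi\circ h_t(x)\,dt \;=\; \mathrm{Re}\!\left(T^{i\nu} D^+_\mu(\varphi)\Phi^+_\mu(g_{\log T}x)\right) \;+\; \mathrm{Re}\!\left(T^{-i\nu} D^-_\mu(\varphi)\Phi^-_\mu(g_{\log T}x)\right) + \mathrm{O}(T^{-\gamma}),
\]
and the boundary case $\mu=1/4$ produces an additional factor $\log T$. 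I would enumerate the principal-series eigenvalues as a sequence, collect the frequencies $(\nu_j)$ into the vector $\boldsymbol{v}\in\T^\infty$, and assemble the sum over principal-series components into a single function $\beta_\tau(s,\theta,y)$ on $\R\times\T^\infty\times M$ by setting
\[
\beta_\tau(s,\theta,y) \;:=\; \sum_j \mathrm{Re}\!\left(e^{2\pi i \theta_j}(1-e^{-s/2}\,c_j(s))\, D_{\mu_j}(\tau_{\mu_j})\Phi^+_{\mu_j}(y)\right),
\]
where $c_j(s)$ encodes the $g_s$-action on the $j$-th component. Sobolev regularity $r>11/2$ ensures $\|\tau_{\mu_j}\|_r$ decays fast enough in $j$ for the series to converge absolutely in $C(\R\times\T^\infty\times M)$ and for the tail to satisfy the uniform remainder. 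The $(e^{-s/2}-1)\beta_\tau^{(1/4)}(g_{\log T}x)\log T$ term arises exactly from the $\mu=1/4$ projection, with $\beta_\tau^{(1/4)}(y) := D_{1/4}(\tau_{1/4})\Phi_{1/4}(y)$, and the final equivalence (vanishing of $\beta_\tau^{(1/4)}$ iff $\tau_{1/4}$ is a horocycle coboundary) follows from the Flaminio--Forni characterization of coboundaries as functions annihilated by all horocycle-invariant distributions.

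The main obstacle is the bookkeeping in the second case: the infinite sum over principal-series components has to be organized so that the remainder is uniform in $(x,T,s)$ with the stated exponent $\gamma\in(0,1/2)$. This requires balancing the decay of the tail (where one uses the Sobolev norm and the growth of Casimir eigenvalues) against the lower-order terms given by~\cite{BF} on each irreducible piece, and interpreting the oscillations $T^{i\nu_j}$ jointly as a single continuous function on $\T^\infty$. Verifying that $\frac{d}{ds}\beta_\tau(0,\theta,x)\not\equiv 0$ reduces to the nontriviality of at least one $D_{\mu_j}(\tau_{\mu_j})$, which follows from the assumption that $\tau$ is not fully supported on the discrete series plus the absence of complementary-series support.
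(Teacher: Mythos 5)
Your proposal follows the same route as the paper's proof: decompose $\tau$ into Casimir irreducible components, invoke the Bufetov--Forni asymptotic normal forms (Lemmas~3.1--3.2 of \cite{BF}, plus Theorem~1.2 for the $\mu=1/4$ boundary), specialize to $f=\tau-\tau\circ g_s$, and compute the factor $\phi_\tau(s)$ (resp.\ the $s$-dependence of $\beta_\tau$) from the $(g_s)_*$-eigenvalue of the horocycle-invariant distributions as in \cite{Fl-Fo}; assembling the principal-series oscillations into a single variable in $\T^\infty$ is also exactly the paper's construction. The one concrete error is the geodesic eigenvalue you assign in the complementary-series case: you use $e^{(2\alpha_\tau-1)s}=e^{\nu_\tau s}$, giving $\phi_\tau(s)=1-e^{\nu_\tau s}$ and $\phi'_\tau(0)=1-2\alpha_\tau$. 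The correct eigenvalue, from \cite{Fl-Fo} (Theorem~3.2, Lemma~3.5; cf.\ formulas (5)--(6) of \cite{BF}), is $\exp\bigl(-\tfrac{1-\nu_\tau}{2}s\bigr)=\exp\bigl(-(1-\alpha_\tau)s\bigr)$, so $\phi_\tau(s)=\exp(-(1-\alpha_\tau)s)-1$ and $\phi'_\tau(0)=\alpha_\tau-1$; both values happen to be nonzero for $\alpha_\tau\in(1/2,1)$, so the lemma's conclusions survive, but the explicit constant $c_\tau=\phi'_\tau(0)$ used in Lemma~\ref{lem:buff} would come out wrong. Two cosmetic points: $\mu_\tau$ and $1/4$ may occur with multiplicity, so $\beta_\tau$ in the first case and $\beta^{(1/4)}_\tau$ should be sums over the corresponding irreducible summands rather than single terms; and your principal-series $c_j(s)$ should be $e^{iv_js/2}$, so that $1-e^{-s/2}c_j(s)$ matches the factor $1-\exp\bigl(-\tfrac{1-iv_j}{2}s\bigr)$ (up to an overall sign absorbed into $\beta_\tau$).
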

 \begin{proof}
We distinguish {three cases:  $\mu_f <1/4$, $\mu_f >1/4$ and $\mu_f=1/4$}.  In the first case, $\mu_f<1/4$,  let $H_1, \dots, H_k$ denote all the irreducible components
 of Casimir parameters $\mu_1 = \dots =\mu_k = \mu_f$. Let $D^\pm_1, \dots, D^\pm_k$ denote the basis {of  distributional} eigenvectors of the geodesic flow of the space of invariant distributions for the horocycle flow supported on ${\mathcal D}'(H_1) \oplus \dots \oplus  {\mathcal D}'(H_k)$. Let $\beta^\pm_1, \dots , \beta^\pm_k$ be the corresponding cocycles for the horocycle flow. In this case, the components $H_1, \dots, H_k$ belong to the  complementary series and by~\cite{BF}, Lemma~3.1, the following holds.  Let $\nu_f := (1-4\mu_f)^{1/2}\in \R^+$. There exist constants $C_r>0$ and $\gamma_f \in (0,\frac{1+\nu_f}{2})$ such that,
 for all $(x,T) \in  M\times \R$ we have
 \begin{equation}
 \label{eq:asympt_1}
 \vert  T^{ - \frac{1+\nu_f}{2}}  \int_0^T   f \circ h_t (x) dt -  \text{\rm Re} \sum_{i=1} ^k D^-_i (f) \beta^-_i( g_{\log T}x, 1)  \vert \leq  C_r \Vert f \Vert_r T^{-\gamma_f} \,;
\end{equation}
 in the second case, $\mu_f>1/4$, the function $f$ has no components on the complementary series and no component corresponding to the Casimir eigenvalue
 $\mu=1/4$.  Let then {$\left(\mu_n\right)_{n\in \N}$} denote the sequence of Casimir parameters in the interval $(1/4, +\infty)$ (listed with multiplicities), let {$\left(D^\pm_{\mu_n}\right)$} denote the sequence of normalized horocycle invariant distributions and let {$\left(\beta^\pm_{\mu_n}\right)$} denote the corresponding sequence of additive H\"older cocycles.
 By~\cite{BF}, Lemma 3.2,  the following holds:

 For every $n\in \N$, let $v_n = (4\mu_n -1)^{1/2} \in \R^+$. There exist constants $C_r>0$ and  $\gamma \in (0,1/2)$  such that, for all $(x,T) \in  M\times \R$,
 \begin{equation}
 \label{eq:asympt_2}
 \begin{aligned}
 \vert  T^{ - 1/2}  \int_0^T   f \circ h_t (x) dt - &\text{\rm Re} \sum_{n\in \N}  D^+_{\mu_n}  (f) \exp(i \frac{v_n}{2}  \log T)  \beta^+_{\mu_n} ( g_{\log T}x, 1)  \vert \\ &\leq
 C_r \Vert f \Vert_r  T^{-\gamma}.
 \end{aligned}
\end{equation}

Finally, in the case $\mu_f=1/4$, we have to add to the above expansion for the case $\mu>1/4$ the following contribution  of the irreducible
 components with Casimir parameter $\mu=1/4$.  Let $H_{1/4, 1}, \dots, H_{1/4, l}$ denote all the irreducible components
 of Casimir parameters $\mu_1 = \dots =\mu_l = 1/4$. Let $D^\pm_{1/4, 1}, \dots, D^\pm_{1/4, l}$ denote the basis {of  distributional} eigenvectors of the geodesic flow of the space of invariant distributions for the horocycle flow supported on ${\mathcal D}'(H_{1/4, 1}) \oplus \dots \oplus  {\mathcal D}'(H_{1/4, l})$. Let $\beta^\pm_{1/4, 1}, \dots ,
 \beta^\pm_{1/4, l}$ be the corresponding cocycles for the horocycle flow.  By  the exact scaling and asymptotic results of \cite{BF}, Theorem~1.2 and Corollary~1.3, or Corollary~3.2,  it follows that for the projection $f_{1/4}$ of $f \in W^r(M)$ onto the component
 $H_{1/4, 1} \oplus \dots \oplus H_{1/4, l}$ of the Hilbert space $W^r(M)$, the following holds.  There exist constants $C_r>0$ and  $\gamma \in (0,1/2)$  such that,
 for all $(x,T) \in  M\times \R$,
 \begin{equation}
 \begin{aligned}
 \label{eq:asympt_3}
 \vert  &T^{ - 1/2}  \int_0^T   f_{1/4} \circ h_t (x) dt -  \sum_{i=1}^l  D^+_{1/4,i} (f_{1/4}) \beta^+_{1/4,i}( g_{\log T}x, 1)  \\& +   \sum_{i=1}^l ( D^-_{1/4,i} (f_{1/4}) - \frac{\log T}{2} D^+_{1/4,i} (f_{1/4}) ) \beta^-_{1/4,i} ( g_{\log T}x, 1)  \vert \leq  C_r \Vert f \Vert_r T^{-\gamma} \,.
 \end{aligned}
\end{equation}
We then apply the above asymptotic formulas to the functions $f=  \tau - \tau\circ g_s$.  Since the action of the geodesic flow $g_\R$
preserves the splitting of Sobolev spaces into irreducible components, hence in particular, for all $s \in \R$, {we have}
$$
f_\mu = \tau_\mu  - \tau_\mu \circ g_s\,, \quad \text{ for all } \mu \in \text{ spec} (\square)\,.
$$
It remains to compute the values of the invariant distributions $D^\pm_\mu\in {\mathcal D}'(H_\mu)$  on the function $\tau - \tau\circ g_s$ for any irreducible
component $H_\mu$ of Casimir parameter $\mu>1/4$.  {For $\mu \not = 1/4$, it} follows from \cite{Fl-Fo}, Theorem~3.2 and
Lemma~3.5 (see also \cite{BF}, formulas~(5) and~(6)) that we have
$$
D^\pm_\mu (\tau \circ g_s) = [(g_s)_* (D^\pm_\mu )] (\tau) = \exp ( - \frac{1 \pm \sqrt{ 1-4\mu} }{2} s)  D^\pm_\mu (\tau) \,,
$$
and, for $\mu = 1/4$, {we have}
$$
\begin{aligned}
D^+_{1/4} (\tau \circ g_s)  &=  [(g_s)_* (D^+_{1/4} )] (\tau) = e^{ - \frac{s}{2} }  D^+_{1/4} (\tau)  \,, \\
D^-_{1/4} (\tau \circ g_s)  &=  [(g_s)_* (D^+_{1/4} )] (\tau) = e^{ - \frac{s}{2} }   [ D^-_{1/4} (\tau) -\frac{s}{2} D^+_{1/4}(\tau)]   \,.
\end{aligned}
$$
Since for every $r>0$  there exists a constant $C_r>0$ such that
$$
\Vert  \tau \circ g_s - \tau  \Vert  \leq  C_r s \Vert \tau \Vert_r\,,
$$
we can conclude the argument as follows. If $\mu_\tau <1/4$, we set
$$
\begin{aligned}
\phi_\tau (s) &:=  \exp ( - \frac{1 - \sqrt{ 1-4\mu_\tau} }{2} s) - 1 \,, \quad \text{ for all } s \in \R\,, \\
\beta_\tau (x) &:=  (1 \pm \sqrt{ 1-4\mu_\tau} ) \text{\rm Re} \sum_{i=1}^k D^-_i(\tau) \beta^-_i (x)\,,  \quad \text{ for all }  x \in M\,,
\end{aligned}
$$
so that from formula \eqref{eq:asympt_1} we derive that
 $$
  \vert  T^{ - \frac{1+\nu_f}{2}}  \int_0^T   (\tau-\tau\circ g_s) \circ h_t (x) dt - \phi_\tau(s) \beta_\tau (g_{\log T}x)   \vert \leq C_r s  \Vert \tau \Vert_r  T^{-\gamma}  \,.
 $$
 {If} $\mu_\tau > 1/4$,  then we set ${\bf v} = ( \frac{v_n}{2})$ and, for all $(s,\theta, x)\in \R \times \T^\infty \times M$, {we have}
 $$
 \beta_\tau (s, \theta, x) =  \text{\rm Re} \sum_{n\in \N}
 [\exp ( - \frac{1 - \sqrt{ 1-4\mu_n} }{2} s) - 1]  D^+_{\mu_n}  (\tau) \exp(i \frac{v_n}{2}  \theta)  \beta^+_{\mu_n} (x, 1)
 $$
 so that, from formula \eqref{eq:asympt_2}, we derive that
 $$
   \vert  T^{ - \frac{1}{2}}  \int_0^T   (\tau-\tau\circ g_s) \circ h_t (x) dt -  \beta_\tau (s, {\bf v} \log T, g_{\log T}x)   \vert \leq C_r s  \Vert \tau \Vert_r  T^{-\gamma}  \,.
 $$

If $\mu_\tau =1/4$, then we also set
 $$
 \beta^{(1/4)}_\tau (x)  :=   \sum_{i=1}^l  D^+ _{1/4, i} (\tau) \beta^+_{1/4, i} (x, 1)\,,  \quad \text{ for all } x\in M\,,
 $$
 so that, by formulas~\eqref{eq:asympt_2} and \eqref{eq:asympt_3}, we derive
 $$
  \begin{aligned}
   \vert  T^{ - \frac{1}{2}}  \int_0^T   (\tau-\tau\circ g_s) \circ h_t (x) dt &-  \beta_\tau (s, {\bf v} \log T, g_{\log T}x) \\ & -  (e^{-s/2} -1) \beta^{(1/4)}_\tau (g_{\log T}x) \log T    \vert \leq C_r s  \Vert \tau \Vert_r  T^{-\gamma}  \,.
  \end{aligned}
$$
\end{proof}

\section{Strong MOMO and USIC properties}\label{s:Sec10}
We now consider a bounded arithmetic functions $\bfu:\N\to\C$ and a {topological} dynamical {system} $(X,T)$  (i.e.\ $X$ is a compact metric space and $T$ is a homeomorphism of $X$).

The following notion has been introduced in \cite{Ab-Ku-Le-Ru}. MOMO is an acronym for M\"obius Orthogonality of Moving Orbits.
\begin{definition}[strong MOMO]\label{def1}\em $(X,T)$ satisfies the {\em strong $\bfu$-MOMO
 property} if
for all $(b_k)\subset\N$ with $b_{k+1}-b_k\to \infty$, $(x_k)\subset X$ and $f\in C(X)$, we have
$$
\lim_{K\to\infty}\frac1{b_K}\sum_{k<K}\left|\sum_{b_k\leq n<b_{k+1}}f(T^nx_k)\bfu(n)\right|=0.$$
\end{definition}

\begin{proposition} \label{p1} $(X,T)$ satisfies strong $\bfu$-MOMO property if and only if it satisfies strong $\bfu$-USIC property.\end{proposition}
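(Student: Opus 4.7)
The proof proceeds by establishing both implications separately.  For the easier direction, from strong $\bfu$-MOMO to strong $\bfu$-USIC, I argue by contraposition.  Suppose strong $\bfu$-USIC fails: there exist $\epsilon_0>0$, $f\in C(X)$, sequences $M_\ell,H_\ell\to\infty$ with $H_\ell=o(M_\ell)$, and points $x_\ell\in X$ such that $\frac{1}{M_\ell}\sum_{M_\ell\le m<2M_\ell}g_{H_\ell}(m,x_\ell)\ge\epsilon_0$, where
$$g_H(m,x):=\Big|\frac{1}{H}\sum_{m\le h<m+H}f(T^hx)\bfu(h)\Big|.$$
A pigeonhole argument over residues modulo $H_\ell$ produces $r_\ell\in[0,H_\ell)$ for which the arithmetic progression $\{r_\ell+jH_\ell\}_{j\geq 0}$ carries a positive fraction of the mass, namely $\sum_j g_{H_\ell}(r_\ell+jH_\ell,x_\ell)\ge\epsilon_0 M_\ell/H_\ell$.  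Extracting a subsequence $\ell_s$ with $M_{\ell_s}=2^sM_0$ and $H_{\ell_s}\to\infty$, I concatenate these arithmetic progressions, inserting short filler blocks if needed to align offsets between consecutive levels, into a single increasing sequence $(b_k)$ with $b_{k+1}-b_k\to\infty$, and set $x_k:=x_{\ell_s}$ on the blocks belonging to level $s$.  Summing the signal contributions at each level, which are at least $H_{\ell_s}\cdot\epsilon_0M_{\ell_s}/H_{\ell_s}=\epsilon_0M_{\ell_s}$, and normalizing by $b_K\sim M_{\ell_{S+1}}$ yields a MOMO average bounded below by a positive constant, contradicting strong $\bfu$-MOMO.

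For the converse direction, fix $(b_k)$, $(x_k)$, $f\in C(X)$ and $\epsilon>0$, and write $L_k:=b_{k+1}-b_k\to\infty$.  The starting point is the sliding-window identity: for any $H<L_k$,
$$\Big|\sum_{b_k\le n<b_{k+1}}f(T^nx_k)\bfu(n)\Big|\le\sum_{m=b_k}^{b_{k+1}-H}g_H(m,x_k)+O(H\|f\|_\infty\|\bfu\|_\infty).$$
Summed over $k<K$, the boundary error contributes $O(KH)$, negligible against $b_K$ because $K/b_K\to 0$ is forced by $L_k\to\infty$.  The main sum becomes $\sum_{m\in I_K}g_H(m,x(m))$, where $x(m)=x_k$ for $m\in[b_k,b_{k+1})$ and $I_K\subset[0,b_K)$.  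I implement a dyadic decomposition: at each scale $[2^s,2^{s+1})$ choose a parameter $H_s\to\infty$ with $H_s=o(2^s)$ and $H_s\le L_{*,s}:=\min\{L_k:b_k\in[2^s,2^{s+1})\}\to\infty$, and apply the uniform-in-$x$ USIC bound to each distinct $x_k$ appearing at that scale.  Combined with the disjointness of the blocks $[b_k,b_{k+1})$, the total contribution at scale $s$ is $O(\eta\cdot 2^s)$, and summing the scale-by-scale bounds yields the MOMO estimate.

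The main obstacle lies in this second direction.  USIC provides only the pointwise control $\sup_x\sum_m g_H(m,x)\le\eta M$, whereas the MOMO argument requires bounding the mixed sum $\sum_m g_H(m,x(m))$ with $x(m)$ varying (piecewise constantly) with $m$: a naive interchange of supremum and sum is too wasteful.  The key is to exploit, at each dyadic scale, the disjointness of the blocks together with the fact that the number $K_s\le 2^s/L_{*,s}$ of distinct $x_k$ values contributing at scale $s$ is controlled, so that the USIC bounds applied at scale $b_k\sim 2^s$ for each individual $x_k$ can be summed without a destructive factor of $K_s$.  The subtle balance between the parameters $H_s$, $L_{*,s}$, and $2^s$, together with the treatment of blocks that straddle dyadic boundaries and a separate handling of short blocks with $L_k<H_s$, constitutes the technical heart of the argument.
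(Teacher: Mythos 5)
Your first direction (USIC fails $\Rightarrow$ MOMO fails) matches the paper's argument essentially line by line: pigeonhole over residue classes modulo $H_\ell$ to find an arithmetic progression carrying the mass, pass to a sparse subsequence, and take the union of the progressions as $(b_k)$. The small discrepancies (assuming dyadic $M_\ell$, inserting filler blocks) are cosmetic and do not affect correctness; the paper simply passes to a subsequence with $M_{\ell+1}>2M_\ell+H_\ell$ and takes the union directly.

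The converse direction, however, has a genuine gap, which you in fact half-identify yourself. Your sliding-window decomposition is correct, and you rightly observe that after summing over $k$ one is left with a mixed sum $\sum_m g_H(m,x(m))$ with $x(m)$ piecewise constant, whereas USIC only bounds $\sum_{m<M}g_H(m,x)$ for a \emph{fixed} $x$. Your proposed resolution — apply the USIC bound at each dyadic scale $[2^s,2^{s+1})$ for each of the $K_s$ blocks meeting that scale, and claim that "the USIC bounds ... can be summed without a destructive factor of $K_s$" — is asserted but not justified, and it is in fact false as stated. For a single $x_k$, USIC gives $\sum_{m\in[2^s,2^{s+1})}g_H(m,x_k)\le\eta\cdot 2^s$; restricting to the sub-interval $[b_k,b_{k+1})\cap[2^s,2^{s+1})$ does not improve this to $\eta\cdot L_k$, because nothing prevents the $\eta$-density of "bad" $m$ for $x_k$ from concentrating precisely on $[b_k,b_{k+1})$. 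Summing the $K_s$ block bounds therefore gives $K_s\eta\cdot 2^s$, not $\eta\cdot 2^s$, and when many short blocks meet one dyadic scale this destroys the estimate. No balance of $H_s$, $L_{*,s}$ and $2^s$ repairs this, because the obstruction is the non-localizability of the USIC bound, not a boundary or parameter issue.

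The paper closes exactly this gap with a structural input, Lemma~\ref{l1} (quoted from \cite{Ab-Ku-Le-Ru}, Lemma~24). It asserts that if the MOMO average has a positive $\limsup$ then, after passing to a subsequence $(k_\ell)$, the union $\bigcup_\ell[b_{k_\ell},b_{k_\ell+1})$ has \emph{positive upper density} and on each such interval the normalized block sum is $\ge\delta_0$ for a fixed $\delta_0>0$. This converts a possibly diffuse failure of MOMO into a positive-density family of intervals that are uniformly bad. One then chooses $(M_\ell)$ realizing that upper density, picks $H_\ell$ growing slowly, and uses USIC — together with the crucial uniformity in $x$ — to conclude that short windows are good for \emph{every} $y\in X$, hence in particular for the $x_{k_r}$ attached to each bad interval; covering a bad interval by good short windows then contradicts the lower bound $\delta_0$. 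Without Lemma~\ref{l1} (or an equivalent extraction of a uniformly-bad positive-density sub-family), the argument does not go through.
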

\begin{proof} To prove one implication, suppose that $(X,T)$ does not satisfy $\bfu$-USIC. Then according to~\eqref{e1} there are: $\vep_0>0$, $(M_\ell),(H_\ell)$ tending to~$\infty$ with $H_\ell/M_\ell\to0$, $f\in C(X)$  such that for a certain subsequence $(\ell_k)$ which we still denote by $(\ell)$, we can find $x_\ell\in X$ such  that
\be\label{e2}
\frac1{M_\ell}\sum_{M_{\ell}\leq m<2M_{\ell}}\left|\frac1{H_{\ell}}\sum_{m\leq h<m+H_{\ell}}f(T^hx_\ell)\bfu(h)\right|\geq\vep_0.\ee
We now proceed as in the proof of Theorem~5 {in}~\cite{Ab-Le-Ru}.
We have
$$
\frac1{H_\ell}\sum_{r=0}^{H_\ell-1}
\frac1{M_\ell/H_\ell}\sum_{M_{\ell}\leq m<2M_{\ell}, m=r\;{\rm mod}\;{H_\ell}}\left|\frac1{H_{\ell}}\sum_{m\leq h<m+H_{\ell}}f(T^hx_\ell)\bfu(h)\right|\geq\vep_0.$$
Hence, for each $\ell$, we can choose $0\leq r_\ell<H_\ell$ such that
\be\label{e3}
\frac1{M_\ell/H_\ell}\sum_{M_{\ell}\leq m<2M_{\ell}, m=r_\ell\;{\rm mod}\;H_\ell}\left|\frac1{H_{\ell}}\sum_{m\leq h<m+H_{\ell}}f(T^hx_\ell)\bfu(h)\right|\geq\vep_0.\ee
By passing to a subsequence if necessary, we can assume that $M_{\ell+1}>2M_\ell+H_\ell$. The sequence $(b_k)$ is defined as
$$
\{b_1<b_2<\ldots\}:=\bigcup_\ell\{M_\ell\leq m<2{M_\ell}+H_\ell,\;m=r_\ell\;{\rm mod}\;H_\ell\}$$
(note that $b_{k+1}-b_k\geq H_\ell\to\infty$).
Moreover, for all $k$ such that $b_k\in [M_\ell,2M_\ell+H_\ell]$, we set $x_k=x_\ell$.
We let $K_\ell$ be the largest $k$ so that $b_k<2M_\ell$. Now, $b_{K_\ell}/{(2M_\ell)}\to1$ (as $H_\ell/M_\ell\to0$) and using~\eqref{e3}, we obtain
$$
\liminf_{\ell\to\infty}\frac1{b_{K_\ell}}\sum_{k<K_\ell}\left|
\sum_{b_k\leq n<b_{k+1}}
f(T^nx_k)\bfu(n)\right|\geq$$
$$
\liminf_{\ell\to\infty}\frac1{2M_\ell}
\sum_{M_{\ell}\leq m<2M_{\ell}, m=r_\ell\;{\rm mod}\;H_\ell}\left|\frac1{H_{\ell}}\sum_{m\leq h<m+H_{\ell}}f(T^hx_\ell)\bfu(h)\right|\geq\vep_0/2.$$
We omit what happens between $b_{K_\ell}$ and $b_{K_\ell+1}$ as we are interested in an estimate from below.

\smallskip

To prove the other implication, we will need a lemma which has been shown in \cite{Ab-Ku-Le-Ru} (see Lemma~24 and its proof therein):

\begin{lemma}\label{l1}
Let $f\in C(X)$, $(x_k)\subset X$, $(b_k)\subset\N$, $b_{k+1}-b_k\to\infty$. Assume that
$$
\limsup_{K\to\infty}\frac1{b_{K+1}}\sum_{k\leq K}\left|\sum_{b_k\leq n<b_{k+1}}f(T^nx_k)\bfu(n)\right|>0.
$$
Then there exist $\delta_0>0$ and a subsequence $(k_\ell)$ such that { \Red the upper density}
\begin{equation*}
{\Red  \eta:=\overline{d}\left(\bigcup_{\ell\geq 1}[b_{k_\ell},b_{k_\ell+1})\right)>0}
\end{equation*}
and {\Blue for each $\ell\geq1$ we have}
\be\label{badint}
\frac1{b_{k_\ell+1}-b_{k_\ell}}
\left|\sum_{n=b_{k_\ell}}^{b_{k_\ell+1}-1}
f(T^n{x_{k_\ell}})\bfu(n)\right|\geq\delta_0.
\ee
\end{lemma}

We now proceed, as before, by contradiction. That is, we {suppose} that the assumption of Lemma~\ref{l1} is satisfied. We need to select $(M_\ell)$ and $(H_\ell)$. For that we use the assertion of that lemma. First of all $(M_\ell)$ will be chosen so that the  upper density $\eta>0$
of the set $\bigcup_{r\geq 1}[b_{k_r},b_{k_r+1})$ is ``realized'', that is, $|[1,M_\ell]\cap \bigcup_{r\geq 1}[b_{k_r},b_{k_r+1})|\geq\frac\eta2M_\ell$ {for each $\ell\geq1$}. Moreover, due to the assumption $b_{k+1}-b_k\to\infty$, we can assume {(by passing to a subsequence of $(M_\ell)$ if necessary)} that the union of intervals $[b_{k_r},b_{k_r+1})$ whose length is short, say $\leq \ell$, has cardinality smaller than or equal to $ M_\ell/\ell$. Finally, set $H_\ell=\sqrt{\ell}$. Clearly $H_\ell/M_\ell\to0$, and according to the strong $\bfu$-USIC property, most of the $m$ in the interval $[1,M_\ell]$ is ``good'' in the sense that the relevant sums along $[m,m+H_\ell)$ are small, {i.e., given $\vep>0$, for $\ell$ large enough, we have}
$$
\frac1{H_\ell}\sup_{x\in X}\left|\sum_{m\leq h<m+H_\ell} f(T^hx)\bfu(h)\right|<\vep$$
{for a $(1-\vep)$-proportion of $m\in[1,M_\ell]$}.
We can make such ``good'' intervals of length $H_\ell$ disjoint by  considering $m=s\;{\rm mod}\;H_\ell$ and then choosing {$0\leq s_\ell<H_\ell$} so that most of the intervals $[m,m+H_\ell)$ with $m=s_\ell\;{\rm mod}\;H_\ell$ are ``good'' (as in the proof of the necessity). Now, such ``good'' intervals will cover (with a small error) many of our ``bad'', i.e.\ satisfying~\eqref{badint},
intervals $[b_{k_r},b_{k_r+1})$ (as the (upper) density of the union of such intervals is fixed, equal to $\eta$). However, if $[m,m+H_\ell)$ is good then $|\sum_{m\leq h<m+H_\ell}f(T^hy)\bfu(h)|={\rm o}(H_\ell)$ (with ``${\rm o}$'' which does not depend on $m$) regardless $y\in X$. It follows that we must have $|\sum_{b_{k_r}\leq n <b_{k_r+1}}f(T^nx_{k_r})\bfu(n)|={\rm o}(b_{k_r+1}-b_{k_r})$, a contradiction.
\end{proof}

\subsection*{Acknowledegements}

The original motivation to start this project came from a question that Marina Ratner asked during the conference  "{\it Homogeneous Dynamics,
Unipotent Flows, and Applications}" held in October 2013 at the IIAS in Jerusalem (and appeared as Question~7 in the survey \cite{Fe-Ku-Le}). We are  thankful to her for asking this question in particular, and for her seminal and inspirational work more generally. We are also grateful to the IIAS and to the conference organizers for making the event possible.

We are indebted to Giovanni Forni for explaining  to us  Lemma~\ref{lem:BF} and how to deduce it from \cite{BF}. 
We would  also like to thank him and  Livio Flaminio for useful discussions.

M.L. is supported by an by an NCN grant. C.U. is supported by the ERC Starting Grant ChaParDyn, as well as by the Leverhulme Trust through a Leverhulme Prize and by the Royal Society
through a Wolfson Research Merit Award. The research leading to these
results has received funding from the European Research Council under
the European Union Seventh Framework Programme (FP/2007-2013) / ERC
Grant Agreement n. 335989.

\end{document}